\newtheorem{theorem}{Theorem}[section]
\newtheorem{theorem*}{Theorem}
\newtheorem{lemma}[theorem]{Lemma}
\newtheorem{corollary}[theorem]{Corollary}
\newtheorem{proposition}[theorem]{Proposition}
\newtheorem{assumption}[theorem]{ASSUMPTION} 
\theoremstyle{definition}
\newtheorem{definition}[theorem]{Definition}
\newtheorem{example}[theorem]{Example}
\theoremstyle{remark}
\newtheorem{remark}[theorem]{Remark}
\numberwithin{equation}{section}
\newcommand{\C}{\mbb{C}}
\newcommand{\N}{\mbb{N}}
\newcommand{\R}{\mbb{R}}
\newcommand{\OO}{\Omega}
\newcommand{\LL}{\mc{L}}
\newcommand{\K}{\mc{K}}
\newcommand{\mbb}{\mathbb}
\newcommand{\mc}{\mathcal}
\newcommand{\mi}{\mathit}
\newcommand{\mk}{\mathfrak}
\newcommand{\mr}{\mathrm}
\newcommand{\mscr}{\mathscr}
\newcommand{\lra}{\longrightarrow}
\newcommand{\sss}{\scriptscriptstyle}
\newcommand{\vep}{\varepsilon}
\newcommand{\bs}{{\tiny $\blacksquare$}}
\newcommand{\sL}{\mathsf{L}}
\newcommand\ssp{\sigma_{S}}	
\newcommand\srho{\rho_{S}}	
\newcommand{\cS}{\mbb{S}}
\newcommand{\rr}{|}
\newcommand{\1}{\mr{I}}
\newcommand{\old}{\oldstylenums{1}}
\newcommand{\cP}{\mc{P}}
\def\sR{{\mathsf R}}
\def\sS{{\mathsf S}}
\def\sH{\mathsf{H}}
\def\bC{{\mathbb C}}           
\def\bH{{\mathbb H}}
\def\bN{{\mathbb N}}
\def\bR{{\mathbb R}}
\def\bS{{\mathbb S}}
\def\gB{{\mathfrak B}}
\def\beq{\begin{equation}}
\def\eeq{\end{equation}}
\def\b{\langle}
\def\k{\rangle}
\def\supp{\mr{supp}}
\DeclareMathOperator{\diff}{d \! \hspace{0.13ex}}
\DeclareMathAlphabet{\mathpzc}{OT1}{pzc}{m}{it}
\newcommand{\Chi}{\mbox{{\Large $\chi$}}}
\newcommand{\Ran}{\mr{Ran}}
\newcommand{\Ker}{\mr{Ker}}
\begin{document}

\title[Spectral representations of normal operators via iqPVMs]{Spectral representations of normal operators \\ in quaternionic Hilbert spaces \\ via intertwining quaternionic PVMs}




\author{Riccardo Ghiloni}
\address{Department of Mathematics, University of Trento, I--38123, Povo-Trento, Italy}
\email{ghiloni@science.unitn.it \\ moretti@science.unitn.it \\ perotti@science.unitn.it}

\author{Valter Moretti}

\author{Alessandro Perotti}


\subjclass[2010]{46S10, 47A60, 47C15, 30G35, 32A30, 81R15}

\date{}


\begin{abstract}
The possibility of formulating quantum mechanics over quaternionic Hilbert spaces can be traced back to von Neumann's foundational works in 
the thirties. The absence of a suitable quaternionic version of spectrum prevented the full development of the theory.
The first rigorous quaternionic formulation has been started only in 2007 
with the definition of the spherical spectrum of a quaternionic operator based on a quadratic version of resolvent operator. The relevance of this notion is proved by the existence of a quaternionic continuous functional calculus and a theory of quaternionic semigroups relying upon it.
A problem of quaternionic formulation is the description of composite quantum systems in absence of a natural tensor product due to non-commutativity of quaternions. A promising tool towards a solution is a quaternionic projection-valued measure (PVM), making possible a tensor product of quaternionic operators with physical relevance. A notion with this property, called \textit{intert\-wining quaternionic PVM}, is presented here.
This foundational paper aims to investigate the interplay of this new mathematical object and the spherical spectral features of quaternionic generally unbounded normal operators. We discover in particular the existence of other spectral notions  equivalent to the spherical ones, but based on a standard non-quadratic notion of resolvent operator.
\end{abstract}

\maketitle


\setcounter{tocdepth}{1}

\tableofcontents


\section{Introduction}

\subsection{Physical motivations for intertwining quaternionic PVMs}
Quantum theories can be formulated either in \textit{real}, \textit{complex} or \textit{quaternionic} Hilbert spaces. This fundamental result has a long history just initiated with von Neumann's mathe\-matical formulation of quantum mechanics \cite{vN} in 1932 and ended more than sixty years later with the celebrated theorem by Sol\`er \cite{soler}, after a crucial result by Piron \cite{piron}. As a matter of fact, the whole physical phenomenology of the testable elementary YES-NO propositions of a given quantum system is encoded \cite{BC,libroQL} in the theore\-tical properties of an abstract orthocomplemented lattice, whose ordering relation is the logical implication, which is {\em complete}, {\em orthomodular}, {\em atomic} and {\em irreducible} (in the presence of superselection rules the last requirement may be relaxed). Although the orthomodularity imposes a weak distributivity, the lattice is non-distributive just in view of non-commutativity of quantum observables constructed out of the lattice -- roughly speaking the Heisenberg principle -- and therefore it does not admit an interpretation in terms of standard Boolean logic though it was the starting point of the so-called quantum logics \cite{BC,libroQL}, after the first formulation by Birkhoff and von Neumann \cite{BvN}. Piron \cite{piron} established that if the lattice includes at least four orthogonal minimal elements called atoms (this requirement can be formulated into essentially equivalent different ways), the lattice is isomorphic to the one of certain subspaces of a generalized  vector space $\sH$ over a division ring $\mathbb B$ equipped with a natural notion of involutory anti-isomorphism and an associated generalized non-singular Hermitian scalar product. Maeda-Maeda \cite{maeda} established that orthomodularity fixed these subspaces as the ones which are orthogonally closed with respect to the said scalar product (i.e., those satisfying $M=(M^\perp)^\perp$). The final step, due to Sol\`er \cite{soler} and successively improved by Holland \cite{holland} and by Aerts and van Steirteghem \cite{AvS}, concluded this investigation proving that if the vector space $\sH$ includes an infinite orthonormal system or it satisfies some similar requirement, then it must be a true Hilbert space, with $\mathbb B$ equal to either $\bR$, $\bC$ or $\bH$ (the ring of quaternions), no further possibility is permitted. In this case, the lattice of elementary propositions is faithfully represented by the lattice of closed subspaces of $\sH$, the order relation being the standard inclusion relation. Equivalently, the lattice is the one of orthogonal projectors. The Hilbert space $\sH$ turns out to be separable if the initial quantum lattice is. There is no chance to get rid of the cases $\bR$ and $\bH$ as Varadarajan \cite{varadarajan} demonstrated that all fundamental results on the basic theoretical formulation of quantum mechanics (like Gleason's and Wigner's theorems and the fact that observables are self-adjoint operators) survive the replacement of $\bC$ for $\bR$ and $\bH$. 
The real alternative seems not to be very interesting with respect to the quaternionic one. Many efforts have been indeed spent to investigate the differences of the quaternionic version with respect to the standard formulation in complex Hilbert spaces \cite{adler,Emch,finkelstein,hb}. However, this investigation has been performed in absence of a clearly defined notion of spectrum of a quaternionic (unbounded) operator, and very often making use of a Dirac bra-ket type formalism, even when that approach is evidently untenable. There are~no doubts that the obtained results are physically meaningful, but their rigorous mathemati\-cal formulation still does not exist. A mathematically sound notion of spectrum of a quaternionic operator, the \textit{spherical spectrum}, was proposed quite recently by Colombo, Gentili, Sabadini and Struppa in 2007 \cite{CGSS2007} (see also the book \cite{libroverde}). The continuous functional calculus for bounded normal operators in a quaternionic Hilbert space, which is the first step towards a full-fledged spectral theory, was founded even more recently by the authors of the present paper in \cite{GhMoPe}. The development of the related theory of quaternionic groups and semigroups has been started recently too \cite{ACS,CS,GR,GR2}.

Many theoretical issues remain open for the  non-standard formulations of quantum mechanics
either in quaternionic and real Hilbert spaces. One is the failure of the quantum version of Noether theorem. Essentially due to Wigner's theorem, continuous symmetries are pictured in terms of strongly continuous one-parameter groups of unitary operators. Stone theorem says that these groups admit a unique anti self-adjoint generator. Adopting the complex formulation, any anti self-adjoint operator $S$ is one-to-one related with a self-adjoint operator $iS$, thus representing an observable. This observable is the {\em conserved quantity} associated with the continuous symmetry (in Noetherian sense) as soon as the group of symmetries commutes with the time evolution. Even if in real and quaternionic formulations observables are described by (generally unbounded) self-adjoint operators, the relation between anti self-adjoint operators and self-adjoint operators is by no means obvious since, differently from the complex case, there is no standard imaginary unit. In the real case, there is no imaginary units at all in the given mathematical structure and this suggests that physics somehow provides a complex structure restoring the standard formulation. This motivates the lack of interest in the real formulation of quantum mechanics. In the quaternionic case a plethora of imaginary units exists given by unitary anti self-adjoint operators commuting with the said operator $S$, each leading to different physics~\cite{adler}. The solution of this intriguing problem is again expected to have a physical nature: physical laws seem to pick out a complex structure, possibly in common with all physically meaningful anti self-adjoint operators. Nevertheless this issue proves that the standard machinery of spectral theorem for (generally unbounded) self-adjoint operators is not enough in the quaternionic setting. A more sophisticated tool is necessary to encompass the self-adjoint and anti self-adjoint cases, but also the unitary case. In~other words, one needs a spectral theorem for {\em unbounded normal operators}, which makes use of the recently constructed mathematical tools related to the notion of {\em spherical spectrum}. The validity of such a version of the spectral theorem was established by Alpay, Colombo and Kimsey~in~\cite{ACK}.

Another very important question shows that such a spectral theorem is again not enough to handle the physics of the quaternionic formulation. It is a long standing problem the search for a physically sound description of {\em composite quantum systems}. If a system is made of two parts respectively described in two Hilbert spaces, the composite system is described in a third Hilbert space which contains the two others. The inclusion must be such to permit the description of {\em quantum entanglement} phenomena. A natural structure allowing it is the Hilbert tensor product of the two factors. Whereas the tensor product is possible as a standard notion in the real and complex cases, the noncommutativity of $\bH$ turns out to be an insurmountable obstruction for the quaternionic formulation. Barring apparently awkward solutions which are not based on the tensor product structure, a possible and  natural way out is to equip the vector space with another multiplication law on the opposite side (the scalar multiplication of ours quaternionic Hilbert spaces is a right multiplication). Indeed, quaternionic two-sided modules permit the notion of tensor product. The problem is that Sol\`er's theorem provides just one scalar multiplication rule: the right-one. The left-one should be somehow imposed by physics in such a way that the tensor product between all operators of the composite quantum system representing observables is compatible with the tensor product between the Hilbert spaces. In other words, such operators have to be linear, not only with respect to the initial right scalar multiplication, but also with respect to the new left scalar multiplication.

Our \textit{idea} is that, given a physical quantum system described in a quaternionic Hilbert space $\sH$, at least the self-adjoint operators representing observables should admit an operatorial representation of $\bH$ commuting with them and with their spectral measures. In other words, there must exist a left scalar multiplication $\bH \ni q \mapsto L_q$ of $\sH$ (maybe a family of such multiplications), which commutes with all physically relevant spectral measures. In particular, an element of this left scalar multiplication could be used to tackle the problem of the complex structure above mentioned in relation with the quantum version of Noether theorem. The new left scalar multiplication, together with the original right-one, provides $\sH$ with a structure of a two-sided quaternionic Hilbert module, permitting tensor pro\-ducts. As we said above, within this approach, the natural notion of \textit{quaternionic projection-valued measure} (\textit{qPVM} for short), necessary to formulate a possibly physically useful version of the spectral theorem, has to include a compatible left scalar multiplication, commuting with all the orthogonal projectors of the measure. This commutativity implies the linearity of such orthogonal projectors also with respect to the left scalar multiplication.
 
The pairs formed by qPVMs and by left scalar multiplications commuting with them are new mathematical objects we introduce and study here for the first time. We call such pairs {\em intertwining quaternionic projection-valued measures} or simply \textit{iqPVMs} for short. The physics involved in this new notion, if any, deserves further investigation. This foundational paper is instead devoted to investigate just the mathematical features of the introduced structures which appear to be remarkable in their own mathematical right, like Theorem E stated below.


\subsection{iqPVMs and main results}

\textit{Fix a quaternionic Hilbert space $\sH$} with right scalar multiplication $\sH \times \bH \ni (u,q) \mapsto uq \in \sH$. Denote by $\gB(\sH)$ the set of all bounded right $\bH$-linear operators $T:\sH \to \sH$
, equipped with its natural structure of real Banach $C^*$-algebra whose product is the composition and whose $^*$-involution is the adjunction $T \mapsto T^*$.

The concept of qPVM can be modelled on that of complex PVM. Consider a second-countable Haudorff space $X$ and its Borel $\sigma$-algebra $\mscr{B}(X)$. We call orthogonal projector of $\sH$ an operator $T \in \gB(\sH)$ which is idempotent $TT=T$ and self-adjoint $T^*=T$. A \textit{qPVM over $X$ in $\sH$} is a map $P:\mscr{B}(X) \to \gB(\sH)$, which assigns to each Borel subset $E$ of $X$ an orthogonal projector $P(E)$ of $\sH$ in such a way that $P(X)$ is the identity map $\1$ on $\sH$, $P(E \cap E')=P(E)P(E')$ for every $E,E' \in \mscr{B}(X)$ and, given any sequence $\{E_n\}_{n \in \N}$ of pairwise disjoint Borel subsets of $X$, $P(\bigcup_{n \in \N}E_n)u$ is equal to the sum of the series $\sum_{n \in \N}P(E_n)u$ for every $u \in \sH$. The support of $P$ is the largest closed subset $\Gamma$ of $X$ such that $P(X \setminus \Gamma)=0$.  

We remind the reader that a left scalar multiplication of $\sH$ is a highly non-intrinsic operation depending on the choice of a preferred Hilbert basis $N$ of $\sH$. Given any $q \in \bH$, one defines the operator $L_q \in \gB(\sH)$ by requiring that $L_qz=zq$ for every $z \in N$. 
The map $\LL:\bH \to \gB(\sH)$, sending $q$ into $L_q$, determinates a left scalar multiplication $\bH \times \sH \ni (q,u) \mapsto qu \in \sH$ by setting $qu:=L_qu$. For convenience, we call $\LL$ itself \textit{left scalar multiplication of $\sH$}. Such a multiplication induces a structure of quaternionic two-sided Banach $C^*$-algebra on $\gB(\sH)$ by considering the left and the right scalar multiplications $(q,T) \mapsto qT:=L_qT$ and $(T,q) \mapsto Tq:=TL_q$.

We introduce the new concept of \textit{iqPVM over $X$ in $\sH$} defining it as a pair $\cP=(P,\LL)$, where $P$ is a qPVM over $X$ in $\sH$ and $\LL$ is a left scalar multiplication $\bH \ni q \mapsto L_q$ of $\sH$ commuting with $P$ in the sense that $P(E)L_q=L_qP(E)$ for every $E \in \mscr{B}(X)$ and for every $q \in \bH$.

A first striking property of iqPVMs is described by the possibility of defining the operatorial integral of all \textit{$\bH$-valued} bounded Borel measurable functions on~$X$, consistently with the quaternionic two-sided Banach $C^*$-algebra structure of $\gB(\sH)$. If~$s=\sum_{\ell=1}^nS_\ell\Chi_{E_n}$ is a simple function with $S_\ell \in \bH$ and $E_\ell \in \mscr{B}(X)$, then $\int_Xs\diff\cP$ is defined as the operator $\sum_{\ell=1}^nL_{S_\ell}P(E_n)$ in $\gB(\sH)$. By a standard density procedure, this integral extends to all elements of $M_b(X,\bH)$, the set of $\bH$-valued bounded Borel measurable functions on $X$. Equip the latter set with the supremum norm and with its natural pointwise defined structure of quaternionic two-sided Banach $C^*$-algebra. The commutativity between $P$ and $\LL$ implies that the map $M_b(X,\bH) \ni \varphi \mapsto \int_X\varphi\diff\cP \in \gB(\sH)$ is a continuous quaternionic two-sided $C^*$-algebra homomorphism. It is worth noting that the integral $\int_X\varphi\diff\cP$ makes sense also when $\varphi$ is an arbitrary $\bH$-valued function whose restriction to the support of $P$ is bounded and measurable.   

Our first main result is a spectral theorem for bounded normal operators.

\textit{Fix~an imaginary unit $\imath$ in $\bH$.} Indicate by $\C^+_\imath$ the slice half-plane determined by~$\imath$; that is, $\C^+_\imath:=\{\alpha+\imath\beta \in \bH \, | \, \alpha,\beta \in \R, \beta \geq 0\}$. 
For every given normal operator $T \in \gB(\sH)$, we prove the existence of an iqPVM $\cP=(P,\LL)$ over $\C^+_\imath$ in $\sH$ such that $P$ has bounded support and $T$ admits the integral representation $T=\int_{\C^+_\imath}\mi{id}\diff\cP$, where $\mi{id}:\C^+_\imath \hookrightarrow \bH$ denotes the inclusion map. The qPVM $P$ is unique, while $\LL$ is not. We stress that this non-uniqueness might have a physical meaning in the context of composite quaternionic quantum systems, as described above. With regard to this lack of uniqueness, we are able to give a complete characterization of all left scalar multiplications $\LL'$ of $\sH$ for which the pair $\cP'=(P,\LL')$ is an iqPVM and $T=\int_{\C^+_\imath}\mi{id}\diff\cP'$. Furthermore, we describe explicitly the relation between the support of $P$ and the spherical spectrum $\ssp(T)$ of $T$.

Our result can be stated as follows.

\vspace{1em}

\noindent \textbf{Theorem A (Bounded iq Spectral Theorem).} \textit{Given any normal operator $T$ in $\gB(\sH)$, there exists an iqPVM $\cP=(P,\LL)$ over $\C^+_\imath$ in $\sH$ such that}
\beq \label{eq:bounded-iq-sp-teo}
T=\int_{\C^+_\imath}\mi{id}\diff\cP.
\eeq

\textit{The qPVM $P$ is uniquely determined by $T$ on the whole $\mscr{B}(\C^+_\imath)$, while $L_q:=\LL(q)$ is uniquely determined by $T$ on  $\Ker(T-T^*)^\perp$ if $q \in \C_\imath$.}

\textit{The following additional facts hold.}
\begin{itemize}
 \item[$(1)$] \textit{A left scalar multiplication $\LL$ defines an iqPVM $\cP=(P,\LL)$ satisfying equa\-lity (\ref{eq:bounded-iq-sp-teo}) if and only if 
 it verifies the next three conditions}
 \begin{itemize}
  \item[$\bullet$] \textit{$L_\imath T=TL_\imath$.}
  \item[$\bullet$] \textit{$L_\jmath T=T^*L_\jmath$ for some imaginary unit $\jmath$ of $\bH$ with $\imath\jmath=-\jmath\,\imath$.}
  \item[$\bullet$] \textit{$-L_\imath(T-T^*)$ is a positive operator.}
 \end{itemize}
 \item[$(2)$] \textit{The support of $P$ is equal to $\ssp(T) \cap \C^+_\imath$. A point $q$ of $\C^+_\imath$ belongs to the spherical point spectrum $\sigma_{pS}(T)$ of $T$ if and only if $P(\{q\}) \neq 0$. The point $q \in \C^+_\imath$ is an element of the continuous spherical spectrum $\sigma_{cS}(T)$ of $T$ if and only if $P(\{q\})=0$ and $P(U) \neq 0$ for every open neighborhood $U$ of $q$ in $\C^+_\imath$. Finally, the residual spherical spectrum $\sigma_{rS}(T)$ of $T$ is empty.}
\end{itemize}

\vspace{1em}

The proof of this theorem is based on the properties of the above-mentioned operatorial integral and on the machinery developed in our previous paper on this topic \cite{GhMoPe}.

In \cite[Thm 4.7]{ACK} the authors prove two versions of formula \eqref{eq:bounded-iq-sp-teo}, including the uniqueness~of~$P$. The qPVM $P$ and the corresponding integral are constructed in a completely different way taking advantage of Riesz representation theorem. Such an integral is complex in nature, in contrast with our whose nature is fully quaternionic as we have seen above. Actually, an easy verification shows that the integral defined in \cite[Sect.~IV]{ACK} coincides with our integral performed only over \textit{$\C_\imath$-valued} bounded Borel measura\-ble functions. We underline that, in \cite{ACK}, no reference is made to notions similar to our iqPVMs, and hence to point $(1)$. Also questions regarding the relation between $P$ and the spherical spectrum of $T$, as in our point $(2)$, are not treated in~\cite{ACK}.

The case of unitary operators and the one of compact normal operators are considered in \cite{SU} and \cite{GhMoPe2}, respectively. Some versions of the spectral theorem obtained without a clearly defined notion of spectrum can be found in \cite{finkelstein,sharma,viswanath}. In the finite-dimensional case, where the spherical spectrum is the set of right eigenvalues, the spectral theorem for normal operators is well known \cite{Jacobson39} (see \cite{FarenickPidkowich} for an ample exposition and further references).

Our second main result is a spectral quaternionic $L^2$-representation theorem in the spirit of Dunford and Schwartz \cite[Thm X.5.2]{DS}. 

\vspace{1em}

\noindent \textbf{Theorem B (iq Spectral $\boldsymbol{L^2}$-representation).} \textit{Let $T \in \gB(\sH)$ be a normal operator and let $\cP=(P,\LL)$ be an iqPVM over $\C^+_\imath$ in $\sH$ satisfying (\ref{eq:bounded-iq-sp-teo}). Then there exist a family $\{\nu_\alpha\}_{\alpha \in \mc{A}}$ of positive $\sigma$-additive finite Borel measures over the support $\Gamma$ of $P$ and an isometry $V:\sH \to \bigoplus_{\alpha \in \mc{A}} L^2(\Gamma,\bH;\nu_\alpha)$ such that, for every $\varphi \in M_b(\Gamma,\bH)$, the operator $\varphi(T):=\int_{\C^+_\imath}\varphi\diff\cP$ is multiplicatively represented by means of $V$ in the sense that}
\[
(V \varphi(T) V^{-1})(\oplus_{\alpha \in \mc{A}} f_\alpha)=\oplus_{\alpha \in \mc{A}} \,\, \varphi f_\alpha \quad \text{\textit{$\nu_\alpha$-a.e. in $\Gamma$,}}
\]
\textit{where $\varphi f_\alpha$ denotes the pointwise product between $\varphi$ and $f_\alpha$. In particular, if $L_q:=\LL(q)$ for $q \in \bH$, then}
\[
(V L_q V^{-1})(\oplus_{\alpha \in \mc{A}} f_\alpha)=\oplus_{\alpha \in \mc{A}} \,\, qf_\alpha \quad \text{\textit{$\nu_\alpha$-a.e. in $\Gamma$.}}
\]

\vspace{1em}

This theorem depends on an algebraic characterization of left scalar multiplications, which is interesting in its own right. Such a characterization is known to physicists, but it is used without any proof, see the foundational paper \cite{finkelstein}. Here we give a proof. Equip $\bH$ with its natural structure of real $^*$-algebra induced by the quaternionic conjugation $q \mapsto \overline{q}$. The mentioned characterization reads as follows. 

\vspace{1em}

\noindent \textbf{Theorem C (algebraic character of left s.\ m.).}
\textit{A map $\LL:\bH \to \gB(\sH)$ is a left scalar multiplication of $\sH$ if and only if it is a real $^*$-algebra homomorphism.}

\vspace{1em}

The integral $\int_X \varphi\diff\cP$ of a function $\varphi \in M_b(X,\bH)$ with respect to a given iqPVM $\cP=(P,\LL)$ can be extended to all  
possibly unbounded $\bH$-valued Borel measurable functions on $X$. This can be done by means of an accurate review of the complex construction. The extended integral permits to generalize the bounded iq Spectral Theorem to the unbounded case.

\vspace{1em}

\noindent \textbf{Theorem D (iq Spectral Theorem).} \textit{Let $T:D(T) \to \sH$ be a closed normal operator with dense domain. Then there exists an iqPVM $\mc{Q}=(Q,\LL)$ over $\C^+_\imath$ in $\sH$ such that} 
\beq \label{eq:unbounded-iq-sp-teo}
T=\int_{\C^+_\imath}\mi{id}\diff\mc{Q}.
\eeq

\textit{The qPVM $Q$ is uniquely determined by $T$ on the whole $\mscr{B}(\C^+_\imath)$, while $L_q:=\LL(q)$ is uniquely determined by $T$ on  $\Ker(\,\overline{T-T^*}\,)^\perp$ if $q \in \C_\imath$. Points (1) and (2) of Theorem A continue to hold without changes, except for replacing $\cP$ for $\mc{Q}$.}

\vspace{1em}

The proof of this result consists of reducing to the bounded case by means of a transformation sending unbounded normal operators to bounded normal ones. The~same strategy was used in \cite[Thm 6.2]{ACK} to prove two versions of formula \eqref{eq:unbounded-iq-sp-teo} and the uniqueness of $Q$, under the additional assumption $D(T^*)=D(T)$. Instead, we will find that equality as a by-product of our preceding iq Spectral Theorem. Also in this situation, the operatorial integral exploited in \cite{ACK} corresponds to our integral computed only over possibly unbounded \textit{$\C_\imath$-valued} Borel measurable functions. 

The iq Spectral Theorem D reveals a completely new, and surprisingly deep, connection between the spherical spectrum $\ssp(T)$ of $T$, which is defined in terms of the quadratic operator $\Delta_q(T):= T^2 -T(2\mr{Re}(q)) + \1|q|^2$, and the classical-like notion of spectrum of $T$ defined by means of any fixed left scalar multiplication associated with $T$ itself via the mentioned theorem. Given an operator $T:D(T) \to \sH$ and a left scalar multiplication $\LL(q)=L_q$ of $\sH$, the left resolvent set $\rho_\LL(T)$ of $T$ w.r.t.\ $\LL$ is the subset of $\bH$ formed by all quaternions $q$ such that $T-L_q:D(T) \to \sH$ is bijective and the resolvent operator $\sR_q(T):=(T-L_q)^{-1}$ is bounded. The \textit{left spectrum $\sigma_\LL(T)$ of $T$ w.r.t.\ $\LL$} is defined by $\sigma_\LL(T):=\bH \setminus \rho_\LL(T)$.

\vspace{1em}

\noindent \textbf{Theorem E (spherical and left spectrum).} \textit{Let $T:D(T) \to \sH$ be a closed normal operator with dense domain and let $\mc{Q}=(Q,\LL)$ be an iqPVM over $\C^+_\imath$ in $\sH$ satisfying (\ref{eq:unbounded-iq-sp-teo}). Then it holds}
\[
\sigma_\LL(T)=\ssp(T) \cap \C^+_\imath
\]
\textit{and}
\[
\sR_q(T)=\int_{\C^+_\imath}(z-q)^{-1}\diff\mc{Q}(z)\quad \text{ \textit{if} $q \in \rho_\LL(T)$}.
\]

\textit{Moreover, if $q \in \C^+_\imath$, the following additional facts hold.}
\begin{itemize}
 \item \textit{$q$ is a left eigenvalue of $T$ w.r.t.\ $\LL$ (i.e. $\Ker(T-L_q) \neq \{0\}$) if and only if it is a right eigenvalue of $T$ (i.e. $Tu=uq$ for some $u \in D(T) \setminus \{0\}$). If~$u$~is a right eigenvector of $T$ associated with $q$ (i.e. $Tu=uq$, $u \in D(T) \setminus \{0\}$), then it is also a left eigenvector of $T$ w.r.t.\ $\LL$ associated with $q$ (i.e. $Tu=L_qu$).}
 \item \textit{If $\Ker(T-L_q)=\{0\}$, then $\Ran(T-L_q)$ is dense in $\sH$. In other words, the left residual spectrum of $T$ w.r.t.\ $\LL$ is empty, as the spherical residual one.}
 \item \textit{$q$ belongs to the left continuous spectrum of $T$ w.r.t.\ $\LL$ (i.e. $\Ker(T-L_q)=\{0\}$ and $\sR_q(T)$ is not bounded) if and only if it belongs to the spherical continuous spectrum of $T$.}
\end{itemize}

\vspace{1em}

As a final application of the iq Spectral Theorem D, we establish the existence of a slice-type decomposition for unbounded closed normal operators. The bounded case was treated in \cite{GhMoPe} (though a  version of this result for the bounded case already appeard in the original paper by Teichm\"uller  \cite{T}), see Theorem J and the physical significance of this kind of decomposition in the introduction of that paper.  

\vspace{1em}

\noindent \textbf{Theorem J (the unbounded case).} \textit{Given any closed normal operator $T:D(T) \to \sH$ with dense domain, there exist three operators $A:D(A) \to \sH$, $B:D(B) \to \sH$ and $J \in \gB(\sH)$ such that}
\begin{itemize}
 \item \textit{$T=A+JB$,}
 \item \textit{$A$ is self-adjoint and $B$ is positive and self-adjoint,}
 \item \textit{$J$ is anti self-adjoint and unitary,}
 \item \textit{$\overline{AB}=\overline{BA}$, $AJ=JA$ and $BJ=JB$.}
\end{itemize}
\textit{The operators $A$ and $B$ are uniquely determined by $T$ via the formulas $A=\overline{(T+T^*)\frac{1}{2}}$ and $B=\big|\,\overline{(T-T^*)\frac{1}{2}}\,\big|$. The operator $J$ is determined by $T$ on $\Ker(\,\overline{T-T^*}\,)^\perp$.} 



\subsection{Structure of the paper} The paper is organized as follows.

In Section \ref{sec:preliminaries}, we recall some basic notions and results necessary throughout the paper. We briefly deals with quaternionic Hilbert spaces, their operators and their complex subspaces, quaternionic two-sided Banach $C^*$-algebras, spherical spectra, and slice-type decomposition of bounded normal operators, including an extended version of the mentioned Theorem J of \cite{GhMoPe}.

In Section \ref{sec:iqPVMs}, we introduce and study the concepts of iqPVM and of operatorial integral of $\bH$-valued bounded Borel measurable functions with respect to an iqPVM. For short and without loss of generality, we deal with these notions on the second-countable Hausdorff spaces $\C^+_\imath$ only.

Section \ref{sec:bounded-sp-teo} is devoted to the proof of above Theorems A, B and C in their complete form. The section includes some elementary, but significant, examples regarding the bounded iq Spectral Theorem A. Particularly enlightening are the matricial~ones.  

In Section \ref{sec:unb-int}, we introduce and accurately study the concept of operatorial integral of possibly unbounded $\bH$-valued Borel measurable functions on $\C^+_\imath$ with respect to an iqPVM.

In Section \ref{sec:spectral-unb} we present the proof of the complete version of the iq Spectral Theorem D (see Theorem \ref{spectraltheoremU} and Proposition \ref{prop:unb-sp-teo}). We give also some consequences of such a proof, which are of general interest. For example, we show that, if $T:D(T) \to \sH$ is a closed normal operator with dense domain, then $T^*=U^*TU$ for some unitary operator $U:\sH \to \sH$ (observe that this is false in the complex case), $D(T^*)=D(T)$ and $\Ker(T)=\Ker(T^*)=\Ran(T)^\perp$.

Section \ref{sec:applications} contains the proofs of Theorems E and J in their complete form, and it includes an explicit matricial example, which shows Theorem E at work.

\vspace{1em}

\textbf{Acknowledgements.} We would like to thank our colleague Igor Khavkine for his precious advice concerning the relations between composite quantum systems and tensor product in the quaternionic setting. This work is supported by INFN-TIFPA, by GNSAGA and GNFM of INdAM, by the grants FIRB ``Differential Geometry and Geometric Function Theory'' and PRIN ``Variet\`a reali e complesse: geometria, topologia e analisi armonica'' of the Italian Ministry of Education.


\section{Preliminaries} \label{sec:preliminaries}

In this section, we recall some basic concepts and results concerning functional analysis over quaternions. For details we refer the reader to \cite{GhMoPe}.

\subsection{Quaternionic Hilbert spaces}

Let $\bH$ be the noncommutative real division algebra of quaternions. Given $q=q_0+q_1i+q_2j+q_3k \in \bH$ with $q_0,q_1,q_2,q_3 \in \R$, we denote by $\mr{Re}(q)=q_0$ the real part of $q$, by $\mr{Im}(q)=q_1i+q_2j+q_3k$ the imaginary part of $q$, by $\overline{q}=q_0-q_1i-q_2j-q_3k$ the conjugate of $q$ and by $|q|=\sqrt{q_0^2+q_1^2+q_2^2+q_3^2}$ the norm of $q$. 
The quaternion $q$ is called imaginary unit if $q^2=-1$ or, equivalently, if $\mr{Re}(q)=0$ and $|q|=1$.
Indicate by $\bS$ the set of all imaginary units of $\bH$ and by $\R^+$ the set of all non-negative real numbers. For every $\imath \in \bS$, we define the complex plane $\C_\imath:=\{\alpha+\imath\beta \in \bH \, | \, \alpha,\beta \in \bR\}$.

A \textit{quaternionic (right) Hilbert space} is an Abelian group $(\sH,+)$ equipped with a right scalar multiplication $\sH \times \bH \ni (u, q) \mapsto uq \in \sH$ and a function $\sH \times \sH \ni (u,v) \mapsto \b u|v \k \in \bH$, 
called Hermitian quaternionic scalar product of $\sH$, having the following pro\-perties:
\begin{itemize}
 \item $(u+v)q=uq+vq$, $v(p+q)=vp+vq$, $v(pq)=(vp)q$, and $u1=u$,
 \item $\b u| vp+wq \k = \b u|v\k p+\b u|w\k q$, $\b u|v \k = \overline{\b v | u \k}$ and $\b u| u \k \in \R^+$
\end{itemize}
for every $p,q \in \bH$ and for every $u,v,w \in \sH$, and
\begin{itemize}
 \item 
 the function $\|\cdot\|:\sH \to \R^+$, defined by setting $\|u\|:=\sqrt{\b u| u \k}$, is a norm making $\sH$ a real Banach space. 
\end{itemize}


\begin{assumption} \label{assumpt:1}
In what follows, all the quaternionic Hilbert spaces $\sH$ we consider will be non-zero; namely, $\sH \neq \{0\}$. 
\end{assumption}

Given a subset $N$ of $\sH$, we denote by $N^\perp$ the (right $\bH$-linear) subspace of $\sH$ consisting of all vectors $u$ such that $\b u|z \k=0$ for every $z \in N$. Every quaternionic Hilbert space $\sH$ has a Hilbert basis; that is, a subset $N$ such that $\|z\|=1$ if $z \in N$, $\b z | z' \k=0$ if $z,z' \in N$ with $z \neq z'$, and $N^\perp=\{0\}$. The latter condition is equivalent to say that, for every $u \in \sH$, the series $\sum_{z \in N}z\b z | u \k$ converges absolutely to $u$ in $\sH$. 

A (right $\bH$-linear) operator $T:D(T) \to \sH$ of $\sH$ is a map such that $T(u+v)=T(u)+T(v)$ and $T(up)=T(u)p$ for every $u,v \in D(T)$ and for every $p \in \bH$, where the domain $D(T)$ of $T$ is a subspace of $\sH$. Denote by $\Ran(T)$ the ima\-ge of $T$ and by $\Ker(T)$ its kernel $T^{-1}(0)$. 
 Define $\|T\|:=\sup_{u \in D(T) \setminus \{0\}}\|Tu\|/\|u\|$. The operator $T$ is continuous if and only if it is bounded; that is, $\|T\|<+\infty$. 
The operator $T$ is said to be closed if its graph $D(T) \oplus \Ran(T)$ is closed in $\sH \times \sH$, equipped with the product topology. If $T$ has a closed operator extension, then $T$ is called closable and the closure $\overline{T}$ of $T$ is its smallest closed extension.

Let $\gB(\sH)$ be the set of all continuous operators $T:\sH \to \sH$. Such a set becomes a real Banach algebra with unity if we define its sum as the pointwise sum, its product as the composition, its real scalar multiplication $\gB(\sH) \times \R \ni (T,r) \mapsto Tr \in \gB(\sH)$ as $(Tr)(u):=T(u)r$ if $u \in \sH$ and its unity as the identity map $\1:\sH \to \sH$ of $\sH$.

Let $T: D(T) \to \sH$ and $S: D(S) \to \sH$ be operators. We write $T \subset S$ if $D(T) \subset D(S)$ and $S \rr_{D(T)}=T$. As usual, $T=S$ if and only if $D(T)=D(S)$ and $T(u)=S(u)$ if $u \in D(T)$. The sum $T+S:D(T+S) \to \sH$ and the composition $TS:D(TS) \to \sH$ are defined in the standard pointwise way:
\begin{itemize}
 \item $D(T+S):= D(T) \cap D(S)$, $(T+S)(u):=T(u)+S(u)$ if $u \in D(T+S)$,
 \item $D(TS):=S^{-1}(D(T))$, $(TS)(u):=T(S(u))$ if $u \in D(TS)$.
\end{itemize}
It is easy to verify that such operations of sum and of composition are associative; namely, if $R:D(R) \to \sH$ is another operator, then $(T+S)+R=T+(S+R)$ and $(TS)R=T(SR)$. Thanks to this fact, as usual, we can write the sum $T+S+R$ and the composition $TSR$ without the parentheses. In what follows, for simplicity, we will use the symbols $Tu$ in place of $T(u)$.

Suppose the operator $T:D(T) \to \sH$ has dense domain. Define the adjoint $T^*:D(T^*) \to \sH$ of $T$ by setting
\[
D(T^*):= \{u \in \sH \, | \, \exists w_u \in \sH \mbox{ with } \b w_u|v \k=\b u | T v \k \ \forall v \in D(T)\}
\]
and
\[
\b T^* u | v \k=\b u | T v \k \quad \forall v \in D(T), \forall u \in D(T^*).
\]

The operator $T$ is called \textit{normal} if $TT^*=T^*T$. Among normal operators, the ones described by the following definitions are very important: the operator $T$ is called  \textit{self-adjoint} if $T^*=T$, \textit{anti self-adjoint} if $T^*=-T$ and \textit{unitary} if $D(T)=\sH$ and $TT^*=T^*T=\mr{I}$. The operator $T$ is said to be \textit{positive}, and we write $T \geq 0$, if $\b u | Tu \k \in \R^+$ for every $u \in D(T)$. A positive operator is also self-adjoint if $D(T)=\sH$.  As in the complex case, if $T \in \gB(\sH)$ with $T \geq 0$, then there exists a unique operator $\sqrt{T} \in \gB(\sH)$ with $\sqrt{T} \geq 0$ such that $\sqrt{T}\sqrt{T}=T$. We recall that, given any $T \in \gB(\sH)$, the operator $T^*$ belongs to $\gB(\sH)$ and $T^*T$ is a positive. In this way, we can define the operator $|T| \in \gB(\sH)$ by setting $|T|:=\sqrt{T^*T}$. If $T \in \gB(\sH)$ is normal, then it holds:
\beq \label{eq:Ker-Ran}
\Ker(T)=\Ker(T^*)=\Ker(|T|)
\; \; \text{and} \; \;
\overline{\Ran(T)}=\overline{\Ran(T^*)}=\overline{\Ran(|T|) }.
\eeq

Let us recall the concept of left scalar multiplication of $\sH$. Given a Hilbert basis $N$ of $\sH$, we define the \textit{left scalar multiplication of $\sH$ induced by $N$} as the map $\mc{L}_N:\bH \to \gB(\sH)$ given by 
\begin{center}
$(\mc{L}_N(q))(u):=\sum_{z \in N}zq\b z|u \k \;$ if $q \in \bH$ and $u \in \sH$.
\end{center}
For short, we will often write $L_q$ and $qu$ in place of $\mc{L}_N(q)$ and $L_qu$, respectively. The map $\mc{L}_N$ turns out to be a norm-preserving real algebra homomorphism. More precisely, it holds:
\begin{itemize}
 \item $\|qu\|=|q| \, \|u\|$ and $L_q \in \gB(\sH)$,
 \item $L_{p+q}=L_p+L_q$, $L_pL_q=L_{pq}$, $L_1=\1$ and $(L_q)^*=L_{\overline{q}}$,
 \item $L_ru=ur$ and $L_qz=zq$,
 \item $\b \overline{q} u |v \k=\b u | qv\k$
\end{itemize}
for every $u,v \in \sH$, $p,q \in \bH$, $r \in \R$ and $z \in N$. Given an operator $T:D(T) \to \sH$ and a quaternion $q \in \bH$, one can define the operator $qT:D(T) \to \sH$ by setting
\[
qT:=L_qT.
\]
If $L_q(D(T)) \subset D(T)$, one can also define the operator $Tq:D(T) \to \sH$ by setting
\[
Tq:=TL_q.
\]
In particular, if $T \in \gB(\sH)$, then $\|qT\|=\|Tq\|=|q| \, \|T\|$ and hence $qT,Tq \in \gB(\sH)$.

In what follows, by the symbol $\bH \ni q \stackrel{\LL}{\longmapsto} L_q$ or simply by $\bH \ni q \mapsto L_q$, we mean the left scalar multiplication of $\sH$ induced by some Hilbert basis of $\sH$ itself.


\subsection{Quaternionic two-sided Banach $C^*$-algebras}

A \textit{quaternionic two-sided vector space} is an Abelian group $(V,+)$ equipped with left and right scalar multiplications $\bH \times V \ni (q,u) \mapsto qu \in V$ and $V \times \bH \ni (u,q) \mapsto uq \in V$ such that:
\begin{itemize}
 \item $q(u+v)=qu+qv$, $(q+p)u=qu+pu$ and $q(pu)=(qp)u$,
 \item $(u+v)q=uq+vq$, $u(q+p)=uq+up$ and $(up)q=u(pq)$,
 \item $(qu)p=q(up)$, $u=1u=u1$ and $ru=ur$
\end{itemize}
for every $u,v \in V$, $q,p \in \bH$ and $r \in \R$. Such a quaternionic two-sided vector space $V$ becomes a \textit{quaternionic two-sided algebra} if, in addition, it is equipped with an associative and distributive product $V \times V \ni (u,v) \mapsto uv \in V$ such that
\begin{itemize}
 \item $q(uv)=(qu)v$ and $(uv)q=u(vq)$
\end{itemize}
for every $u,v \in V$ and $q \in \bH$. If $uv=vu$ for every $u,v \in V$, then $V$ is called \textit{commutative}. Moreover, $V$ is said to be \textit{with unity}, or \textit{unital}, if there exists a (unique) element $\old$ of $V$, the \textit{unity} of $V$, such that $u=\old u=u\old$ for every $u \in V$.

The quaternionic two-sided algebra $V$ is a \textit{quaternionic two-sided Banach $C^*$-algebra} if, in addition, it is equipped with a norm $\|\cdot\|:V \to \R^+$ making it a real Banach space and with a $^*$-involution $^*:V \to V$ (that is, a map such that $(u^*)^*=u$, $(u+v)^*=u^*+v^*$, $(qu)^*=u^*\overline{q}$, $(uq)^*=\overline{q}u^*$ and $(uv)^*=v^*u^*$ if $u,v \in V$ and $q \in \bH$) having the following properties:
\begin{itemize}
 \item $\|qu\|=\|uq\|=|q| \, \|u\|$, $\|uv\| \leq \|u\| \, \|v\|$ and $\|u^*u\|=\|u\|^2$
\end{itemize} 
for every $u,v \in V$ and $q \in \bH$.

A \textit{$^*$-homomorphism} $\phi:V \to W$ between quaternionic two-sided Banach $C^*$-algebras is a map such that $\phi(u+v)=\phi(u)+\psi(v)$, $\phi(qu)=q\phi(u)$, $\phi(uq)=\phi(u)q$, $\phi(uv)=\phi(u)\phi(v)$ and $\phi(u^*)=\phi(u)^*$ for every $u,v \in V$ and $q \in \bH$. If $V$ and $W$ are unital, then we require also that  $\phi(\old)=\old$. 

\begin{remark} \label{rem:banach}
If in the preceding definition of quaternionic two-sided Banach unital $C^*$-algebra, one uses $\R$ (or $\C_\imath$ for some fixed $\imath \in \bS$) as set of scalars, then one obtains the concept of a real (respectively $\C_\imath$-complex) two-sided Banach unital $C^*$-algebra. Since $ru=ur$ for every $u \in V$ and for every $r \in \R$, the concept of real two-sided Banach unital $C^*$-algebra is equivalent to the usual one of real Banach unital $C^*$-algebra. Similarly, complex Banach unital $C^*$-algebras correspond to two-sided $\bC_\imath$-Banach unital $C^*$-algebras $V$ having the following additional property:
\beq \label{eq:cj-two-sided-banach-algebra}
\text{$cu=uc\;$ for every $u \in V$ and for every $c \in \C_\imath$.}
\eeq

Since a quaternionic two-sided Banach unital $C^*$-algebra $V$ is also a real (two-sided) Banach unital $C^*$-algebra, we can speak about real Banach unital $C^*$-subalgebras of $V$. If \eqref{eq:cj-two-sided-banach-algebra} is verified, then one can also speak about $\bC_\imath$-Banach unital $C^*$-subalgebras of $V$. \bs
\end{remark}

\begin{example} \label{example:banach}
$(1)$ Every left scalar multiplication $\bH \ni q \mapsto L_q$ of $\sH$ induces a structure of quaternionic two-sided Banach $C^*$-algebra with unity on the set $\gB(\sH)$. It suffices to equip $\gB(\sH)$ with the pointwise sum, with the left and right scalar multiplications $(q,T) \mapsto qT:=L_qT$ and $(T,q) \mapsto Tq:=TL_q$, with the composition as product, with the adjunction $T \mapsto T^*$ as $^*$-involution and with the usual norm (see the preceding subsection). Its unity is the identity map $\1:\sH \to \sH$ of $\sH$.

$(2)$ Given a non-empty set $E$, the set of bounded $\bH$-valued functions on $E$, equipped with the pointwise defined operations of sum, of left and right quaternionic scalar multiplications, of product, of conjugation $\overline{f}(q):=\overline{f(q)}$ as $^*$-involution and with the supremum norm $\|\cdot\|_\infty$, is a quaternionic two-sided Banach $C^*$-algebra, whose unity is the function constantly equal to $1$. \bs 
\end{example}


\subsection{Complex subspaces} \label{subsec:complex-subspaces} Let $\sH$ be a quaternionic Hilbert space, let $J \in \gB(\sH)$ be an anti self-adjoint and unitary operator, and let $\imath \in \bS$. Define the subsets $\sH^{J\imath}_+$ and $\sH^{J\imath}_-$ of $\sH$ by setting 
\begin{center}
$\sH^{J\imath}_+:=\{u \in \sH \, | \, Ju=u\imath\} \; \;$ and $\; \; \sH^{J\imath}_-:=\{u \in \sH \, | \, Ju=-u\imath\}$.
\end{center}

It is not difficult to verify that $\sH^{J\imath}_\pm \neq \{0\}$ and, for every $u,v  \in \sH^{J\imath}_\pm$ and for every $c \in \C_\imath$, it holds: $u+v,uc \in \sH^{J\imath}_\pm$ and $\b u |v \k \in \C_\imath$. In this way, $\sH^{J\imath}_+$ and $\sH^{J\imath}_-$ inherit a structure of non-zero complex Hilbert space from $\sH$: its sum is the sum of $\sH$, its complex scalar multiplication is the right scalar multiplication of $\sH$ restricted to $\C_\imath$ and its Hermitian product coincides with the one of $\sH$. These $\C_\imath$-complex Hilbert spaces are called \textit{complex subspaces $\sH^{J\imath}_+$ and $\sH^{J\imath}_-$ associated with $J$ and $\imath$}. Moreover, if $\jmath \in \bS$ with $\imath\jmath=-\jmath\,\imath$, then the map $\sH^{J\imath}_+ \ni u \mapsto u\jmath \in \sH^{J\imath}_-$ is a well-defined isometric $\C_\imath$-anti-linear isomorphism.

As a $\C_\imath$-complex (right) vector space, $\sH$ decomposes as $\sH^{J\imath}_+ \oplus \sH^{J\imath}_-$; that is, $\sH^{J\imath}_+ \cap \sH^{J\imath}_-=\{0\}$ and, given any $u \in \sH$, it holds: $u=u_++u_-$ with $u_\pm:=\frac{1}{2}(u \mp Ju\imath) \in \sH^{J\imath}_\pm$.

The set $\gB(\sH^{J\imath}_\pm)$ of all bounded (right) $\C_\imath$-linear operators on $\sH^{J\imath}_\pm$ is a $\C_\imath$-complex Banach space if we equip it with the pointwise sum, with the pointwise (right) $\C_\imath$-scalar multiplication $\gB(\sH^{J\imath}_\pm) \times \C_\imath \ni (T,c) \mapsto Tc \in \gB(\sH^{J\imath}_\pm)$ defined by
\[
\text{$(Tc)(u):=T(u)c\;$ for every $u \in \sH^{J\imath}_\pm$}
\]
and with the usual norm $\|T\|:=\sup_{u \in \sH^{J\imath}_\pm \setminus \{0\}}\|Tu\|/\|u\|$. It is worth observing that, if $T \in \gB(\sH^{J\imath}_\pm)$ and $c \in \C_\imath$, then $Tc$ is (right) $\C_\imath$-linear and hence it is a well-defined element of $\gB(\sH^{J\imath}_\pm)$, because the algebra $\C_\imath$ is commutative. Indeed, for every $u \in \gB(\sH^{J\imath}_\pm)$ and for every $d \in \C_\imath$, we have that
\[
(Tc)(ud)=T(ud)c=T(u)dc=T(u)cd=(Tc)(u)d.
\]

\begin{proposition}[{\cite[Prop.~3.11]{GhMoPe}}]
\label{prop:first-3.11}
For every operator $T \in \gB(\sH^{J\imath}_+)$, there exi\-sts a unique operator $\widetilde{T} \in \gB(\sH)$ extending $T$. Such an extension can be defined explicitly as follows. Choose an imaginary unit $\jmath$ in $\bS$ with $\imath\jmath=-\jmath\,\imath$ and define
\beq \label{eq:def-ext}
\widetilde{T}(u):=T(u_+)-T(u_-\jmath)\jmath
\eeq
for every $u \in \sH$, where $u_+:=\frac{1}{2}(u-Ju\imath) \in \sH^{J\imath}_+$ and $u_-:=\frac{1}{2}(u+Ju\imath) \in \sH^{J\imath}_-$.

The following additional facts hold:
\begin{itemize}
 \item[$(\mr{a})$] $\|\widetilde T\|=\|T\|$,
 \item[$(\mr{b})$] $J\widetilde{T}=\widetilde{T}J$,
 \item[$(\mr{c})$] $(\widetilde T)^*=\widetilde{T^*}$,
 \item[$(\mr{d})$]  $\widetilde{ST}=\widetilde{S}\widetilde{T}$ if $S \in \gB(\sH^{J\imath}_+)$,
 \item[$(\mr{e})$] $\widetilde{T} \geq 0$ if $T \geq 0$.
\end{itemize}
\end{proposition}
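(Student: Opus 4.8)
\textbf{Proof plan for Proposition \ref{prop:first-3.11}.}

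The plan is to construct the extension $\widetilde{T}$ explicitly by the formula \eqref{eq:def-ext}, verify it is a well-defined bounded right $\bH$-linear operator extending $T$, and then establish properties (a)--(e) one by one, finally arguing uniqueness.

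First I would fix an auxiliary imaginary unit $\jmath \in \bS$ with $\imath\jmath = -\jmath\,\imath$, as the statement permits. Recall from the discussion of complex subspaces that $\sH = \sH^{J\imath}_+ \oplus \sH^{J\imath}_-$ as a $\C_\imath$-complex right vector space, and that the map $\sH^{J\imath}_+ \ni w \mapsto w\jmath \in \sH^{J\imath}_-$ is an isometric $\C_\imath$-anti-linear isomorphism (so its inverse is $v \mapsto v\jmath^{-1} = -v\jmath$, but writing $v \mapsto (v\jmath)$ lands back in $\sH^{J\imath}_+$ since $(\sH^{J\imath}_-)\jmath \subset \sH^{J\imath}_+$). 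For $u \in \sH$ write $u = u_+ + u_-$ with $u_\pm = \tfrac{1}{2}(u \mp Ju\imath)$; then $u_-\jmath \in \sH^{J\imath}_+$, so $T(u_-\jmath)$ makes sense, and $T(u_-\jmath)\jmath \in \sH^{J\imath}_-$. Thus $\widetilde{T}(u) := T(u_+) - T(u_-\jmath)\jmath$ is a well-defined element of $\sH$. Additivity is immediate from additivity of $u \mapsto u_\pm$ and of $T$. For right $\bH$-linearity it suffices to check $\widetilde{T}(uc) = \widetilde{T}(u)c$ for $c \in \C_\imath$ and $\widetilde{T}(u\jmath) = \widetilde{T}(u)\jmath$, since $\C_\imath$ and $\jmath$ generate $\bH$. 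The $\C_\imath$-case follows because $(uc)_\pm = u_\pm c$ and $T$ is $\C_\imath$-linear on $\sH^{J\imath}_+$; the $\jmath$-case follows by a short computation using $(u\jmath)_\pm = u_\mp \jmath$ (which holds since $\jmath$ anti-commutes with $\imath$, hence swaps the $\pm\imath$-eigenspaces of right-multiplication relative to $J$) together with $\jmath^2 = -1$. That $\widetilde{T}$ restricted to $\sH^{J\imath}_+$ equals $T$ is clear: for $u \in \sH^{J\imath}_+$ one has $u_+ = u$, $u_- = 0$.

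Boundedness and (a): for $u \in \sH$, the decomposition $\sH = \sH^{J\imath}_+ \oplus \sH^{J\imath}_-$ is orthogonal with respect to the real part of the scalar product (indeed $\langle u_+ | u_- \rangle = 0$ by a standard argument with $J$ self-adjoint unitary and $\imath^* = -\imath$), so $\|u\|^2 = \|u_+\|^2 + \|u_-\|^2$; likewise $\|\widetilde{T}(u)\|^2 = \|T(u_+)\|^2 + \|T(u_-\jmath)\jmath\|^2 = \|T(u_+)\|^2 + \|T(u_-\jmath)\|^2 \le \|T\|^2(\|u_+\|^2 + \|u_-\jmath\|^2) = \|T\|^2 \|u\|^2$, using that right multiplication by $\jmath$ is an isometry. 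Hence $\|\widetilde{T}\| \le \|T\|$; the reverse inequality is trivial since $\widetilde{T}$ extends $T$. For (b), compute $J\widetilde{T}(u)$ and $\widetilde{T}(Ju)$ using $J u_\pm = \pm u_\pm \imath$ (from the definition of the complex subspaces) and $J$-linearity relations; both sides reduce to $T(u_+)\imath + T(u_-\jmath)\jmath\,\imath$. For (d), note that $\widetilde{S}\widetilde{T}$ is a bounded $\bH$-linear operator whose restriction to $\sH^{J\imath}_+$ is $ST$ (since $\widetilde{T}$ maps $\sH^{J\imath}_+$ into itself as $T$), so by the uniqueness part it equals $\widetilde{ST}$. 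For (c), use the orthogonal decomposition to compute $\langle \widetilde{T}^* u | v \rangle$ by testing against $v = v_+ + v_-$; one checks $(\widetilde{T})^*$ leaves $\sH^{J\imath}_+$ invariant and restricts there to $T^*$ (the adjoint computed inside the complex Hilbert space $\sH^{J\imath}_+$ coincides with the restriction of the quaternionic adjoint, because $\langle\cdot|\cdot\rangle$ on $\sH^{J\imath}_+$ is $\C_\imath$-valued and agrees with the ambient product), whence $(\widetilde{T})^* = \widetilde{T^*}$ by uniqueness. For (e), if $T \ge 0$ then for $u \in \sH$ one has $\langle u | \widetilde{T} u \rangle$; its real part equals $\langle u_+ | T u_+\rangle + \langle u_- | -T(u_-\jmath)\jmath\rangle$, and a computation using $\C_\imath$-anti-linearity of $w \mapsto w\jmath$ and positivity of $T$ on $\sH^{J\imath}_+$ shows this is $\ge 0$; the imaginary part handling needs that $\widetilde{T}$ is self-adjoint by (c) (since $T^* = T$), so $\langle u | \widetilde{T}u\rangle$ is real and we are done.

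Uniqueness: if $\widehat{T} \in \gB(\sH)$ also extends $T$, then $\widehat{T}$ and $\widetilde{T}$ agree on $\sH^{J\imath}_+$; since both are right $\bH$-linear and $\sH^{J\imath}_+ \cdot \jmath = \sH^{J\imath}_-$ spans (together with $\sH^{J\imath}_+$) all of $\sH$ over $\C_\imath$, and $\widehat{T}(w\jmath) = \widehat{T}(w)\jmath = T(w)\jmath = \widetilde{T}(w\jmath)$ for $w \in \sH^{J\imath}_+$, they agree on a generating set, hence everywhere. I expect the main obstacle to be the bookkeeping in properties (c) and (e): one must carefully track how the quaternionic adjoint and positivity interact with the $\C_\imath$-anti-linear identification $\sH^{J\imath}_- \cong \sH^{J\imath}_+$, since the anti-linearity introduces conjugations that could flip signs if mishandled. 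Everything else is routine once the decomposition $\sH = \sH^{J\imath}_+ \oplus \sH^{J\imath}_-$ and the eigenrelations $Ju_\pm = \pm u_\pm\imath$ are in hand.
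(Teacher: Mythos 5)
Your proposal is correct in substance, and it in fact covers more ground than the paper does on the spot: the paper cites \cite[Prop.~3.11]{GhMoPe} for existence, uniqueness and properties $(\mr{a})$--$(\mr{d})$ and only writes out the argument for $(\mr{e})$, whereas you reconstruct everything. Your treatment of $(\mr{e})$ is essentially the paper's computation: expand $\langle u|\widetilde{T}u\rangle$ with $u=u_++u_-$, convert the term in $u_-$ through the anti-linear map $w\mapsto w\jmath$ via $\langle u_-|v\rangle=\jmath\langle u_-\jmath|v\rangle$, and deal with the cross terms. The only organizational difference is that the paper cancels the cross terms exactly, using $T=T^*$ together with $\langle u_-\jmath|Tu_+\rangle\in\C_\imath$, while you note they are purely imaginary and then use self-adjointness of $\widetilde{T}$ (via $(\mr{c})$) to see that $\langle u|\widetilde{T}u\rangle$ is real; both routes work.

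One slip you should correct: the parenthetical claim $\langle u_+|u_-\rangle=0$ is false in general. For $a\in\sH^{J\imath}_+$ and $b\in\sH^{J\imath}_-$ one only gets $\langle a|b\rangle\in\C_\imath\jmath$; for instance, with $\sH=\bH$, $\langle p|q\rangle=\overline{p}q$ and $J=$ left multiplication by $\imath$, taking $a=1$, $b=\jmath$ gives $\langle a|b\rangle=\jmath\neq 0$. So the two complex subspaces are not orthogonal for the quaternionic scalar product; what is true, and what you actually invoke, is that the $\C_\imath$-component (in particular the real part) of $\langle a|b\rangle$ vanishes, and this real-part orthogonality is exactly what yields $\|u\|^2=\|u_+\|^2+\|u_-\|^2$ and the analogous identity for $\|\widetilde{T}u\|^2$ in your proof of $(\mr{a})$. (Also, $J$ is anti self-adjoint, not self-adjoint.) Since your argument for $(\mr{c})$ ultimately rests on the invariance of $\sH^{J\imath}_+$ under $(\widetilde{T})^*$, obtained from $(\mr{b})$, together with uniqueness of the extension, and not on full orthogonality, nothing else in the proposal is affected once the parenthetical is replaced by the correct statement.
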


\begin{proof}
This proposition is a direct consequence of Proposition 3.11 of \cite{GhMoPe} and of its proof, except for point $(\mr{e})$ we are going to prove.

Suppose $T \geq 0$. Let $\jmath,u,u_+,u_-$ be as in the statement. We must show that $\langle u | \widetilde{T}u\rangle \in \R^+$. For simplicity, we set $a:=u_+$ and $b:=u_-$. By \eqref{eq:def-ext}, we have:
\begin{align*}
\langle u|\widetilde{T}u\rangle &= \langle a+b|Ta-T(b\jmath)\jmath\rangle\\
&= \langle a|Ta \rangle-\langle a|T(b\jmath)\rangle\jmath+\langle b|Ta\rangle-\langle b|T(b\jmath)\rangle\jmath\\
&=\langle a|Ta \rangle-\langle a|T(b\jmath)\rangle\jmath+\jmath\langle b\jmath|Ta\rangle-\jmath\langle b\jmath|T(b\jmath)\rangle\jmath\\
&=\langle a|Ta \rangle-\langle Ta|b\jmath\rangle\jmath+\jmath\langle b\jmath|Ta\rangle+\langle b\jmath|T(b\jmath)\rangle\\
&=\langle a|Ta \rangle-\overline{\langle b\jmath|Ta\rangle}\jmath+\jmath\langle b\jmath|Ta\rangle+\langle b\jmath|T(b\jmath)\rangle,
\end{align*}
where we used the fact that $T$ is self-adjoint and $\langle b\jmath|T(b\jmath)\rangle \in \R$. Since $\langle b\jmath|Ta\rangle \in \bC_\imath$ (see Lemma 3.9 of \cite{GhMoPe}) and $\imath\jmath=-\jmath \, \imath$, it follows at once that $\jmath\langle b\jmath|Ta\rangle=\overline{\langle b\jmath|Ta\rangle}\jmath$. In this way, we obtain that $\langle u|\widetilde{T}u\rangle=\langle a|Ta \rangle+\langle b\jmath|T(b\jmath)\rangle \in \R^+$, as desired.
\end{proof}


\subsection{Spherical spectrum} \label{subsec:ssp}

Before giving the definitions of spherical resolvent and of spherical spectrum of an operator, we recall some facts concerning the ``complex slice'' nature of quaternions. Such a nature is described by the fact that $\bH=\bigcup_{\imath \in \bS}\C_\imath$ and $\C_\imath \cap \C_\jmath =\R$ for every $\imath,\jmath \in \bS$ with $\imath \neq \pm \jmath$. This is equivalent to assert that
\begin{itemize}
 \item every quaternion $q \in \bH \setminus \R$ decomposes uniquely as $q=a+\jmath b$, where $a,b \in \R$, $b>0$ and $\jmath \in \bS$.
\end{itemize}
Evidently, we have also that $q=a+(-\jmath)(-b)$, where $-\jmath$ is again an element of~$\bS$. 

Let $q \in \bH$. Write $q$ as follows, $q=\alpha+\jmath \beta$ for some $\alpha,\beta \in \R$ and $\jmath \in \bS$. Define the $2$-sphere $\bS_q$ of $\bH$ by setting $\bS_q:=\alpha+\beta\bS=\{\alpha+\imath\beta \in \bH \, | \, \imath \in \bS\}$. Observe that $\overline{q}=\alpha-\jmath\beta \in \bS_q$. One can show that
\[
\bS_q=\{x \in \bH \, | \, \mr{Re}(x)=\mr{Re}(q), |x|=|q|\}=\{sqs^{-1} \in \bH \, | \, s \in \bH \setminus \{0\}\}.
\]

Given a subset $E$ of $\C$, we define the circularization $\OO_E$ of $E$ in $\bH$ as the following subset of $\bH$:
\[
\OO_E=\{c+\kappa d \in \bH \, | \, c,d \in \R, c+i d \in E, \kappa \in \bS\},
\]
A subset $L$ of $\bH$ is said to be \textit{circular} if $L=\OO_E$ for some $E \subset \C$. Observe that $L$ is circular if and only $L \supset \bS_q$ for every $q \in L$.

Let $\sH$ be a quaternionic Hilbert space and let $T:D(T) \to  \sH$ be an operator. For every $q \in \bH$, 
following \cite{CGSS2007} and  \cite{libroverde},
define the operator $\Delta_q(T):D(T^2) \to \sH$ by setting
\[
\Delta_q(T):=T^2-T (2\mr{Re}(q))+\1 |q|^2. 
\]

According to the definition in \cite{GhMoPe}, which is equivalent to the one in 
\cite{CGSS2007} and  \cite{libroverde} for $T \in \gB(\sH)$ (see however Remark 4.2 in \cite{GhMoPe}),
the \textit{spherical resolvent set of $T$}, denoted by $\srho(T)$, is the circular subset of $\bH$ consisting of all quaternions $q$ having the following three properties:
\begin{itemize}
 \item $\Ker(\Delta_q(T))=\{0\}$,
 \item $\Ran(\Delta_q(T))$ is dense in $\sH$, 
 \item $\Delta_q(T)^{-1} : \Ran(\Delta_q(T)) \to  D(T^2)$ is bounded.
\end{itemize}

The \textit{spherical spectrum $\ssp(T)$ of $T$} is defined by $\ssp(T):=\bH \setminus \srho(T)$. The circular set $\ssp(T)$ is the union of three pairwise disjoint circular subsets $\sigma_{\mi{pS}}(T)$, $\sigma_{\mi{rS}}(T)$ and $\sigma_{\mi{cS}}(T)$, called respectively \textit{spherical pointwise spectrum of $T$}, \textit{spherical residual spectrum of $T$} and \textit{spherical continuous spectrum of $T$}. They are defined as follows:
\begin{align*}
\sigma_{\mi{pS}}(T)&:=\{q \in \bH \,|\, \Ker(\Delta_q(T)) \neq \{0\} \},\\
\sigma_{\mi{rS}}(T)&:=\big\{q \in \bH \big| \, \Ker(\Delta_q(T))=\{0\}, \, \overline{\Ran(\Delta_q(T))} \neq \sH \big\},\\
\sigma_{\mi{cS}}(T)&:=\big\{q \in \bH \big| \, \Ker(\Delta_q(T))= \{0\},\, \overline{\Ran(\Delta_q(T))}=\sH, \, \text{$\Delta_q(T)^{-1}$ is not bounded}\big\}.
\end{align*}

Given $q \in \bH$, a vector $u \in \sH \setminus \{0\}$ is called a \textit{eigenvector of $T$} associated with the \textit{eigenvalue~$q$} if $Tu=uq$. Observe that, if $q$ is an eigenvalue of $T$ associated with the eigenvector $u$ and $s \in \bH \setminus \{0\}$, then $sqs^{-1}$ is an eigenvalue of $T$ associated with the eigenvector $us$. In this case, $\bS_q$ is called \textit{eigensphere of $T$}. In spite of an apparently unrelated definition, it turns out that the spherical pointwise spectrum $\sigma_{\mi{pS}}(T)$ of $T$ coincides with the set of all eigenvalues of $T$.

If $T \in \gB(\sH)$, then $\ssp(T)$ is always a non-empty compact subset of $\bH$. 


\subsection{Slice-type decomposition of bounded normal operators} 

We recall a slice-type decomposition result for bounded normal operators on $\sH$.

\begin{theorem}[{\cite[Thm~5.9, Prop.~5.11, Thm~5.14]{GhMoPe}}]
\label{thm:first-5.9}
Let $\sH$ be a quaternionic Hilbert space and let $T \in \gB(\sH)$ be a normal operator. Then there exist $A,B,J \in \gB(\sH)$ such that
\begin{itemize}
 \item[$(\mr{i})$] $T=A+JB$,
 \item[$(\mr{ii})$] $A$ is self-adjoint and $B \geq 0$,
 \item[$(\mr{iii})$] $J$ is anti self-adjoint and unitary.
 \item[$(\mr{iv})$] $A$, $B$ and $J$ commute mutually.
\end{itemize}
The operators $A$ and $B$ are uniquely determined by $T$ by means of equalities $A=(T+T^*)\frac{1}{2}$ and $B=|T-T^*|\frac{1}{2}$. The operator $J$ is uniquely determined by $T$ on $\Ker(B)^\perp=\mr{Ker}(T-T^*)^\perp$; namely, if $A',B',J' \in \gB(\sH)$ are operators satisfying conditions $(\mr{i})$-$(\mr{iv})$ with $A'$, $B'$ and $J'$ in place of $A$, $B$ and $J$ respectively, then $A'=(T+T^*)\frac{1}{2}$, $B'=|T-T^*|\frac{1}{2}$ and $J'u=Ju$ for every $u \in \mr{Ker}(T-T^*)^\perp$.

Furthermore, fixed any $\imath \in \bS$, we have:
\begin{itemize}
 \item[$(\mr{a})$] $T(\sH^{J\imath}_+) \subset \sH^{J\imath}_+$ and $T^*(\sH^{J\imath}_+) \subset \sH^{J\imath}_+$.
 \item[$(\mr{b})$] $(T\rr_{\sH_+^{J\imath}})^*= T^*\rr_{\sH_+^{J\imath}}$, where $T\rr_{\sH_+^{J\imath}}$ and $T^*\rr_{\sH_+^{J\imath}}$ denote the complex operators in $\gB(\sH_+^{J\imath})$ obtained restricting respectively $T$ and $T^*$ from $\sH_+^{J\imath}$ into itself.
 \item[$(\mr{c})$] $\sigma(T\rr_{\sH_+^{J\imath}}) \cup \overline{\sigma(T\rr_{\sH_+^{J\imath}})}= \ssp(T)\cap \bC_i$. Here $\sigma(T\rr_{\sH_+^{J\imath}})$ is considered as a subset of $\bC_\imath$ via the natural identification of $\bC$ with $\bC_\imath$ induced by the real vector isomorphism $\bC \ni \alpha+i\beta \mapsto \alpha+\imath\beta \in \bC_\imath$.
 \item[$(\mr{d})$]
$\ssp(T)=\OO_\K$, where $\K:=\sigma(T\rr_{\sH_+^{J\imath}}) \subset \C$.
\end{itemize}

Finally, it holds:
\begin{itemize}
 \item[$(\mr{e})$] There exists a left scalar multiplication $\bH \ni q \mapsto L_q$ of $\sH$ such that $L_\imath=J$ and, for every $q \in \bH$, $L_qA=AL_q$ and $L_qB=BL_q$.
\end{itemize}
\end{theorem}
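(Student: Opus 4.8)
The plan is to split $T$ into a self-adjoint part and an anti self-adjoint part, to recover $J$ from a polar decomposition of the latter while simultaneously building the left scalar multiplication of $(\mr{e})$, and then to read off $(\mr{a})$--$(\mr{d})$ from the restriction/extension correspondence of Proposition~\ref{prop:first-3.11}. First I would set $A:=(T+T^*)\frac12$ and $C:=(T-T^*)\frac12$, so that $A^*=A$, $C^*=-C$, $T=A+C$, and expanding $TT^*=T^*T$ yields $AC=CA$; next $B:=|C|=\sqrt{-C^2}=|T-T^*|\frac12\ge 0$, which, being a continuous function of $-C^2$, commutes with every operator commuting with $C$, hence with $A$ and $C$. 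This already gives $(\mr{ii})$, the stated formulas for $A$ and $B$, and $AB=BA$, $CB=BC$. Since $C$ is normal, \eqref{eq:Ker-Ran} gives $\Ker C=\Ker B$ and $M:=\overline{\Ran C}=\overline{\Ran B}=\Ker(B)^\perp$, so $\sH=M\oplus\Ker B$ is an orthogonal sum of closed subspaces invariant under $A$, $B$, $C$; from $\|Cu\|=\|Bu\|$ the assignment $Bu\mapsto Cu$ extends to an operator $J_0\in\gB(\sH)$ that vanishes on $\Ker B$, restricts to a unitary on $M$, and satisfies $C=J_0B$, $(J_0)^*=-J_0$, and (writing $J_0=CB^{-1}$ on the dense range of $B$ in $M$) commutes with $A$, $B$ and $C$.

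To produce $J$ and prove $(\mr{e})$ at once I would treat $M$ and $\Ker B$ separately. On $M$: since $J_0|_M$ is an anti self-adjoint unitary commuting with the commuting pair $A|_M$ (self-adjoint), $B|_M$ (positive), these two preserve the complex subspace $M^{J_0\imath}_+$ and restrict there to commuting $\C_\imath$-linear self-adjoint operators, the second positive; hence $A|_{M^{J_0\imath}_+}+\imath\,B|_{M^{J_0\imath}_+}$ is a bounded normal operator on the complex Hilbert space $M^{J_0\imath}_+$. By the classical spectral theorem in multiplication-operator form (cf.\ \cite[Thm~X.5.2]{DS}) it is unitarily equivalent to multiplication by a bounded $\C_\imath$-valued function on some $\bigoplus_\alpha L^2(\mu_\alpha)$, under which $A|_{M^{J_0\imath}_+}$ and $B|_{M^{J_0\imath}_+}$ become multiplications by real-valued functions; complex conjugation of functions commutes with both, so transporting it back gives a conjugation of $M^{J_0\imath}_+$ commuting with $A|_{M^{J_0\imath}_+}$ and $B|_{M^{J_0\imath}_+}$. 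An orthonormal basis $N_M$ of its real fixed-point subspace is a $\C_\imath$-Hilbert basis of $M^{J_0\imath}_+$, hence an $\bH$-Hilbert basis of $M$, in which $A|_M$ and $B|_M$ have real matrices; the induced left scalar multiplication $\mc{L}_{N_M}$ of $M$ then satisfies $\mc{L}_{N_M}(\imath)=J_0|_M$ (both sides are bounded, right $\bH$-linear, and agree on $N_M\subset M^{J_0\imath}_+$) and $\mc{L}_{N_M}(q)$ commutes with $A|_M$ and $B|_M$ for all $q\in\bH$ (for $q=\jmath$ with $\imath\jmath=-\jmath\imath$ this uses the reality of the matrix entries; the general $q$ follows since $\imath$ and $\jmath$ generate $\bH$). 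On $\Ker B$, where $B$ vanishes, the same goal — a left scalar multiplication commuting with $A|_{\Ker B}$ — is reached through the (already available) spectral theorem for bounded self-adjoint operators on quaternionic Hilbert spaces, e.g.\ via its $L^2$-representation, under which $A|_{\Ker B}$ becomes multiplication by a real-valued function and hence commutes with pointwise left multiplication by quaternions, giving a left scalar multiplication $\mc{L}_{N_K}$ of $\Ker B$ with every value commuting with $A|_{\Ker B}$. Gluing $N_M$ and $N_K$ into a Hilbert basis $N$ of $\sH$, and putting $\mc{L}:=\mc{L}_N$ and $J:=\mc{L}(\imath)$, one checks that $\mc{L}(q)$ acts as $\mc{L}_{N_M}(q)$ on $M$ and as $\mc{L}_{N_K}(q)$ on $\Ker B$, that $J$ is anti self-adjoint and unitary with $J|_M=J_0|_M$, hence $JB=J_0B=C$ and $T=A+JB$, and that $A$, $B$, $J$ commute mutually while every $\mc{L}(q)$ commutes with $A$ and $B$. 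This establishes $(\mr{i})$--$(\mr{iv})$ and $(\mr{e})$.

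Uniqueness is formal: from $(J')^*=-J'$ and $(J')^*J'=\1$ one gets $(J')^2=-\1$, so if $T=A'+J'B'$ with $A',B',J'$ satisfying $(\mr{i})$--$(\mr{iv})$, then $T^*=A'-J'B'$, whence $A'=(T+T^*)\frac12=A$ and $(T-T^*)^*(T-T^*)=4(B')^2$, i.e.\ $B'=|T-T^*|\frac12=B$; and $J'(Bu)=Cu$ on $\Ran B$ pins $J'$ down on $\Ker(B)^\perp$. For the slice part, fix $\imath\in\bS$: from $AJ=JA$ and $BJ=JB$ one checks that $A$, $B$, $J$, hence $T$ and $T^*$, map $\sH^{J\imath}_+$ into itself (e.g.\ $Ju=u\imath$ forces $J(Bu)=(Bu)\imath$ and $J(JBu)=-Bu=(JBu)\imath$), which is $(\mr{a})$, while restricting $\langle Tu|v\rangle=\langle u|T^*v\rangle$ to $u,v\in\sH^{J\imath}_+$, where the scalar product is $\C_\imath$-valued, gives $(\mr{b})$. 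Since $T$ is the canonical extension $\widetilde{T|_{\sH^{J\imath}_+}}$ of Proposition~\ref{prop:first-3.11}, for $q=\alpha+\imath\beta$ with $\beta\ge 0$ the operator $\Delta_q(T)$ restricts on $\sH^{J\imath}_+$ to the product of the commuting factors $T|_{\sH^{J\imath}_+}-(\alpha+\imath\beta)$ and $T|_{\sH^{J\imath}_+}-(\alpha-\imath\beta)$; since each of the three conditions defining $\srho(T)$ transfers between $\sH$ and $\sH^{J\imath}_+$ via that correspondence, $q\in\ssp(T)$ if and only if $\alpha+i\beta$ or $\alpha-i\beta$ lies in $\sigma(T|_{\sH^{J\imath}_+})$ (the real $q$ being similar), which is $(\mr{c})$; circularity of $\ssp(T)$ then upgrades this to $(\mr{d})$ with $\K=\sigma(T|_{\sH^{J\imath}_+})$.

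I expect the only genuinely nontrivial step to be the construction over $M$: producing a single Hilbert basis — equivalently, a single conjugation — adapted simultaneously to $A$ and to $B$ is exactly where the classical spectral theorem for bounded normal operators on complex Hilbert spaces is indispensable. The mild extra care needed is that this basis must be built so as to keep the prescribed phase $J_0|_M$ of $C$ fixed, which is what makes $JB=C$ automatic and why $M$ and $\Ker B$ are treated apart.
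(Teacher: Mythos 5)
Your reconstruction follows, in essence, the route of the source this theorem is quoted from (the paper itself gives no proof here, citing \cite{GhMoPe}; the closest in-paper analogue of your key step is Step I of the proof of Theorem \ref{spectraltheorem}, which explicitly adapts the argument of Theorem 5.14 of \cite{GhMoPe}): Cartesian splitting $T=A+C$, $B=|C|$, a polar-type anti self-adjoint partial isometry $J_0$ with $C=J_0B$ on $M=\Ker(B)^\perp$, and then the passage to the complex subspace $M^{J_0\imath}_+$, Dunford--Schwartz X.5.2 and a Hilbert basis with real matrix entries for $A$ and $B$, which simultaneously yields $L_\imath=J_0$ on $M$ and the commutation in $(\mr{e})$. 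Your uniqueness argument, and the derivation of $(\mr{a})$--$(\mr{d})$ from Proposition \ref{prop:first-3.11} together with the factorization $\Delta_q(T)\rr_{\sH^{J\imath}_+}=(T\rr_{\sH^{J\imath}_+}-z)(T\rr_{\sH^{J\imath}_+}-\overline{z})$ and the transfer of the three resolvent conditions, are correct.

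The one step you must not leave as stated is the treatment of $\Ker B$. There you need a left scalar multiplication of $\Ker B$ (in particular an anti self-adjoint unitary) commuting with $A_0:=A\rr_{\Ker B}$, and you invoke ``the already available spectral theorem for bounded self-adjoint quaternionic operators via its $L^2$-representation''. Within the present development this is circular: the quaternionic $L^2$-representation (Theorem \ref{teorapp}) rests on Theorem \ref{spectraltheorem}, which rests on the very statement being proved. Either cite the classical self-adjoint quaternionic theory (\cite{T}, \cite{viswanath}), or — better — argue elementarily: decompose $\Ker B$, by Zorn's lemma, into mutually orthogonal cyclic subspaces of the form $\overline{\{p(A_0)uq \,|\, p \in \R[X],\, q \in \bH\}}$; since $\b u|p(A_0)u\k \in \R$ for every real polynomial $p$, the assignment $\sum_k p_k(A_0)u\,q_k \mapsto \sum_k p_k(A_0)u\,(wq_k)$ is well defined and satisfies $\|L_w x\|=|w|\,\|x\|$, $(L_w)^*=L_{\overline{w}}$, $L_wL_{w'}=L_{ww'}$ and $L_wA_0=A_0L_w$ on each cyclic piece, so by Theorem \ref{propLprod} (whose proof is independent of the present statement) the glued map is a left scalar multiplication of $\Ker B$ commuting with $A_0$. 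With that replacement your proof is complete; note also that this cyclic trick cannot replace the Dunford--Schwartz step on $M$, since there the additional constraint $L_\imath=J_0$ destroys the reality of the relevant inner products, so your assessment of where the real work lies is accurate.
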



\section{Intertwining quaternionic projection-valued measures: iqPVMs} \label{sec:iqPVMs}

\subsection{Simple functions}
In the following, if $\imath \in \bS$ is a fixed imaginary unit, $\mscr{B}(\bC^+_\imath)$ will denote the $\sigma$-algebra of all the Borel subsets of $\bC_\imath^+:=\{\alpha+\imath\beta \in \bC_\imath \, | \, \beta \geq 0\}$, when $\bC_\imath^+$ is endowed with the topology induced by the Euclidean one of $\bH \simeq \R^4$. Given $E \in \mscr{B}(\bC^+_\imath)$, $\mscr{B}(E)$ denotes the analogous $\sigma$-algebra of the sets $F$ such that $F \in \mscr{B}(\bC^+_\imath)$ and $F \subset E$. As usual, a function $f:E \to \bH$ is (Borel) measurable if $f^{-1}(O) \in \mscr{B}(\bC^+_\imath)$ for every open (equivalently, Borel) subset $O$ of $\bH$. We denote by $M_b(E,\bH)$ the \textit{set of all bounded $\bH$-valued measurable functions on $E$}.

\begin{remark}\label{remvari}
$(1)$ $M_b(E,\bH)$ is a quaternionic two-sided Banach unital $C^*$-algebra with respect the pointwise defined operations and to the supremum norm $\|\cdot\|_\infty$. Its unity is the function $1_E:E \to \bH$ constantly equal to $1$. More precisely, $M_b(E,\bH)$ is a quaternionic two-sided Banach $C^*$-subalgebra of the quaternionic two-sided Banach unital $C^*$-algebra of bounded $\bH$-valued functions on $E$, described in Example \ref{example:banach}$(2)$.
 
$(2)$ If $E,F \in \mscr{B}(\bC^+_\imath)$ and $E \subset F$, then $f|_E \in M_b(E,\bH)$ if $f \in M_b(F,\bH)$. Similarly, if  $f \in M_b(E,\bH)$, the extension of $f$ to $F$ defined as the null function over $F \setminus E$, produces an element of $M_b(F,\bH)$. \bs
\end{remark}

As for standard measure theory, we have the following elementary definition.

\begin{definition}\label{defsf}
We will say that a function $s:\bC^+_\imath \to \bH$ is \emph{simple} if it can be represented as follows:
\beq \label{decsimps0}
s=\sum_{\ell=1}^n S_{\ell}\Chi_{E_{\ell}},
\eeq
where $S_1,\ldots,S_n$ are (not necessarily distinct) quaternions and $E_1,\ldots,E_n$ are pairwise disjoint Borel sets in $\mscr{B}(\bC^+_\imath)$. Here $\Chi_{E_\ell}$ denotes the characteristic function of $E_\ell$ in $\bC^+_\imath$. \bs
\end{definition}

\begin{remark}\label{remark1}
$(1)$ Evidently, every simple function and every continuous function is measurable.

$(2)$ We stress that, given any simple function on $\mscr{B}(\bC^+_\imath)$, there are many equiva\-lent representations as the right-hand side of equation (\ref{decsimps0}). However, all the results and definitions concerning such functions we are going to present are not affected by that ambiguity, as one can straightforward verify.

$(3)$ Taking into account that, in \eqref{decsimps0}, $E_\ell \cap E_{\ell'}= \emptyset$ if $\ell \neq \ell'$, one easily obtains:
\beq \label{sups}
|s(z)|^2=\sum_{\ell=1}^n |S_{\ell}|^2\Chi_{E_\ell}(z) \quad \mbox{for every $z \in \bC_\imath^+$. \bs} 
\eeq
\end{remark}


\subsection{Intertwining quaternionic projection-valued measures and their integral}

Let us introduce the basic notion of intertwining quaternionic projection-valued measure. We begin with the quaternionic projection-valued measures. $\N$ henceforth denotes the set of all non-negative integers.

\begin{definition}\label{defSPOVM}
Let $\sH$ be a quaternionic Hilbert space and let $\imath \in \bS$. We say that a map $P: \mscr{B}(\bC^+_\imath) \to \gB(\sH)$ is a \emph{quaternionic projection-valued measure over $\bC^+_\imath$ in $\sH$}, or \emph{qPVM over $\C_\imath$ in $\sH$} for short, if it has the following properties:
\begin{itemize}
\item[$(\mr{a})$] $P(\bC_\imath^+)=\1$, where $\1:\sH \to \sH$ is the identity map on $\sH$.
\item[$(\mr{b})$] $P(E)P(E')= P(E \cap E')$ for every $E,E' \in \mscr{B}(\bC^+_\imath)$.
\item[$(\mr{c})$] If $\{E_n\}_{n \in N}\subset \mscr{B}(\bC^+_\imath)$ with $N \subset \bN$ and $E_n \cap E_m =\emptyset$ for $n \neq m$, then:
\[
\sum_{n \in N} P(E_n)u = P\left(\,\bigcup_{n \in N} E_n\right)\!u \quad \mbox{for every $u \in \sH$}.
\]
\item[$(\mr{d})$] $P(E)\geq 0$ for every $E \in  \mscr{B}(\bC^+_\imath)$.
\end{itemize} 

The \emph{support $\supp(P)$ of $P$} is defined as the following closed subset of $\C^+_\imath$:
\beq
\supp(P) = \bC_\imath^+ \setminus \bigcup \{O \subset \bC_\imath^+ \, | \, O \mbox{ open in $\bC_\imath^+$, $P(O)=0$}\}.
\eeq
If $\supp(P)$ is bounded, then $P$ is said to be \emph{bounded}. \bs
\end{definition}

\begin{remark}\label{rempropP} 
$(1)$ As an immediate consequence of conditions $(\mr{a})$-$(\mr{d})$, we have that $P(\emptyset)=0$ and $P(E)P(E)=P(E)$ for every $E \in \mscr{B}(\bC^+_\imath)$. Moreover, bearing in mind that positivity implies self-adjointness, one easily deduce that $P(E)= P(E)^*$ for every $E \in \mscr{B}(\bC^+_\imath)$. It follows that every $P(E)$ is an orthogonal projector of $\sH$. In particular, $\|P(E)\|=1$ if $P(E) \neq 0$.

$(2)$ With the given definitions, one straightforwardly verify that the \emph{isotonous} and the \emph{sub-additivity properties} hold. In other words, respectively and for every $u \in \sH$, we have:
\[
\text{$\langle u|P(E) u\rangle \leq \langle u| P(F) u \rangle \;$ if $E,F \in \mscr{B}(\bC^+_\imath)$ with $E \subset F$}
\]
and
\[
\textstyle
\text{$\langle u| P(\bigcup_{n \in N} E_n)u \rangle \leq \sum_{n \in N}\langle u |P(E_n) u\rangle \;$ if $\{E_n\}_{n \in N} \subset \mscr{B}(\bC^+_\imath)$ with $N \subset \N$}.
\]
Similarly, the \emph{inner} and \emph{outer continuity} hold; namely, for every $\{E_n\}_{n \in \N} \subset \mscr{B}(\bC^+_\imath)$ and for every $u \in \sH$, we have:
\[
\textstyle
\text{$\lim_{n \to +\infty}\langle u| P(E_n) u\rangle=\langle u|P(\bigcup_{n \in \N} E_n) u \rangle \;$ if $E_n \subset  E_{n+1}$}
\]
and
\[
\textstyle
\text{$\lim_{n \to +\infty}\langle u| P(E_n) u \rangle=\langle u|P(\bigcap_{n \in \N} E_n) u \rangle \;$ if $E_{n} \supset  E_{n+1}$.}
\]

$(3)$ Since the topology of $\bC^+_\imath$ has a countable base, it holds $P(\bC_\imath^+ \setminus \supp(P))=0$ from $(2)$. In particular, we infer that $P(E) = P(E \cap \supp(P))$ for every $E \in \mscr{B}(\bC^+_\imath)$, and hence $P(E)=0$ if $E \cap \supp(P)=\emptyset$. \bs
\end{remark}

\begin{definition} \label{def:iqPVM}
By an \emph{intertwining quaternionic projection-valued measure over $\bC^+_\imath$ in $\sH$}, an \emph{iqPVM over $\bC^+_\imath$ in $\sH$} for short, we mean a pair $(P,\LL)$, where $P$ is a qPVM over $\C^+_\imath$ in $\sH$ and $\LL$ is a left scalar multiplication $\bH \ni q \mapsto L_q$ of $\sH$ which commutes with $P$ in the following sense:
\beq \label{eq:PL=LP}
P(E)L_q = L_q P(E) \; \mbox{ for every $E \in \mscr{B}(\bC^+_\imath)$ and for every $q \in \bH$.}
\eeq
If $P$ is bounded, then $\cP$ is said to be \emph{bounded}. \bs
\end{definition}

We make an assumption that will hold throughout the remaining part of the present section. 

\begin{assumption} \label{assumpt:2}
Fix a quaternionic Hilbert space $\sH$, an imaginary unit $\imath \in \bS$ and an iqPVM $\cP=(P,\LL)$ over $\C^+_\imath$ in $\sH$. Indicate $\supp(P)$ by $\Gamma$ and the left scalar multiplication $\mc{L}$ of $\sH$ by $\bH \ni q \mapsto L_q$. Equip $\gB(\sH)$ with the structure of quaternionic two-sided Banach unital $C^*$-algebra induced by $\bH \ni q \mapsto L_q$ (see Example \ref{example:banach}(2)).~\bs 
\end{assumption}

The definition of integral of a simple function with respect to an iqPVM can be given in the standard way.

\begin{definition} \label{def:int-simple}
Let $s:\bC^+_\imath \to \bH$ be a simple function and let $s=\sum_{\ell=1}^nS_\ell\Chi_{E_\ell}$ be one of its representations. We define the \emph{integral $\int_{\bC_\imath^+} s \diff\cP$ of $s$ with respect to $\cP$} as the following operator in $\gB(\sH)$:
\beq \label{defints}
\int_{\bC_\imath^+} s \diff\cP:=\sum_{\ell=1}^n  L_{S_\ell} P(E_{\ell}). \; \text{ \bs}
\eeq
\end{definition}

In order to extend this definition of integral to a generic bounded measurable function, we state and prove two elementary, but fundamental, results.

First, we need a definition. Given $\varphi \in M_b(\bC^+_\imath,\bH)$, we set
\beq \label{normP}
\|\varphi\|^{\sss(P)}_{\infty} := \inf\big\{k \in \R  \, \big| \, P\big(\{q \in \bC_\imath^+ \, | \, |\varphi(q)|>k\}\big)=0\big\}.
\eeq

\begin{proposition} \label{prop:s}
The following assertions hold.
\begin{itemize}
\item[$(\mr{a})$] Given any $\varphi \in M_b(\bC_i^+,\bH)$, we have 
\beq \label{normP2}
\|\varphi\|^{\sss(P)}_\infty \leq \|\varphi\|_\infty
\eeq
and
\beq \label{firstestimate}
\|\varphi\|^{\sss(P)}_{\infty}=\left\| \int_{\bC_\imath^+} \varphi \diff\cP \right\| \; \mbox{ if $\varphi$ is simple.}
\eeq
\item[$(\mr{b})$] Let $u \in \sH$. Define the function $\mu^{\sss(P)}_u:\mscr{B}(\bC^+_\imath) \to \R^+$ by setting
\beq \label{measuremuPx}
\mu^{\sss(P)}_u(F):=\langle u|P(F)u \rangle=\|P(F)u\|^2 \; \mbox{ if $F \in \mscr{B}(\bC^+_\imath)$.}
\eeq
Then $\mu^{\sss(P)}_u$ is a finite regular positive Borel measure over $\mscr{B}(\bC^+_\imath)$ such that $\supp(\mu^{\sss(P)}_u) \subset \Gamma$ and
\beq \label{secondestimate}
\int_{\bC_\imath^+} |\varphi|^2 \diff\mu^{\sss(P)}_u=
\left\| \int_{\bC_\imath^+} \varphi \diff\cP u \right\|^2 \; \mbox{ if $\varphi$ is simple.}
\eeq
\end{itemize}
\end{proposition}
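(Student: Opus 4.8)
The plan is to handle both parts through a single Pythagorean-type identity for integrals of simple functions. The inequality \eqref{normP2} is immediate: taking $k=\|\varphi\|_\infty$ makes the set $\{q\in\bC_\imath^+\,|\,|\varphi(q)|>k\}$ empty, whence $P$ of it vanishes by Remark \ref{rempropP}(1), so $\|\varphi\|_\infty$ lies in the set over which the infimum in \eqref{normP} is taken. For everything else, fix a simple function $s=\sum_{\ell=1}^n S_\ell\Chi_{E_\ell}$ with the $E_\ell$ pairwise disjoint. From Definition \ref{defSPOVM}(b) together with $P(\emptyset)=0$ (Remark \ref{rempropP}(1)) we get $P(E_\ell)P(E_{\ell'})=0$ for $\ell\neq\ell'$, so the $P(E_\ell)$ are mutually orthogonal projectors and, for each $u\in\sH$, the vectors $P(E_\ell)u$ are pairwise orthogonal.

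The crucial point is the identity
\[
\Big\|\textstyle\sum_{\ell=1}^n L_{S_\ell}P(E_\ell)u\Big\|^2=\sum_{\ell=1}^n|S_\ell|^2\,\|P(E_\ell)u\|^2 .
\]
To prove it I would expand the square and rewrite the cross term $\langle L_{S_\ell}P(E_\ell)u\,|\,L_{S_{\ell'}}P(E_{\ell'})u\rangle$, using $L_{S_\ell}^*=L_{\overline{S_\ell}}$ and $L_pL_q=L_{pq}$, as $\langle P(E_\ell)u\,|\,L_{\overline{S_\ell}S_{\ell'}}P(E_{\ell'})u\rangle$; then the intertwining relation \eqref{eq:PL=LP} lets me pull $L_{\overline{S_\ell}S_{\ell'}}$ through $P(E_{\ell'})$, and $P(E_{\ell'})P(E_\ell)=0$ kills the term for $\ell\neq\ell'$, while the diagonal terms equal $|S_\ell|^2\|P(E_\ell)u\|^2$ since $\|L_qv\|=|q|\,\|v\|$. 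Combined with \eqref{sups} and \eqref{measuremuPx}, which give $\int_{\bC_\imath^+}|s|^2\,\diff\mu^{\sss(P)}_u=\sum_\ell|S_\ell|^2\mu^{\sss(P)}_u(E_\ell)=\sum_\ell|S_\ell|^2\|P(E_\ell)u\|^2$, this yields \eqref{secondestimate} at once. For \eqref{firstestimate}, put $M:=\max\{|S_\ell|\,|\,P(E_\ell)\neq0\}$ (and $M:=0$ if every $P(E_\ell)=0$); the identity together with $\sum_\ell\|P(E_\ell)u\|^2=\|P(\bigcup_\ell E_\ell)u\|^2\leq\|u\|^2$ gives $\big\|\int_{\bC_\imath^+}s\,\diff\cP\big\|\leq M$, with equality attained on any nonzero $u$ in the range of $P(E_{\ell_0})$ for an index $\ell_0$ realizing $M$; and $\|s\|^{\sss(P)}_\infty=M$ follows by writing $\{q\,|\,|s(q)|>k\}=\bigcup_{|S_\ell|>k}E_\ell$ for $k\geq0$ and using that a finite sum of mutually orthogonal projectors is zero iff each summand is. Hence both sides of \eqref{firstestimate} equal $M$.

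For part (b), $\mu^{\sss(P)}_u$ is nonnegative with $\mu^{\sss(P)}_u(\emptyset)=0$, and the equality of the two expressions in \eqref{measuremuPx} follows from $P(F)$ being an orthogonal projector. Countable additivity follows from Definition \ref{defSPOVM}(c), which gives $P(\bigcup_n F_n)u=\sum_n P(F_n)u$ for pairwise disjoint $F_n$, together with the pairwise orthogonality of the $P(F_n)u$ and continuity of the norm; finiteness holds since $\mu^{\sss(P)}_u(\bC_\imath^+)=\|u\|^2$; and regularity is the standard fact that a finite Borel measure on $\bC_\imath^+$, a locally compact second-countable Hausdorff space, is regular. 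Finally $\supp(\mu^{\sss(P)}_u)\subset\Gamma$ is immediate from $P(\bC_\imath^+\setminus\Gamma)=0$, proved in Remark \ref{rempropP}(3).

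I expect the only genuinely load-bearing step to be the Pythagorean identity above: it is precisely the intertwining condition \eqref{eq:PL=LP} that forces the left multiplications $L_{S_\ell}$ to commute with the spectral projectors, making the cross terms vanish; without it the statement would fail, and the rest of the argument is routine measure-theoretic and $C^*$-algebraic bookkeeping.
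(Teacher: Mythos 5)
Your proof is correct and takes essentially the same route as the paper's: the identity $\big\|\int_{\bC_\imath^+}s\,\diff\cP\,u\big\|^2=\sum_\ell|S_\ell|^2\,\|P(E_\ell)u\|^2$, obtained by killing the cross terms through the intertwining relation \eqref{eq:PL=LP} together with $P(E_\ell)P(E_{\ell'})=\delta_{\ell\ell'}P(E_\ell)$, is exactly the paper's central computation, and you use the same identification $\|s\|^{\sss(P)}_\infty=\max\{|S_\ell| \,:\, P(E_\ell)\neq 0\}$ and the same choice of a nonzero vector in the range of a maximizing projector to get equality in \eqref{firstestimate}. Part $(\mr{b})$ is likewise handled as in the paper (countable additivity from Definition \ref{defSPOVM}$(\mr{c})$, regularity from local compactness and second countability), so there is nothing to add.
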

\begin{proof}
$(\mr{a})$ Let $s=\sum_\ell S_\ell\Chi_{E_\ell}$ be a simple function represented as in Definition \ref{defsf}.  Since $P(\emptyset)=0$, it is evident that

\beq \label{eq:norm-s}
\|s\|^{\sss(P)}_\infty=\max\{|S_{\ell}| \in \R^+ \, | \, P(E_{\ell}) \neq 0\}
\eeq
and hence inequality \eqref{normP2} is verified. Let $u \in \sH$. Bearing in mind commutativity property \eqref{eq:PL=LP} of $\cP$ and the fact that $P(E_\ell)^*=P(E_\ell)$ and $P(E_\ell)P(E_{\ell'})=P(E_{\ell} \cap E_{\ell'})=\delta_{\ell\ell'}P(E_\ell)$ for every $\ell,\ell'$, we obtain at once that
\begin{align} \label{eq}
\left\| \int_{\bC_\imath^+} s \diff\cP u\right\|^2 &
=\sum_{\ell} \langle u| P(E_\ell) L^*_{S_\ell}L_{S_\ell}P(E_\ell) u \rangle 
=\sum_{\ell} \langle u| L_{|S_\ell|^2}P(E_\ell)u \rangle \nonumber\\
&=\sum_{\ell} |S_\ell|^2\langle u| P(E_\ell)u \rangle=\sum_{\ell'} |S_\ell'|^2 \langle u| P(E_{\ell'})u \rangle, 
\end{align}
where $\ell'$ ranges in the subset of the values of $\ell$ such that $P(E_{\ell}) \neq 0$. Thanks to properties $(\mr{a})$ and $(\mr{c})$ of Definition \ref{defSPOVM}, to point $(2)$ of Remark \ref{rempropP} and to \eqref{eq:norm-s}, we have that
\begin{align*}
\left\| \int_{\bC_\imath^+} s \diff\cP u\right\|^2 &=\sum_{\ell'} |S_{\ell'}|^2 \langle u | P(E_{\ell'})u \rangle \\
&\leq (\|s\|^{\sss(P)}_\infty)^2 \left\langle u \left| P\left(\bigcup_{\ell'}E_{\ell'}\right) \! u \right. \right\rangle \leq (\|s\|^{\sss(P)}_\infty)^2\|u\|^2
\end{align*}
and hence $\|\int_{\bC_\imath^+} s \diff\cP \| \leq \|s\|^{\sss(P)}_\infty$. To prove (\ref{firstestimate}), it is enough to establish the existence of $v \in \sH \setminus \{0\}$ such that $\|\int_{\bC_\imath^+} s \diff\cP v\|^2=(\|s\|^{\sss(P)}_\infty)^2\|v\|^2$. If $\|s\|^{\sss(P)}_\infty=0$, then every $v \in \sH \setminus \{0\}$ has the desired property. Suppose $\|s\|^{\sss(P)}_\infty \neq 0$. Choose $\ell'_0$ in such way that $|S_{\ell'_0}|=\|s\|^{\sss(P)}_\infty$ and $P(E_{\ell'_0}) \neq 0$. In this way, the set $P(E_{\ell'_0})(\sH)$ contains at least an element $v \neq 0$. By definition of integral, one finds that
\[
\int_{\bC_\imath^+} s \diff\cP v = L_{S_{\ell'_0}} P(E_{\ell'_0})v=S_{\ell'_0}v
\]
and hence $\|\int_{\bC_\imath^+} s \diff\cP v\|^2=|S_{\ell'_0}|^2\|v\|^2=(\|s\|^{\sss(P)}_\infty)^2\|v\|^2$.

$(\mr{b})$ The fact that $\mu^{\sss(P)}_u$ is a finite positive Borel measure on $\mscr{B}(\bC^+_\imath)$ with $\supp(\mu^{\sss(P)}_u) \subset \Gamma$ is an immediate consequence of Definition \ref{defSPOVM}. Equality \eqref{secondestimate} follows directly from \eqref{eq}. Moreover $\mu_u^{\sss(P)}$ is regular because the measure space $\bC^+_\imath$ is Hausdorff and locally compact, and open sets are  countable union of compacts with finite measure (see Theorem 2.18 of \cite{Rudin}).
\end{proof}

\begin{proposition}\label{convergencesimple}
If $E \in \mscr{B}(\bC^+_\imath)$ and $\varphi \in M_b(E,\bH)$, then there exists a sequence of simple functions $\{s_n:\bC^+_\imath \to \bH\}_{n\in \bN}$ such that $||s_n -\widetilde{\varphi}||_\infty \to 0$ as $n\to +\infty$, where $\widetilde{\varphi}\in M_b(\bC_\imath^+,\bH)$ extends $\varphi$ to the null function outside $E$. Furthermore, the simple functions $s_n$ are $\bC_\imath$-valued, or real-valued, if $\varphi$ is.
\end{proposition}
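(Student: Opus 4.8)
The plan is to approximate by partitioning the \emph{range} of $\varphi$ rather than its domain: because $\varphi$ is bounded, this device yields convergence in the \emph{uniform} norm with no exceptional set. First I would invoke Remark \ref{remvari}$(2)$ to reduce the statement to the assertion that $\widetilde\varphi \in M_b(\bC^+_\imath,\bH)$ is a uniform limit of simple functions, and set $M:=\|\widetilde\varphi\|_\infty$; the case $M=0$ is trivial (take $s_n:=0$), so I would assume $M>0$.

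Next I would recall from the Preliminaries that, under the identification $\bH\simeq\R^4$ sending $q=q_0+q_1i+q_2j+q_3k$ to $(q_0,q_1,q_2,q_3)$, the quaternionic modulus $|\cdot|$ is precisely the Euclidean norm of $\R^4$. For each $n\in\bN$ I would then cover the closed Euclidean ball $\overline{B}(0,M)\subset\bH$ by finitely many pairwise disjoint Borel subsets $B_1^{(n)},\dots,B_{k_n}^{(n)}$ of $\bH$, each of Euclidean diameter strictly less than $1/n$ — for instance the finitely many half-open dyadic cubes of side length $<1/(2n)$ that meet $\overline{B}(0,M)$, which are pairwise disjoint, cover $\overline{B}(0,M)$ and have diameter $<1/n$. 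Choosing $S_\ell^{(n)}\in B_\ell^{(n)}$, setting $E_\ell^{(n)}:=\widetilde\varphi^{-1}(B_\ell^{(n)})\in\mscr{B}(\bC^+_\imath)$ (these are pairwise disjoint Borel sets, since $\widetilde\varphi$ is measurable), and putting
\[
s_n:=\sum_{\ell=1}^{k_n} S_\ell^{(n)}\,\Chi_{E_\ell^{(n)}},
\]
we obtain a simple function in the sense of Definition \ref{defsf}. For every $z\in\bC^+_\imath$ one has $\widetilde\varphi(z)\in\overline{B}(0,M)\subset\bigcup_\ell B_\ell^{(n)}$, so $z$ lies in exactly one $E_\ell^{(n)}$ and $|s_n(z)-\widetilde\varphi(z)|=|S_\ell^{(n)}-\widetilde\varphi(z)|\le\operatorname{diam}(B_\ell^{(n)})<1/n$; hence $\|s_n-\widetilde\varphi\|_\infty\le 1/n\to 0$.

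For the last assertion, if $\varphi$ (and therefore $\widetilde\varphi$) takes its values in $\bC_\imath$, respectively in $\R$, I would run the identical construction inside the real-linear subspace $\bC_\imath\simeq\R^2$, respectively $\R\simeq\R^1$ — covering the corresponding closed ball by dyadic squares, respectively dyadic intervals — so that the representatives $S_\ell^{(n)}$ automatically lie in $\bC_\imath$, respectively in $\R$, which makes each $s_n$ $\bC_\imath$-valued, respectively real-valued. I do not expect any genuine obstacle in this argument; the only points demanding a little care are the disjointification of the covering cubes and the (notational) identification of $|\cdot|$ with the Euclidean norm of $\R^4$, which is exactly what converts the diameter bound on the range-pieces $B_\ell^{(n)}$ into the uniform estimate for $|s_n-\widetilde\varphi|$.
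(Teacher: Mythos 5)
Your argument is correct. It differs from the paper's proof in execution: the paper reduces to the scalar case by writing $\widetilde{\varphi}=\sum_{a=0}^3 e_a\widetilde{\varphi}_a$ with respect to the real basis $1,i,j,k$, observes that each component is a bounded measurable real-valued function, and invokes the standard uniform approximation of such functions by real simple functions (the argument behind Theorem 1.17 of Rudin), then recombines; you instead perform a direct range partition in $\bH\simeq\R^4$, covering the closed ball of radius $\|\widetilde{\varphi}\|_\infty$ by finitely many disjoint dyadic cubes of diameter $<1/n$ and pulling them back under $\widetilde{\varphi}$. Both are ``partition the range'' arguments at heart; yours is self-contained and gives the uniform bound $\|s_n-\widetilde{\varphi}\|_\infty\le 1/n$ in one stroke, while the paper's componentwise route is shorter on the page because it outsources the scalar case to a textbook. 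Your treatment of the last assertion is actually slightly cleaner for a general imaginary unit $\imath$: running the construction inside $\C_\imath\simeq\R^2$ (resp.\ $\R$) forces the representatives $S_\ell^{(n)}$ to lie in $\C_\imath$ (resp.\ $\R$), whereas the componentwise argument with the fixed basis $1,i,j,k$ yields this only after replacing that basis by one adapted to $\imath$ (e.g.\ $1,\imath,\jmath,\imath\jmath$), a point the paper leaves implicit. All the small details you flag (finiteness of the cubes meeting a bounded set, disjointness of the preimages, measurability of $E_\ell^{(n)}$, the identification of $|\cdot|$ with the Euclidean norm) check out.
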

\begin{proof}
Let $\{\widetilde{\varphi}_a:\bC^+_\imath \to \R\}_{a=0}^3$ be the components of $\widetilde{\varphi}$ with respect to the vector basis $e_0=1$, $e_1=i$, $e_2=j$, $e_3=k$ of $\bH$; namely, $\widetilde{\varphi}=\sum_{a=0}^3e_a\widetilde{\varphi}_a$. Since $|\widetilde{\varphi}|^2=\sum_{a=0}^3|\widetilde{\varphi}_a|^2$, each component $\widetilde{\varphi}_a$ of $\widetilde{\varphi}$ is a bounded measurable real-valued function. Thus, by a standard argument (see, for example, the proof of Theorem 1.17 of \cite{Rudin}), one can uniformly approximate $\widetilde{\varphi}_a$ by a sequence $\{s_{a,n}\}_{n \in \N}$ of real-valued simple functions on $\bC^+_\imath$. The sequence $\{s_n:=\sum_{a=0}^3e_as_{a,n}\}_{n \in \N}$ of quaternionic simple functions has the desired approximation property.
\end{proof}

We are now in a position to state the key concept of integral of a function in $M_b(E,\bH)$ with respect to an iqPVM.

\begin{definition}\label{defintslicefunctions}
Given $E \in \mscr{B}(\bC^+_\imath)$ and $\varphi \in M_b(E,\bH)$, the \emph{integral of $\varphi$ with respect to $\cP$}  is the operator in $\gB(\sH)$ defined as the following limit:
\beq \label{defintf}
\int_{E}\varphi \diff\cP:=\lim_{n \to +\infty}  \int_{\bC_\imath^+} s_n \diff\cP,
\eeq
where $\{s_n\}_{n \in \N}$ is any 
sequence of simple functions on $\bC^+_\imath$ such that $\|s_n-\widetilde{\varphi}\|_\infty \to 0$ if $n \to +\infty$ and $\widetilde{\varphi} \in M_b(\bC_\imath^+,\bH)$ extends $\varphi$ to the null function outside $E$.

Given an arbitrary subset $F$ of $\C^+_\imath$ containing $E$ and an arbitrary (possibly unbounded) function $\psi:F \to \bH$ whose restriction to $E \cap \Gamma$ belongs to $M_b(E \cap \Gamma,\bH)$, then we also define the operator $\int_E \psi \diff\cP$ in $\gB(\sH)$ by setting
\beq \label{defintfpsi}
\int_E \psi \diff\cP:=\int_{E \cap \Gamma} \psi|_{E \cap \Gamma} \diff\cP,
\eeq
where we understand that $\int_{E \cap \Gamma} \psi|_{E \cap \Gamma} \diff\cP:=0$ if $E \cap \Gamma=\emptyset$. \bs
\end{definition}

\begin{remark} \label{remarksupp}
$(1)$ Definition \eqref{defintf} is well-posed. In fact, \eqref{normP2} and \eqref{firstestimate} imply that $\|\int_{\bC_\imath^+} s_n \diff\cP -\int_{\bC_\imath^+} s_m \diff\cP\|=\|s_n-s_m\|^{\sss(P)}_\infty \leq \|s_n-s_m\|_\infty$ and hence the sequence of 
$\big\{\int_{\bC_\imath^+} s_n \diff\cP\big\}_{n \in \N}$ is a Cauchy sequence in $\gB(\sH)$. The independence of the limit from the choice of the sequence $\{s_n\}_{n \in \N}$ follows analogously.

$(2)$ If $E \in \mscr{B}(\bC^+_\imath)$ and $\varphi \in M_b(E,\bH)$, then
\[
\int_E \varphi \diff\cP=\int_{E \cap \Gamma} \varphi|_{E \cap \Gamma} \diff\cP,
\]
where the integrals are understood in the sense of \eqref{defintf}. Indeed, this equality trivially holds for simple functions and it survives the limit procedure. As a consequence, we obtain that definition \eqref{defintfpsi} is well-posed too.

$(3)$ Given $E \in \mscr{B}(\bC^+_\imath)$ and $\varphi \in M_b(E,\bH)$, sometimes it is useful to denote the integral $\int_E\varphi\diff\cP$ by the symbol $\int_E\varphi(q)\diff\cP(q)$. In fact, the latter notation $\int_E\varphi(q)\diff\cP(q)$ allows to write the integrand $\varphi$ via explicit expressions. We will use this notation in Section \ref{sec:spectral-unb}.

$(4)$ Let $E \in \mscr{B}(\bC^+_\imath)$, let $\varphi \in M_b(E,\bH)$ and let $\cP'=(P,\LL')$ be another iqPVM over $\C^+_\imath$ in $\sH$. If $\varphi$ is real-valued, then 
\[
\int_E\varphi\diff\cP=\int_E\varphi\diff\cP'.
\]
This follows immediately from the fact that $\LL(r)=\LL'(r)$ for every $r \in \R$. \bs
\end{remark}

\begin{theorem}\label{TEO1}
Consider $M_b(\Gamma,\bH)$ as a quaternionic two-sided unital Banach $C^*$-algebras (see Remark \ref{remvari}(1)). The following hold.
\begin{itemize}
 \item[$(\mr{a})$] The map
\[ 
M_b(\Gamma,\bH) \ni \varphi \longmapsto \int_\Gamma \varphi \diff\cP \in \gB(\sH) \]
is continuous because norm-decreasing and is a $^*$-homomorphism; namely, for every $\varphi,\psi \in M_b(\Gamma,\bH)$
and for every $q \in \bH$, it holds:
\begin{itemize}
 \item[$(\mr{i})$] $\int_\Gamma (\varphi+\psi) \diff\cP = \int_\Gamma \varphi \diff\cP + \int_\Gamma \psi \diff\cP$, $\int_\Gamma \varphi q \diff\cP = \big(\int_\Gamma \varphi \diff\cP\big)q$ and $\int_\Gamma q\varphi \diff\cP =q\big(\int_\Gamma \varphi \diff\cP\big)$.
 \item[$(\mr{ii})$] $\int_\Gamma \varphi\psi \diff\cP = \big(\int_\Gamma \varphi \diff\cP\big)\big(\int_\Gamma \psi \diff\cP\big)$.
 \item[$(\mr{iii})$] $\left(\int_\Gamma \varphi \diff\cP \right)^*=\int_\Gamma \overline{\varphi} \diff\cP$.
 \item[$(\mr{iv})$] $\int_\Gamma \Chi_\Gamma \diff\cP=\mr{I}$.
\end{itemize}

Furthermore, we have:
\begin{itemize}
\item[$(\mr{v})$] The operator $\int_\Gamma \varphi \diff\cP$ is normal for every $\varphi \in M_b(\Gamma,\bH)$.
 \item[$(\mr{vi})$] $\int_\Gamma q \diff\cP = L_q$ if we think the integrand $q$ as the function on $\Gamma$ constantly equal to $q$.
 \item[$(\mr{vii})$] $\Ker\big(\int_{\Gamma}\varphi\diff\cP\big)=P(\varphi^{-1}(0))(\sH)$ for every $\varphi \in M_b(\Gamma,\bH)$.
 \item[$(\mr{viii})$] $\big|\int_{\Gamma}\varphi\diff\cP\big|=\int_{\Gamma}|\varphi|\diff\cP$ for every $\varphi \in M_b(\Gamma,\bH)$.
\end{itemize}
 \item[$(\mr{b})$] If $\varphi \in M_b(\Gamma,\bH)$ and $u \in \sH$, then
\beq \label{firstestimatef}
\left\|\int_\Gamma \varphi \diff\cP \right\|= \|\varphi\|^{\sss(P)}_\infty,
\eeq
\beq \label{secondestimatef}
\left\|\int_\Gamma \varphi \diff\cP u \right\|^2 = \int_\Gamma |\varphi|^2 \diff\mu^{\sss(P)}_u
\eeq
and
\beq \label{intemediatestimatef}
\left \langle  u\left| \int_\Gamma \varphi \diff\cP u \right. \right\rangle=\int_\Gamma \varphi \, \diff\mu^{\sss(P)}_u \; \mbox{ if $\varphi$ is real-valued}.
\eeq

\item[$(\mr{c})$] Let $\jmath \in \bS$, let $\Psi :\bC_\imath^+ \to \bC^+_\jmath$ be a measurable map and let $Q:\C^+_\jmath \to \gB(\sH)$ be the map defined by setting $Q(E):=P(\Psi^{-1}(E))$ for every $E \in \mscr{B}(\C^+_\jmath)$. Then $\mc{Q}:=(Q,\mc{L})$ is an iqPVM over $\C^+_\jmath$ in $\sH$ such that
\beq \label{PQ}
\int_{\bC_\imath^+} \xi \circ \Psi \diff\cP= \int_{\bC_\jmath^+} \xi \diff\mc{Q} \; \mbox{ if $\xi \in M_b(\bC_\jmath^+,\bH)$}.
\eeq
\end{itemize}
\end{theorem}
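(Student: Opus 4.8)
The plan is to prove every assertion first for simple functions, directly from Definition~\ref{def:int-simple}, and then to transfer the statements to arbitrary $\varphi\in M_b(\Gamma,\bH)$ by the approximation result Proposition~\ref{convergencesimple} together with the norm control in Proposition~\ref{prop:s}. \textbf{Step 1 (simple functions).} For $s=\sum_\ell S_\ell\Chi_{E_\ell}$ with the $E_\ell$ pairwise disjoint, additivity and the two-sided $\bH$-homogeneity in (i), and the $^*$-identity (iii), are immediate from $L_{p+q}=L_p+L_q$, $L_{pq}=L_pL_q$, $(L_q)^*=L_{\overline q}$ and $P(E)^*=P(E)$ (recalled in Remark~\ref{rempropP}(1)), once one uses the intertwining relation~\eqref{eq:PL=LP} to slide the $L$-factors past the $P$-factors. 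For multiplicativity (ii) I would, given a second simple $t=\sum_m T_m\Chi_{F_m}$, pass to the common refinement $\{E_\ell\cap F_m\}$ of the two partitions, write $st=\sum_{\ell,m}S_\ell T_m\Chi_{E_\ell\cap F_m}$, and reorganize $\sum_{\ell,m}L_{S_\ell}L_{T_m}P(E_\ell)P(F_m)$ into $\big(\sum_\ell L_{S_\ell}P(E_\ell)\big)\big(\sum_m L_{T_m}P(F_m)\big)$, the one non-formal move being the interchange of $L_{T_m}$ and $P(E_\ell)$, which is precisely \eqref{eq:PL=LP}. Identity (iv) is clear since $\Chi_\Gamma$ is already simple and $\int_{\bC_\imath^+}\Chi_\Gamma\diff\cP=L_1P(\Gamma)=\1$ by $P(\Gamma)=\1$ (Remark~\ref{rempropP}(3)); Proposition~\ref{prop:s}(a) already gives $\|\int_{\bC_\imath^+}s\diff\cP\|=\|s\|^{\sss(P)}_\infty\le\|s\|_\infty$, so the map is norm-decreasing on simple functions. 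Finally, for real simple $s$ one checks at once $\langle u|\int_{\bC_\imath^+}s\diff\cP\,u\rangle=\int_{\bC_\imath^+}s\diff\mu^{\sss(P)}_u$, which, with \eqref{secondestimate}, are the simple-function forms of (\ref{intemediatestimatef}) and (\ref{secondestimatef}).

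\textbf{Step 2 (density and the limit).} For $\varphi\in M_b(\Gamma,\bH)$ pick simple $s_n$ with $\|s_n-\widetilde{\varphi}\|_\infty\to0$ (Proposition~\ref{convergencesimple}); by Definition~\ref{defintslicefunctions} and Remark~\ref{remarksupp}(1), $\int_\Gamma\varphi\diff\cP=\lim_n\int_{\bC_\imath^+}s_n\diff\cP$ in $\gB(\sH)$. Since sum, composition, left/right multiplication by $L_q$ and adjunction are continuous on $\gB(\sH)$, and the simple functions $s_n+t_n$, $s_nq$, $qs_n$, $s_nt_n$, $\overline{s_n}$ converge uniformly to $\varphi+\psi$, $\varphi q$, $q\varphi$, $\varphi\psi$, $\overline\varphi$ respectively, the identities (i)--(iv) survive the limit; and $\|\int_\Gamma\varphi\diff\cP\|=\|\varphi\|^{\sss(P)}_\infty\le\|\varphi\|_\infty$, i.e.\ (\ref{firstestimatef}), follows from $\big|\,\|s_n\|^{\sss(P)}_\infty-\|\widetilde{\varphi}\|^{\sss(P)}_\infty\,\big|\le\|s_n-\widetilde{\varphi}\|^{\sss(P)}_\infty\le\|s_n-\widetilde{\varphi}\|_\infty$. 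Using that $\mu^{\sss(P)}_u$ is finite with support in $\Gamma$ and that $|s_n|^2\to|\varphi|^2$ uniformly on $\Gamma$, the two simple-function identities of Step~1 pass to the limit and give (\ref{secondestimatef}) and (\ref{intemediatestimatef}); this establishes the $^*$-homomorphism/continuity claims of (a), items (i)--(iv), and part (b).

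\textbf{Step 3 (remaining items and part (c)).} Item (v) is immediate from (ii), (iii) and the pointwise identity $\varphi\overline\varphi=\overline\varphi\varphi=|\varphi|^2$ in $\bH$; (vi) holds because the constant function $q$ on $\Gamma$ is the simple function $q\Chi_\Gamma$, whose integral is $L_qP(\Gamma)=L_q$. For (vii), (\ref{secondestimatef}) shows $\int_\Gamma\varphi\diff\cP\,u=0$ iff $\mu^{\sss(P)}_u\big(\varphi^{-1}(\bH\setminus\{0\})\big)=0$ iff $P\big(\varphi^{-1}(\bH\setminus\{0\})\big)u=0$, and since $P(\varphi^{-1}(0))$ and $P(\varphi^{-1}(\bH\setminus\{0\}))$ are complementary orthogonal projectors with sum $P(\Gamma)=\1$, this is equivalent to $u\in P(\varphi^{-1}(0))(\sH)$. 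For (viii), $\int_\Gamma|\varphi|\diff\cP\ge0$ by (\ref{intemediatestimatef}), and by (ii), (iii) its square equals $\int_\Gamma|\varphi|^2\diff\cP=\big(\int_\Gamma\varphi\diff\cP\big)^*\big(\int_\Gamma\varphi\diff\cP\big)$, so uniqueness of the positive square root in $\gB(\sH)$ gives $\big|\int_\Gamma\varphi\diff\cP\big|=\int_\Gamma|\varphi|\diff\cP$. For part (c), axioms (a)--(d) of Definition~\ref{defSPOVM} for $Q(E):=P(\Psi^{-1}(E))$ are a formal consequence of those of $P$ together with $\Psi^{-1}(\bC_\jmath^+)=\bC_\imath^+$, compatibility of preimages with intersections and countable disjoint unions, and measurability of $\Psi$; the intertwining property of $\mc{Q}$ is $Q(E)L_q=P(\Psi^{-1}(E))L_q=L_qP(\Psi^{-1}(E))=L_qQ(E)$; and (\ref{PQ}) holds for simple $\xi=\sum_\ell S_\ell\Chi_{E_\ell}$, since $\xi\circ\Psi=\sum_\ell S_\ell\Chi_{\Psi^{-1}(E_\ell)}$ is simple on $\bC_\imath^+$ and both members equal $\sum_\ell L_{S_\ell}P(\Psi^{-1}(E_\ell))$, after which it extends to all $\xi\in M_b(\bC_\jmath^+,\bH)$ by uniform approximation, because $\xi_n\to\xi$ uniformly forces $\xi_n\circ\Psi\to\xi\circ\Psi$ uniformly.

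\textbf{Main obstacle.} The entire argument is routine except for one point: the multiplicativity (ii) for simple functions, where one genuinely has to refine the two underlying partitions and to invoke the intertwining property~\eqref{eq:PL=LP} to interchange an $L_q$ with a $P(E)$; everything else is a mechanical combination of the $C^*$-relations among the $L_q$ and the $P(E)$ followed by a standard density/continuity step, the only further care being the bookkeeping of extending functions by zero outside $\Gamma$ (Remark~\ref{remarksupp}(2)) so that all integrals are read consistently over $\Gamma$ and over $\bC_\imath^+$.
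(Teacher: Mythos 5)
Your proposal is correct and follows essentially the same route as the paper: verify (i)--(iv) and the norm/measure identities directly for simple functions (with the intertwining relation \eqref{eq:PL=LP} doing the work in (ii), (iii) and the right-multiplication identity), then transfer everything to $M_b(\Gamma,\bH)$ by uniform approximation via Proposition \ref{convergencesimple} and the estimates of Proposition \ref{prop:s}, deriving (v)--(viii) and part (c) exactly as the paper does (your handling of (vii) entirely through \eqref{secondestimatef} rather than using (a)(ii) for the easy inclusion is only a cosmetic variation).
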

\begin{proof} 
We begin by proving properties $(\mr{i})$-$(\mr{vi})$ of point $(\mr{a})$. Such properties $(\mr{i})$-$(\mr{iv})$ for simple functions can be easily verified by direct inspection. We underline that, in such verifications, the commutativity condition \eqref{eq:PL=LP} plays a crucial role. Thanks to Proposition \ref{convergencesimple}, these properties extend immediately to the whole $M_b(\Gamma,\bH)$ by density. Properties $(\mr{v})$ and $(\mr{vi})$ follow at once from the preceding ones $(\mr{i})$-$(\mr{iv})$. Bearing in mind \eqref{normP2}, equality \eqref{firstestimatef} follows from \eqref{firstestimate} by density. In turn, inequality \eqref{normP2} and equality  \eqref{firstestimatef} imply that the map $\varphi \mapsto \int_\Gamma \varphi \diff\cP$ is norm-decreasing and thus continuous.

Let us prove \eqref{secondestimatef}. Let $u \in \sH$, let $\varphi \in M_b(\Gamma,\bH)$ and let $\{s_n\}_{n \in \N}$ be a sequence of simple functions on $\bC^+_\imath$ uniformly converging to $\widetilde{\varphi}$ as in Proposition \ref{convergencesimple}. Since $\varphi$ is bounded and the convergence is uniform, there exists a positive real constant $C$ such that $\|\widetilde{\varphi}\|_\infty+\sup_{n \in \N}\|s_n\|_\infty \leq C$. In this way, we have:
\begin{align*}
\left|\int_\Gamma (|\varphi|^2 - |s_n|^2) \diff\mu^{\sss(P)}_u\right| &\leq \int_\Gamma (|\varphi|+ |s_n|)|\varphi-s_n| \diff\mu^{\sss(P)}_u\\
&\leq C \big\|\widetilde{\varphi}-s_n\big\|_\infty\mu^{\sss(P)}_u(\Gamma)=C \big\|\widetilde{\varphi}-s_n\big\|_\infty\|u\|^2,
\end{align*}
where in the last equality we used the fact that $\mu^{\sss(P)}_u(\Gamma)=\langle u |P(\Gamma) u \rangle =  \langle u |P(\bC^+_\imath) u\rangle = \langle u|u \rangle=\|u\|^2$. It follows that $\int_S |s_n|^2 \diff\mu^{\sss(P)}_u \to \int_\Gamma |\varphi|^2 \diff\mu^{\sss(P)}_u$ as $n \to +\infty$. On the other hand, $\int_\Gamma |s_n|^2 \diff\mu^{\sss(P)}_u=\|\int_{\bC^+_\imath} s_n \diff\cP u\|^2$ by \eqref{secondestimate}. Since $\|\int_{\bC^+_\imath} s_n \diff\cP u\| \to \|\int_{\bC^+_\imath} \widetilde{\varphi} \diff\cP u\|=\|\int_\Gamma \varphi \diff\cP u\|$, we obtain equality \eqref{secondestimatef}.

Consider now a real-valued simple function $s:\bC^+_\imath \to \bH$. Let $s=\sum_\ell S_\ell\Chi_{F_\ell}$ be one of its representation with $S_\ell \in \R$. Equality \eqref{intemediatestimatef} holds for $s$. Indeed, we have:
\begin{align*}
\left \langle u\left| \, \int_\Gamma s \diff\cP u \right. \right\rangle &=\left \langle u\left| \, \sum_\ell S_\ell P(F_\ell) u \right. \right\rangle=\sum_\ell \left \langle u\left| P(F_\ell) u \right. \right\rangle S_\ell\\
&=\sum_\ell S_\ell \, \mu^{\sss(P)}_u(F_\ell)=\int_\Gamma s \diff\mu^{\sss(P)}_u
\end{align*}
By density, \eqref{intemediatestimatef} extends to all real-valued functions $\varphi \in M_b(\bC^+_\imath,\bH)$.

Let us show properties $(\mr{vii})$ and $(\mr{viii})$ of point $(\mr{a})$. Let $\varphi$ be an arbitrary function in $M_b(\Gamma,\bH)$ and let $E:=\varphi^{-1}(0)$. By above point $(\mr{a})(\mr{ii})$, we know that
\[
\int_\Gamma \varphi \diff\cP \, P(E)=\int_\Gamma \varphi \diff\cP \int_\Gamma \Chi_E \diff\cP=\int_\Gamma \varphi \Chi_E \diff\cP=0
\]
and hence $P(E)(\sH) \subset \Ker\big(\int_\Gamma \varphi \diff\cP\big)$. Consider a vector $u$ in $\Ker\big(\int_\Gamma \varphi \diff\cP\big)$. By \eqref{secondestimatef}, we have that $\int_\Gamma |\varphi|^2\diff\mu_u^{\sss(P)}=0$. Since the measure $\mu_u^{\sss(P)}$ is positive, it follows that $\int_F|\varphi|^2\diff\mu_u^{\sss(P)}=0$ for every $F \in \mscr{B}(\C^+_\imath)$ with $F \subset \Gamma$. In particular, this is true when $F$ is equal to $F_n:=\{q \in \Gamma \, | \, |\varphi(q)| \geq \frac{1}{n}\}$ for any $n \in \N \setminus \{0\}$. We infer that $0=\int_{F_n}|\varphi|^2\diff\mu_u^{\sss(P)} \geq n^{-2}\mu_u^{\sss(P)}(F_n) \geq 0$ and hence $\mu_u^{\sss(P)}(F_n)=0$ and $\mu_u^{\sss(P)}(\Gamma \setminus E)=\lim_{n \to +\infty}\mu_u^{\sss(P)}(F_n)=0$. By \eqref{measuremuPx}, the latter equality is equivalent to the following one $P(\Gamma \setminus E)u=0$, which in turn is equivalent to $u=P(E)u$; that is, $u \in P(E)(\sH)$. This proves that $\Ker(\int_\Gamma \varphi \diff\cP)=P(E)(\sH)$.

By \eqref{intemediatestimatef}, we know that the operator $\int_\Gamma|\varphi|\diff\cP$ is positive. On the other hand, points $(\mr{a})(\mr{ii})$ and $(\mr{a})(\mr{iii})$ imply that $\left(\int_\Gamma\varphi\diff\cP\right)^*\left(\int_\Gamma\varphi\diff\cP\right)=\int_\Gamma|\varphi|^2\diff\cP=\left(\int_\Gamma|\varphi|\diff\cP\right)^2$. Now equality $(\mr{viii})$ follows immediately from the uniqueness of the square root (see \cite[Thm 2.18]{GhMoPe}). 

Concerning point $(\mr{c})$, it is easy to verify, by direct inspection, that $\mc{Q}$ is an iqPVM over $\C^+_\jmath$ and \eqref{PQ} holds for simple functions on $\bC^+_\jmath$. Finally, consider $\xi \in M_b(\bC^+_\jmath,\bH)$ and a sequence $\{\eta_n\}_{n \in \N}$ of simple functions on $\bC^+_\jmath$ converging uniformly to $\xi$. Since the sequence $\{\eta_n \circ \Psi\}_{n \in \N}$ of simple functions on $\bC^+_\imath$ converges uniformly to $\xi \circ \Psi$, by the definition of integral of a bounded measurable function with respect to an iqPVM, we infer at once \eqref{PQ}:
\[
\int_{\bC_\imath^+} \xi \circ \Psi \diff\cP=\lim_{n \to +\infty}\int_{\bC_\imath^+} \eta_n \circ \Psi \diff\cP=\lim_{n \to +\infty}\int_{\bC_\jmath^+} \eta_n \diff\mc{Q}=\int_{\bC_\jmath^+} \xi \diff\mc{Q}.
\]
The proof is complete.
\end{proof}

Fix $u,v \in \sH$ and $\jmath \in \bS$ in such a way that $\imath\jmath=-\jmath \, \imath$. The assigned qPVM $P$ over $\C^+_\imath$ in $\sH$ allows to define a quaternion-valued finite measure $\nu^{\sss(P)}_{u,v}:\mscr{B}(\bC^+_\imath) \to \bH$ by setting
\beq \label{eq:nu}
\nu^{\sss(P)}_{u,v}(E):= \langle u | P(E) v\rangle \; \mbox{ if $E \in \mscr{B}(\bC^+_\imath)$}.
\eeq 
By the quaternionic polarization identity (see Proposition 2.2 of \cite{GhMoPe}), we have that
\begin{align}
4\nu^{\sss(P)}_{u,v}(E)=&\,\mu^{\sss(P)}_{u+v}(E) -\mu^{\sss(P)}_{u-v}(E) +\big(\mu^{\sss(P)}_{u\imath+v}(E) -\mu^{\sss(P)}_{u\imath-v}(E)\big) \imath \nonumber\\
&+\big(\mu^{\sss(P)}_{u\jmath +v}(E) -\mu^{\sss(P)}_{u\jmath-v}(E)\big)\jmath+\big(\mu^{\sss(P)}_{u\kappa +v}(E)-\mu^{\sss(P)}_{u\kappa-v}(E)\big) \kappa, \label{dec}
\end{align}
where $\kappa$ is the imaginary unit of $\bH$ defined by $\kappa:=\imath\jmath$.

Given a Borel set $E \in \mscr{B}(\bC^+_\imath)$ and a \emph{real-valued} function $\varphi \in M_b(\bC^+_\imath,\bH)$, we define the quaternion $\int_E \varphi \diff\nu^{\sss(P)}_{u,v}$ in the natural way suggested by decomposition (\ref{dec}):
\begin{align*}
\textstyle
\int_E \varphi \diff\nu^{\sss(P)}_{u,v}:= & \, \textstyle\frac{1}{4}\big(\int_E \varphi \diff\mu^{\sss(P)}_{u+v}-\int_E \varphi \diff\mu^{\sss(P)}_{u-v} +\left(\int_E \varphi \diff\mu^{\sss(P)}_{u\imath+v} -\int_E \varphi \diff\mu^{\sss(P)}_{u\imath-v}\right) \imath\\
&\textstyle+\left(\int_E \varphi \diff\mu^{\sss(P)}_{u\jmath +v} -\int_E \varphi \diff\mu^{\sss(P)}_{u\jmath-v}\right)\jmath+\left(\int_E \varphi \diff\mu^{\sss(P)}_{u\kappa +v}-\int_E \varphi \diff\mu^{\sss(P)}_{u\kappa-v}\right) \kappa\big), 
\end{align*}

Equality \eqref{intemediatestimatef} generalizes as follows:

\begin{proposition} \label{nu}
For every $u,v \in \sH$ and for every real-valued function $\varphi \in M_b(\Gamma,\bH)$, it holds:
\beq \label{intemediatestimatef2}
\left\langle u \left| \int_\Gamma \varphi \diff\cP v\right.\right\rangle = \int_\Gamma \varphi \diff\nu^{\sss(P)}_{u,v}.
\eeq
\end{proposition}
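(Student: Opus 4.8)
The plan is to reduce Proposition \ref{nu} to equality \eqref{intemediatestimatef}, which was already established in Theorem \ref{TEO1} for real-valued $\varphi$, by applying it to the six auxiliary vectors occurring in the quaternionic polarization identity \eqref{dec}. Fix $u,v \in \sH$ and a real-valued $\varphi \in M_b(\Gamma,\bH)$, and fix $\jmath \in \bS$ with $\imath\jmath=-\jmath\,\imath$, together with $\kappa:=\imath\jmath$, exactly as in the construction of $\nu^{\sss(P)}_{u,v}$ and of $\int_\Gamma\varphi\diff\nu^{\sss(P)}_{u,v}$. The right-hand side of \eqref{intemediatestimatef2} is, by definition, the $\frac14$-combination of the eight real integrals $\int_\Gamma\varphi\diff\mu^{\sss(P)}_{w}$ for $w$ ranging over $u\pm v$, $u\imath\pm v$, $u\jmath\pm v$, $u\kappa\pm v$, and by \eqref{intemediatestimatef} each of these equals $\langle w|\int_\Gamma\varphi\diff\cP\, w\rangle$. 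So the task is purely algebraic: show that assembling these eight quadratic expressions according to the polarization pattern reproduces $\langle u|\int_\Gamma\varphi\diff\cP\, v\rangle$.

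Concretely, I would write $\Phi:=\int_\Gamma\varphi\diff\cP \in \gB(\sH)$ and observe that, since $\varphi$ is real-valued, $\Phi$ is self-adjoint: this is point $(\mr{a})(\mr{iii})$ of Theorem \ref{TEO1}, as $\overline{\varphi}=\varphi$. Then the desired identity
\[
\langle u|\Phi v\rangle=\tfrac14\big(\langle u{+}v|\Phi(u{+}v)\rangle-\langle u{-}v|\Phi(u{-}v)\rangle+\textstyle\sum_{\ell\in\{\imath,\jmath,\kappa\}}(\langle u\ell{+}v|\Phi(u\ell{+}v)\rangle-\langle u\ell{-}v|\Phi(u\ell{-}v)\rangle)\ell\big)
\]
is precisely the quaternionic polarization identity (Proposition 2.2 of \cite{GhMoPe}, the very one invoked to justify \eqref{dec}) applied to the self-adjoint operator $\Phi$ in place of $P(E)$. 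Indeed \eqref{dec} is that identity for the self-adjoint operators $P(E)$; replacing $P(E)$ by the self-adjoint $\Phi$ and using bilinearity/$\bH$-linearity of $\langle\cdot|\cdot\rangle$ in the appropriate slots gives exactly the formula above. Thus the combination of the eight terms $\langle w|\Phi w\rangle$ collapses to $\langle u|\Phi v\rangle$, and combined with the term-by-term application of \eqref{intemediatestimatef} this yields \eqref{intemediatestimatef2}.

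The only point requiring a little care — and the one I expect to be the main (minor) obstacle — is the bookkeeping that the coefficients and imaginary units attached to the measures $\mu^{\sss(P)}_{w}$ in the definition of $\int_\Gamma\varphi\diff\nu^{\sss(P)}_{u,v}$ match, slot for slot, those attached to the terms $\mu^{\sss(P)}_{w}(E)$ in \eqref{dec}; since $\int_\Gamma\varphi\diff\nu^{\sss(P)}_{u,v}$ was defined by literally mimicking \eqref{dec}, this is a formal check rather than a real difficulty. One should also note that $\int_\Gamma\varphi\diff\mu^{\sss(P)}_{w}$ makes sense for each of the eight vectors $w$ because $\varphi\in M_b(\Gamma,\bH)$ and each $\mu^{\sss(P)}_{w}$ is a finite positive Borel measure on $\mscr{B}(\bC^+_\imath)$ supported in $\Gamma$ by Proposition \ref{prop:s}$(\mr{b})$, and that the passage from \eqref{dec} (an identity of measures, i.e. of values on Borel sets $E$) to the integrated statement is immediate by linearity of the integral against a real-valued bounded measurable function. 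No further analytic input is needed: everything beyond \eqref{intemediatestimatef} and the polarization identity is elementary algebra with quaternionic coefficients.
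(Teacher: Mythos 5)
Your argument is correct, but it takes a somewhat different route from the paper's. The paper proves \eqref{intemediatestimatef2} by density: it reduces to real-valued simple functions, then by linearity to $s=S\Chi_E$ with $S\in\R$, and verifies the identity for such an $s$ by applying the polarization identity for the scalar product to the vectors $u':=P(E)u$, $v':=P(E)v$, using right $\bH$-linearity, self-adjointness \emph{and idempotency} of $P(E)$ to pass back to the measures $\mu^{\sss(P)}_{u\pm v}$, etc. You instead work directly with the operator $\Phi:=\int_\Gamma\varphi\diff\cP$, observe it is self-adjoint, and invoke a polarization identity for the Hermitian sesquilinear form $(u,v)\mapsto\langle u|\Phi v\rangle$, feeding in the already-proven diagonal formula \eqref{intemediatestimatef} for each of the eight vectors; this avoids any new density/limit step and any use of the projector structure of $P(E)$, and it handles all real-valued $\varphi\in M_b(\Gamma,\bH)$ at once. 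The one point to be precise about: the identity
\[
4\langle u|\Phi v\rangle=\langle u{+}v|\Phi(u{+}v)\rangle-\langle u{-}v|\Phi(u{-}v)\rangle+\sum_{\ell\in\{\imath,\jmath,\kappa\}}\bigl(\langle u\ell{+}v|\Phi(u\ell{+}v)\rangle-\langle u\ell{-}v|\Phi(u\ell{-}v)\rangle\bigr)\ell
\]
is not literally Proposition 2.2 of \cite{GhMoPe} (which concerns the scalar product itself, and from which \eqref{dec} is derived using also $P(E)P(E)=P(E)$); for a general self-adjoint $\Phi$ it requires its own short expansion, using additivity, $\langle u\ell|\Phi v\rangle=\bar{\ell}\langle u|\Phi v\rangle$, $\langle v|\Phi(u\ell)\rangle=\langle v|\Phi u\rangle\ell$ and the Hermitian symmetry $\langle v|\Phi u\rangle=\overline{\langle u|\Phi v\rangle}$, which indeed collapses the right-hand side to $4\langle u|\Phi v\rangle$. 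Since you flagged exactly this bookkeeping and it is elementary, the proposal stands; it is arguably slightly cleaner than the paper's proof, at the cost of stating and checking the operator-valued polarization identity once.
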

\begin{proof}
By density, it suffices to show \eqref{intemediatestimatef2} for real-valued simple functions on $\Gamma$. Moreover, by linearity, one can reduce to consider simple functions of the form $s=S\Chi_E$ for some $S \in \R$ and $E \in \mscr{B}(\bC^+_\imath)$. Define $u':=P(E)u$ and $v':=P(E)v$. Equality \eqref{intemediatestimatef2} holds for such a function $s$. Indeed, we have:
\begin{align*}
\textstyle
4\left\langle u \left| \int_\Gamma s \diff\cP v\right.\right\rangle =& \, \textstyle 4\langle u | SP(E)v\rangle=4\langle u | P(E)v\rangle S=4\langle u' | v' \rangle S\\
=& \, \textstyle (\langle u'+v'|u'+v'\rangle - \langle u'-v'|u'-v'\rangle)S\\
&+(\langle u'\imath+v'|u'\imath+v'\rangle - \langle u'\imath-v'|u'\imath-v'\rangle)\imath S\\
&+(\langle u'\jmath+v'|u'\jmath+v'\rangle - \langle u'\jmath-v'|u'\jmath-v'\rangle)\jmath S\\
&+(\langle u'\kappa+v'|u'\kappa+v'\rangle - \langle u'\kappa-v'|u'\kappa-v'\rangle)\kappa S\\
=& \, \textstyle (\langle u+v|P(E)(u+v)\rangle - \langle u-v|P(E)(u-v)\rangle)S\\
&+(\langle u\imath+v|P(E)(u\imath+v)\rangle - \langle u\imath-v|P(E)(u\imath-v)\rangle)\imath S\\
&+(\langle u\jmath+v|P(E)(u\jmath+v)\rangle - \langle u\jmath-v|P(E)(u\jmath-v)\rangle)\jmath S\\
&+(\langle u\kappa+v|P(E)(u\kappa+v)\rangle - \langle u\kappa-v|P(E)(u\kappa-v)\rangle)\kappa S\\
=& \, \textstyle 4S\nu^{\sss(P)}_{u,v}(E)=4\int_\Gamma s \diff\nu^{\sss(P)}_{u,v},
\end{align*}
where we used the quaternionic polar identity, decomposition \eqref{dec}, and the fact that the operator $P(E)$ is right linear, self-adjoint and $P(E)P(E)=P(E)$. 
\end{proof}

\begin{remark}\label{remwekadef}
Given any real-valued function $\varphi \in M_b(\Gamma,\bH)$, it is clear that there is a unique operator $S: \sH \to \sH$ satisfying
\beq \label{intemediatestimatef2bis}
\left\langle u \left| S v\right.\right\rangle = \int_\Gamma \varphi \diff\nu^{\sss(P)}_{u,v} \;\; \text{ for every $u,v \in \sH$,} 
\eeq
and, obviously, $S=\int_\Gamma \varphi \diff\cP$. This observation can be exploited to define $\int_\Gamma \varphi \diff\cP$ from scratch as such unique operator.  This definition, though definitely elegant and adopted by some authors (see \cite{ACK}), is not so technically easy to handle differently from the complex case. Above all,  this definition  turns out to be hardly extendible to the case of a generic $\bH$-valued bounded measurable function $\varphi$ in the absence of a quaternionic version of Radon-Nikodym theorem. \bs
\end{remark}

We conclude this subsection with an extension lemma, which will prove its crucial importance later.

\begin{lemma} \label{lem:fundamental}
Let $\sH$ be a quaternionic Hilbert space, let $J \in \gB(\sH)$ be an anti-self-adjoint and unitary operator, and let $\imath \in \bS$. Then, for every complex PVM $p:\mscr{B}(\bC^+_\imath) \to \gB(\sH^{J\imath}_+)$ over $\bC^+_\imath$ in $\sH^{J\imath}_+$, there exists a unique qPVM $P:\mscr{B}(\bC^+_\imath) \to \gB(\sH)$ over $\bC^+_\imath$ in $\sH$ extending $p$; that is, 
\[
P(E)(\sH^{J\imath}_+) \subset \sH^{J\imath}_+
\; \text{ and } \;
P(E)|_{\sH^{J\imath}_+}=p(E) \; \mbox{ for every $E \in \mscr{B}(\bC^+_\imath)$}.
\]
Such an extension has the following properties:
\beq \label{eq:norm-comm}
\|P(E)\|=\|p(E)\| \; \mbox{ and } \; P(E)J=JP(E)
\eeq
for every $E \in \mscr{B}(\bC^+_\imath)$. In particular, $P$ and $p$ have the same support.

\end{lemma}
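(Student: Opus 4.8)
The plan is to use Proposition \ref{prop:first-3.11}, which produces, for every bounded $\C_\imath$-linear operator $T$ on $\sH^{J\imath}_+$, a unique bounded $\bH$-linear extension $\widetilde T \in \gB(\sH)$, and to show that the assignment $E \mapsto \widetilde{p(E)}$ is the sought-for qPVM. First I would set $P(E):=\widetilde{p(E)}$ for every $E \in \mscr{B}(\bC^+_\imath)$; uniqueness of the extension is then immediate, because any qPVM $P'$ over $\bC^+_\imath$ in $\sH$ with $P'(E)(\sH^{J\imath}_+) \subset \sH^{J\imath}_+$ and $P'(E)|_{\sH^{J\imath}_+}=p(E)$ must restrict to a $\C_\imath$-linear operator on $\sH^{J\imath}_+$ equal to $p(E)$, and the unique $\bH$-linear extension of $p(E)$ is $\widetilde{p(E)}$ by Proposition \ref{prop:first-3.11}. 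So the real content is checking that $P$ satisfies conditions $(\mr a)$--$(\mr d)$ of Definition \ref{defSPOVM} and the norm and commutativity statements in \eqref{eq:norm-comm}. Points $(\mr a)$--$(\mr b)$, i.e.\ $P(\bC_\imath^+)=\1$ and $P(E)P(E')=P(E\cap E')$, follow from the functoriality of the extension map: Proposition \ref{prop:first-3.11}$(\mr d)$ gives $\widetilde{ST}=\widetilde S\widetilde T$, and the identity on $\sH^{J\imath}_+$ plainly extends to $\1$ on $\sH$ by uniqueness. Point $(\mr d)$, $P(E)\geq 0$, is exactly Proposition \ref{prop:first-3.11}$(\mr e)$, since each $p(E)$ is a positive operator on the complex Hilbert space $\sH^{J\imath}_+$; this also yields $P(E)^*=P(E)$ via Proposition \ref{prop:first-3.11}$(\mr c)$ together with $p(E)^*=p(E)$. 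The norm equality $\|P(E)\|=\|p(E)\|$ is Proposition \ref{prop:first-3.11}$(\mr a)$, and the commutativity $P(E)J=JP(E)$ is Proposition \ref{prop:first-3.11}$(\mr b)$.

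The one condition that does not reduce to a pointwise algebraic identity is the $\sigma$-additivity condition $(\mr c)$: for pairwise disjoint $\{E_n\}$ one must show $\sum_n P(E_n)u = P(\bigcup_n E_n)u$ for every $u\in\sH$, not merely for $u\in\sH^{J\imath}_+$. Here I would use the $\C_\imath$-complex decomposition $\sH = \sH^{J\imath}_+ \oplus \sH^{J\imath}_-$ described in Section \ref{subsec:complex-subspaces}: any $u\in\sH$ writes uniquely as $u=u_+ + u_-$ with $u_\pm = \frac12(u\mp Ju\imath)\in\sH^{J\imath}_\pm$. Since $P(E)$ commutes with $J$ and is $\bH$-linear, it preserves both $\sH^{J\imath}_+$ and $\sH^{J\imath}_-$; on $\sH^{J\imath}_+$ it acts as $p(E)$, and on $\sH^{J\imath}_-$ one can transport the problem to $\sH^{J\imath}_+$ via the isometric $\C_\imath$-anti-linear isomorphism $\sH^{J\imath}_+ \ni w \mapsto w\jmath \in \sH^{J\imath}_-$ (for a fixed $\jmath\in\bS$ with $\imath\jmath=-\jmath\,\imath$), under which $P(E)$ corresponds again to $p(E)$ because $P(E)(w\jmath)=(P(E)w)\jmath$. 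Thus the $\sigma$-additivity of $P$ on each of $\sH^{J\imath}_\pm$ is exactly the $\sigma$-additivity of the complex PVM $p$, and by additivity of the decomposition $u\mapsto(u_+,u_-)$ it extends to all of $\sH$; the series $\sum_n P(E_n)u_\pm$ converges in $\sH$ because it converges in the closed subspace $\sH^{J\imath}_\pm$. The final sentence, that $P$ and $p$ have the same support, then follows from $\|P(O)\|=\|p(O)\|$ for open $O$ together with the definition of support as the complement of the union of all open sets on which the projector vanishes.

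I expect the main (mild) obstacle to be the bookkeeping around $\sH^{J\imath}_-$: one must confirm that $P(E)$ really does preserve $\sH^{J\imath}_-$ and acts there compatibly with $p(E)$ under the identification $w\mapsto w\jmath$, and that the explicit extension formula \eqref{eq:def-ext} of Proposition \ref{prop:first-3.11} is consistent with this — i.e.\ that $\widetilde{p(E)}(v\jmath)=(p(E)v)\jmath$ for $v\in\sH^{J\imath}_+$, which is immediate from \eqref{eq:def-ext} since for $u=v\jmath$ one has $u_+=0$ and $u_-=v\jmath$. Everything else is a routine transcription of the complex PVM axioms through the functorial extension of Proposition \ref{prop:first-3.11}.
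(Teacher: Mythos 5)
Your proposal is correct and matches the paper's proof in essence: the paper also defines $P(E)$ as the unique extension $\widetilde{p(E)}$ from Proposition \ref{prop:first-3.11}, reads off all pointwise properties (norm, commutation with $J$, multiplicativity, positivity) from that proposition, and singles out $\sigma$-additivity as the only point needing care, verifying it exactly as you do via the explicit formula \eqref{eq:def-ext}, i.e.\ the decomposition $u=u_++u_-$ and the identity $\widetilde{p(E)}(u)=p(E)u_+-p(E)(u_-\jmath)\jmath$ together with the $\sigma$-additivity of $p$ on $\sH^{J\imath}_+$. Your closing remark on the supports is the same one-line consequence of $\|P(E)\|=\|p(E)\|$ that the paper leaves implicit.
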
 
\begin{proof}
This result follows immediately from Proposition \ref{prop:first-3.11}. We have to be careful regarding one point only: condition $(\mr{c})$ of Definition \ref{defSPOVM}. Let us prove that $P$ has this property. Consider a family of Borel sets $\{E_n\}_{n \in N}$ in $\mscr{B}(\bC^+_\imath)$ with $N \subset \bN$ and $E_n \cap E_m =\emptyset$ for $n \neq m$. Let $x \in \sH$ and let $\jmath,a,b$ be as in the statement of Proposition \ref{prop:first-3.11}. Bearing in mind definition \eqref{eq:def-ext}, we obtain:
\begin{align*}
\textstyle\sum_{n \in N}P(E_n)x &\textstyle=\sum_{n \in N}\big(p(E_n)a-p(E_n)(b\jmath)\jmath\big)\\
&\textstyle=\sum_{n \in N}p(E_n)a-\sum_{n \in N}p(E_n)(b\jmath)\jmath\\
&\textstyle=p\left(\bigcup_{n \in N}E_n\right)a-p\left(\bigcup_{n \in N}E_n\right)(b\jmath)\jmath=P\left(\bigcup_{n \in N}E_n\right)x,
\end{align*}
as desired.
\end{proof}





\section{Spectral representation of bounded normal operators \\ and iqPVMs} \label{sec:bounded-sp-teo}


\subsection{The spectral theorem for bounded normal operators}

We are now in a position to state and prove the spectral theorem for normal operators on quaternionic Hilbert spaces. 

\begin{theorem}\label{spectraltheorem}
Let $\sH$ be a quaternionic Hilbert space and let $\imath \in \bS$. Then, given a normal operator $T \in \gB(\sH)$, there exists a bounded iqPVM $\cP=(P,\LL)$ over $\bC^+_\imath$ in $\sH$ such that
\beq \label{decspet}
T=\int_{\bC^+_\imath} \mi{id} \diff\cP,
\eeq
where $\mi{id}:\bC^+_\imath \hookrightarrow \bH$ indicates the inclusion map.

The following additional facts hold:
\begin{itemize}
\item[$(\mr{a})$] $T$ determines uniquely $P$ and the restriction of $L_q=\LL(q)$ to $\Ker(T-T^*)^\perp$ for every $q \in \C_\imath$. More precisely, if $\cP'=(P',\LL')$ is any bounded iqPVM over $\C^+_\imath$ in $\sH$ such that $T=\int_{\bC^+_\imath} \mi{id} \diff\cP'$, then $P'=P$ and $L'_q(u)=L_q(u)$ for every $u \in \Ker(T-T^*)^\perp$ and for every $q \in \C_\imath$, where $L'_q$ denotes $\LL'(q)$. Furthermore, we have:
 \begin{itemize}
  \item[$(\mr{i})$] $L'_\imath T=TL'_\imath$ and $L'_\imath T^*=T^*L'_\imath$,
  \item[$(\mr{ii})$] $L'_\jmath T=T^*L'_\jmath$ if $\jmath \in \cS$ with $\imath\jmath=-\jmath\,\imath$,
  \item[$(\mr{iii})$] $-L'_\imath(T-T^*)=|T-T^*|=\int_{\C^+_\imath}2\beta \diff\cP'$, where the integrand $2\beta$ indicates the function $\C^+_\imath \ni \alpha+\imath\beta \mapsto 2\beta$,
 \end{itemize}

 \item[$(\mr{b})$] $\supp(P)=\ssp(T) \cap \C^+_\imath$.
 \item[$(\mr{c})$] $P(\R)(\sH)=\mr{Ker}(T-T^*)$  and hence $P(\bC^+_\imath \setminus \R)(\sH)=\mr{Ker}(T-T^*)^\perp$.
 \item[$(\mr{d})$] Concerning the spherical spectrum $\ssp(T)$ of $T$, we have:
 \begin{itemize}
  \item[$(\mr{i})$] $q \in \sigma_{\mi{pS}}(T)$ if and only if $P(\bS_q \cap \bC^+_\imath) \neq 0$. In particular, every isolated point of $\ssp(T) \cap \bC^+_\imath$ belongs to $\sigma_{\mi{pS}}(T)$.
  \item[$(\mr{ii})$] $\sigma_{\mi{rS}}(T)=\emptyset$.
  \item[$(\mr{iii})$] $q \in \sigma_{\mi{cS}}(T)$ if and only if $P(\bS_q \cap \bC^+_\imath)=0$ and $P(U) \neq 0$ for every open subset $U$ of $\bC^+_\imath$ containing $\bS_q \cap \bC^+_\imath$. Furthermore, if $q \in \sigma_{\mi{cS}}(T)$, then, for every positive real number $\epsilon$, there exists a vector $u_\epsilon \in \sH$ such that $\|u_\epsilon\|=1$ and $\|Tu_\epsilon - u_\epsilon q\|<\epsilon$.
 \end{itemize}
\end{itemize}
\end{theorem}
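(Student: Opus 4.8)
The plan is to reduce everything to the classical spectral theorem for bounded normal operators on a \emph{complex} Hilbert space, using the slice decomposition of Theorem~\ref{thm:first-5.9} together with the complex subspaces $\sH^{J\imath}_\pm$ and the extension results of Proposition~\ref{prop:first-3.11} and Lemma~\ref{lem:fundamental}.

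First I would fix a slice decomposition $T=A+JB$ as in Theorem~\ref{thm:first-5.9} and a left scalar multiplication $\bH\ni q\mapsto L_q$ as in part $(\mr{e})$ of that theorem, so that $L_\imath=J$ and $L_q$ commutes with $A$ and $B$ for every $q$. Since $A$, $B$ and $J$ commute with $J$, they preserve $\sH^{J\imath}_+$, and the restriction $T_+:=T\rr_{\sH^{J\imath}_+}$ is a bounded normal operator on the complex Hilbert space $\sH^{J\imath}_+$ with $(T_+)^*=T^*\rr_{\sH^{J\imath}_+}$ (Theorem~\ref{thm:first-5.9}$(\mr{a})$--$(\mr{b})$) and Cartesian decomposition $T_+=A\rr_{\sH^{J\imath}_+}+\imath\,B\rr_{\sH^{J\imath}_+}$; in particular $\im(T_+)=B\rr_{\sH^{J\imath}_+}\geq0$, hence $\sigma(T_+)\subset\bC^+_\imath$. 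Let $p:\mscr{B}(\bC^+_\imath)\to\gB(\sH^{J\imath}_+)$ be the unique complex PVM with $T_+=\int_{\bC^+_\imath}\mi{id}\,dp$, and let $P:\mscr{B}(\bC^+_\imath)\to\gB(\sH)$ be its unique qPVM extension furnished by Lemma~\ref{lem:fundamental}; then each $P(E)$ commutes with $J$ and $\supp(P)=\supp(p)=\sigma(T_+)$, so $\cP:=(P,\LL)$ is bounded. To obtain \eqref{decspet} I would split $\mi{id}=\alpha+\imath\beta$ and use Theorem~\ref{TEO1} to get $\int_{\bC^+_\imath}\mi{id}\diff\cP=\int_{\bC^+_\imath}\alpha\diff\cP+L_\imath\int_{\bC^+_\imath}\beta\diff\cP$; the two real integrals commute with $J$ and restrict on $\sH^{J\imath}_+$ to $\re(T_+)=A\rr_{\sH^{J\imath}_+}$ and $\im(T_+)=B\rr_{\sH^{J\imath}_+}$, so by the uniqueness of the extension in Proposition~\ref{prop:first-3.11} they equal $A$ and $B$ respectively, whence $\int_{\bC^+_\imath}\mi{id}\diff\cP=A+JB=T$.

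The main obstacle is checking that $\cP$ is genuinely an iqPVM, i.e.\ that $L_qP(E)=P(E)L_q$ for \emph{all} $q\in\bH$, not only for $q=\imath$. The set of such $q$ is a unital real subalgebra of $\bH$ containing $\bR$, so it suffices to see it contains $\C_\imath$ together with one further imaginary unit $\jmath$ with $\imath\jmath=-\jmath\,\imath$; then it is all of $\bH$. For $q\in\C_\imath$ the operator $L_q$ commutes with $J$, hence preserves $\sH^{J\imath}_+$, and its restriction commutes with the bounded self-adjoint operators $A\rr_{\sH^{J\imath}_+}$ and $B\rr_{\sH^{J\imath}_+}$; by the classical fact that an operator commuting with a bounded self-adjoint operator commutes with its entire spectral resolution, this restriction commutes with $p(E)$ for every $E$ (recall $p$ is built from the joint spectral measure of the pair $A\rr_{\sH^{J\imath}_+},B\rr_{\sH^{J\imath}_+}$), and pushing this through Proposition~\ref{prop:first-3.11}$(\mr{d})$ gives $L_qP(E)=P(E)L_q$. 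For $q=\jmath$, the map $u\mapsto(L_\jmath u)\jmath$ restricts to a conjugation $C$ on $\sH^{J\imath}_+$ which commutes with $A\rr_{\sH^{J\imath}_+}$ and $B\rr_{\sH^{J\imath}_+}$, hence with $p$, and unwinding \eqref{eq:def-ext} yields $L_\jmath P(E)=P(E)L_\jmath$.

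It remains to establish the additional facts for an arbitrary bounded iqPVM $\cP'=(P',\LL')$ satisfying \eqref{decspet}. The identities $(\mr{a})(\mr{i})$--$(\mr{iii})$ are immediate from the $^*$-homomorphism property of $\varphi\mapsto\int_{\bC^+_\imath}\varphi\diff\cP'$ (Theorem~\ref{TEO1}), using $\overline{\mi{id}}=\alpha-\imath\beta$, the relation $\jmath(\alpha+\imath\beta)=(\alpha-\imath\beta)\jmath$, and $L'_\imath L'_\imath=-\1$; in particular $T=A+L'_\imath B$ and $-L'_\imath(T-T^*)=\int_{\bC^+_\imath}2\beta\diff\cP'\geq0$. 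Point $(\mr{c})$ follows from Theorem~\ref{TEO1}$(\mr{a})(\mr{vii})$ applied to $2\imath\beta$, whose zero set in $\bC^+_\imath$ is $\R$: $\Ker(T-T^*)=P'(\R)(\sH)$. For the uniqueness in $(\mr{a})$, the identities above show $(A,B,L'_\imath)$ is a slice decomposition of $T$, so Theorem~\ref{thm:first-5.9} forces $A'=A$, $B'=B$ and $L'_\imath\rr_{\Ker(T-T^*)^\perp}=J\rr_{\Ker(T-T^*)^\perp}$; consequently on $\Ker(T-T^*)^\perp$ the measure $P'$ is the qPVM extension of the unique complex spectral measure of $T\rr_{\sH^{J\imath}_+\cap\Ker(T-T^*)^\perp}$, while on $\Ker(T-T^*)=P'(\R)(\sH)$ the operator $T$ is self-adjoint and $P'(\,\cdot\,\cap\R)$ is its unique spectral measure, so $P'=P$. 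Point $(\mr{b})$ follows by combining $\supp(P)=\sigma(T_+)$ with Theorem~\ref{thm:first-5.9}$(\mr{c})$--$(\mr{d})$ and $\sigma(T_+)\subset\bC^+_\imath$. Finally, for $(\mr{d})$ one writes $\Delta_q(T)=\int_{\bC^+_\imath}(z-q_0)(z-\overline{q_0})\diff\cP(z)$, where $q_0$ is the unique point of $\bS_q\cap\bC^+_\imath$: Theorem~\ref{TEO1}$(\mr{a})(\mr{vii})$ gives $\Ker(\Delta_q(T))=\Ker(\Delta_q(T)^*)=P(\{q_0\})(\sH)$, yielding $(\mr{d})(\mr{i})$ and $\sigma_{\mi{rS}}(T)=\emptyset$; and $\Delta_q(T)^{-1}$ is bounded precisely when the continuous function $(z-q_0)(z-\overline{q_0})$ is bounded away from $0$ on the compact set $\supp(P)$, i.e.\ when $q_0\notin\supp(P)$, which gives $(\mr{d})(\mr{iii})$, the approximate eigenvectors being unit vectors in $p(\{|z-q_0|<\epsilon/2\})(\sH^{J\imath}_+)$, on which right multiplication by $q_0$ coincides with $L_{q_0}$.
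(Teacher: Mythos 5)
Your construction follows the paper's overall architecture (slice decomposition $T=A+JB$, restriction $T_+$ to $\sH^{J\imath}_+$, the classical complex spectral theorem, extension of $p$ to $P$ via Lemma \ref{lem:fundamental}), but you replace the paper's two key devices by genuinely different ones, and this is worth comparing. For the intertwining property you recycle the left scalar multiplication of Theorem \ref{thm:first-5.9}$(\mr{e})$ and prove $L_\jmath P(E)=P(E)L_\jmath$ through the anti-linear conjugation $C(u)=(L_\jmath u)\jmath$ of $\sH^{J\imath}_+$, which commutes with $A\rr_{\sH^{J\imath}_+}$ and $B\rr_{\sH^{J\imath}_+}$ and hence with $p$; the paper instead manufactures a \emph{new} left multiplication from a Hilbert basis of $\sH^{J\imath}_+$ with $\langle z|p(E)z'\rangle\in\R$, obtained from the Dunford--Schwartz multiplicative $L^2$-model of $T_+$ plus Zorn's lemma. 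Your route is shorter and avoids the $L^2$-representation at this stage; note only that the ``classical fact'' you invoke is stated for $\C_\imath$-linear operators, so for the anti-linear $C$ you should add the one-line argument that $E\mapsto Ce_S(E)C$ is again a PVM representing $CSC=S$ for $S=A\rr_{\sH^{J\imath}_+},B\rr_{\sH^{J\imath}_+}$, whence $C$ commutes with their (joint) spectral measure and so with $p$. Likewise your derivation of $(\mr{d})(\mr{i})$--$(\mr{ii})$ from $\Delta_q(T)=\int_{\C^+_\imath}(z-q_0)(z-\overline{q_0})\diff\cP(z)$ and Theorem \ref{TEO1}$(\mr{a})(\mr{vii})$ is cleaner than the paper's eigenvector decomposition; just add that an isolated point $q_0$ of $\supp(P)$ necessarily has $P(\{q_0\})\neq 0$, and that an approximate eigenvector $u$ for $q_0\in\C^+_\imath$ gives one for any $q=sq_0s^{-1}\in\bS_{q_0}$ by passing to $us$, since the statement of $(\mr{d})(\mr{iii})$ concerns arbitrary $q\in\sigma_{\mi{cS}}(T)$.

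The one genuine gap is in the uniqueness of $P$. On $\Ker(T-T^*)^\perp$ your reduction (via $L'_\imath=J$ there, the uniqueness of the complex spectral measure, and the uniqueness clause of Lemma \ref{lem:fundamental}) is sound. But on $\Ker(T-T^*)=P'(\R)(\sH)$ you simply assert that $P'(\,\cdot\,\cap\R)$ is ``the unique spectral measure'' of the self-adjoint restriction of $T$. In the quaternionic setting this is not a previously available fact in the paper --- it is precisely an instance of what Theorem \ref{spectraltheorem}$(\mr{a})$ is proving --- and it cannot be obtained by your complex-reduction trick, because the complex subspaces attached to $L_\imath$ and $L'_\imath$ may differ on $\Ker(T-T^*)$. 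It has to be argued, e.g.\ exactly as the paper does globally: for real-valued integrands the integral does not involve the left multiplication, so $\int t^n\diff\mu^{\sss(P)}_u=\langle u|T^nu\rangle=\int t^n\diff\mu^{\sss(P')}_u$ for all $n$ and all $u$ in $\Ker(T-T^*)$; Stone--Weierstrass and the Riesz theorem for compactly supported regular Borel measures give $\mu^{\sss(P)}_u=\mu^{\sss(P')}_u$, and Proposition 2.17$(\mr{a})$ of \cite{GhMoPe} then yields $P(E)=P'(E)$ for Borel $E\subset\R$. With that paragraph inserted (a special case of the paper's own uniqueness argument), your proof is complete.
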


\begin{proof}
We divide the proof into two steps.

\textit{Step I.} Let us prove the existence of an iqPVM $\cP=(P,\LL)$ satisfying \eqref{decspet}. In view of Theorem \ref{thm:first-5.9}, there exists an anti self-adjoint and unitary operator $J \in \gB(\sH)$ such that $T=A+JB$, where $A=(T+T^*)\frac{1}{2}$, $B=|T-T^*|\frac{1}{2}$ and the operators $A$, $B$ and $J$ commute mutually. In particular, the operators $A$, $B$ and $T$ commute with $J$ and hence such operators leave fixed the complex subspace $\sH^{J\imath}_+$. Denote by $A_+,B_+,T_+ \in\gB(\sH^{J\imath}_+)$ the restrictions of $A,B,T$ from $\sH^{J\imath}_+$ into itself, respectively. By point $(\mr{b})$ of the above mentioned theorem, it follows that $A_+$ and $B_+$ are self-adjoint and $T_+$ is normal. Moreover, $B_+ \geq 0$ and $T_+=A_++B_+\imath$ in $\gB(\sH^{J\imath}_+)$. Apply the classical complex spectral theorem to $T_+$. We obtain a $\bC_\imath$-complex PVM $p:\mscr{B}(\bC^+_\imath) \to \gB(\sH^{J\imath}_+)$ such that $T_+=\int_{\bC^+_\imath}\mi{id} \diff p$. Here, by abuse of notation, $\mi{id}$ indicates the identity map on $\bC^+_\imath$. Since $B_+=\int_{\sigma(T_+)} \mr{im}(z) \diff p(z)$ and $B_+ \geq 0$, it must be 
$\mr{im}(\sigma(T_+)) \subset  [0,+\infty)$; that is,
$\sigma(T_+) \subset \bC^+_\imath$, where we identified $\bC$ with $\bC_\imath$ in the natural way.
We conclude that $\supp(p)$ is contained in $\bC_\imath^+$, because it coincides with $\sigma(T_+)$. The latter fact allows to apply Lemma \ref{lem:fundamental}, obtaining a qPVM $P:\mscr{B}(\bC^+_\imath) \to \gB(\sH^{J\imath}_+)$ extending $p$ and satisfying \eqref{eq:norm-comm}. In particular, $P(E)$ commutes with $J$ for every $E \in \mscr{B}(\bC^+_\imath)$.

Our next aim is to construct a left scalar multiplication $\bH \ni q \mapsto L_q$ of $\sH$ such that $L_\imath=J$ and $P(E)$ commutes with $L_q$ also for all $q \in \sH \setminus \{\imath\}$. In order to do this, it suffices to find a Hilbert basis $N$ of $\sH^{J\imath}_+$ such that
\beq \label{eq:zp(E)z'}
\langle z|p(E)z'\rangle \in \R \; \mbox{ if $z,z' \in N$}.
\eeq
Indeed, such a basis $N$ of $\sH^{J\imath}_+$ would be also a Hilbert basis of $\sH$ and, if $\bH \ni q \stackrel{\LL}{\longmapsto} L_q$ denotes the left scalar multiplication of $\sH$ induced by $N$, then $L_\imath=J$ (see Lemma 3.10$(\mr{b})$ and Proposition 3.8$(\mr{f})$ of \cite{GhMoPe} for a proof of this assertion). Furthermore, for every $E \in \mscr{B}(\bC^+_\imath)$ and for every $q \in \bH$, we would have:
\begin{align*}
(L_qP(E))(z')&=\sum_{z \in N}zq\langle z|p(E)z'\rangle=\sum_{z \in N}z\langle z|p(E)z'\rangle q\\
&=(p(E)(z'))q=P(E)(z'q)=(P(E)L_q)(z').
\end{align*}
In other words, it would hold that $L_qP(E)=P(E)L_q$; that is, the pair $(P,\LL)$ would be an iqPVM over $\bC^+_\imath$ in $\sH$.

To construct the desired Hilbert basis $N$ of $\sH^{J\imath}_+$, we adapt to the present situation the argument we used to prove Theorem 5.14 of \cite{GhMoPe} (see point $(\mr{e})$ of Theorem \ref{thm:first-5.9} for the statement of such a result). Since the operator $T_+$ is bounded and normal, by Theorem X.5.2 of \cite{DS}, there exist a family $\{\mu_\lambda\}_{\lambda \in \Lambda}$ of finite positive regular Borel measures on $\sigma(T_+)$ and an isometry $V:\sH^{J\imath}_+ \to \bigoplus_{\lambda \in \Lambda} L^2_\lambda$ with $L^2_\lambda:=L^2(\sigma(T_+),\bC_\imath;\mu_\lambda)$ such that, for every bounded measurable function $\psi:\sigma(T_+) \to \bC_\imath$, the operator $\psi(T_+):= \int_{\sigma(T_+)} \psi \diff p \in \gB(\sH^{J\imath}_+)$ is multiplicatively represented  in $\bigoplus_{\lambda \in \Lambda}L^2_\lambda$ in the sense that
\[
(V \psi(T_+) V^{-1})(\oplus_{\lambda \in \Lambda} \varphi_\lambda)=\oplus_{\lambda \in \Lambda} \psi \cdot \varphi_\lambda.
\]
In particular, we have that 
$(V p(E) V^{-1})(\oplus_{\lambda \in \Lambda} \varphi_\lambda)=\oplus_{\lambda \in \Lambda} \Chi_E \cdot \varphi_\lambda$ if $E \in \mscr{B}(\bC^+_\imath)$. By Zorn's lemma, it is easy to infer the existence of a Hilbert basis $M$ of $\bigoplus_{\lambda \in \Lambda}L^2_\lambda$ formed by real-valued functions defined on $\sigma(T_+)$ and belonging to some $L^2_\lambda$. The Hilbert basis $N:=V^{-1}(M)$ of $\gB(\sH^{J\imath}_+)$ has the required property \eqref{eq:zp(E)z'}. Indeed, if $z,z' \in N$, $f:=V(z) \in L^2_\lambda$, $f':=V(z') \in L^2_{\lambda'}$ and $E \in \mscr{B}(\bC^+_\imath)$, then
\[
\langle z|p(E)z'\rangle=0 \in \R \; \mbox{ if $\lambda \neq \lambda'$}
\]
and
\[
\langle z|p(E)z'\rangle =\int_{\sigma(T_+)}\overline{f}\Chi_Ef' \diff \mu_\lambda=\int_{\sigma(T_+)}f\Chi_Ef' \diff \mu_\lambda \in \R \; \mbox{ if $\lambda=\lambda'$},
\]
because $f$, $f'$ and $\Chi_E$ are real-valued.

We have just constructed a Hilbert basis $N$ of $\gB(\sH)$, which induces a left scalar multiplication $\bH \ni q \stackrel{\LL}{\longmapsto} L_q$ of $\sH$ such that $L_\imath=J$ and the pair $\cP:=(P,\LL)$ is an iqPVM over $\bC^+_\imath$ in $\sH$.

It remains to verify formula \eqref{decspet}. Let $s$ be a $\C_\imath$-valued simple function on $\C^+_\imath$. Suppose $s$ is of the form $s=S\Chi_E$ for some $S \in \C_\imath$ and $E \in \mscr{B}(\C^+_\imath)$. By definition, the operator $\int_{\bC^+_\imath} s \diff \cP$ is equal to $L_SP(E)$ and hence it commutes with $J=L_\imath$, because $L_\imath L_S=L_{\imath S}=L_{S\imath}=L_SL_\imath$ and $L_\imath P(E)=P(E)L_\imath$. It follows that
\[
\text{$\left(\int_{\bC^+_\imath} s \diff \cP\right)(\sH^{J\imath}_+) \subset \sH^{J\imath}_+ \; \;$ and $\; \; \left.\int_{\bC^+_\imath} s \diff \cP\right|_{\sH^{J\imath}_+}=\int_{\bC^+_\imath} s \diff p$.}
\]
By linearity, the latter two facts hold true for all $\C_\imath$-valued simple function on $\C^+_\imath$.

Making use of a sequence of $\bC_\imath$-valued simple functions converging uniformly to $\mi{id}$, we infer at once that the same is true also for $\mi{id}$:
\[
\text{$\left(\int_{\bC^+_\imath} \mi{id} \diff \cP\right)(\sH^{J\imath}_+) \subset \sH^{J\imath}_+ \; \;$ and $\; \; \left.\int_{\bC^+_\imath} \mi{id} \diff \cP\right|_{\sH^{J\imath}_+}=\int_{\bC^+_\imath} \mi{id} \diff p=T_+$.}
\]

Since each operator in $\gB(\sH^{J\imath}_+)$ extends uniquely to an operator in $\gB(\sH)$ (see Proposition \ref{prop:first-3.11}), the operators $\int_{\bC^+_\imath} \mi{id} \diff\cP$ and $T$ coincide. Formula \eqref{decspet} is proved.  

\textit{Step II.} Let us prove points $(\mr{a})$-$(\mr{d})$.

$(\mr{a})$ Let $\cP'=(P',\LL')$ be another iqPVM over $\bC^+_\imath$ in $\sH$ such that $P'$ is bounded and $\int_{\bC_\imath^+} \mi{id} \diff \cP'=T=\int_{\bC_\imath^+} \mi{id} \diff \cP$. First observe that properties $(\mr{i})$-$(\mr{iii})$ follows immediately from points $(\mr{a})(\mr{i})$, $(\mr{a})(\mr{ii})$, $(\mr{a})(\mr{iii})$ and $(\mr{a})(\mr{viii})$ of Theorem \ref{TEO1} and from the fact that $\imath\,\mi{id}=\mi{id}\,\imath$ and $\jmath\,\mi{id}=\overline{\mi{id}}\,\jmath$ if $\jmath \in \bS$ with $\imath\jmath=-\jmath\,\imath$.

Using again points $(\mr{a})(\mr{i})$, $(\mr{a})(\mr{iii})$ and $(\mr{a})(\mr{viii})$ of Theorem \ref{TEO1}, we obtain that 
\[
\int_{\bC_\imath^+} \alpha \diff \cP'=(T+T^*)\frac{1}{2}=A=\int_{\bC_\imath^+} \alpha \diff \cP
\]
and
\[
\int_{\bC_\imath^+} \beta \diff \cP'=|T-T^*|\frac{1}{2}=B=\int_{\bC_\imath^+} \beta \diff \cP,
\]
where the integrands $\alpha$ and $\beta$ indicate the functions $\C^+_\imath \ni (\alpha,\beta) \mapsto \alpha$ and $\C^+_\imath \ni (\alpha,\beta) \mapsto \beta$, respectively. By points $(\mr{a})(\mr{i})$ and $(\mr{a})(\mr{ii})$ of the same theorem, we also infer that $\int_{\bC_\imath^+}f \diff\cP'=\int_{\bC_\imath^+} f \diff\cP$ for every real-valued polynomial function $\C^+_\imath \ni (\alpha,\beta) \mapsto f(\alpha,\beta)$. Denote by $K$ the compact subset $\supp(P) \cup \supp(P')$ of $\bC^+_\imath$. Making use of the Stone-Weierstrass theorem, one gets immediately the equality $\int_{\C^+_\imath} \varphi \diff\cP'=\int_K \varphi \diff\cP'=\int_K \varphi \diff\cP=\int_{\C^+_\imath} \varphi \diff\cP$ for every real-valued continuous function $\varphi$ on $\bC_\imath^+$. Thus, if $\varphi$ is such a function and $x \in \sH$, equality \eqref{intemediatestimatef} ensures that 
\[
\int_{\C^+_\imath} \varphi \diff\mu_x^{\sss(P)}= 
\left\langle x\left| \int_{\C^+_\imath} \varphi \diff\cP x\right.\right\rangle=\left \langle x\left| \int_{\C^+_\imath} \varphi \diff\cP' x\right. \right\rangle =\int_{\C^+_\imath} \varphi \diff\mu_x^{\sss(P')}.
\]
In view of Riesz theorem for positive regular Borel measures, we have that $\mu_x^{\sss(P)}=\mu_x^{\sss(P')}$ and thus the equality $\int_K \varphi \diff\mu_x^{\sss(P)}=\int_K\varphi \diff\mu_x^{\sss(P')}$ continues to hold for every real-valued function $\varphi$ in $M_b(K,\bH)$. In other words, if $\varphi \in M_b(K,\bH)$ is real-valued, then $\big\langle x \big| (\int_{\C^+_\imath} \varphi \diff\cP-\int_{\C^+_\imath} \varphi \diff\cP')x \big\rangle=0$ for every $x \in \sH$. The latter condition implies that $\int_{\bC_\imath^+}\varphi \diff\cP=\int_{\bC_\imath^+}\varphi \diff\cP'$ if $\varphi \in M_b(K,\bH)$ is real-valued (see Proposition 2.17$(\mr{a})$ of \cite{GhMoPe} for a proof of this implication). In particular, this is true if $\varphi=\Chi_E$ for every $E \in \mscr{B}(\bC^+_\imath)$ and hence
\[
P(E)=\int_{\bC^+_\imath}\Chi_E \diff\cP=\int_{\bC^+_\imath}\Chi_E \diff\cP'=P'(E),
\]
as desired. Moreover, we have
\[
L_\imath B=\int_{\C^+_\imath} \imath\beta\diff\cP =(T-T^*)\frac{1}{2}=\int_{\C^+_\imath}\imath\beta\diff\cP'=L'_\imath B
\]
and hence $L_\imath=L'_\imath$ on $\overline{\Ran(B)}=\overline{\Ran(|T-T^*|)}$. On the other hand, by (2.13) and (2.15) of \cite{GhMoPe}, we know that $\overline{\Ran(|T-T^*|)}=\Ker(T-T^*)^\perp$.

$(\mr{b})$ By construction of $P$ in Step I, we have that $\supp(P)=\supp(p)$. However, the classical complex spectral theorem for $T_+$ ensures that $\supp(p)$ is equal to $\sigma(T_+)$ and hence it is compact. Point $(\mr{d})$ of Theorem \ref{thm:first-5.9} ensures that $\OO_{\supp(p)}=\ssp(T)$.

$(\mr{c})$  Since $T-T^*=\int_{\C^+_\imath}2\beta\imath\diff\cP$, point $(\mr{a})(\mr{vii})$ of Theorem \ref{TEO1} ensures that $\Ker(T-T^*)=P(\R)(\sH)$, which is equivalent to $\mr{Ker}(T-T^*)^\perp=P(\bC^+_\imath \setminus \R)(\sH)$.

$(\mr{d})$ In the proof of this point, we will adopt the notations used in Step I.

Let us prove $(\mr{i})$. Since $\sigma_{\mi{pS}}(T)$ is a circular set, it suffices to show that $q \in \sigma_{\mi{pS}}(T) \cap \bC^+_\imath$ if and only if $P(\{q\}) \neq 0$ or, equivalently, if $p(\{q\}) \neq 0$. Let $q \in \bC^+_\imath$. Recall that $q \in \sigma_{pS}(T)$ if and only if $Tu=uq$ for some $u \in \sH \setminus \{0\}$ (see Proposition 4.5 of \cite{GhMoPe} for a proof). Fix $\jmath \in \bS$ with $\imath\jmath=-\jmath\,\imath$. Define $u_\pm:=\frac{1}{2}(u \mp Ju\imath) \in \sH^{J\imath}_\pm$. Due to Lemma 3.10 in \cite{GhMoPe}, the identity $Tu=uq$ decomposes in $T_+u_+=u_+q$ and $-T_+(u_-\jmath)\jmath=u_-q$. Since $q \in \bC_\imath$, the latter equality can be re-written as $T_+ u_-\jmath = u_-\jmath\overline{q}$. Since $u \neq 0$, either $u_+ \neq 0$ or $u_- \neq 0$ (and hence $u_-\jmath \neq 0$). It follows that $q \in \sigma_{\mi{pS}}(T)$ if and only if either $q \in \sigma_p(T_+)$ when $u_+ \neq 0$ or $q=\overline{q} \in \sigma_p(T_+)$ when $u_-\jmath \neq 0$ (because, in the latter case, $\overline{q} \in \sigma_p(T_+) \subset \bC^+_\imath$). In this way, we have found that $q \in \sigma_{\mi{pS}}(T)$ if and only if $q \in \sigma_p(T_+)$. From the standard complex spectral theory, we know that $q \in \sigma_p(T_+)$ if and only if $p(\{q\}) \neq 0$. In particular, this is true if $q$ is an isolated point of $\sigma_p(T_+)$ (see Theorem 8.54(b)(iii) of \cite{Mo-book}).

Point $(\mr{ii})$ is contained in Theorem 4.8$(\mr{a})(\mr{ii})$ of \cite{GhMoPe}.

The first part of $(\mr{iii})$ follows immediately from $(\mr{i})$, $(\mr{ii})$ and from the fact that $\sigma_{\mi{cS}}(T) \cap \bC^+_\imath=(\ssp(T) \cap \bC^+_\imath) \setminus \sigma_{\mi{pS}}(T)=\supp(P) \setminus \sigma_{\mi{pS}}(T)$. Let us complete the proof of $(\mr{iii})$. Let $q \in \sigma_{\mi{cS}}(T)$ and let $\vep$ be a positive real number. By definition of $\sigma_{\mi{cS}}(T)$, we know that $\Delta_q(T)$ is injective, but $\Delta_q(T)^{-1}:\mr{Ran}(\Delta_q(T)) \to \sH$ is not bounded. In this way, there exists $x_\vep \in \sH \setminus \{0\}$ such that $\|\Delta_q(T)^{-1}(x_\vep)\|/\|x_\vep\|>\vep^{-2}$. If we set $u_\vep:=\Delta_q(T)^{-1}(x_\vep)/\|\Delta_q(T)^{-1}(x_\vep)\|$, we can re-write the latter inequality as follows, $\|\Delta_q(T)u_\vep\|<\vep^2$ or, equivalently,
\beq \label{eq:u}
\|T(Tu_\vep-u_\vep q)-(Tu_\vep-u_\vep q)\overline{q}\|<\vep^2.
\eeq
If $\|Tu_\vep-u_\vep q\|<\vep$, we are done. Suppose $\|Tu_\vep-u_\vep q\|\geq\vep$. Let $\kappa \in \bS$ such that $\overline{q}=\kappa q \kappa^{-1}$. Define $v_\vep:=(Tu_\vep-u_\vep q)/\|Tu_\vep-u_\vep q\|$ and $u'_\vep:=v_\vep\kappa$. Observe that $\|u'_\vep\|=1$, because $|\kappa|=1$. Thanks to \eqref{eq:u}, we have:
\begin{align*}
\|Tu'_\vep-u'_\vep q\|&=\|Tv_\vep-v_\vep \overline{q}\| \cdot |\kappa|=\frac{\|\Delta_q(T)u_\vep\|}{\|Tu_\vep-u_\vep q\|}<\frac{\vep^2}{\vep}=\vep.
\end{align*}
The vector $u'_\vep \in \sH$ has the desired property.
\end{proof}


\subsection{iqPVMs associated with bounded normal operators}

In the preceding spectral theorem, the uniqueness of $P$ has been established, however no uniqueness property holds for $\LL$. As a matter of fact, it is possible to 
prove that every left multiplication $\LL$ with a nice interplay with $T$ can be used to construct an iqPVM verifying the spectral formula \eqref{decspet}. We state a corresponding result on this point.

\begin{theorem}\label{propL}
Let $\sH$ be a quaternionic Hilbert space, let $\imath \in \bS$, let $T \in \gB(\sH)$ be a normal operator and let $P:\mscr{B}(\bC^+_\imath) \to \gB(\sH)$ be the qPVM over $\bC^+_\imath$ in $\sH$ associated with $T$ by means of Spectral Theorem \ref{spectraltheorem}(a). Given a left scalar multiplication $\bH \ni q \stackrel{\LL}{\longmapsto}\!L_q$ of $\sH$, we have that the pair $\cP=(P,\LL)$ is an iqPVM over $\bC^+_\imath$ in $\sH$ verifying (\ref{decspet}) if and only if the following three conditions hold:
\begin{itemize}
 \item[$(\mr{i})$] $L_\imath T = T L_\imath$,
 \item[$(\mr{ii})$] $L_\jmath T = T^* L_\jmath$ for some $\jmath \in \bS$ with $\imath\jmath=-\jmath \, \imath$,
 \item[$(\mr{iii})$] $-L_\imath (T-T^*)\geq 0$.
\end{itemize}
\end{theorem}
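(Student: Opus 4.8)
The statement is an equivalence whose two halves are very unequal: the necessity is immediate from what is already proved, while the sufficiency carries all the weight. For the necessity I would simply note that if $\cP=(P,\LL)$ is an iqPVM over $\bC^+_\imath$ in $\sH$ with $T=\int_{\bC^+_\imath}\mi{id}\,\diff\cP$, then (by the uniqueness clause of Theorem \ref{spectraltheorem}$(\mr a)$) this $P$ is the one in the statement, and conditions $(\mr i)$, $(\mr{ii})$, $(\mr{iii})$ are exactly items $(\mr a)(\mr i)$, $(\mr a)(\mr{ii})$ and $(\mr a)(\mr{iii})$ of that theorem --- the last giving $-L_\imath(T-T^*)=|T-T^*|\ge 0$.

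For the converse, the plan is as follows. Assuming $(\mr i)$--$(\mr{iii})$, put $J:=L_\imath$, which is automatically anti self-adjoint and unitary. Taking the adjoint of $(\mr i)$ yields $L_\imath T^*=T^*L_\imath$, so $L_\imath$ commutes with $A:=(T+T^*)\frac12$ and with $T-T^*$; hence $\bigl(-L_\imath(T-T^*)\bigr)^2=L_\imath^2(T-T^*)^2=-(T-T^*)^2=(T-T^*)^*(T-T^*)=|T-T^*|^2$, and combining with $(\mr{iii})$ and the uniqueness of the positive square root forces $-L_\imath(T-T^*)=|T-T^*|=2B$ with $B:=|T-T^*|\frac12$. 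Thus $L_\imath B=(T-T^*)\frac12=T-A$, i.e.\ $T=A+JB$, and likewise $BL_\imath=L_\imath B$; so $(A,B,J)$ satisfies conditions $(\mr i)$--$(\mr{iv})$ of Theorem \ref{thm:first-5.9} (the commutativity of $A$ and $B$ being automatic because $T$ is normal). Step I of the proof of Theorem \ref{spectraltheorem} then applies verbatim with this $J$ and, by the uniqueness in part $(\mr a)$, produces the $P$ of the present statement, realized as the canonical extension to $\sH$ (Lemma \ref{lem:fundamental}) of the complex spectral measure $p$ of $T_+:=T|_{\sH^{J\imath}_+}$; in particular $L_\imath P(E)=P(E)L_\imath$ for every $E\in\mscr{B}(\bC^+_\imath)$. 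Since the integral of a real-valued integrand needs no left multiplication, it follows that $\int_{\bC^+_\imath}\alpha\,\diff P=A$ and $\int_{\bC^+_\imath}\beta\,\diff P=B$: both sides commute with $J$ and restrict on $\sH^{J\imath}_+$ to $A_+:=A|_{\sH^{J\imath}_+}$, resp.\ $B_+:=B|_{\sH^{J\imath}_+}$ (recall $T_+=A_++\imath B_+$), so they agree by Proposition \ref{prop:first-3.11}.

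The main obstacle is proving $L_\jmath P(E)=P(E)L_\jmath$ for all $E$ --- this is where $(\mr{ii})$ is used, and it cannot be obtained from the ``commutes with $J,A,B$'' pattern because $L_\jmath$ \emph{anti}commutes with $J$. My plan is to pass to the conjugation $\Phi(X):=L_\jmath XL_\jmath^{-1}$, a continuous $\R$-linear $^*$-automorphism of $\gB(\sH)$. By $(\mr{ii})$, $\Phi(T)=T^*$ and hence $\Phi(T^*)=T$, so $\Phi$ fixes $A$ and fixes $(T-T^*)^*(T-T^*)$, hence (again by uniqueness of the positive square root) fixes $B$. Also $\Phi(L_\imath)=L_{\jmath\imath\jmath^{-1}}=L_{-\imath}=-L_\imath$, and a short computation from $L_\imath P(E)=P(E)L_\imath$ shows that $Q(E):=\Phi(P(E))$ still commutes with $J$; thus $Q$ is a qPVM over $\bC^+_\imath$ commuting with $J$, restricting to a complex PVM $p'$ on $\sH^{J\imath}_+$. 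Since for real-valued bounded Borel $\varphi$ one has $\int_{\bC^+_\imath}\varphi\,\diff Q=\Phi\bigl(\int_{\bC^+_\imath}\varphi\,\diff P\bigr)$ (clear for simple $\varphi$, then by continuity of $\Phi$), restricting to $\sH^{J\imath}_+$ gives $\int\alpha\,\diff p'=A_+$ and $\int\beta\,\diff p'=B_+$, whence $\int_{\bC^+_\imath}z\,\diff p'=A_++\imath B_+=T_+=\int_{\bC^+_\imath}z\,\diff p$; by uniqueness of the spectral measure of the normal operator $T_+$ we conclude $p'=p$, so $Q(E)=\widetilde{p'(E)}=\widetilde{p(E)}=P(E)$, i.e.\ $L_\jmath P(E)=P(E)L_\jmath$.

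Finally, since $\{1,\imath,\jmath,\imath\jmath\}$ is an $\R$-basis of $\bH$ and $\LL$ is a real algebra homomorphism, every $L_q$ is a real-linear combination of $\1,L_\imath,L_\jmath,L_\imath L_\jmath$ and therefore commutes with every $P(E)$; hence $\cP=(P,\LL)$ is an iqPVM over $\bC^+_\imath$ in $\sH$. Writing $\mi{id}=\alpha+\imath\beta$ as an $\bH$-valued function on $\bC^+_\imath$ and using the two-sided $^*$-homomorphism property of the integral (Theorem \ref{TEO1}$(\mr a)$) together with $\int_{\bC^+_\imath}\alpha\,\diff\cP=A$ and $\int_{\bC^+_\imath}\beta\,\diff\cP=B$, we then obtain $\int_{\bC^+_\imath}\mi{id}\,\diff\cP=A+L_\imath B=A+JB=T$, which is \eqref{decspet}. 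The delicate points are the two invocations of uniqueness of positive square roots and, above all, the third paragraph: one must go through the automorphism $\Phi$ to accommodate the relation $L_\jmath J=-JL_\jmath$, and then identify $p'$ with $p$ via the normal operator $T_+$ rather than directly.
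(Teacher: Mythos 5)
Your proposal is correct, and its necessity half coincides with the paper's (both simply quote Theorem \ref{spectraltheorem}$(\mr{a})$); the sufficiency half, however, takes a genuinely different route after the common first step in which $(\mr{i})$--$(\mr{iii})$ and uniqueness of positive square roots yield that $L_\imath,L_\jmath$ commute with $A=(T+T^*)\frac12$ and $B=|T-T^*|\frac12$ and that $-L_\imath(T-T^*)=2B$, i.e.\ $T=A+L_\imath B$. The paper keeps the left multiplication $\LL^0$ furnished by Theorem \ref{spectraltheorem}, knows only $L_\imath=L^0_\imath$ on $\Ker(T-T^*)^\perp$, and proves $L_\kappa P(E)=P(E)L_\kappa$ for both $\kappa=\imath,\jmath$ by one uniform measure-theoretic bootstrap: commutation with $\int f\diff\cP^0$ for real polynomials $f(\alpha,\beta)$, then for real continuous functions via Stone--Weierstrass, then $\mu^{\sss(P)}_{L_\kappa u}=\mu^{\sss(P)}_u$ via the Riesz representation theorem, and finally for $\Chi_E$; equality \eqref{decspet} is then obtained by approximating $\mi{id}$ with simple functions $s_n+s'_n\imath$, $s'_n|_\R=0$. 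You instead treat the two units asymmetrically: commutation of $L_\imath$ with $P$ comes from re-running Step I of Theorem \ref{spectraltheorem} with $J:=L_\imath$ (legitimate, since $(A,B,L_\imath)$ has all the properties that step uses, and $A$, $B$ commute by Theorem \ref{thm:first-5.9} applied to $T$) together with the uniqueness of $P$ and Lemma \ref{lem:fundamental}; commutation of $L_\jmath$ comes from the conjugation $\Phi=L_\jmath(\cdot)L_\jmath^{-1}$, which fixes $A$ and $B$, sends $P$ to another qPVM commuting with $J$ whose restriction to $\sH^{J\imath}_+$ again integrates $z$ to $T_+$, so that the classical uniqueness of the complex spectral measure of $T_+$ plus the extension uniqueness of Proposition \ref{prop:first-3.11} force $\Phi(P(E))=P(E)$. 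Your argument trades the paper's Riesz-representation step for the uniqueness clause of the classical spectral theorem, and it closes more cheaply: once $\int_{\C^+_\imath}\alpha\diff P=A$ and $\int_{\C^+_\imath}\beta\diff P=B$ are identified, $\int_{\C^+_\imath}\mi{id}\diff\cP=A+L_\imath B=T$ follows from Theorem \ref{TEO1}$(\mr{a})$ with no further approximation, whereas the paper's route stays entirely inside the quaternionic measure-theoretic machinery and handles $\imath$ and $\jmath$ by one and the same argument. If you write yours up, make explicit the two small points you use implicitly: integrals of real-valued integrands against a bare qPVM are unambiguous (Remark \ref{remarksupp}(4)), and $\Phi$ fixes $B$ because it is a $^*$-automorphism implemented by the unitary $L_\jmath$, hence preserves positivity, so uniqueness of the positive square root applies.
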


\begin{proof} By point $(\mr{a})$ of Theorem \ref{spectraltheorem}, we just know that, if $\cP=(P,\LL)$ is an iqPVM verifying \eqref{decspet}, then properties $(\mr{i})$, $(\mr{ii})$ and $(\mr{iii})$ hold.

Let us prove the converse. Suppose that $\LL$ verifies $(\mr{i})$, $(\mr{ii})$ and $(\mr{iii})$.

First, we show that $L_q$ commutes with the operators $A:=(T+T^*)\frac{1}{2}$ and $B:=|T-T^*|\frac{1}{2}$ for every $q \in \bH$. In order to do this, it suffices to verify that $L_\imath$ and $L_\jmath$ commute with $A$ and $B$, because the left multiplication for reals coincides with the right one and does not depend on the choice of $\LL$. By $(\mr{i})$, we have that
\[
T^*L_\imath=-(L_\imath T)^*=-(T L_\imath)^*=L_\imath T^*.
\]
It follows that $L_\imath$ commutes with $A$ and with $T-T^*$. In particular, we have that
\[
\big(-L_\imath(T-T^*)\big)^2=-(T-T^*)^2=(T-T^*)^*(T-T^*)=(2B)^2.
\]
By the uniqueness of the square root of a positive operator, we infer that
\[
-L_\imath(T-T^*)=2B.
\]

By $(\mr{ii})$, $L_\jmath$ commutes with $A$ as well, and it holds $L_\jmath(T-T^*)=-(T-T^*)L_\jmath$. In this way, it holds:
\begin{align*}
2(L_\jmath B) &=-L_\jmath L_\imath(T-T^*)=-L_{\jmath\,\imath}(T-T^*)=L_{\imath\jmath}(T-T^*)\\
&=L_\imath L_\jmath(T-T^*)=-L_\imath(T-T^*)L_\jmath=2(BL_\jmath);
\end{align*}
that is, $L_\jmath$ commutes with $B$. This proves that $L_q$ commutes with $A$ and $B$ for every $q \in \bH$.

We are now in a position to show that the pair $\cP:=(P,\LL)$ is an iqPVM over $\bC^+_\imath$ in $\sH$. This is equivalent to prove that $L_\kappa$ commutes with $P(E)$ for every $\kappa \in \{\imath,\jmath\}$ and for every $E \in \mscr{B}(\bC^+_\imath)$. Let $\kappa \in \{\imath,\jmath\}$ and let $E \in \mscr{B}(\C^+_\imath)$.

Thanks to Theorem \ref{spectraltheorem}, there exists a left scalar multiplication $\LL^0$ of $\sH$ such that $\cP^0:=(P,\LL^0)$ is an iqPVM over $\bC^+_\imath$ in $\sH$ for which $T=\int_\Gamma\mi{id}\diff\cP^0$,
\beq \label{eq:A-B}
\text{$A=\int_\Gamma\alpha\diff\cP^0\;\;$ and $\;\;B=\int_\Gamma\beta\diff\cP^0$,}
\eeq
and
\beq \label{eq:P(E)}
P(E)=\int_\Gamma \Chi_E\diff\cP^0,
\eeq
where $\Gamma$ indicates the compact subset $\supp(P)$ of $\C^+_\imath$. Moreover, if $L^0_\imath$ denotes $\LL^0(\imath)$, then  $T-T^*=L^0_\imath\big(-L^0_\imath(T-T^*)\big)$, the operators $L^0_\imath$ and $-L^0_\imath(T-T^*)$ commute and $-L^0_\imath(T-T^*)=|T-T^*| \geq 0$. By hypothesis, the latter properties are true also for $L_\imath$; namely, $T-T^*=L_\imath(-L_\imath(T-T^*))$, the operators $L_\imath$ and $-L_\imath(T-T^*)$ commute and $-L_\imath(T-T^*)=|T-T^*| \geq 0$. In this way, Theorem \ref{thm:first-5.9} applied to $T-T^*$ and Theorem \ref{spectraltheorem}$(\mr{c})$ imply that
\beq\label{eq:L}
\text{$L_\imath=L^0_\imath\;$ on $\mr{Ker}(T-T^*)^\perp=P(\C^+_\imath \setminus \R)(\sH)$.}
\eeq

 We just know that $L_\kappa$ commutes with $A$ and $B$. Bearing in mind \eqref{eq:A-B} and Theorem \ref{TEO1}, we infer at once that $\int_\Gamma f\diff\cP^0$ for every real-valued polynomial function $\Gamma \ni (\alpha,\beta) \to f(\alpha,\beta)$. Thanks to the Stone-Weierstrass approximation theorem, it follows at once that the same commutative property holds for every real-valued continuous function on $\Gamma$. Let $\varphi$ be such a function. Making use of the latter commutative property of $L_\kappa$, of \eqref{intemediatestimatef} and of the equality $L_\kappa^*=-L_\kappa$, we obtain that
\begin{align}
\int_\Gamma \varphi\diff\mu^{\sss(P)}_{L_\kappa u} &=  \left\langle L_\kappa u \left|\int_\Gamma \varphi
\diff\cP^0 L_\kappa u \right. \right\rangle = \left\langle u \left|-L_\kappa\int_\Gamma \varphi
\diff\cP^0 L_\kappa u \right. \right\rangle \nonumber\\
& \label{aggid}
=-\left\langle u \left|\int_\Gamma \varphi
\diff\cP^0 L_\kappa L_\kappa u \right. \right\rangle=\left\langle u \left|\int_\Gamma \varphi
\diff\cP^0 u \right. \right\rangle = \int_\Gamma\varphi \diff\mu^{\sss(P)}_u
\end{align}
and hence
\[ 
\int_\Gamma \varphi\diff\mu^{\sss(P)}_{L_\kappa u} =\int_\Gamma\varphi \diff\mu^{\sss(P)}_u
\]
for every $u \in \sH$. Thanks to Riesz' theorem for positive regular Borel measures, we have that $\mu^{\sss(P)}_{L_\kappa u}=\mu^{\sss(P)}_u$ and hence the chain of equalities \eqref{aggid} extends to all real-valued measurable functions $\varphi$ on $\Gamma$. In particular, we have:
\[
\left\langle u \left| -L_\kappa \int_\Gamma \Chi_E
\diff\cP^0 L_\kappa u \right. \right\rangle = \left\langle u \left|\int_\Gamma \Chi_E
\diff\cP^0 u \right. \right\rangle
\]
and hence, by \eqref{eq:P(E)},
\[
\big\b u \big| (-L_\kappa P(E) L_\kappa - P(E)) u \big\k=0
\]
for every $u \in \sH$. It follows that $-L_\kappa P(E) L_\kappa - P(E)=0$ (see Proposition 2.17$(\mr{a})$ of \cite{GhMoPe}) or, equivalently, $P(E)L_\kappa=L_\kappa P(E)$. This proves that $\cP=(P,\LL)$ is an iqPVM over $\bC^+_\imath$ in $\sH$.

It remains to show that $T=\int_\Gamma\mi{id}\diff\cP$. Thanks to \eqref{eq:L}, if a simple function $s:\C^+_\imath \to \bH$ is real-valued or it is $\C_\imath$-valued and $s|_\R=0$, then one has:
\[
\int_\Gamma s \diff\cP=\int_\Gamma s \diff\cP^0.
\]
Now, it suffices to consider a sequence of simple functions of the form $\{s_n+ s'_n\imath\}_{n \in \N}$ with $s_n$ and $s'_n$ real-valued and $s'_n|_\R=0$, which converges uniformly to $\mi{id}$ on $\Gamma$. Indeed, exploiting the very definition of integral of a function with respect to an iqPVM, we immediately obtain:
\[
\int_\Gamma \mi{id} \diff\cP=\int_\Gamma \mi{id} \diff\cP^0=T,
\]
as desired. The proof is complete.
\end{proof}


\subsection{Algebraic characterization of left scalar multiplications and spectral $\boldsymbol{L^2}$-representation theorem for bounded normal operators}


We are going to prove a version of the spectral $L^2$-representation theorem for bounded quaternionic normal operators. Such a proof will use the following result, announced in Remark 3.2(2) of \cite{GhMoPe}), which is of independent interest. Indeed, it completely characterizes the class of left scalar multiplications of a quaternionic Hilbert space.

\begin{theorem}\label{propLprod}
Let $\sH$ be a quaternionic Hilbert space and let $\bH \ni q \stackrel{\LL}{\longmapsto} L_q \in \gB(\sH)$ be a map verifying the following conditions:
\begin{itemize}
 \item[$(\mr{a})$] $L_{p+q}=L_p+L_q$, $L_pL_q=L_{pq}$ and $L_ru=ur$,
 \item[$(\mr{b})$] $(L_q)^*=L_{\overline{q}}$
\end{itemize}
for every $p,q \in \bH$, for every $r \in \R$ and for every $u \in \sH$. Then $\LL$ is a left scalar multiplication of $\sH$.
\end{theorem}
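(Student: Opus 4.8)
The plan is to show that an abstract map $\LL$ satisfying $(\mr{a})$ and $(\mr{b})$ must coincide with $\mc{L}_N$ for some Hilbert basis $N$ of $\sH$, which is precisely the definition of a left scalar multiplication. The strategy is to read off a distinguished anti self-adjoint unitary $J:=L_\imath$ together with a complex conjugate-linear antiunitary $C:=L_\jmath$ (for a fixed pair $\imath,\jmath\in\bS$ with $\imath\jmath=-\jmath\,\imath$), use these to decompose $\sH$ into complex subspaces, and then build the required Hilbert basis inside one of those subspaces.

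First I would fix $\imath\in\bS$ and set $J:=L_\imath$. From $(\mr{a})$, $L_\imath L_\imath=L_{\imath^2}=L_{-1}=-\1$ (using $L_{-1}=(-1)\cdot\1$ via $L_ru=ur$), so $J^2=-\1$; from $(\mr{b})$, $J^*=L_{\overline{\imath}}=L_{-\imath}=-J$, so $J$ is anti self-adjoint, and $JJ^*=-J^2=\1$, so $J$ is unitary. Hence the machinery of Subsection~\ref{subsec:complex-subspaces} applies: $\sH$ splits as the $\C_\imath$-vector space $\sH^{J\imath}_+\oplus\sH^{J\imath}_-$, and the restriction of the right $\C_\imath$-multiplication makes $\sH^{J\imath}_+$ a complex Hilbert space whose scalar product is the restriction of $\b\cdot|\cdot\k$. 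Next, picking $\jmath\in\bS$ with $\imath\jmath=-\jmath\,\imath$ and setting $C:=L_\jmath$, one checks from $(\mr{a})$ that $C^2=L_{\jmath^2}=-\1$, that $CJ=L_{\jmath\imath}=L_{-\imath\jmath}=-L_{\imath\jmath}=-JC$, and that $C$ is isometric (indeed unitary with $C^*=L_{-\jmath}=-C$); therefore $C$ maps $\sH^{J\imath}_+$ onto $\sH^{J\imath}_-$ and, since $\b Cu|Cv\k=\b u|C^*Cv\k=\b u|v\k$ while right-multiplication by $\jmath$ implements the standard conjugate-linear identification $\sH^{J\imath}_+\to\sH^{J\imath}_-$, one verifies that $u\mapsto Cu$ agrees, up to the already-known structure, with $u\mapsto u\jmath$. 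The key point to extract is: for $u\in\sH^{J\imath}_+$ one has $L_\jmath u=u\jmath$, and more generally, writing an arbitrary $q=c_1+c_2\jmath$ with $c_1,c_2\in\C_\imath$, the operator $L_q$ acts on $\sH^{J\imath}_+$-vectors in the expected way once a basis is fixed.

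Then I would produce the Hilbert basis: take any Hilbert basis $N$ of the complex Hilbert space $\sH^{J\imath}_+$. Because $\sH^{J\imath}_+$ and $\sH^{J\imath}_-=\sH^{J\imath}_+\jmath$ together span $\sH$ over $\bR$ (indeed over $\C_\imath$, with $\sH=\sH^{J\imath}_+\oplus\sH^{J\imath}_+\jmath$), and since $\b z|z'\k\in\C_\imath$ for $z,z'\in N$ while $\b z|z'\jmath\k=\b z|z'\k\jmath$ lies in $\C_\imath\jmath$, a short orthogonality computation (using $\C_\imath\perp\C_\imath\jmath$ inside $\bH$ together with $\b z|z'\k=\delta_{zz'}$) shows that $N$ is in fact a Hilbert basis of $\sH$ as a quaternionic Hilbert space: the $z\in N$ are orthonormal in $\sH$ and $N^\perp=\{0\}$ because $\sH^{J\imath}_+{}^{\perp}\cap\sH=\sH^{J\imath}_-$ is exhausted by the $N\jmath$ and hence is not orthogonal to $N$ unless zero. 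Finally one computes $\mc{L}_N(q)$ on each $z\in N$ and on each $z\jmath$, and matches it against $L_q z$ and $L_q(z\jmath)=L_q L_\jmath z=L_{q\jmath}z$ using $(\mr{a})$; since both $\mc{L}_N(q)$ and $L_q$ are bounded right $\bH$-linear and agree on the total set $N\cup N\jmath$, they agree on $\sH$. Hence $\LL=\mc{L}_N$, proving $\LL$ is a left scalar multiplication.

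\textbf{Main obstacle.} The one place demanding care is the verification that $L_\jmath$ really implements the canonical map $\sH^{J\imath}_+\ni u\mapsto u\jmath\in\sH^{J\imath}_-$, i.e.\ that the abstract operator $L_\jmath$ coincides with right multiplication by $\jmath$ on $\sH^{J\imath}_+$; a priori $(\mr{a})$ and $(\mr{b})$ only constrain algebraic relations among the $L_q$ and their adjoints, not their concrete action on vectors. Resolving this cleanly is exactly the content of choosing the basis $N$ \emph{inside} $\sH^{J\imath}_+$ first and then defining $\mc{L}_N$: the relation $L_\jmath z = z\jmath$ for $z\in N$ must be derived, not assumed, and for that one uses that $L_\jmath z\in\sH^{J\imath}_-$ (since $J L_\jmath z = L_\imath L_\jmath z = L_{\imath\jmath}z = -L_{\jmath\imath}z = -L_\jmath L_\imath z = -L_\jmath(z\imath)=-(L_\jmath z)\imath$), together with $\|L_\jmath z\|=\|z\|=1$ and an inner-product normalization argument fixing the phase; alternatively one may simply redefine $N$ to be any Hilbert basis of $\sH^{J\imath}_+$ and observe directly that the resulting $\mc{L}_N$ satisfies $(\mr{a})$--$(\mr{b})$ and agrees with $\LL$ on $\imath$ and $\jmath$, then invoke that a map satisfying $(\mr{a})$ is determined on all of $\bH$ by its values on $\imath,\jmath$ — so $\LL=\mc{L}_N$. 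I expect this identification step, handled via the explicit extension formula \eqref{eq:def-ext} of Proposition~\ref{prop:first-3.11}, to be the crux; everything else is routine algebra with the relations in $(\mr{a})$ and $(\mr{b})$.
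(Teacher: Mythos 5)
Your overall skeleton (set $J:=L_\imath$, decompose $\sH=\sH^{J\imath}_+\oplus\sH^{J\imath}_-$, and look for the Hilbert basis inside $\sH^{J\imath}_+$) is reasonable, but the step you yourself single out as the crux is a genuine gap, and neither of the two fixes you sketch closes it. For $u\in\sH^{J\imath}_+$ the hypotheses only give $L_\jmath u\in\sH^{J\imath}_-$ with $\|L_\jmath u\|=\|u\|$; they do not give $L_\jmath u=u\jmath$, nor even $L_\jmath u=uc\jmath$ for some unimodular $c\in\C_\imath$ (a ``phase''). Concretely, take $\sH=\bH$ with $\b u|v\k=\overline{u}v$ and $L_qu:=qu$: then $\sH^{J\imath}_+=\C_\imath$, the unit vector $z=\imath$ is a perfectly good Hilbert basis of $\sH^{J\imath}_+$, yet $L_\jmath z=\jmath\,\imath=-z\jmath$, and the induced left multiplication satisfies $\mc{L}_N(\jmath)u=\imath\jmath(-\imath)u=-\jmath u\neq L_\jmath u$. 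In $\bH^2$ with the same $L$, the unit vector $z=\frac{1}{\sqrt2}(1,\imath)\in\sH^{J\imath}_+$ gives $L_\jmath z=w\jmath$ with $w=\frac{1}{\sqrt2}(1,-\imath)$ orthogonal to $z$, so no normalization of $z$ can relate $L_\jmath z$ to $z\jmath$ at all. Hence your first repair (an ``inner-product normalization argument fixing the phase'') cannot work for an arbitrary basis of $\sH^{J\imath}_+$, and your second repair begs the question: $\mc{L}_N(\imath)=L_\imath$ is indeed automatic for any Hilbert basis $N\subset\sH^{J\imath}_+$, but $\mc{L}_N(\jmath)=L_\jmath$ is exactly what has to be proved and is false for a generic choice of $N$.

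What is missing is the construction of a basis adapted \emph{simultaneously} to $L_\imath$ and $L_\jmath$, and this is where the paper's proof puts its effort. The paper first produces a single non-zero vector $z$ with $L_\imath z=z\imath$ and $L_\jmath z=z\jmath$: it works inside $\sH^{L_\kappa\kappa}_+$ with $\kappa=\imath\jmath$, shows there is $x\neq 0$ there with $L_\imath x+x\imath\neq 0$, sets $z:=x-L_\imath x\imath$ and checks both identities using only the algebra in $(\mr{a})$. It then runs a Zorn's lemma argument on orthonormal families of such vectors, using that $(\mr{b})$ forces $L_q(N^\perp)\subset N^\perp$, so the first step can be re-applied inside $N^\perp$ until $N^\perp=\{0\}$. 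If you prefer to stay inside $\sH^{J\imath}_+$, the clean route is to note that $Vu:=-(L_\jmath u)\jmath$ is a $\C_\imath$-antilinear isometric involution of $\sH^{J\imath}_+$ (one checks $V(\sH^{J\imath}_+)\subset\sH^{J\imath}_+$, $V^2=\1$ via $L_{\jmath^2}=-\1$ and right $\bH$-linearity of $L_\jmath$, and $\b Vu|Vv\k=\overline{\b u|v\k}$), that the condition $L_\jmath z=z\jmath$ is precisely $Vz=z$, and that an orthonormal basis of the real form $\{u\in\sH^{J\imath}_+\,|\,Vu=u\}$ is a Hilbert basis of $\sH^{J\imath}_+$ with the required property. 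Either this extra argument or the paper's two-step construction must be supplied; as written, your proof does not go through.
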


\begin{proof}
We must show that there exists a Hilbert basis $N$ of $\sH$ such that $L_qz=zq$ for every $q \in \bH$ and for every $z \in N$.

\textit{Step I.} Let us prove the existence of $z \in \sH \setminus \{0\}$ such that $L_qz=zq$ for every $q \in \bH$. Let $\imath,\jmath$ be elements of $\bS$ with $\imath \jmath = -\jmath \, \imath$ and let $\kappa:=\imath\jmath$. It turns out that $\{1,\imath,\jmath,\kappa\}$ is a real vector basis of $\bH$. In this way, if $q \in \bH$, then $q=q_0+q_1\imath+q_2\jmath+q_3\kappa$ for some unique $q_0,q_1,q_2,q_3 \in \R$. By property $(\mr{a})$, we have that
\[
L_q=\1 q_0+L_\imath q_1+L_\jmath q_2+L_\imath L_\jmath q_3.
\]
Therefore, if $z \in \sH \setminus \{0\}$ satisfies the conditions $L_\imath z=z\imath$ and $L_\jmath z=z\jmath$, then it also satisfies the required condition $L_q z = zq$ for every $q \in \bH$. Consider the $\C_\kappa$-complex Hilbert space $\sH^{L_\kappa \kappa}_+:=\{x \in \sH \, | \, L_\kappa x=x\kappa\}$. It is known that $\sH^{L_\kappa \kappa}_+ \neq \{0\}$ (see Proposition 3.8$(\mr{d})$ of \cite{GhMoPe} for a proof). We assert that there exists $x \in \sH^{L_\kappa \kappa}_+ \setminus \{0\}$ such that $L_\imath x+x \imath \neq 0$. On the contrary, suppose that $L_\imath y=-y\imath$ for every $y \in \sH^{L_\kappa \kappa}_+$. Since $yk \in \sH^{L_\kappa \kappa}_+$, we would have that $L_\imath yk=-yk\imath$. Hence $L_\imath yk=y \imath k$ and $L_\imath y=y\imath$. Taking into account  that $L_\imath y=-y\imath$, it would follow that $\sH^{L_\kappa \kappa}_+=\{0\}$, which is a contradiction. This proves the existence of $x \in \sH \setminus \{0\}$ such that $L_\kappa x=x\kappa$ (namely, $x \in \sH^{L_\kappa \kappa}_+ \setminus \{0\}$) and $L_\imath x + x \imath \neq 0$, as desired.

Define $z:=x-L_\imath x\imath=-(L_\imath x+x\imath) \imath \neq 0$. It holds 
\[
L_\imath z = L_\imath x + x\imath=(x-L_\imath x\imath) \imath=z\imath
\]
and
\begin{align*}
L_\jmath z &= L_\jmath x - L_\jmath L_\imath x\imath =   L_\jmath x +  L_k x\imath  =
 L_\jmath x+ x k\imath =  L_\jmath x + x \jmath\\
 &=-L_i L_k x + x \jmath =-L_i xk  + x \jmath= -L_i x i\jmath  +x \jmath =  z\jmath.
\end{align*}
Replacing $z$ with $z/\|z\|$, we can assume that $\|z\|=1$ as well. We have just proved that there exists $z \in \sH$  such that $\|z\|=1$ and $L_q z=zq$ for every $q \in \bH$.

\textit{Step II.} Given a non-empty subset $S$ of $\sH$, we say that $S$ is good if, for every $s,s' \in S$ with $s \neq s'$, $\b s|s' \k=0$, $\|s\|=1$ and $L_qs=sq$ for every $q \in \bH$. Let $\mc{N}$ be the set whose elements are all the good subsets of $\sH$, partially ordered by the inclusion. We have just shown that $\mc{N} \neq \emptyset$. Since $\mc{N}$ has the property that every totally ordered subset of $\mc{N}$ has an upper bound (given by the union of its elements), by Zorn's lemma, it admits a maximal element $N$. It remains to prove that such a set $N$ is a Hilbert basis of $\sH$; that is, $N^\perp=\{0\}$. On the contrary, suppose that $N^\perp \neq \{0\}$. By property $(\mr{b})$, we infer that $L_q(N^\perp) \subset N^\perp$ for every $q \in \bH$. Indeed, if $w \in N^\perp$ and $q \in \bH$, it holds:
\[
\b L_qw|z \k=\b w|L_{\overline{q}}z \k=\b w|z\overline{q}\k=\b w|z \k\overline{q}=0
\;\;
\text{ for every $z \in N$.}
\]
In this way, considering each $L_q$ as an operator in $\gB(N^\perp)$ by restriction, we can apply Step I again, obtaining $z_0 \in N^{\perp}$ such that $\|z_0\|=1$ and $L_qz_0=z_0q$ for every $q \in \bH$. This contradicts the maximality of $N$, because $N \cup \{z_0\} \in \mc{N}$. It follows that $N^\perp=\{0\}$, as desired. The proof is complete.
\end{proof}

Let $X$ be a non-empty set and let $\mu$ be a positive $\sigma$-additive measure over a $\sigma$-algebra of $X$. Consider a function $\varphi:X \to \bH$. Write $\varphi$ as follows $\varphi=\sum_{a=0}^3 \varphi_ae_a$, where $e_0:=1$, $e_1:=i$, $e_2:=j$, $e_3:=k$ and each $\varphi_a$ is a real-valued function on $X$. We say that $\varphi$ is $\mu$-summable if each $\varphi_a$ is. In this case, we define the integral $\int_X\varphi\diff\mu$ of $\varphi$ with respect to $\mu$ by setting
\[
\int_X\varphi\diff\mu:=\sum_{a=0}^3\left(\int_X\varphi_a\diff\mu\right)e_a.
\]

Indicate by $L^2(X,\bH;\mu)$ the set of all functions $\varphi:X \to \bH$ whose square norm $|\varphi|^2:X \to \R$ is $\mu$-summable. Define the function $L^2(X,\bH;\mu) \times L^2(X,\bH;\mu) \ni (\varphi,\psi) \mapsto \b \varphi|\psi\k \in \bH$ as follows:
\[
\b\varphi|\psi\k:=\int_X \overline{\varphi}\,\psi\diff\mu,
\]
where $\overline{\varphi}\,\psi$ indicates the pointwise product between $\overline{\varphi}$ and $\psi$.

The set $L^2(X,\bH;\mu)$, equipped with the pointwise defined operations of sum and of right quaternionic scalar multiplication, and with the preceding function $(\varphi,\varphi') \mapsto \b \varphi|\varphi'\k$ as a Hermitian quaternionic scalar product, becomes a quaternionic Hilbert space. In what follows, the symbol $L^2(X,\bH;\mu)$ will denote such a quaternionic Hilbert space. Recall that, if $\sL$ is another quaternionic Hilbert space, then an operator $U:L^2(X,\bH;\mu) \to \sL$ is called \textit{isometry} if it is bijective and it preserves the norm of vectors.

Our spectral $L^2$-representation theorem reads as follows.

\begin{theorem} \label{teorapp}
Let $\sH$ be a quaternionic Hilbert space, let $\imath \in \bS$, let $T \in \gB(\sH)$ be a normal operator and let $\cP=(P,\LL)$ be an iqPVM over $\bC^+_\imath$ in $\sH$ verifying (\ref{decspet}). Define $\Gamma:=\ssp(T) \cap \C^+_\imath$ and $\varphi(T):=\int_{\Gamma}\varphi\diff\cP$ for every $\varphi \in M_b(\Gamma,\bH)$.
 
Then there exists an orthogonal decomposition of $\sH$ into closed (right $\bH$-linear) subspaces $\sH=\bigoplus_{\alpha \in \mc{A}} \sH_\alpha$ having the following properties:
\begin{itemize}
 \item[$(\mr{a})$] $\varphi(T)(\sH_\alpha) \subset \sH_\alpha$ for every $\varphi \in M_b(\Gamma,\bH)$ and for every $\alpha \in \mc{A}$.
 \item[$(\mr{b})$] For every $\alpha \in \mc{A}$, there exist a positive $\sigma$-additive finite 
Borel measure $\nu_\alpha$ over $\Gamma$ and an isometry
$U_\alpha :L^2(\Gamma,\bH;\nu_\alpha) \to \sH_\alpha$ such that, for every $\psi_\alpha \in L^2(\Gamma,\bH;\nu_\alpha)$, it holds:
\[
U_\alpha^{-1} \varphi(T)\rr_{\sH_\alpha} U_\alpha\psi_\alpha=\varphi\psi_\alpha
\quad
\text{$\nu_a$-a.e. in $\Gamma$}
\]
and, in particular,
\[
U_\alpha^{-1} L_q \rr_{\sH_\alpha} U_\alpha \psi_\alpha=q \psi_\alpha
\quad
\text{$\nu_a$-a.e. in $\Gamma$}
\]
for every $q \in \bH$.
\end{itemize}
\end{theorem}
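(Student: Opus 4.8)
The plan is to realise $\sH$ as an orthogonal sum of cyclic subspaces, each generated by a single vector adapted to $\LL$, and to transport every summand onto an $L^2$-space through the functional calculus $\varphi\mapsto\varphi(T):=\int_\Gamma\varphi\diff\cP$. Throughout I use that, by Theorem \ref{spectraltheorem}$(\mr{b})$, $\Gamma=\ssp(T)\cap\C^+_\imath$ equals $\supp(P)$, so that all the results of Theorem \ref{TEO1} and Proposition \ref{prop:s} apply with this $\Gamma$. Call a vector $z\in\sH$ \emph{$\LL$-real} if $L_qz=zq$ for every $q\in\bH$; the vectors of the Hilbert basis inducing $\LL$ are $\LL$-real. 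For an $\LL$-real unit vector $z$ set $\sH_z:=\overline{\{\varphi(T)z\,|\,\varphi\in M_b(\Gamma,\bH)\}}$. Using $\varphi(T)\psi(T)=(\varphi\psi)(T)$ and $L_q\varphi(T)=(q\varphi)(T)$ from Theorem \ref{TEO1}$(\mr{a})$, the right linearity of $\varphi(T)$, and the identity $zq=L_qz$, one gets $\psi(T)(\varphi(T)z)=(\psi\varphi)(T)z\in\sH_z$ and $(\varphi(T)z)q=\varphi(T)(zq)=(\varphi q)(T)z\in\sH_z$; hence $\sH_z$ is a closed right $\bH$-linear subspace invariant under $\varphi(T)$ (and under $L_q$) for all $\varphi\in M_b(\Gamma,\bH)$ and $q\in\bH$.

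Next I would build the decomposition by Zorn's lemma. Order by inclusion the families $\{z_\alpha\}_{\alpha\in\mc{A}}$ of $\LL$-real unit vectors whose associated subspaces $\sH_{z_\alpha}$ are pairwise orthogonal, and fix a maximal one. Let $\mc{K}$ be the orthogonal complement of $\overline{\bigoplus_\alpha\sH_{z_\alpha}}$ in $\sH$. Since each $\sH_{z_\alpha}$ is invariant under $\varphi(T)$ and under $\varphi(T)^*=\overline{\varphi}(T)$ (Theorem \ref{TEO1}$(\mr{a})(\mr{iii})$), as well as under $L_q$ and $L_q^*=L_{\overline{q}}$, the subspace $\mc{K}$ reduces all of these; in particular $\mc{K}$ is a right $\bH$-submodule and $q\mapsto L_q|_{\mc{K}}$ satisfies conditions $(\mr{a})$ and $(\mr{b})$ of Theorem \ref{propLprod}, so it is a left scalar multiplication of $\mc{K}$. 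If $\mc{K}\neq\{0\}$, then $\mc{K}$ has a Hilbert basis of $\LL$-real unit vectors; any one of its elements $z'$ satisfies $\sH_{z'}\subset\mc{K}$ (by $\varphi(T)$-invariance of $\mc{K}$) and $\sH_{z'}\perp\sH_{z_\alpha}$ for every $\alpha$, contradicting maximality. Thus $\mc{K}=\{0\}$, i.e.\ $\sH=\bigoplus_{\alpha\in\mc{A}}\sH_{z_\alpha}$; putting $\sH_\alpha:=\sH_{z_\alpha}$ settles property $(\mr{a})$.

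For each $\alpha$ set $\nu_\alpha:=\mu^{\sss(P)}_{z_\alpha}$, the finite positive regular Borel measure on $\Gamma$ of Proposition \ref{prop:s}$(\mr{b})$. By \eqref{secondestimatef}, $\varphi\mapsto\varphi(T)z_\alpha$ is norm-preserving from $M_b(\Gamma,\bH)$, with the $L^2(\Gamma,\bH;\nu_\alpha)$-norm, into $\sH_\alpha$; since simple functions, and a fortiori $M_b(\Gamma,\bH)$, are dense in $L^2(\Gamma,\bH;\nu_\alpha)$ and $\sH_\alpha$ is the closure of the range, it extends to a norm-preserving surjection $U_\alpha:L^2(\Gamma,\bH;\nu_\alpha)\to\sH_\alpha$. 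Because $z_\alpha$ is $\LL$-real, $U_\alpha(\varphi q)=(\varphi q)(T)z_\alpha=\varphi(T)L_qz_\alpha=\varphi(T)(z_\alpha q)=(U_\alpha\varphi)q$ for simple $\varphi$, hence $U_\alpha$ is right $\bH$-linear and therefore an isometry in the sense recalled before the statement. For $\varphi\in M_b(\Gamma,\bH)$ and simple $\psi$, Theorem \ref{TEO1}$(\mr{a})(\mr{ii})$ gives $\varphi(T)U_\alpha\psi=\varphi(T)\psi(T)z_\alpha=(\varphi\psi)(T)z_\alpha=U_\alpha(\varphi\psi)$; passing to the $L^2$-limit (with $\|\varphi\psi_n-\varphi\psi\|_{L^2(\nu_\alpha)}\leq\|\varphi\|_\infty\|\psi_n-\psi\|_{L^2(\nu_\alpha)}$) yields $U_\alpha^{-1}\varphi(T)|_{\sH_\alpha}U_\alpha\psi=\varphi\psi$ $\nu_\alpha$-a.e.\ for every $\psi\in L^2(\Gamma,\bH;\nu_\alpha)$, which is property $(\mr{b})$. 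The particular case follows by taking $\varphi$ equal to the constant function $q$, since $L_q=\int_\Gamma q\diff\cP=q(T)$ by Theorem \ref{TEO1}$(\mr{a})(\mr{vi})$.

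The genuinely delicate point is the interaction between the right and the left scalar multiplications: the cyclic subspace $\overline{\{\varphi(T)u\}}$ generated by an arbitrary $u\in\sH$ need not even be a right $\bH$-submodule, and the map $\varphi\mapsto\varphi(T)u$ is right $\bH$-linear only when $L_qu=uq$. Restricting to $\LL$-real generators removes this obstruction, and the single non-routine step that then remains is showing that a maximal orthogonal family of such cyclic subspaces exhausts $\sH$; this is exactly where the algebraic characterisation of left scalar multiplications, Theorem \ref{propLprod}, is used, being applied to the reducing subspace $\mc{K}$.
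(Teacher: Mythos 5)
Your proposal is correct and follows essentially the same route as the paper: cyclic subspaces generated by unit vectors $z$ with $L_qz=zq$, the isometry $U_z$ built from the measure $\mu^{\sss(P)}_z$ via \eqref{secondestimatef}, invariance of the orthogonal complement under all $\varphi(T)$, and Theorem \ref{propLprod} plus Zorn's lemma to exhaust $\sH$. Your explicit maximal-family formulation of the Zorn argument is just a slightly more detailed write-up of the exhaustion step the paper sketches.
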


\begin{proof}
By the definition of left scalar multiplication, there exists $z \in \sH$ such that $\|z\|=1$ and $L_qz=zq$ for every $q \in \bH$.
Define $\sH_z$ as the closure in $\sH$ of the subspace spanned by the vectors of the form $\sum_{k=1}^N P(E_k) zq_k$, where $N \in \bN$, the $q_k$'s are arbitrary quaternions and $\{E_k\}_{k=1}^N$ is a family of pairwise disjoint Borel sets in $\mscr{B}(\C^+_\imath)$. Direct inspection shows that, if $s_u:=\sum_{k=1}^N q_k\Chi_{E_k}$ is the simple function on $\C^+_\imath$ associated with the vector $u=\sum_{k=1}^N P(E_k)zq_k \in \sH_z$, we get:
\[
\|u\|^2=\sum_{k=1}^N \left\|P(E_k) zq_k\right\|^2 = \sum_{k=1}^N |q_k|^2 \mu^{\sss(P)}_{z}(E_k) = \int_{\sigma_S(T) \cap \bC_\imath^+} |s_u|^2 d\mu_z^{\sss(P)}.
\]
This suggests to focus on the operator $U_z :  L^2(\Gamma,\bH;\mu_z^{\sss(P)}) \to \sH_z$ uniquely defined by fixing it on the dense subset of simple functions:
\[
\sum_{k=1}^N \Chi_{E_k} q_k \, \stackrel{U_z}{\longmapsto} \, \sum_{k=1}^N P(E_k) zq_k.
\]
Since the range of $U_z$ is dense by definition, the operator $U_z$ is an isometry.

Observe that, if $D,E \in \mscr{B}(\C^+_\imath)$ and $p,q \in \bH$, then, bearing in mind that the pair $(P,\LL)$ is an iqPVM and $L_qz=zq$, we have:
\begin{align*}
L_qP(E)\left(U_z(p\Chi_D)\right)&=L_qP(E)\big(P(D)zp\big)=P(E \cap D)L_qzp\\
&=P(E \cap D)z(qp)=U_z\left((q\Chi_E)(p\Chi_D)\right).
\end{align*}
This immediately implies that
\[
\varphi(T) (\sH_z) \subset \sH_z
\;\; \text{ and } \;\;
U_z^{-1} \varphi(T)\rr_{\sH_z} U_z\psi=\varphi\psi \;\; \text{ on $\Gamma$}
\]
for every pair of simple functions $\varphi,\psi$ on $\Gamma$. By density, the latter inclusion and the latter equality remain valid for every $\varphi \in M_b(\Gamma,\bH)$ and for every $\psi \in L^2(\Gamma,\bH;\mu_z^{\sss(P)})$. 

In the fortunate case in which $\sH_z=\sH$, the proof stops here. Otherwise, one passes to consider $\sH_z^\perp \neq \{0\}$. Such a subspace of $\sH$ is invariant under the action of every operator $\varphi(T)$ with $\varphi \in M_b(\Gamma,\bH)$; that is, $\varphi(T)(\sH_z^\perp) \subset \sH_z^\perp$. Indeed, if $\varphi$ is such a function, then $\varphi(T)(\sH_z) \subset \sH_z$, $\varphi(T)^*(\sH_z)=\overline{\varphi}(T)(\sH_z) \subset \sH_z$ and hence, for every $w \in \sH_z^\perp$ and for every $v \in \sH_z$, it holds:
\[
\b\varphi(T)w|v\k=\b w|\varphi(T)^*v\k=0,
\]
as desired. In particular, we have that $L_q(\sH_z^\perp) \subset \sH_z^\perp$ for every $q \in \bH$ and hence, by Theorem \ref{propLprod}, the restrictions of the $L_q$'s to $\sH_z^\perp$ define again a left scalar multiplication of $\sH_z^\perp$. Then one looks for another $z' \in \sH_z^\perp$ such that $L_qz'=z'q$ for every $q \in \bH$ and re-implements the procedure. More precisely, one can complete the proof by using Zorn's lemma as in Step II of the proof of Theorem \ref{propLprod}.
\end{proof}

\begin{remark}\label{remarknonuniqL}
The preceding theo\-rem permits to exhibit a huge class of left scalar multiplications of $\sH$ satisfying the hypotheses of Theorem \ref{propL} for any given normal operator $T \in \gB(\sH)$. Let $T$ be such an operator, let $\cP=(P,L)$ be an iqPVM over $\C^+_\imath$ in $\sH$ satisfying \eqref{decspet} for $T$, and let $\{\sH_\alpha\}_{\alpha \in A}$, $\{\nu_\alpha\}_{\alpha \in A}$ and $\{U_\alpha\}_{\alpha \in A}$ be as in the statement of Theorem \ref{teorapp}. Given any family $\{\gamma_\alpha:\C^+_\imath \to \C_\imath\}_{\alpha \in A}$ of $\nu_\alpha$-measurable functions $\gamma_\alpha$ with $|\gamma_\alpha(x)|=1$ $\nu_a$-a.e. in  $x \in \bC^+_\imath$, we can define the map $\bH \ni q \stackrel{\LL'}{\longmapsto} L'_q \in \gB(\sH)$ by setting
\[
\left(U_\alpha^{-1} L'_q \rr_{\sH_\alpha} U_\alpha \psi_\alpha\right)(x)=\overline{\gamma_\alpha(x)} \, q \, \gamma_\alpha(x) \, \psi_\alpha(x)
\]
$\nu_a$-a.e. in  $x \in \bC^+_\imath$. Bearing in mind \eqref{decspet}, Theorem \ref{TEO1} and Theorem \ref{teorapp}$(\mr{ii})$, it is immediate to verify that $\LL'$ is a left scalar multiplication of $\sH$ satisfying the hypotheses of Theorem \ref{propL}. Indeed, the inclusion map $\C^+_\imath \ni \alpha+\imath\beta \stackrel{\mi{{id}}}{\hookrightarrow} \alpha+\imath\beta \in \sH$ has the following properties $\imath\,\mi{id}=\mi{id}\,\imath$, $\jmath\,\mi{id}=\overline{\mi{id}}\,\jmath$ for every $\jmath \in \bS$ with $\imath\jmath=-\jmath \, \imath$ and $-\imath(\mi{id}-\overline{\mi{id}})=2\beta \geq 0$ on $\C^+_\imath$. Observe that each left scalar multiplication $\bH \ni q \mapsto L'_q$ of $\sH$ constructed in this way, via a choice of functions $\{\gamma_\alpha\}_{\alpha \in A}$ as above, coincides with $\bH \ni q \mapsto L_q$ on $\C^+_\imath$; that is, $L'_q=L_q$ for every $q \in \C^+_\imath$. \bs
\end{remark}


\subsection{Some simple examples} We present here some elementary examples concerning Theorems \ref{spectraltheorem} and \ref{teorapp}.


\subsubsection{Normal compact operators} We now consider the case of  normal compact operators. We recall that an operator $T \in \gB(\sH)$ is said to be \textit{compact} if it sends bounded subsets of $\sH$ into relatively compact subsets of $\sH$. The following result is contained in \cite{GhMoPe2}.

\begin{theorem}\label{MT2}
Let $\sH$ be a quaternionic Hilbert space and let $T \in \gB(\sH)$ be a compact normal operator. Then there exists  a Hilbert basis $N$ of $\sH$ made of eigenvectors of $T$ such that 
\beq \label{expcompact}
Tu=\sum_{z \in N}  z\lambda_z \b z|u \k \;\; \text{ for every $u \in \sH$,}
\eeq
where $\lambda_z \in \bH$ is an eigenvalue relative to the eigenvector $z$ for every $z \in N$ and the series converges absolutely. Moreover, if $\Lambda$ denotes the set $\{\lambda_z \in \bH \, | \, z \in N\}$, then it holds:
\begin{itemize}
 \item[$(\mr{a})$] For every $n \in \N \setminus \{0\}$, the set $\Lambda \cap \{q \in \bH \, | \, |q| \geq 1/n\}$ is finite. In particular, $\Lambda$ is at most countable and $0$ is the only possible accumulation point of $\Lambda$ in $\bH$.
 \item[$(\mr{b})$] For every $\lambda \in \Lambda \setminus \{0\}$, the set $\{z \in N \, | \, \lambda_z=\lambda\}$ is finite.
 \item[$(\mr{c})$] $\ssp(T) \setminus \{0\} = \Omega_\Lambda \setminus \{0\}$.
 \item[$(\mr{d})$] $0$ is the only possible element of $\sigma_{\mi{cS}}(T)$; that is, $\sigma_{\mi{cS}}(T) \subset \{0\}$.
\end{itemize} 
\end{theorem}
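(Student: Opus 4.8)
The plan is to derive the statement from the classical complex spectral theorem for compact normal operators by passing, as in the proof of Theorem~\ref{spectraltheorem}, to a suitable complex subspace of $\sH$. First I would fix $\imath \in \bS$ and apply Theorem~\ref{thm:first-5.9} to obtain an anti self-adjoint unitary $J \in \gB(\sH)$ commuting with $T$; then $T$ and $T^*$ leave the $\C_\imath$-complex Hilbert space $\sH^{J\imath}_+$ invariant (point~$(\mr{a})$) and the restriction $T_+:=T|_{\sH^{J\imath}_+}$ is a normal element of $\gB(\sH^{J\imath}_+)$ (point~$(\mr{b})$). Since $\sH^{J\imath}_+$ is a closed subspace of $\sH$, compactness of $T$ passes to $T_+$: a bounded subset of $\sH^{J\imath}_+$ is bounded in $\sH$, hence $T$ maps it into a relatively compact subset of $\sH$, which lies in the closed set $\sH^{J\imath}_+$ and is therefore relatively compact there.

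Next I would invoke the classical spectral theorem for the compact normal operator $T_+$ on the complex Hilbert space $\sH^{J\imath}_+$: there is a Hilbert basis $N$ of $\sH^{J\imath}_+$ made of eigenvectors of $T_+$, say $T_+z = z\lambda_z$ with $\lambda_z \in \C_\imath$ (identifying $\C$ with $\C_\imath$ as usual), such that $T_+v = \sum_{z \in N}z\lambda_z\langle z|v\rangle$ for all $v \in \sH^{J\imath}_+$, only finitely many $z$ satisfy $|\lambda_z| \geq 1/n$ for each $n$, and each nonzero value of $\lambda_z$ is taken finitely many times. The decisive transfer step is that any such complex Hilbert basis $N$ of $\sH^{J\imath}_+$ is automatically a Hilbert basis of the quaternionic space $\sH$ (this is exactly the fact exploited in Step~I of the proof of Theorem~\ref{spectraltheorem}, cf.\ Lemma~3.10 and Proposition~3.8 of \cite{GhMoPe}), and that each $z \in N$ is then an eigenvector of $T$ itself with eigenvalue $\lambda_z$, because $Tz = T_+z$. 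For an arbitrary $u \in \sH$ I would expand $u = \sum_{z \in N}z\langle z|u\rangle$ and apply the bounded, hence continuous, operator $T$ over the net of finite partial sums to get $Tu = \sum_{z \in N}z\lambda_z\langle z|u\rangle$; the convergence is absolute in the net sense used in the paper because the summands are pairwise orthogonal and $\sum_{z \in N}\|z\lambda_z\langle z|u\rangle\|^2 = \sum_{z \in N}|\lambda_z|^2|\langle z|u\rangle|^2 \leq \|T\|^2\|u\|^2 < +\infty$. This yields \eqref{expcompact}, and — setting $\Lambda := \{\lambda_z \mid z \in N\}$ — properties~$(\mr{a})$ and~$(\mr{b})$ are just restatements of the corresponding facts for $T_+$.

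For~$(\mr{c})$ I would combine the identity $\ssp(T) = \OO_{\sigma(T_+)}$ of Theorem~\ref{thm:first-5.9}$(\mr{d})$ with the Riesz--Schauder fact that the nonzero spectrum of the compact operator $T_+$ consists entirely of eigenvalues, which together with the remark that $\Lambda$ is exactly the set of eigenvalues of $T_+$ gives $\sigma(T_+)\setminus\{0\} = \Lambda\setminus\{0\}$; since circularization commutes with deleting the origin, i.e.\ $\OO_{E\setminus\{0\}} = \OO_E\setminus\{0\}$, this yields $\ssp(T)\setminus\{0\} = \OO_\Lambda\setminus\{0\}$. Finally, for~$(\mr{d})$ I would recall that $\sigma_{\mi{rS}}(T) = \emptyset$ by Theorem~\ref{spectraltheorem}$(\mr{d})(\mr{ii})$, so $\sigma_{\mi{cS}}(T) = \ssp(T)\setminus\sigma_{\mi{pS}}(T)$; since every element of $\Lambda\setminus\{0\}$ is an eigenvalue of $T$ and $\sigma_{\mi{pS}}(T)$ is circular, $\OO_{\Lambda\setminus\{0\}} \subset \sigma_{\mi{pS}}(T)$, whence $\ssp(T)\setminus\{0\} \subset \sigma_{\mi{pS}}(T)$ and therefore $\sigma_{\mi{cS}}(T) \subset \{0\}$.

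I expect the only genuinely delicate points to be purely bookkeeping: being careful that the quaternionic series are read in the unconditional/net sense so that $T$ may legitimately be pulled inside the basis expansion, and invoking correctly the (standard but quaternionic) fact that a complex Hilbert basis of $\sH^{J\imath}_+$ is a quaternionic Hilbert basis of $\sH$. Everything else is a routine transcription of the classical compact normal operator theory across the slice $\sH^{J\imath}_+$, so this is in essence the argument of \cite{GhMoPe2}.
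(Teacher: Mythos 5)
Your proof is correct. Note that this paper does not actually prove Theorem \ref{MT2} but quotes it from \cite{GhMoPe2}; your route -- restricting the compact normal $T$ to the slice subspace $\sH^{J\imath}_+$ via Theorem \ref{thm:first-5.9}, applying the classical Riesz--Schauder/spectral theory to $T_+$, and transferring back using the fact that a Hilbert basis of $\sH^{J\imath}_+$ is a Hilbert basis of $\sH$ together with $\ssp(T)=\OO_{\sigma(T_+)}$ -- is precisely the slice-reduction technique the paper itself employs in Step I of the proof of Theorem \ref{spectraltheorem}, and every transfer step you invoke (invariance and normality of $T_+$, compactness passing to the closed subspace, circularity of $\sigma_{pS}(T)$, emptiness of the residual spectrum) is available and correctly applied, with the series in \eqref{expcompact} rightly read in the unconditional sense used throughout the paper.
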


Fix $\imath \in \bS$ and, for every $z \in N$, choose $q_z \in \bS$ in such a way that $q_z^{-1} \lambda_z q_z \in \C_\imath^+$. Observe that the quaternion $q_z^{-1} \lambda_z q_z$ is an eigenvalue of $T$ with respect to the eigenvector $zq_z$. Indeed, it holds:
\[
T(zq_z)=T(z)q_z=z\lambda_zq_z=(zq_z)(q_z^{-1} \lambda_z q_z).
\]
Rename each vector $zq_z$ by $z$ and each quaternion $q_z^{-1} \lambda_z q_z$ by $\lambda_z$. In this way, in the statement of the preceding theorem, we can also assume that
\begin{center}
$\lambda_z \in \C^+_\imath$ for every $z \in N$.
\end{center}
Denote by $\bH \ni q \stackrel{\LL}{\lra} L_q$ the left scalar multiplication of $\sH$ associated with $N$ and define the qPVM $P:\mscr{B}(\C^+_\imath) \to \gB(\sH)$ by setting
\[
P(E)u:=\sum_{z \in N, \lambda_z \in E} z \b z|u \k \;\; \text{ for every $E \in \mscr{B}(\bC^+_\imath)$}.
\]

The reader observes that the pair $\cP:=(P,\LL)$ is an iqPVM over $\C^+_\imath$ in $\sH$; that is, the operators $P(E)$ and $L_q$ commute for every $E \in \mscr{B}(\C^+_\imath)$ and for every $q \in \bH$. Indeed, it holds:
\begin{center}
$P(E)L_qz=zq=L_qP(E)z\;$ for every $z \in N$ with $\lambda_z \in E$
\end{center}
and
\begin{center}
$P(E)L_qz=0=L_qP(E)z\;$ for every $z \in N$ with $\lambda_z \not\in E$.
\end{center}

Let us verify that $T=\int_{\C^+_\imath}\mi{id}\diff\cP$. For every $n \in \N \setminus \{0\}$, indicate by $s_n:\C^+_\imath \to \bH$ the simple function defined by setting
\[
s_n:=\sum_{z \in N, |\lambda_z| \geq 1/n} \lambda_z\Chi_{\{\lambda_z\}}.
\]
Since the sequence $\{s_n\}_{n \in \N \setminus \{0\}}$ converges uniformly to $\mi{id}$ on $\supp(P)=\Lambda$, by the very definition of integral with respect to an iqPVM and by \eqref{expcompact}, it turns out that
\begin{align*}
\int_{\bC_\imath^+} \mi{id}\diff\cP u &=\lim_{n \to +\infty} \int_{\bC_\imath^+} s_n\diff\cP u=\lim_{n \to +\infty} \sum_{z \in N, |\lambda_z| \geq 1/n} z \lambda_z \b z|u \k\\
&=\sum_{z \in N}z \lambda_z \b z|u \k=Tu,
\end{align*}
as desired. We underline that $P$ is the unique qPVM over $\C^+_\imath$ in $\sH$ associated with $T$ by the Spectral Theorem \ref{spectraltheorem}.


\subsubsection{Two matricial examples}\label{MEx}
Consider the finite dimensional quaternionic Hilbert space $\sH=\bH \oplus \bH$, equipped with the standard right scalar multiplication $(r,s)q:=(rq,sq)$ and with the standard Hermitian scalar product:
\[
\b (r,s)|(u,v) \k:=\overline{r}u+\overline{s}v \quad \text{if $r,s,u,v \in \bH$.}
\]

(1) Let $T:\sH \to \sH$ be the (right $\bH$-)linear operator represented by the matrix
\[
T=
\begin{bmatrix}
0 & i \\
j & 0
\end{bmatrix}
\in M_{2,2}(\bH),
\]
where matrix multiplication is that on the left; namely, $T(r,s)=(is,jr)$ for every $(r,s) \in \sH$. The operator $T$ is unitary, with spherical spectrum 
\[
\ssp(T)=\sigma_{pS}(T)=\left\{q=q_0+q_1i+q_2j+q_3k\in\bH \, \left| \, q_0^2=q_1^2+q_2^2+q_3^2=\frac12 \right.\right\}.
\]
The decomposition $T=A+JB$ of Theorem~\ref{thm:first-5.9} is obtained by setting
\[A=(T+T^*)\frac{1}{2}=
\begin{bmatrix}
0 & \frac{i-j}2 \\
\frac{j-i}2 & 0
\end{bmatrix},\quad
B=|T-T^*|\frac{1}{2}=
\begin{bmatrix}
\frac{\sqrt2}2 & 0 \\
0 & \frac{\sqrt2}2
\end{bmatrix},\quad 
J=
\begin{bmatrix}
0 & \frac{i+j}{\sqrt2} \\
\frac{i+j}{\sqrt2} & 0
\end{bmatrix}.
\]
Note that, in this case, $J=(T-A)B^{-1}$ is uniquely determined.

Fix $\imath=i$. 
Since $\ssp(T) \cap \bC_i^+=\{\lambda_1:=\frac1{\sqrt2}+\frac1{\sqrt2}i,\lambda_2:=-\frac1{\sqrt2}+\frac1{\sqrt2}i\}$, an easy computation shows that a Hilbert basis $N$ of $\sH^{Ji}_+$ (and then of $\sH$) constituted by eigenvectors of $T$ is given by the vectors
\[u_1=\left(\frac{i+k}2,\frac{1+i+j-k}{2\sqrt2}\right),
\quad u_2=\left(\frac{i-k}2,\frac{-1+i+j+k}{2\sqrt2}\right).\]
Therefore the left scalar multiplication induced by $N$ is the map defined by 
\[
L_q=u_1 q\b u_1|\ \cdot\ \k+u_2 q\b u_2|\ \cdot\ \k=
\begin{bmatrix}
q_0-q_2j & \frac{-q_3+q_1i+q_1j-q_3k}{\sqrt2} \\
\frac{q_3+q_1i+q_1j-q_3k}{\sqrt2} & q_0+q_2i 
\end{bmatrix}
\]
for every $q=q_0+q_1i+q_2j+q_3k \in \bH$. Moreover, the qPVM $P:\mscr{B}(\C^+_i) \to \gB(\sH)=M_{2,2}(\bH)$ associated with $T$ is given by 
\[
P(E):=\sum_{\mu=1}^2 \epsilon_\mu P_\mu \;\; \text{ for every $E \in \mscr{B}(\bC^+_\imath)$}, 
\]
where $\epsilon_\mu=1$ if $\lambda_\mu\in E$ and $\epsilon_\mu=0$ otherwise, and 
\[
P_1=u_1\b u_1|\ \cdot\ \k=
\frac12
\begin{bmatrix}
1 & \frac{i-j}{\sqrt2} \\
\frac{j-i}{\sqrt2} & 1
\end{bmatrix},\quad
P_2=u_2\b u_2|\ \cdot\ \k=
\frac12
\begin{bmatrix}
1 & \frac{j-i}{\sqrt2} \\
\frac{i-j}{\sqrt2} & 1
\end{bmatrix}.
\]
The Spectral Theorem~\ref{spectraltheorem} takes then the following expression
\[T=P_1L_{\lambda_1}+P_2L_{\lambda_2}=L_{\lambda_1}P_1+L_{\lambda_2}P_2.
\]

(2)
The right $\bH$-linear operator on $\sH$ represented by the matrix
\[
S=
\begin{bmatrix}
0 & i \\
-i & 0
\end{bmatrix},
\]
is self--adjoint, with spherical spectrum $\ssp(S)=\sigma_{pS}(S)=\{\pm1\}$. The decomposition $S=A+JB$ of Theorem~\ref{thm:first-5.9} is obtained by setting $A=S$ and $B=0$. Since $\Ker(B) \neq \{0\}$, the operator $J$ is not uniquely determined. We can take as $J$ the genuine \emph{left} multiplication by $i$ in $\sH$:
\[J=\begin{bmatrix}
i & 0 \\
0 & i
\end{bmatrix}.
\]

Fix $\imath=i$. Then $\sH^{Ji}_+=\bC_i\oplus\bC_i$. Since $\ssp(S)\cap\bC_i^+=\{\lambda_1:=1,\lambda_2:=-1\}$, a Hilbert basis $N$ of $\sH^{Ji}_+$ (and then of $\sH$) constituted by eigenvectors of $S$ is given by the vectors
\[
u_1=\left(\frac1{\sqrt2},-\frac{i}{\sqrt2}\right),
\quad
u_2=\left(\frac1{\sqrt2},\frac{i}{\sqrt2}\right).
\]
The left scalar multiplication induced by $N$ is the map 
\[
L_q=u_1 q\b u_1|\ \cdot\ \k+u_2 q\b u_2|\ \cdot\ \k=
\begin{bmatrix}
q & 0\\
0 & -iqi
\end{bmatrix}
\]
for every $q \in \bH$. The qPVM $P:\mscr{B}(\C^+_i) \to \gB(\sH)=M_{2,2}(\bH)$ associated with $S$ is given by 
\[
P(E):=\sum_{\mu=1}^2 \epsilon_\mu P_\mu \;\; \text{ for every $E \in \mscr{B}(\bC^+_\imath)$}, 
\]
where $\epsilon_\mu=1$ if $\lambda_\mu\in E$ and $\epsilon_\mu=0$ otherwise, and 
\[
P_1=u_1\b u_1|\ \cdot\ \k=
\frac12
\begin{bmatrix}
1 & i \\
-i & 1
\end{bmatrix},\quad
P_2=u_2\b u_2|\ \cdot\ \k=
\frac12
\begin{bmatrix}
1 & -i \\
i & 1
\end{bmatrix}.
\]
The Spectral Theorem~\ref{spectraltheorem} takes then the classical form
\[S=P_1L_{\lambda_1}+P_2L_{\lambda_2}=P_1-P_2.
\]
Observe that, even if $S$ is $\bC_i$-valued, as are $P_1$ and $P_2$, the left scalar multiplication associated with $S$ cannot be equal for each $q\in\bH$ to the standard left multiplication by $q$, since then the map would not satisfy the conditions of Theorem~\ref{propL}.


\subsubsection{The quaternionic $L^2$-space on $[0,1]$}

Consider the quaternionic Hilbert space $\sL:=L_2([0,1],\bH;\mk{m})$, where $\mk{m}$ is the standard Lebesgue measure on $[0,1]$. Define the bounded operator $T:\sL \to \sL$ by setting
\[
(T\psi)(t):=t\psi(t) \;\; \text{ $\mk{m}$-a.e. in $t \in [0,1]$ for every $\psi \in \sL$.}
\]

It is immediate to verify that $T$ is self-adjoint and hence normal. Thanks to points $(\mr{a})(\mr{ii})$ and $(\mr{b})$ of Theorem 4.8 in \cite{GhMoPe}, $\ssp(T)$ is a compact subset of $\R$ and it coincides with $\sigma_{\mi{cS}}(T)$. Since $\big(\Delta_q(T)^{-1}\psi\big)(t)=(t-q)^{-2}\psi(t)$ for every $q \in \R$, it follows immediately that $\ssp(T)=\sigma_{\mi{cS}}(T)=[0,1]$.

Fix $\imath \in \bS$. Define the left scalar multiplication $\bH \ni q \stackrel{\LL}{\longmapsto} L_q \in \gB(\sL)$ and the qPVM $P:\mscr{B}(\C^+_\imath) \to \gB(\sL)$ by setting
\[
(L_q\psi)(t):=q\psi(t)
\;\; 
\text{ and }
\;\;
(P(E)\psi)(t):=\Chi_{E \cap [0,1]}(t)\psi(t)
\]
for every $q \in \bH$, for every $E \in \mscr{B}(\C^+_\imath)$ and for every $t \in [0,1]$. Since each function $\Chi_{E \cap [0,1]}$ is real-valued, the pair $\cP:=(P,\LL)$ is an iqPVM over $\C^+_\imath$ in $\sL$. We have that $\cP$ satisfies the spectral theorem for $T$; that is, $T=\int_{\C^+_\imath}\mi{id}\diff\cP$. The latter result can be proved following the proof of Theorem \ref{teorapp}, where $A=\{1\}$, $\sH_1=\sL$, $\nu_1(E)=\mk{m}(E \cap [0,1])$ and $U_1$ is the identity map on $\sL$.


\section{Integral of unbounded measurable functions w.r.t. an iqPVM} \label{sec:unb-int}

Fix a quaternionic Hilbert space $\sH$ and $\imath \in \bS$. Denote by $M(\C^+_\imath,\bH)$ the \textit{set of all (possibly unbounded Borel) measurable $\bH$-valued functions on $\C^+_\imath$}. Choose a qPVM $P:\mscr{B}(\C^+_\imath) \to \gB(\sH)$ over $\C^+_\imath$ in $\sH$. Given any $f \in M(\C^+_\imath,\bH)$, we define 
\[
\|f\|_\infty^{\sss(P)}:=\inf\big\{k \in \R^+ \big|\, P(\{q \in \bC_\imath^+ \, | \, |f(q)|>k\}) = 0\big\} \in \R^+ \cup \{+\infty\},
\]
where $\inf \emptyset:=+\infty$. We say that $f$ is \textit{$P$-essentially bounded} if $\|f\|^{\sss(P)}_\infty<+\infty$. Recall that $M_b(\C^+_\imath,\bH)$ indicates the subset of $M(\C^+_\imath,\bH)$ consisting of all bounded measurable $\bH$-valued functions on $\C^+_\imath$.

In what follows, we will use the finite positive regular Borel measures over $\mscr{B}(\bC^+_\imath)$ defined in \eqref{measuremuPx}. We point out that
\beq \label{summes}
\mu^{\sss(P)}_{qu}(E)=|q|^2\mu^{\sss(P)}_u(E)
\quad \text{ and } \quad
\mu^{\sss(P)}_{u+v}(E) \leq 2\big(\mu^{\sss(P)}_u(E)+\mu^{\sss(P)}_v(E)\big)
\eeq
for every $E \in \mscr{B}(\C^+_\imath)$, for every $q \in \bH$ and for every $u,v \in \sH$. Indeed, it holds:
\[
\mu^{\sss(P)}_{qu}(E)=\|P(E)L_qu\|^2=\|L_qP(E)u\|^2=|q|^2\|P(E)u\|^2=|q|^2\mu^{\sss(P)}_u(E)
\]
and
\[
\mu^{\sss(P)}_{u+v}(E)=\|P(E)(u)+P(E)(v)\|^2 \leq 2\mu^{\sss(P)}_u(E)+2\mu^{\sss(P)}_v(E).
\]

\begin{proposition}\label{defintunb}
Let $\cP=(P,\mc{L})$ be an iqPVM over $\C^+_\imath$ in $\sH$ and let $f \in M(\C^+_\imath,\bH)$. Denote by $D_f$ the set $\{u \in \sH \, | \, f \in L^2(\C_\imath^+,\bH;\mu_u^{\sss(P)})\}$. The following facts hold.
\begin{itemize}
 \item[$(\mr{a})$] $D_f$ is a dense (right $\bH$-linear) subspace of $\sH$.
 \item[$(\mr{b})$] For every $n \in \N$, define the function $f_n \in M_b(\C^+_\imath,\bH)$ by setting $f_n(q):=f(q)$ if $|f(q)| \leq n$ and $f_n(q):=0$ otherwise. Then, for every $u \in D_f$, the sequence $\{f_n\}_{n \in \N}$ converges to $f$ in $L^2(\C^+_\imath,\bH;\mu^{\sss(P)}_u)$ and the sequence $\big\{\int_{\C^+_\imath}f_n\diff\cP u\big\}_{n \in \N}$ converges in $\sH$. Define the operator $\int_{\C^+_\imath}f\diff\cP:D_f \to \sH$ as follows:
\beq \label{intill}
\int_{\C^+_\imath}f\diff\cP u:=\lim_{n \to +\infty} \int_{\C_\imath^+} f_n \diff\cP u \;\; \text{ for every $u \in D_f$}.
\eeq
 \item[$(\mr{c})$] If $\{g_n\}_{n \in \N}$ is another sequence of $P$-essentially bounded functions in $M(\C^+_\imath,\bH)$ converging to $f$ in $L^2(\C^+_\imath,\bH;\mu^{\sss(P)}_u)$ for some $u \in D_f$, then the sequence $\big\{\int_{\C^+_\imath}g_n\diff\cP u\big\}_{n \in \N}$ converges to $\int_{\C^+_\imath}f\diff\cP u$ in $\sH$.
 \item[$(\mr{d})$] For every $f \in M(\C^+_\imath,\bH)$ and for every $q \in \bH \setminus \{0\}$, we have that $D_{qf}=D_f=D_{fq}$, $L_q(D_f)=D_f$ and
\beq \label{ccq}
L_q\int_{\C^+_\imath}f\diff\cP=\int_{\C^+_\imath}qf\diff\cP
\;\; \text{ and }\;\;
\left(\int_{\C^+_\imath}f\diff\cP\right)L_q=\int_{\C^+_\imath}fq\diff\cP.
\eeq
Evidently, the second equality remains true if $q=0$.

 \item[$(\mr{e})$] For every $f \in M(\C^+_\imath,\bH)$ and for every $u \in D_f$, we have that
\beq \label{normunb}
\left\|\int_{\C_\imath^+} f \diff\cP u\right\|^2 =  \int_{\C_\imath^+} |f|^2 \diff\mu^{\sss(P)}_u \eeq
and 
\beq \label{normunb2}
\left \b u \left| \int_{\C_\imath^+} f \diff\cP u\right. \right\k=\int_{\C_\imath^+} f \diff\mu^{\sss(P)}_u \;\; \mbox{ if $f$ is real-valued}.
\eeq
\end{itemize}
\end{proposition}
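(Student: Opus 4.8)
The plan is to mimic the classical complex construction of the integral of an unbounded measurable function with respect to a PVM, but taking care of the quaternionic features (in particular the two-sided module structure and the fact that $\mu_u^{\sss(P)}$ is only defined for a single vector $u$, not sesquilinearly). I would proceed in the order the statement lists the items, since each one feeds the next.

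\textbf{Part $(\mathrm{a})$.} First I would check that $D_f$ is a right $\bH$-linear subspace. Closure under right scalar multiplication is immediate from the first identity in \eqref{summes}: $\mu^{\sss(P)}_{uq}(E)=|q|^2\mu^{\sss(P)}_u(E)$ (the statement writes it for $qu$, but $\mu^{\sss(P)}_{uq}=\mu^{\sss(P)}_u\cdot|q|^2$ equally, since $\|P(E)uq\|=\|P(E)u\|\,|q|$), so $f\in L^2(\mu^{\sss(P)}_u)$ iff $f\in L^2(\mu^{\sss(P)}_{uq})$. Closure under addition follows from the subadditivity estimate $\mu^{\sss(P)}_{u+v}\le 2(\mu^{\sss(P)}_u+\mu^{\sss(P)}_v)$, so that $\int|f|^2\diff\mu^{\sss(P)}_{u+v}\le 2\int|f|^2\diff\mu^{\sss(P)}_u+2\int|f|^2\diff\mu^{\sss(P)}_v<+\infty$. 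For density, I would use the projections $P(B_n)$ where $B_n:=\{q\in\C^+_\imath\mid |f(q)|\le n\}$: since $B_n\uparrow\C^+_\imath$, Remark \ref{rempropP}(2) (inner continuity) gives $P(B_n)u\to P(\C^+_\imath)u=u$ for every $u\in\sH$, and $P(B_n)u\in D_f$ because $\mu^{\sss(P)}_{P(B_n)u}(E)=\|P(E)P(B_n)u\|^2=\|P(E\cap B_n)u\|^2=\mu^{\sss(P)}_u(E\cap B_n)$, whence $\int_{\C^+_\imath}|f|^2\diff\mu^{\sss(P)}_{P(B_n)u}=\int_{B_n}|f|^2\diff\mu^{\sss(P)}_u\le n^2\|u\|^2<+\infty$. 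Thus $D_f$ is dense.

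\textbf{Parts $(\mathrm{b})$ and $(\mathrm{c})$.} Here the key is the isometry formula \eqref{secondestimatef} of Theorem \ref{TEO1}(b), which I would first upgrade slightly: for $\varphi,\psi\in M_b(\Gamma,\bH)$ one has $\|\int\varphi\diff\cP u-\int\psi\diff\cP u\|^2=\|\int(\varphi-\psi)\diff\cP u\|^2=\int|\varphi-\psi|^2\diff\mu^{\sss(P)}_u$ by $(\mathrm{a})(\mathrm{i})$ of that theorem and \eqref{secondestimatef}. Now if $u\in D_f$, dominated convergence in $L^2(\mu^{\sss(P)}_u)$ (dominate $|f_n|^2$ by $|f|^2\in L^1(\mu^{\sss(P)}_u)$) shows $f_n\to f$ in $L^2(\mu^{\sss(P)}_u)$; hence $\{f_n\}$ is $L^2(\mu^{\sss(P)}_u)$-Cauchy, so by the upgraded isometry $\{\int f_n\diff\cP u\}$ is Cauchy in $\sH$, and \eqref{intill} is well defined. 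For $(\mathrm{c})$, if $g_n\to f$ in $L^2(\mu^{\sss(P)}_u)$ with each $g_n$ $P$-essentially bounded, then $\|g_n-f_n\|_{L^2(\mu^{\sss(P)}_u)}\to 0$, and the same isometry (applied now to the $P$-essentially bounded functions $g_n-f_n$, extending \eqref{secondestimatef} from simple to all $P$-essentially bounded functions by the density Proposition \ref{convergencesimple} after truncation) gives $\|\int g_n\diff\cP u-\int f_n\diff\cP u\|\to 0$, so the two limits agree. I should also remark that the limit in \eqref{intill} is linear in $u$ (each $\int f_n\diff\cP$ is right $\bH$-linear and the limit of right $\bH$-linear maps on a linear subspace is right $\bH$-linear), so $\int_{\C^+_\imath}f\diff\cP$ is genuinely an operator with domain $D_f$.

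\textbf{Parts $(\mathrm{d})$ and $(\mathrm{e})$.} The equalities $D_{qf}=D_f=D_{fq}$ for $q\ne 0$ are immediate since $|qf|=|q|\,|f|=|fq|$ pointwise; $L_q(D_f)=D_f$ follows from $\mu^{\sss(P)}_{L_qu}(E)=|q|^2\mu^{\sss(P)}_u(E)$ (proved in the excerpt right before this proposition), which gives $L^2(\mu^{\sss(P)}_{L_qu})=L^2(\mu^{\sss(P)}_u)$. For the intertwining relations \eqref{ccq}: on $M_b(\C^+_\imath,\bH)$ they are exactly $(\mathrm{a})(\mathrm{i})$ of Theorem \ref{TEO1}, i.e.\ $L_q\int\varphi\diff\cP=\int q\varphi\diff\cP$ and $(\int\varphi\diff\cP)L_q=\int\varphi q\diff\cP$; applying these to $\varphi=f_n$ and letting $n\to\infty$ (using that $(qf)_n=q(f_n)$ and $(fq)_n=(f)_n q$ since truncation commutes with multiplication by a fixed quaternion), the left-hand sides converge because $L_q$ and right multiplication by $L_q$ are bounded and continuous, and the right-hand sides converge by definition, yielding \eqref{ccq} on the common domain $D_f$. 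Finally \eqref{normunb} and \eqref{normunb2} are obtained by passing to the limit in \eqref{secondestimatef} and \eqref{intemediatestimatef} respectively: $\|\int f_n\diff\cP u\|^2=\int|f_n|^2\diff\mu^{\sss(P)}_u\to\int|f|^2\diff\mu^{\sss(P)}_u$ by monotone (or dominated) convergence, and the left side tends to $\|\int f\diff\cP u\|^2$; similarly for the real-valued case using the signed version, splitting $f=f^+-f^-$ or just dominating by $|f|\in L^1(\mu^{\sss(P)}_u)$ (which holds on $D_f$ by Cauchy--Schwarz against the finite measure $\mu^{\sss(P)}_u$).

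\textbf{Main obstacle.} The one genuinely delicate point is part $(\mathrm{c})$, the independence of the limit from the approximating sequence of $P$-essentially bounded (not merely bounded) functions: one must first extend the isometry \eqref{secondestimatef} and the definition of $\int\varphi\diff\cP$ from bounded to $P$-essentially bounded $\varphi$, which requires modifying $\varphi$ on a $P$-null set to make it bounded and checking (via \eqref{firstestimatef}, since $\|\varphi\|^{\sss(P)}_\infty$ ignores $P$-null sets) that the integral and all the estimates are unchanged. Everything else is a routine truncation-and-limit argument built on Theorem \ref{TEO1} and the two elementary inequalities in \eqref{summes}.
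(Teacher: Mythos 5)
Your proof is correct and follows essentially the same route as the paper: truncation $f_n$, the isometry \eqref{secondestimatef} to get a Cauchy sequence, part $(\mathrm{c})$ for independence of the approximating sequence, and passage to the limit in the identities of Theorem \ref{TEO1} for $(\mathrm{d})$ and $(\mathrm{e})$; your density argument in $(\mathrm{a})$ via the increasing sets $\{|f|\le n\}$ is a harmless variant of the paper's argument with the disjoint sets $\{n\le|f|<n+1\}$. One small correction: in $(\mathrm{d})$ the claim $(qf)_n=qf_n$ is false unless $|q|=1$ (the truncation thresholds differ), but this does not matter, since $qf_n\to qf$ in $L^2(\C^+_\imath,\bH;\mu^{\sss(P)}_u)$ and your part $(\mathrm{c})$ then identifies $\lim_{n\to+\infty}\int_{\C^+_\imath}qf_n\diff\cP\,u$ with $\int_{\C^+_\imath}qf\diff\cP\,u$, which is exactly how the paper's own proof passes to the limit as well.
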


\begin{proof}
$(\mr{a})$ We begin by defining the pairwise disjoint sets $\{E_n\}_{n \in \N}$ in $\mscr{B}(\C_\imath^+)$ and the closed subspaces $\{\sH_n\}_{n \in \N}$ as follows:
$E_n:=\{q \in \C^+_\imath \, | \, n \leq |f(q)|<n+1\}$ and  $\sH_n :=P(E_n)(\sH)$. Since $\sum_{n \in \N}P(E_n)=P(\bigcup_{n \in \N} E_n)=P(\C^+_\imath)=\1$, the span of $\bigcup_{n \in \N}\sH_n$ is dense in $\sH$. It is now sufficient to show that $\sH_n \subset D_f$ for every $n \in \N$. Thanks to the monotone convergence theorem, we know that
\[
\int_{\C_\imath^+} |f|^2 \diff\mu^{\sss(P)}_u=\sum_{n \in \N} \int_{\C_\imath^+} |\Chi_{E_n}f|^2 \diff\mu^{\sss(P)}_u \in \R \cup \{+\infty\}.
\]
On the other hand, Theorem \ref{TEO1} ensures that
\begin{align*}
\int_{\C_\imath^+} |\Chi_{E_n}f|^2 \diff\mu^{\sss(P)}_u &=\left\langle \int_{\C_\imath^+} \Chi_{E_n} f \diff\cP u \left| \int_{\C_\imath^+} \Chi_{E_n} f \diff\cP u\right.  \right\rangle\\
&=\left\langle \int_{\C_\imath^+} \Chi_{E_n} f \diff\cP u \left| \int_{\C_\imath^+} \Chi_{E_n} f \Chi_{E_n} \diff\cP u\right.\right\rangle\\
&=\left\langle \int_{\C_\imath^+} \Chi_{E_n} f \diff\cP u \left| \int_{\C_\imath^+} \Chi_{E_n} f \diff\cP  \int_{\C_\imath^+} \Chi_{E_n} \diff\cP u\right.\right\rangle\\
&=\left\langle \int_{\C_\imath^+} |\Chi_{E_n} f|^2 \diff\cP u \left|\int_{\C_\imath^+} \Chi_{E_n} \diff\cP u \right.\right\rangle\\
&=\left\langle\left.\int_{\C_\imath^+} |\Chi_{E_n}f|^2 \diff\cP u \right| P(E_n)u \right\rangle.
\end{align*}

Therefore, if $u \in \sH_m$ for some $m \in \N$, then $P(E_n)u=0$ for every $n \in \N \setminus \{m\}$ and hence
\begin{align*}
\int_{\C_\imath^+} |f|^2 \diff\mu^{\sss(P)}_u&=\sum_{n \in \N} \int_{\C_\imath^+} |\Chi_{E_n}f|^2 \diff\mu^{\sss(P)}_u=\int_{\C_\imath^+} |\Chi_{E_m}f|^2 \diff\mu^{\sss(P)}_u\\
& \leq (m+1)^2\mu^{\sss(P)}_u(\C^+_\imath) \leq (m+1)^2\|u\|^2 < +\infty.
\end{align*}

This proves that $\bigcup_{n \in \N}\sH_n \subset D_f$ and hence $D_f$ turns out to be dense in $\sH$.

The fact that $D_f$ is a subspace of $\sH$ follows immediately from \eqref{summes}.

$(\mr{b})$ Let $u \in D_f$. Exploiting Theorem \ref{TEO1} again, we infer that
\[
\left\|\int_{\C_\imath^+} f_n \diff\cP u -   \int_{\C_\imath^+} f_m \diff\cP u\right\|^2=
\int_{\bC_\imath^+} |f_n-f_m|^2 d\mu^{\sss(P)}_u.
\]
Since the sequence $\{f_n\}_{n \in \N}$ converges to $f$ in $L^2(\C^+_\imath,\bH;\mu^{\sss(P)}_u)$, the latter equality implies that the sequence $\{\int_{\C^+_\imath}f_n \diff\cP u\}_{n \in \N}$ in $\sH$ is a Cauchy sequence. This fact assures the existence of the limit in (\ref{intill}). Bearing in mind that each restriction $\int_{\C_\imath^+} f_n \diff\cP\big|_{D_f}$ is right $\bH$-linear, it follows immediately that the limit $\int_{\C_\imath^+} f \diff\cP$ is right $\bH$-linear as well.

$(\mr{c})$ If $\{g_n\}_{n \in \N}$ is a sequence of $P$-essentially bounded functions in $M(\C^+_\imath,\bH)$ converging to $f$ in $L^2(\C^+_\imath,\bH;\mu^{\sss(P)}_u)$ for some $u \in D_f$, then $\{f_n-g_n\}_{n \in \N}$ converges to $0$ in $L^2(\C^+_\imath,\bH;\mu^{\sss(P)}_u)$ and hence
\[
\left\|\int_{\C_\imath^+} f_n \diff\cP u-\int_{\C_\imath^+} g_n \diff\cP u\right\|=\left(\int_{\bC_\imath^+} |f_n-g_n|^2 \diff\mu^{\sss(P)}_u\right)^{\frac{1}{2}} \to 0 \;\; \text{ in $\sH$.}
\]

$(\mr{d})$ It is immediate to verify that $D_{qf}=D_f=D_{fq}$. By Theorem \ref{TEO1}$(\mr{a})(\mr{i})$, we have that
\beq \label{ccq'}
L_q\int_{\C^+_\imath}f_n\diff\cP u=\int_{\C^+_\imath}qf_n\diff\cP u
\; \text{ and } \; \left(\int_{\C^+_\imath}f_n\diff\cP\right)L_q u=\int_{\C^+_\imath}f_nq\diff\cP u
\eeq
for every $n \in \N$ and for every $u \in D_f$. Since $\mu^{\sss(P)}_{qu}(E)=|q|^2\mu^{\sss(P)}_u(E)$ for every $E \in \mscr{B}(\C^+_\imath)$ and for every $u \in \sH$, we have that $L_q(D_f) \subset D_f$ (and hence $L_q(D_f)=D_f$). In this way, we can perform the limits in \eqref{ccq'} as $n \to +\infty$, obtaining \eqref{ccq}. 

$(\mr{e})$ Thanks to \eqref{secondestimatef} and \eqref{intemediatestimatef}, equalities \eqref{normunb} and \eqref{normunb2} hold if $f$ is replaced by $f_n$ for an arbitrary $n \in \N$ and if $u \in \sH$. Now, taking the limits as $n \to +\infty$, we obtain \eqref{normunb} and \eqref{normunb2} for arbitrary $f \in M(\C^+_\imath,\bH)$ and $u \in D_f$. 
$(\mr{f})$ Define the constant sequence $\{g_n\}_{n \in \N}$ in $M_b(\C^+_\imath,\bH)$ by setting $g_n:=\Chi_Ef$ for every $n \in \N$. Since such a sequence satisfies the hypothesis of preceding point $(\mr{c})$ for every $u \in D_f$, we have that the integral $\int_{\C^+_\imath}f\diff\cP$ defined in \eqref{intill} coincides with $\int_Ef\diff\cP=\int_{\C^+_\imath}\Chi_Ef\diff\cP$ defined in \eqref{defintfpsi}.
\end{proof}

\begin{remark}$\null$\label{remarksupp2}
$(1)$ Let $\cP=(P,\mc{L})$ be an iqPVM over $\C^+_\imath$ in $\sH$, let $E \in \mscr{B}(\C^+_\imath)$ and let $f \in M(\C^+_\imath,\bH)$. We define
\beq \label{defintfFunb}
\int_E f \diff\cP:=\int_{\C_\imath^+} \Chi_E f \diff\cP.
\eeq
It holds:
\beq \label{threeintunb}
\int_E f \diff\cP=\int_{E \cap \, \supp(P)} f \diff\cP.
\eeq
If $g$ is a function in $M(\C^+_\imath,\bH)$ with  $\|f-g\|_\infty^{\sss(P)}=0$, then 
\[
\int_E f \diff\cP=\int_E g \diff\cP.
\]
All the preceding equalities can be proved by observing that they are evident in the bounded case and survive the limit process \eqref{intill}.

If $h: E \to \bH$ is a possibly unbounded measurable function on $E$, then we define
\beq 	\label{eq:int-restr}
\int_E h \diff\cP:=\int_{\C_\imath^+} \widetilde{h} \diff\cP,
\eeq
where $\widetilde{h}:\C_\imath^+  \to \bH$ extends $h$ to the null function outside $E$. We denote the integral $\int_E h \diff\cP$ also by the symbol $\int_E h(z) \diff\cP(z)$, which is useful in the case we desire to write the integrand $h$ by an explicit expression.

$(2)$ Let $E \in \mscr{B}(\C^+_\imath)$, let $h: E \to \bH$ be a possibly unbounded measurable function on $E$ and let $\cP=(P,\mc{L})$ and $\cP'=(P,\mc{L}')$ be iqPVMs over $\C^+_\imath$ in $\sH$. If $h$ is real-valued, then
\[
\int_{\C^+_\imath}h\diff\cP=\int_{\C^+_\imath}h\diff\cP'.
\]
This equality follows immediately from definitions \eqref{eq:int-restr} and \eqref{intill}, and from Remark \ref{remarksupp}$(4)$. \bs
\end{remark}

We can now extend Theorem \ref{TEO1}, stating and proving  the general properties of the operators  $\int_{\C_\imath^+}f \diff\cP$ for $f \in M(\C^+_\imath,\bH)$.

\begin{theorem}\label{TEO1unb}
Let $\cP=(P,\mc{L})$ be an iqPVM over $\C^+_\imath$ in $\sH$ and let $f$ and $g$ be arbitrary functions in $M(\C^+_\imath,\bH)$. The following hold.
\begin{itemize}
 \item[$(\mr{a})$] $D\big(\int_{\C_\imath^+} f \diff\cP  \int_{\C_\imath^+} g \diff\cP \big)=D_g \cap D_{fg}$ and 
\beq \label{produnb}
\int_{\C_\imath^+} f \diff\cP \int_{\C_\imath^+} g \diff\cP \subset \int_{\C_\imath^+} fg \diff\cP.
\eeq
We can replace the latter inclusion with an equality if and only if $D_{g} \supset D_{fg}$.
  \item[$(\mr{b})$] $D\big(\int_{\C_\imath^+} f \diff\cP + \int_{\C_\imath^+} g \diff\cP \big)=D_f \cap D_g$ and
\beq \label{sumunb}
\int_{\C_\imath^+} f \diff\cP + \int_{\C_\imath^+} g \diff\cP \subset \int_{\C_\imath^+} (f+g) \diff\cP.
\eeq
We can replace the latter inclusion with an equality if and only if $D_f \cap D_g \supset D_{f+g}$.
 \item[$(\mr{c})$] $D_f=D_{\overline{f}}$ and
\beq \label{aggcomp}
\left(\int_{\C_\imath^+} f \diff\cP \right)^*= \int_{\C_\imath^+} \overline{f} \diff\cP.
\eeq
In particular, we have that $\int_{\C_\imath^+} f \diff\cP$ is always a closed operator.
 \item[$(\mr{d})$] The operator $\int_{\C_\imath^+} f \diff\cP$ is normal and
\begin{align}
\left(\int_{\C_\imath^+} f \diff\cP \right)^*
\left(\int_{\C_\imath^+} f \diff\cP \right)&=
\int_{\C_\imath^+} |f|^2 \diff\cP \nonumber\\
&=\left(\int_{\C_\imath^+} f \diff\cP \right)\left(\int_{\C_\imath^+} f \diff\cP \right)^*. \label{aggcomp2}
\end{align}
 \item[$(\mr{e})$] $D_f = \sH$ if and only if $f$ is $P$-essentially bounded. In particular, \eqref{produnb} holds with the equality if $g$ is $P$-essentially bounded. Similarly, \eqref{sumunb} holds with the equality if at least one of $f$ and $g$ is $P$-essentially bounded.
 \item[$(\mr{f})$]
 Let $\jmath \in \bS$, let $\Psi :\C_\imath^+ \to \C^+_\jmath$ be a measurable map and let $Q:\C^+_\jmath \to \gB(\sH)$ be the map defined by setting $Q(E):=P(\Psi^{-1}(E))$ for every $E \in \mscr{B}(\C^+_\jmath)$. Then $\mc{Q}:=(Q,\mc{L})$ is an iqPVM over $\C^+_\jmath$ in $\sH$ such that
\beq \label{PQ2}
\int_{\bC_\imath^+} \xi \circ \Psi \diff\cP= \int_{\bC_\jmath^+} \xi \diff\mc{Q}
\eeq
for every $\xi \in M(\C^+_\jmath,\bH)$.
\end{itemize}
\end{theorem}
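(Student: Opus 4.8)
The plan is to deduce all six assertions from the bounded case (Theorem~\ref{TEO1}) and the basic properties collected in Proposition~\ref{defintunb}, the pivot being a single Radon--Nikodym type identity for the scalar measures attached to vectors in the range of an operatorial integral:
\[
\mu^{\sss(P)}_{\int_{\C^+_\imath} g\,\diff\cP\, u}(E)=\int_E |g|^2\,\diff\mu^{\sss(P)}_u\qquad\text{for all }E\in\mscr{B}(\C^+_\imath),\ g\in M(\C^+_\imath,\bH),\ u\in D_g.
\]
For bounded $g$ this follows at once from Theorem~\ref{TEO1}, since $P(E)\int g\,\diff\cP=\int\Chi_E g\,\diff\cP$ and $\|\int\Chi_E g\,\diff\cP\,u\|^2=\int_E|g|^2\,\diff\mu^{\sss(P)}_u$; for arbitrary $g$ one passes to the limit along the truncations $g_n$ of Proposition~\ref{defintunb}(b), the right-hand side converging by monotone convergence.

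Feeding this into $\int|f|^2\,\diff\mu_{\int g\,\diff\cP\,u}=\int|fg|^2\,\diff\mu_u$ shows that $\int g\,\diff\cP\,u\in D_f$ iff $u\in D_{fg}$, which gives the domain statement in (a); the inclusion (\ref{produnb}) is then obtained by approximating $f$ by its bounded truncations $f_n$ (so that $\int f_n\,\diff\cP$ is bounded), checking $\int f_n\,\diff\cP\int g\,\diff\cP\,u=\int f_n g\,\diff\cP\,u$ via dominated convergence in $L^2(\mu_u)$, and letting $n\to\infty$ using $u\in D_{fg}$. Part (b) is analogous but lighter: the domain of a sum is by definition $D_f\cap D_g$, $L^2(\mu_u)$ is a vector space so $D_f\cap D_g\subset D_{f+g}$ always, and equality holds exactly when $D_{f+g}\subset D_f\cap D_g$; the ``if and only if'' clauses in both (a) and (b) are then tautologies about these domain inclusions. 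Part (e) also belongs here: $P$-essential boundedness yields $\int|f|^2\,\diff\mu_u\le(\|f\|^{\sss(P)}_\infty)^2\|u\|^2$ for every $u$, while if $f$ is not $P$-essentially bounded one produces $u\notin D_f$ by taking unit vectors $v_n$ in the ranges of $P(\{n\le|f|<n+1\})$ over an infinite set of indices and setting $u=\sum_n v_n c_n$ with $\sum|c_n|^2<\infty$ but $\sum n^2|c_n|^2=\infty$; the displayed identity then forces $\int|f|^2\,\diff\mu_u=\infty$. The two concluding sentences of (e) are immediate from (a) and (b).

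For (c) I would prove the two inclusions separately. Since $|f|=|\overline f|$ we have $D_f=D_{\overline f}$, and $\int\overline f\,\diff\cP\subset(\int f\,\diff\cP)^*$ follows by letting $n\to\infty$ (via Proposition~\ref{defintunb}(b)) in the bounded-case identity $\langle\int\overline{f_n}\,\diff\cP\,u|v\rangle=\langle u|\int f_n\,\diff\cP\,v\rangle$, valid for $u\in D_{\overline f}$, $v\in D_f$. For the reverse inclusion, given $u\in D((\int f\,\diff\cP)^*)$ with $w:=(\int f\,\diff\cP)^*u$, I would cut by $P(E_n)$ with $E_n:=\{|f|\le n\}$ --- these commute with $\int f\,\diff\cP$ on $D_f$ by (a) applied with the bounded factor $\Chi_{E_n}$ --- obtain $\int\overline f\,\diff\cP\,(P(E_n)u)=P(E_n)w$, and then read off $\int_{E_n}|f|^2\,\diff\mu_u=\|P(E_n)w\|^2\le\|w\|^2$ from the displayed identity; monotone convergence gives $u\in D_f$. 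Equality of the two operators exhibits $\int f\,\diff\cP$ as an adjoint of a densely defined operator, hence closed. Part (d) is then bookkeeping: by (a), $(\int f\,\diff\cP)^*\int f\,\diff\cP=\int\overline f\,\diff\cP\int f\,\diff\cP\subset\int|f|^2\,\diff\cP$ with domain $D_f\cap D_{|f|^2}$, and $D_{|f|^2}\subset D_f$ by the Cauchy--Schwarz inequality for the finite measure $\mu_u$, so the inclusion is an equality; the symmetric computation handles $\int f\,\diff\cP(\int f\,\diff\cP)^*$, and normality follows. Finally, in (f) the fact that $\mc{Q}=(Q,\mc{L})$ is an iqPVM is the verification already carried out in Theorem~\ref{TEO1}(c) plus the remark that $\mc{L}$ commutes with $Q(E)=P(\Psi^{-1}(E))$ because it commutes with every $P(\cdot)$; since $\mu^{\sss(Q)}_u$ is the pushforward $\Psi_*\mu^{\sss(P)}_u$, the change-of-variables formula gives $D_\xi=D_{\xi\circ\Psi}$ and reduces (\ref{PQ2}) to the bounded instance (\ref{PQ}) via the truncations, using that $\Psi^{-1}(\{|\xi|\le n\})=\{|\xi\circ\Psi|\le n\}$.

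I expect the main obstacle to be the reverse inclusion $(\int f\,\diff\cP)^*\subset\int\overline f\,\diff\cP$ in (c): it is the one step that resists a direct limiting argument, and it is exactly where the commutation of $\int f\,\diff\cP$ with the spectral cutoffs $P(E_n)$ and the Radon--Nikodym identity must be combined with care. Everything downstream --- closedness of $\int f\,\diff\cP$, its normality, and the exact domains in (a), (b) and (d) --- hinges on that step.
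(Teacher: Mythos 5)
Your proposal is correct and follows essentially the same route as the paper's proof: the pivotal identity $\int_{\C_\imath^+}|f|^2\,\diff\mu^{\sss(P)}_{Gu}=\int_{\C_\imath^+}|fg|^2\,\diff\mu^{\sss(P)}_u$ with $G=\int_{\C_\imath^+}g\,\diff\cP$ (the paper's analogue of your Radon--Nikodym identity), truncation plus monotone/dominated convergence for (a)--(c), the cut-off adjoint argument with $P(\{|f|\le n\})$ for the hard inclusion $(\int_{\C_\imath^+}f\,\diff\cP)^*\subset\int_{\C_\imath^+}\overline f\,\diff\cP$, the finite-measure $L^4\subset L^2$ (Cauchy--Schwarz) observation for (d), and the pushforward measure for (f). The only divergence is the converse half of (e), where you construct an explicit vector outside $D_f$ from an orthonormal sequence in the ranges of $P(\{n\le|f|<n+1\})$, whereas the paper invokes closedness from (c), the closed graph theorem and the estimate $\|f\Chi_{\{|f|\le n\}}\|^{\sss(P)}_\infty\le\big\|\int_{\C_\imath^+}f\,\diff\cP\big\|$; both arguments are valid, yours being slightly more elementary.
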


\begin{proof}
$(\mr{a})$ First, suppose that $\|f\|^{\sss(P)}_\infty < +\infty$. Define the sequence $\{g_n\}_{n \in \N}$ in $M_b(\C^+_\imath,\bH)$ by setting $g_n(q):=g(q)$ if $|g(q)| \leq n$ and $g_n(q):=0$ otherwise. Let $u \in D_g$. Bearing in mind points $(\mr{b})$ and $(\mr{c})$ of Proposition \ref{defintunb}, point $(\mr{a})(\mr{ii})$ of Theorem \ref{TEO1} and the fact that $\int_{\C_\imath^+} f \diff\cP \in \gB(\sH)$, we infer at once that
\begin{align*}
\int_{\C_\imath^+} f \diff\cP \int_{\C_\imath^+} g \diff\cP u &= \int_{\C_\imath^+} f \diff\cP \lim_{n \to +\infty}\int_{\C_\imath^+} g_n \diff\cP u\\
&=\lim_{n \to +\infty} \int_{\C_\imath^+} f \diff\cP \int_{\C_\imath^+} g_n \diff\cP u \\
&= \lim_{n \to +\infty} \int_{\C_\imath^+} f g_n \diff\cP u = \int_{\C_\imath^+} fg \diff\cP u.
\end{align*}
This proves that
\beq \label{itermprod}
\int_{\C_\imath^+} f \diff\cP \int_{\C_\imath^+} g \diff\cP u = \int_{\C_\imath^+} fg \diff\cP u \quad \mbox{if $u \in D_g$ and $\|f\|^{\sss(P)}_\infty <+\infty$}.
\eeq
Consequently, if $G:=\int_{\C_\imath^+} g \diff\cP$, \eqref{normunb} and \eqref{itermprod} imply
\beq \label{itermprod2}
\int_{\bC_\imath^+} |f|^2 d\mu^{\sss(P)}_{Gu} = \int_{\bC_\imath^+} |fg|^2 d\mu^{\sss(P)}_u\quad \mbox{if $u \in D_g$ and $\|f\|^{\sss(P)}_\infty<+\infty$.}
\eeq

Let $f$ be an arbitrary function in $M(\C^+_\imath,\bH)$ and let $\{f_n\}_{n \in \N}$ be the sequence in $M_b(\C^+_\imath,\bH)$ defined by setting $f_n(q):=f(q)$ if $|f(q)| \leq n$ and $f_n(q):=0$ otherwise. Since \eqref{itermprod2} applies to each $f_n$, the monotone convergence theorem assures that $Gu \in D_f$ if and only if $u \in D_{fg}$ and hence $D\big(\int_{\C^+_\imath}f \diff\cP\int_{\C^+_\imath}g \diff\cP\big)=D_g \cap D_{fg}$. 

Fix $u \in D_g \cap D_{fg}$. Thanks to the dominated convergence theorem, we infer that $\{f_n\}_{n \in \N}$ converges to $f$ in $L^2(\C^+_\imath,\bH;\mu^{\sss(P)}_{Gu})$ and $\{f_ng\}_{n \in \N}$ converges to $fg$ in $L^2(\C^+_\imath,\bH;\mu^{\sss(P)}_u)$. In particular, by \eqref{normunb}, it follows also that
\[
\lim_{n \to +\infty} \int_{\C_\imath^+} f_n \diff\cP Gu=\int_{\C_\imath^+} f \diff\cP Gu
\]
and
\[
\lim_{n \to +\infty} \int_{\C_\imath^+} f_ng \diff\cP u=  \int_{\C_\imath^+} fg \diff\cP u.
\]
Bearing in mind that \eqref{itermprod} holds if $f$ is replaced by each $f_n$, we obtain:
\begin{align*}
\int_{\C_\imath^+} f \diff\cP \int_{\C_\imath^+} g \diff\cP u&=\lim_{n \to +\infty} \int_{\C_\imath^+} f_n \diff\cP Gu\\
&=\lim_{n \to +\infty} \int_{\C_\imath^+} f_ng \diff\cP u=  \int_{\C_\imath^+} fg \diff\cP u.
\end{align*}

$(\mr{b})$ The proof of this point is  analogous to the one of $(\mr{a})$.

$(\mr{c})$ Let $f \in M(\C^+_\imath,\bH)$ and let $\{f_n\}_{n \in \N}$ be the sequence in $M_b(\C^+_\imath,\bH)$ defined as above. The equality $D_f=D_{\overline{f}}$ follows immediately from the definitions of $D_f$ and~$D_{\overline{f}}$.

Let us prove \eqref{aggcomp}. 
Let $u \in D_f$ and let $v \in D_{\overline{f}}=D_f$. By the definitions of $\int_{\C_\imath^+} f \diff\cP$ and $\int_{\C_\imath^+} \overline{f} \diff\cP$, and by point $(\mr{a})(\mr{iii})$ of Theorem \ref{TEO1}, we infer that
\begin{align*}
\left\langle v \left| \int_{\C_\imath^+} f \diff\cP u \right\rangle\right. &=
\lim_{n \to +\infty} \left\langle v \left| \int_{\C_\imath^+} f_n \diff\cP u \right\rangle\right.
=\lim_{n \to +\infty} \left\langle \left.\int_{\C_\imath^+} \overline{f_n} \diff\cP v \right| u \right\rangle\\
&=\left\langle \left.\int_{\C_\imath^+} \overline{f} \diff\cP v \right| u \right\rangle.
\end{align*}
This implies that $v \in D\big(\big(\int_{\C_\imath^+} f \diff\cP\big)^*\big)$ and $\int_{\C_\imath^+} \overline{f} \diff\cP \subset \big(\int_{\C_\imath^+} f \diff\cP\big)^*$.

To complete the proof of \eqref{aggcomp}, we have to show that $D\big(\big(\int_{\C_\imath^+} f \diff\cP\big)^*\big) \subset D_f$. 

For every $n \in \N$, define $F_n:=\{q \in \C^+_\imath \, | \, |f(q)| \leq n\}$. Since $f_n= f\Chi_{F_n}$, point $(\mr{b})$ implies that
\beq \label{eq:Fn}
\int_{\C_\imath^+} f_n\diff\cP=\int_{\C_\imath^+} f\diff\cP \int_{\C_\imath^+} \Chi_{F_n}\diff\cP.
\eeq
Bearing in mind that the operator $\int_{\C_\imath^+} \Chi_{F_n}\diff\cP=P(F_n)$ is bounded and self-adjoint, we obtain (see point $(\mr{v})$ of Remark 2.16 of \cite{GhMoPe}):
\[
\int_{\C_\imath^+} \Chi_{F_n}\diff\cP  \left(\int_{\C_\imath^+} f \diff\cP\right)^*
\subset 
\left(\int_{\C_\imath^+} f \diff\cP  \int_{\C_\imath^+} \Chi_{F_n}\diff\cP \right)^*.
\]
Combining the latter inclusion with \eqref{eq:Fn} and with point $(\mr{a})(\mr{iii})$ of Theorem \ref{TEO1}, we infer that
\[
\int_{\C_\imath^+} \Chi_{F_n}\diff\cP \left(\int_{\C_\imath^+} f \diff\cP\right)^*
\subset \left(\int_{\C_\imath^+} f_n \diff\cP\right)^* = \int_{\C_\imath^+} \overline{f_n} \diff\cP.
\]
In particular, given any $x \in D\big(\big(\int_{\C_\imath^+} f \diff\cP\big)^*\big)$, it holds
\[
\int_{\C_\imath^+} \Chi_{F_n}\diff\cP y =   \int_{\C_\imath^+} \overline{f_n} \diff\cP x,
\]
where $y:=\big(\int_{\C_\imath^+} f \diff\cP\big)^*x$. Thanks to \eqref{normunb}, it follows that
\[
\int_{\C_\imath^+} |f_n|^2 \diff\mu^{\sss(P)}_x = \int_{\C_\imath^+} |\Chi_{F_n}|^2 \diff\mu^{\sss(P)}_y \leq \|y\|^2 \;\; \text{ for every $n \in \N$},
\]
which implies that $x \in D_f$. This proves  \eqref{aggcomp} and consequently, since the adjoint of an operator is always closed, it also proves that $\int_{\C_\imath^+} f \diff\cP $ is closed.

$(\mr{d})$ Recall that, given any $u \in \sH$, the positive Borel measure $\mu_u^{\sss(P)}$ on $\C^+_\imath$ is finite. In this way, we have that $L^4(\C_\imath^+,\bH;\mu_u^{\sss(P)}) \subset L^2(\C_\imath^+,\bH;\mu_u^{\sss(P)})$ and hence $D_{\overline{f}f}=D_{f\overline{f}} \subset D_f=D_{\overline{f}}$. Combining this fact with preceding points $(\mr{a})$ and $(\mr{c})$, we infer that
\begin{align*}
\left(\int_{\C_\imath^+} f \diff\cP \right)^*
\left(\int_{\C_\imath^+} f \diff\cP \right)&=\int_{\C_\imath^+} \overline{f} \diff\cP \int_{\C_\imath^+} f \diff\cP=\int_{\C_\imath^+} |f|^2 \diff\cP\\
&=\int_{\C_\imath^+} f \diff\cP \int_{\C_\imath^+} \overline{f} \diff\cP=\left(\int_{\C_\imath^+} f \diff\cP \right)\left(\int_{\C_\imath^+} f \diff\cP \right)^*.
\end{align*}
In particular,  the operator $\int_{\C_\imath^+} f \diff\cP$ is normal.

$(\mr{e})$ If $\|f\|^{\sss(P)}_\infty<+\infty$, then $D_f= \sH$ by point $(\mr{f})$ of Proposition \ref{defintunb}.

Assume that $D_f=\sH$. By point $(\mr{c})$, the operator  $\int_{\C_\imath^+} f \diff\cP$ is closed. In this way,  the closed graph theorem implies that $\int_{\C_\imath^+} f \diff\cP$ is bounded (see point $(\mr{f})$ of Proposition 2.11 in \cite{GhMoPe}). If $\{f_n\}_{n \in \N}$ and $\{F_n\}_{n \in \N}$ are the sequences defined above, then \eqref{firstestimatef} and point $(\mr{a})$ implies that
\begin{align*}
\|f_n\|^{\sss(P)}_\infty &= \left\|\int_{\C_\imath^+} f_n \diff\cP\right\|=\left\|\int_{\C_\imath^+} f \diff\cP \int_{\C_\imath^+} \Chi_{F_n} \diff\cP\right\|\\
&=\left\|\int_{\C_\imath^+} f \diff\cP P(F_n)\right\| \leq \left\|\int_{\C_\imath^+} f \diff\cP\right\| \, \|P(F_n)\| \leq \left\|\int_{\C_\imath^+} f \diff\cP\right\|
\end{align*}
for every $n \in \N$. Therefore, taking the limit as $n \to + \infty$, we obtain that $\|f\|_\infty^{\sss(P)} \leq  \big\|\int_{\C_\imath^+} f \diff\cP\big\|<+\infty$, as desired.

$(\mr{f})$ It is immediate to verify that $\mc{Q}=(Q,\mc{L})$ is an iqPVM over $\C^+_\jmath$ in $\sH$, and $\mu_u^{\sss(Q)}=\mu_u^{\sss(P)} \circ \psi^{-1}$ for every $u \in \sH$. Let $\xi \in M(\C_\jmath^+,\bH)$. From standard results concerning positive measures, we have that, for every $u \in \sH$, the integrals $\int_{\C_\imath^+} |\xi \circ \psi|^2 \diff\mu_u^{\sss(P)}$ and $\int_{\C_\jmath^+} |\xi|^2 \diff\mu_u^{\sss(Q)}$ coincides, also when they diverges. This fact implies that $D_{\xi \circ \psi}=D_\xi$.

Let $\{\xi_n\}_{n \in \N}$ be the sequence in $M_b(\C^+_\jmath,\bH)$ defined by setting $\xi_n(q):=\xi(q)$ if $|\xi(q)| \leq n$ and $\xi_n(q):=0$ otherwise. Thanks to point $(\mr{c})$ of Theorem \ref{TEO1} and to point $(\mr{f})$ of Proposition \ref{defintunb}, we know that \eqref{PQ2} holds for every $\xi_n$ and for every $u \in \sH$. Points $(\mr{b})$ and $(\mr{c})$ of Proposition \ref{defintunb} ensure that \eqref{PQ2} holds for $\xi$ if $u \in D_{\xi \circ \psi}=D_\xi$.
\end{proof}

As a first corollary, we improve point $(\mr{f})$ of Proposition \ref{defintunb}.

\begin{corollary} \label{cor:f}
Let $\cP=(P,\mc{L})$ be an iqPVM over $\C^+_\imath$ in $\sH$ and let $f \in M(\C^+_\imath,\bH)$. Then the following three conditions are equivalent:
\begin{itemize}
 \item[$(\mr{a})$] $D_f=\sH$.
 \item[$(\mr{b})$] $\int_{\C^+_\imath}f\diff\cP$ is bounded.
 \item[$(\mr{c})$] $f$ is $P$-essentially bounded.
\end{itemize}

Moreover, if the preceding conditions are satisfied, then $\int_{\C^+_\imath}f\diff\cP \in \gB(\sH)$ and $\|f\|^{\sss(P)}_\infty \leq \big\|\int_{\C^+_\imath}f\diff\cP\big\|$.
\end{corollary}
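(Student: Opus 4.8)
The plan is to establish the chain of equivalences using results already proven in Theorem \ref{TEO1unb}, treating the three conditions as essentially packaged together by parts $(\mr{c})$ and $(\mr{e})$ of that theorem. First I would observe that the implication $(\mr{c}) \Rightarrow (\mr{a})$ is exactly point $(\mr{f})$ of Proposition \ref{defintunb}, which gives $D_f = \sH$ whenever $\|f\|^{\sss(P)}_\infty < +\infty$. Next, for $(\mr{a}) \Rightarrow (\mr{b})$: by Theorem \ref{TEO1unb}$(\mr{c})$ the operator $\int_{\C^+_\imath}f\diff\cP$ is closed, and if its domain $D_f$ equals the whole space $\sH$, then the closed graph theorem (in the quaternionic setting, via point $(\mr{f})$ of Proposition 2.11 of \cite{GhMoPe}) forces it to be bounded. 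This is the same argument already carried out inside the proof of Theorem \ref{TEO1unb}$(\mr{e})$, so it can simply be cited.

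The remaining implication $(\mr{b}) \Rightarrow (\mr{c})$ requires the quantitative estimate $\|f\|^{\sss(P)}_\infty \leq \big\|\int_{\C^+_\imath}f\diff\cP\big\|$, which I would extract directly from the computation already performed in the proof of Theorem \ref{TEO1unb}$(\mr{e})$. There one introduces the truncations $f_n := f\Chi_{F_n}$ with $F_n := \{q \in \C^+_\imath \,|\, |f(q)| \leq n\}$, and using $\int_{\C^+_\imath}f_n\diff\cP = \int_{\C^+_\imath}f\diff\cP\,P(F_n)$ together with \eqref{firstestimatef} one gets
\[
\|f_n\|^{\sss(P)}_\infty = \left\|\int_{\C^+_\imath}f\diff\cP\,P(F_n)\right\| \leq \left\|\int_{\C^+_\imath}f\diff\cP\right\| \, \|P(F_n)\| \leq \left\|\int_{\C^+_\imath}f\diff\cP\right\|,
\]
valid for every $n$. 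Letting $n \to +\infty$ and noting that $\|f_n\|^{\sss(P)}_\infty \nearrow \|f\|^{\sss(P)}_\infty$ yields $\|f\|^{\sss(P)}_\infty \leq \big\|\int_{\C^+_\imath}f\diff\cP\big\| < +\infty$, so $f$ is $P$-essentially bounded. This simultaneously proves $(\mr{b}) \Rightarrow (\mr{c})$ and the final quantitative assertion of the corollary.

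Finally, the statement $\int_{\C^+_\imath}f\diff\cP \in \gB(\sH)$ under the equivalent hypotheses is immediate: once $D_f = \sH$ and the operator is bounded, it is by definition an element of $\gB(\sH)$. The only point deserving a word of care is the monotonicity $\|f_n\|^{\sss(P)}_\infty \to \|f\|^{\sss(P)}_\infty$, which follows because $|f_n|$ increases pointwise to $|f|$ and $P$ is monotone on the corresponding superlevel sets (using that $P(\C^+_\imath \setminus \supp(P)) = 0$ and the inner continuity of $P$ from Remark \ref{rempropP}$(2)$). There is no real obstacle here: the corollary is essentially a repackaging of Theorem \ref{TEO1unb}$(\mr{e})$ together with Proposition \ref{defintunb}$(\mr{f})$, and the proof consists of citing those results and recording the estimate already implicit in them.
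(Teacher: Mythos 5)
Your proposal is correct and follows essentially the same route as the paper: $(\mr{c})\Rightarrow(\mr{a})$ from the definition of $D_f$ (via Proposition \ref{defintunb}), $(\mr{a})\Rightarrow(\mr{b})$ by closedness of $\int_{\C^+_\imath}f\diff\cP$ plus the quaternionic closed graph theorem, and $(\mr{b})\Rightarrow(\mr{c})$ by the truncation identity $\int_{\C^+_\imath}f\Chi_{F_n}\diff\cP=\int_{\C^+_\imath}f\diff\cP\,P(F_n)$ together with \eqref{firstestimatef} and $\|f\|^{\sss(P)}_\infty=\sup_n\|f\Chi_{F_n}\|^{\sss(P)}_\infty$, which also yields the quantitative bound. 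The paper's proof is the same computation, only written out directly rather than cited from the proof of Theorem \ref{TEO1unb}$(\mr{e})$.
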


\begin{proof}
Implication $(\mr{a}) \Longrightarrow (\mr{b})$ follows immediately from the closed graph theorem (see Proposition 2.11$(\mr{f})$ of \cite{GhMoPe}). Implication $(\mr{c}) \Longrightarrow (\mr{a})$ is evident.

Let us prove $(\mr{b}) \Longrightarrow (\mr{c})$. For every $n \in \N$, define $E_n:=\{q \in \C^+_\imath \, | \, |f(q)| \leq n\} \in \mscr{B}(\C^+_\imath)$. By \eqref{firstestimatef}, Proposition \ref{defintunb}$(\mr{f})$ and Theorem \ref{TEO1unb}$(\mr{a})$, we have that
\begin{align*}
\|f\Chi_{E_n}\|^{\sss(P)}_\infty &=\left\|\int_{\C^+_\imath}f\Chi_{E_n}\diff\cP\right\|=\left\|\int_{\C^+_\imath}f\diff\cP\int_{\C^+_\imath}\Chi_{E_n}\diff\cP\right\|=\left\|\int_{\C^+_\imath}f\diff\cP \, P(E_n)\right\|\\
&\leq \left\|\int_{\C^+_\imath}f\diff\cP\right\|  \left\|P(E_n)\right\| \leq \left\|\int_{\C^+_\imath}f\diff\cP\right\|
\end{align*}
for every $n \in \N$. Since $\|f\|^{\sss(P)}_\infty=\sup_{n \in \N}\|f\Chi_{E_n}\|^{\sss(P)}_\infty$ (see Remark \ref{rempropP}$(2)$), the proof is complete.
\end{proof}

Another important consequence of Theorem \ref{TEO1unb} is the following.

\begin{corollary}\label{corollario1}
Let $\cP=(P,\mc{L})$ be an iqPVM over $\C^+_\imath$ in $\sH$ and let $f$ be an arbitrary function in $M(\C^+_\imath,\bH)$. Then $\Ker\big(\int_{\C^+_\imath}f\diff\cP\big)=P(f^{-1}(0))(\sH)$ and hence the operator $\int_{\C_\imath^+} f \diff\cP:D_f \to \Ran\big(\int_{\C_\imath^+} f \diff\cP\big)$ is bijective if and only if $P(f^{-1}(0))=0$. In this case 
\[
\left(\int_{\C_\imath^+} f \diff\cP\right)^{-1}= \int_{\C_\imath^+} \frac{1}{f} \, \diff\cP,
\]
where $\frac{1}{f}:\C^+_\imath \to \bH$ is the function defined by setting $\frac{1}{f}(q):=(f(q))^{-1}$ if $f(q) \neq 0$ and $\frac{1}{f}(q):= 0$ otherwise.
\end{corollary}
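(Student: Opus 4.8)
The plan is to imitate the proof of point~$(\mr{a})(\mr{vii})$ of Theorem~\ref{TEO1}, but using the unbounded integral and the norm identity \eqref{normunb} in place of \eqref{secondestimatef}, and then to extract the formula for the inverse from the multiplicativity property recorded in Theorem~\ref{TEO1unb}$(\mr{a})$. Throughout I put $Z:=f^{-1}(0)\in\mscr{B}(\C^+_\imath)$.

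First I would determine the kernel. One inclusion is immediate: $\Chi_Z$ is bounded and $f\Chi_Z\equiv 0$, so Theorem~\ref{TEO1unb}$(\mr{a})$ gives $\int_{\C^+_\imath}f\diff\cP\,\int_{\C^+_\imath}\Chi_Z\diff\cP\subset\int_{\C^+_\imath}f\Chi_Z\diff\cP=0$, and since $\int_{\C^+_\imath}\Chi_Z\diff\cP=P(Z)\in\gB(\sH)$ this forces $P(Z)(\sH)\subseteq\Ker\big(\int_{\C^+_\imath}f\diff\cP\big)$. For the reverse inclusion I would take $u\in\Ker\big(\int_{\C^+_\imath}f\diff\cP\big)\subseteq D_f$; by \eqref{normunb} then $\int_{\C^+_\imath}|f|^2\diff\mu^{\sss(P)}_u=0$, so $\mu^{\sss(P)}_u$ vanishes on every set $\{q\in\C^+_\imath\,|\,|f(q)|\ge 1/n\}$ and hence on $\C^+_\imath\setminus Z$; by \eqref{measuremuPx} this means $P(\C^+_\imath\setminus Z)u=0$, i.e.\ $u=P(Z)u\in P(Z)(\sH)$. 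This proves $\Ker\big(\int_{\C^+_\imath}f\diff\cP\big)=P(Z)(\sH)$; since $P(Z)$ is a projector, its range is trivial exactly when $P(Z)=0$, and as $\int_{\C^+_\imath}f\diff\cP$ is tautologically onto its range, it is bijective from $D_f$ onto $\Ran\big(\int_{\C^+_\imath}f\diff\cP\big)$ if and only if $P(Z)=0$.

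Finally I would treat the inverse formula under the assumption $P(Z)=0$. Write $F:=\int_{\C^+_\imath}f\diff\cP$, $g:=1/f$, $G:=\int_{\C^+_\imath}g\diff\cP$. Since $\bH$ is a division algebra, $fg=gf=\Chi_{\C^+_\imath\setminus Z}$ pointwise, and $P(Z)=0$ together with Definition~\ref{defSPOVM}$(\mr{a})$,$(\mr{c})$ yields $\int_{\C^+_\imath}fg\diff\cP=\int_{\C^+_\imath}gf\diff\cP=P(\C^+_\imath\setminus Z)=\1$. As $fg$ and $gf$ are bounded, $D_{fg}=D_{gf}=\sH$, so Theorem~\ref{TEO1unb}$(\mr{a})$ gives $FG\subset\1$ with $D(FG)=D_g$ and $GF\subset\1$ with $D(GF)=D_f$; equivalently $F(Gu)=u$ for all $u\in D_g$ and $G(Fv)=v$ for all $v\in D_f$. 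The identity on $D_f$ shows $\Ran(F)\subseteq D(G)=D_g$ and that $G|_{\Ran(F)}$ inverts the bijection $F\colon D_f\to\Ran(F)$; the identity on $D_g$ shows moreover that each $u\in D_g$ equals $F(Gu)$, hence $D_g\subseteq\Ran(F)$. Therefore $\Ran(F)=D_g$ and $F^{-1}=G=\int_{\C^+_\imath}\frac{1}{f}\diff\cP$, as claimed. I expect the only delicate step to be exactly this last domain bookkeeping: Theorem~\ref{TEO1unb}$(\mr{a})$ supplies only the operator inclusions $FG\subset\1$ and $GF\subset\1$, and one must invoke the boundedness of $fg$ and $gf$ to identify the domains $D(FG)=D_g$, $D(GF)=D_f$ and so upgrade these inclusions to the information needed to recover $F^{-1}=G$.
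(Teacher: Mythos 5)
Your proposal is correct and follows essentially the same route as the paper: the kernel is identified exactly as in the paper via Theorem~\ref{TEO1unb}$(\mr{a})$ together with \eqref{normunb} and \eqref{measuremuPx}, and the inverse formula is obtained, as in the paper, from the two inclusions $FG\subset\1$ and $GF\subset\1$ supplied by Theorem~\ref{TEO1unb}$(\mr{a})$ (using $P(f^{-1}(0))=0$ so that $\int_{\C^+_\imath}f\tfrac{1}{f}\diff\cP=\1$), with the same domain bookkeeping based on the boundedness of $f\cdot\tfrac{1}{f}$. The only cosmetic difference is that you compute $\int_{\C^+_\imath}f\tfrac{1}{f}\diff\cP=P(\C^+_\imath\setminus f^{-1}(0))=\1$ directly rather than invoking $P$-a.e.\ equality with the constant function $1$, which changes nothing of substance.
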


\begin{proof}
The proof of the equality $\Ker\big(\int_{\C^+_\imath}f\diff\cP\big)=P(f^{-1}(0))(\sH)$ is similar to the one of point $(\mr{a})(\mr{vii})$ of Theorem \ref{TEO1}. Let $E:=f^{-1}(0)$. By point $(\mr{a})$ of Theorem \ref{TEO1unb}, we know that $\int_{\C^+_\imath}f \diff\cP \, P(E)=\int_{\C^+_\imath}f \diff\cP\int_{\C^+_\imath}\Chi_E \diff\cP=\int_{\C^+_\imath}f\Chi_E \diff\cP=0$ and hence $P(E)(\sH) \subset \Ker\big(\int_{\C^+_\imath}f \diff\cP\big)$. Let $u \in \Ker\big(\int_{\C^+_\imath}f \diff\cP\big)$. By \eqref{normunb}, we have that $\int_{\C^+_\imath}|f|^2\diff\mu_u^{\sss(P)}=0$. Since the measure $\mu_u^{\sss(P)}$ is positive, it follows that $\mu_u^{\sss(P)}(\C^+_\imath \setminus E)=0$ and hence $P(\C^+_\imath \setminus E)u=0$. In this way, we infer that $u=P(E)u \in P(E)(\sH)$. This proves that $\Ker(\int_{\C^+_\imath}f\diff\cP)=P(E)(\sH)$.

Assume $\int_{\C^+_\imath}f\diff\cP$ is bijective or, equivalently, $P(E)=0$. Since $f \frac{1}{f}$ is $P$-almost everywhere equal to the function on $\C^+_\imath$ constantly equal to $1$, point $(\mr{a})$ of Theorem \ref{TEO1unb} and point $(\mr{a})(\mr{iv})$ of Theorem \ref{TEO1} ensure that $D\big(\int_{\C^+_\imath}\frac{1}{f}\diff\cP \int_{\C^+_\imath}f \diff\cP\big)=D\big(\int_{\C^+_\imath} f \diff\cP \big)$ and $\int_{\C^+_\imath}\frac{1}{f} \diff\cP\int_{\C^+_\imath} f \diff\cP \subset \1$. In this way, $\int_{\C^+_\imath} f \diff\cP$ is invertible, $\Ran\big(\int_{\C^+_\imath} f \diff\cP\big) \subset D_{\frac{1}{f}}$ and $\big(\int_{\C^+_\imath} f \diff\cP\big)^{-1} \subset \int_{\C^+_\imath} \frac{1}{f} \diff\cP$. By applying the same argument to $\frac{1}{f}$, we obtain that $\int_{\C^+_\imath} \frac{1}{f} \diff\cP$ is invertible, $\Ran\big(\int_{\C^+_\imath} \frac{1}{f} \diff\cP\big) \subset D_f$ and $\big(\int_{\C^+_\imath} \frac{1}{f} \diff\cP\big)^{-1} \subset \int_{\C^+_\imath} f \diff\cP$. In particular, we have that
\[
\textstyle
D_{\frac{1}{f}}=\big(\int_{\C^+_\imath} \frac{1}{f} \diff\cP\big)^{-1}\left(\Ran\big(\int_{\C^+_\imath} \frac{1}{f} \diff\cP\big)\right) \subset \big(\int_{\C^+_\imath} f \diff\cP\big)(D_f)=\Ran\big(\int_{\C^+_\imath} f \diff\cP\big),
\]
and hence $D_{\frac{1}{f}}=\Ran\big(\int_{\C^+_\imath} f \diff\cP\big)$ and $\int_{\C^+_\imath} \frac{1}{f} \diff\cP=\big(\int_{\C^+_\imath} f \diff\cP\big)^{-1}$, as desired.
\end{proof}

We conclude this section by improving points $(\mr{a})$ and $(\mr{b})$ of Theorem \ref{TEO1unb}. 

First, we need some preparations. As for complex Hilbert spaces, we have the following definition. 

\begin{definition}\label{defcore}
Let $T: D(T) \to \sH$ be a closed linear operator.
A linear subspace $\sS$ of $D(T)$ is said to be a \emph{core for $T$} if $\overline{T|_\sS}=T$. \bs
\end{definition}

The following is a quaternionic version of Theorem 4.3$(\mr{iii})$ of \cite{Schmudgen}.

\begin{lemma} \label{lem:core2}
Let $f \in M(\C^+_\imath,\bH)$ and let $\{E_n\}_{n \in \N}$ be a sequence of sets in $\mscr{B}(\C^+_\imath)$ such that $E_n \subset E_{n+1}$ for every $n \in \N$, $\bigcup_{n \in \N}E_n=\C^+_\imath$ and $f\Chi_{E_n} \in M_b(\C^+_\imath,\bH)$. Such a sequence of Borel sets is called \emph{bounding sequence for $f$}. Consider an iqPVM $\cP=(P,\mc{L})$ over $\C^+_\imath$ in $\sH$. Then $\bigcup_{n \in \N}P(E_n)(\sH)$ is a core for $\int_{\C^+_\imath}f\diff\cP$.
\end{lemma}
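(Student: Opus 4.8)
Write $F:=\int_{\C^+_\imath}f\diff\cP$ and $\sS:=\bigcup_{n\in\N}P(E_n)(\sH)$; recall from Theorem \ref{TEO1unb}$(\mr{c})$ that $F$ is closed with $D(F)=D_f$. First I would record the two routine preliminary facts that make the statement meaningful, namely that $\sS$ is a right $\bH$-linear subspace and that $\sS\subset D_f$. Since $E_n\subset E_{n+1}$ gives $P(E_n)=P(E_n)P(E_{n+1})$, we get $P(E_n)(\sH)\subset P(E_{n+1})(\sH)$, so $\sS$ is an increasing union of subspaces, hence a subspace. For $u\in P(E_n)(\sH)$ one has $u=P(E_n)u$, so $\mu^{\sss(P)}_u(\C^+_\imath\setminus E_n)=\|P(\C^+_\imath\setminus E_n)P(E_n)u\|^2=0$, whence $\int_{\C^+_\imath}|f|^2\diff\mu^{\sss(P)}_u=\int_{E_n}|f|^2\diff\mu^{\sss(P)}_u\le\big(\|f\Chi_{E_n}\|_\infty\big)^2\|u\|^2<+\infty$; thus $P(E_n)(\sH)\subset D_f$ and $\sS\subset D(F)$.

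\textbf{Reduction to a graph-limit statement.} Since $F$ is closed and $F|_\sS\subset F$, the operator $F|_\sS$ is closable with $\overline{F|_\sS}\subset F$, so by Definition \ref{defcore} it remains to prove the reverse inclusion: for every $u\in D(F)$ the pair $(u,Fu)$ is a limit in $\sH\times\sH$ of pairs $(u_n,Fu_n)$ with $u_n\in\sS$. I would take $u_n:=P(E_n)u\in P(E_n)(\sH)\subset\sS$. That $u_n\to u$ is precisely the inner continuity of $P$ from Remark \ref{rempropP}$(2)$: because $P(E_n)$ is a self-adjoint idempotent, $\|u-P(E_n)u\|^2=\|u\|^2-\|P(E_n)u\|^2=\|u\|^2-\mu^{\sss(P)}_u(E_n)\to\|u\|^2-\mu^{\sss(P)}_u(\C^+_\imath)=0$.

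\textbf{Convergence of the images.} Here I would lean on Theorem \ref{TEO1unb}. As $\Chi_{E_n}$ and $f\Chi_{E_n}$ are bounded, parts $(\mr{a})$ and $(\mr{e})$ of that theorem give the operator identity $F\,P(E_n)=\big(\int_{\C^+_\imath}f\diff\cP\big)\big(\int_{\C^+_\imath}\Chi_{E_n}\diff\cP\big)=\int_{\C^+_\imath}f\Chi_{E_n}\diff\cP$ on all of $\sH$, hence $Fu_n=\int_{\C^+_\imath}f\Chi_{E_n}\diff\cP\,u$. For $u\in D_f$, parts $(\mr{b})$ and $(\mr{e})$ give $Fu_n-Fu=\int_{\C^+_\imath}(f\Chi_{E_n}-f)\diff\cP\,u$, and then \eqref{normunb} yields
\[
\|Fu_n-Fu\|^2=\int_{\C^+_\imath}|f\Chi_{E_n}-f|^2\diff\mu^{\sss(P)}_u=\int_{\C^+_\imath\setminus E_n}|f|^2\diff\mu^{\sss(P)}_u.
\]
Since $\C^+_\imath\setminus E_n\downarrow\emptyset$ and $|f|^2\in L^1(\C^+_\imath,\bH;\mu^{\sss(P)}_u)$ (because $u\in D_f$), the dominated convergence theorem forces the right-hand side to $0$. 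Thus $(u_n,Fu_n)\to(u,Fu)$, proving $F\subset\overline{F|_\sS}$ and therefore $\overline{F|_\sS}=F$, i.e.\ $\sS$ is a core for $F$.

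\textbf{Main obstacle.} None of the steps is deep; the only point demanding care is the bookkeeping one: verifying that the two operator identities $F\,P(E_n)=\int_{\C^+_\imath}f\Chi_{E_n}\diff\cP$ and $Fu_n-Fu=\int_{\C^+_\imath}(f\Chi_{E_n}-f)\diff\cP\,u$ really hold on the domains where they are used. For unbounded operators this is not automatic and must be extracted from the precise domain clauses in parts $(\mr{a})$, $(\mr{b})$ and $(\mr{e})$ of Theorem \ref{TEO1unb} (using $D_{\Chi_{E_n}}=D_{f\Chi_{E_n}}=\sH$ and $D_f\subset D_{-f}$). Once these are in place, the convergence $Fu_n\to Fu$ is a one-line dominated-convergence estimate.
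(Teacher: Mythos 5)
Your proof is correct and takes essentially the same route as the paper: you approximate $u \in D_f$ by $u_n := P(E_n)u$, and you justify $P(E_n)(\sH) \subset D_f$ and the needed operator identities through Theorem \ref{TEO1unb}$(\mr{a})$,$(\mr{e})$ applied to the bounded cutoff $f\Chi_{E_n}$, exactly as in the printed argument. The only (harmless) deviation is the final step: writing $F:=\int_{\C^+_\imath}f\diff\cP$, the paper deduces $Fu_n=P(E_n)Fu \to Fu$ from the commutation identity and the $\sigma$-additivity of $P$, whereas you compute $\|Fu_n-Fu\|^2=\int_{\C^+_\imath\setminus E_n}|f|^2\diff\mu^{\sss(P)}_u$ via \eqref{normunb} and conclude by dominated convergence.
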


\begin{proof}
Let $n \in \N$. Thanks to points $(\mr{a})$ and $(\mr{e})$ of Theorem \ref{TEO1unb}, and to the fact that the measurable function $f\Chi_{E_n}$ is bounded, we infer at once that
\begin{align*}
D\left(P(E_n)\int_{\C^+_\imath}f\diff\cP\right)
&=
D\left(\int_{\C^+_\imath}\Chi_{E_n}\diff\cP\int_{\C^+_\imath}f\diff\cP\right)\\
&=D_f \cap D_{f\Chi_{E_n}}=D_f \cap \sH=D_f
\end{align*}
and
\[
P(E_n)\int_{\C^+_\imath}f\diff\cP \subset \int_{\C^+_\imath}\Chi_{E_n}f\diff\cP = \int_{\C^+_\imath}f\Chi_{E_n}\diff\cP=\left(\int_{\C^+_\imath}f\diff\cP \right) P(E_n).
\]
It follows that $P(E_n)(\sH) \subset D_f$ and $\int_{\C^+_\imath}f\diff\cP(P(E_n)u)=P(E_n)(\int_{\C^+_\imath}f\diff\cP \, u)$ for every $u \in D_f$. Bearing in mind points $(\mr{a})$ and $(\mr{c})$ of Definition \ref{defSPOVM}, it follows also that, if $u \in D_f$, then $P(E_n)u \to u$ and $\int_{\C^+_\imath}f\diff\cP(P(E_n)u)=P(E_n)(\int_{\C^+_\imath}f\diff\cP \, u) \to \int_{\C^+_\imath}f\diff\cP \, u$ as $n \to +\infty$. This proves that $\bigcup_{n \in \N}P(E_n)(\sH)$ is a core for $\int_{\C^+_\imath}f\diff\cP$. 
\end{proof}

\begin{proposition} \label{TEO1unb-improved}
Given an iqPVM $\cP=(P,\mc{L})$ over some $\C^+_\imath$ in $\sH$ and arbitrary functions $f$ and $g$ in $M(\C^+_\imath,\bH)$, it holds:
\beq \label{eq:closure-composition}
\overline{\int_{\C_\imath^+} f \diff\cP \int_{\C_\imath^+} g \diff\cP} = \int_{\C_\imath^+} fg \diff\cP.
\eeq
and
\beq \label{eq:closure-sum}
\overline{\int_{\C_\imath^+} f \diff\cP + \int_{\C_\imath^+} g \diff\cP} = \int_{\C_\imath^+} (f+g) \diff\cP.\eeq
\end{proposition}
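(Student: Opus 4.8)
The plan is to prove both identities by the same strategy: first establish the inclusion "$\subset$" (already available from Theorem \ref{TEO1unb}), then show that the left-hand operator has a core on which it agrees with the right-hand operator, and finally invoke closedness of the right-hand operator (Theorem \ref{TEO1unb}$(\mr{c})$) together with the fact that a closed operator is the closure of its restriction to any core. Since $\int_{\C_\imath^+}fg\diff\cP$ and $\int_{\C_\imath^+}(f+g)\diff\cP$ are closed, and since $\int_{\C_\imath^+}f\diff\cP\int_{\C_\imath^+}g\diff\cP \subset \int_{\C_\imath^+}fg\diff\cP$ by \eqref{produnb}, taking closures gives $\overline{\int f\diff\cP\int g\diff\cP}\subset \int fg\diff\cP$ automatically; so the real content is the reverse inclusion, which follows once we exhibit a common core.

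First I would produce the core. Let $\{E_n\}_{n\in\N}$ be a bounding sequence that works simultaneously for $f$, $g$, $fg$ and $f+g$ — for instance $E_n:=\{q\in\C^+_\imath \mid |f(q)|\le n,\ |g(q)|\le n\}$, which is increasing with union $\C^+_\imath$ and makes all four restrictions $f\Chi_{E_n}$, $g\Chi_{E_n}$, $(fg)\Chi_{E_n}$, $(f+g)\Chi_{E_n}$ bounded. Set $\sS:=\bigcup_{n\in\N}P(E_n)(\sH)$. By Lemma \ref{lem:core2}, $\sS$ is a core for $\int_{\C_\imath^+}fg\diff\cP$ and for $\int_{\C_\imath^+}(f+g)\diff\cP$.

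Next I would check that $\sS$ lies in the domain of the composition (resp.\ the sum) and that the two operators agree there. For the composition: if $u\in P(E_n)(\sH)$, then by Theorem \ref{TEO1unb}$(\mr{e})$ and the computation in the proof of Lemma \ref{lem:core2} we have $u\in D_g$, and moreover $\int_{\C_\imath^+}g\diff\cP\,u = \int_{\C_\imath^+}g\Chi_{E_n}\diff\cP\,u \in P(E_n)(\sH)$, which in turn lies in $D_f$ since $f\Chi_{E_n}$ is bounded; hence $u\in D\big(\int f\diff\cP\int g\diff\cP\big)=D_g\cap D_{fg}$, and on such $u$ the inclusion \eqref{produnb} is an equality, so $\int f\diff\cP\int g\diff\cP\,u=\int fg\diff\cP\,u$. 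Thus $\int_{\C_\imath^+}fg\diff\cP\big|_{\sS}=\int f\diff\cP\int g\diff\cP\big|_{\sS}\subset \int f\diff\cP\int g\diff\cP$. Taking closures and using that $\sS$ is a core for $\int fg\diff\cP$ (so $\overline{\int fg\diff\cP\big|_{\sS}}=\int fg\diff\cP$) and that $\overline{\int f\diff\cP\int g\diff\cP}$ is the smallest closed extension, we get $\int_{\C_\imath^+}fg\diff\cP\subset \overline{\int f\diff\cP\int g\diff\cP}$, completing \eqref{eq:closure-composition}. The argument for \eqref{eq:closure-sum} is identical, using $D\big(\int f\diff\cP+\int g\diff\cP\big)=D_f\cap D_g$ and the fact that $P(E_n)(\sH)\subset D_f\cap D_g$ together with equality in \eqref{sumunb} on $\sS$ (which holds because on each $P(E_n)(\sH)$ both $f$ and $f+g$-pieces are bounded, so $D_f\cap D_g\supset D_{f+g}$ is not even needed pointwise — we just verify the two agree on $\sS$).

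The main obstacle I anticipate is the bookkeeping around cores: one must be careful that $\sS$ is genuinely a core for the closed operator on the right (this is exactly Lemma \ref{lem:core2}, applied with the common bounding sequence) and that the restriction of the composition/sum to $\sS$ has the same closure — this uses $\sS\subset D\big(\int f\diff\cP\int g\diff\cP\big)$ and the monotonicity $\overline{\int f\diff\cP\int g\diff\cP}\supset \overline{\int f\diff\cP\int g\diff\cP\big|_{\sS}}=\overline{\int fg\diff\cP\big|_{\sS}}=\int fg\diff\cP$. Everything else is a direct consequence of Theorem \ref{TEO1unb} and of the general fact, valid verbatim as in the complex case, that a closed operator equals the closure of any of its restrictions to a core. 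No quaternion-specific difficulty arises beyond what is already encapsulated in Theorems \ref{TEO1} and \ref{TEO1unb}.
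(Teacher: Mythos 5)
Your proposal is correct and follows essentially the same route as the paper: the same bounding sequence $E_n=\{q \mid \max\{|f(q)|,|g(q)|\}\leq n\}$, the core $\bigcup_{n}P(E_n)(\sH)$ supplied by Lemma \ref{lem:core2}, agreement of the composition (resp.\ sum) with $\int_{\C_\imath^+}fg\diff\cP$ (resp.\ $\int_{\C_\imath^+}(f+g)\diff\cP$) on that core via points $(\mr{a})$ and $(\mr{e})$ of Theorem \ref{TEO1unb}, and the closure/core argument to upgrade the inclusions \eqref{produnb} and \eqref{sumunb} to the stated equalities. The only cosmetic difference is that you verify domain membership vector by vector, whereas the paper records it through the operator identity $\int_{\C^+_\imath}fg\diff\cP\,P(E_n)=\int_{\C^+_\imath}f\diff\cP\int_{\C^+_\imath}g\diff\cP\,P(E_n)$; the content is the same.
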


\begin{proof}
The proof is similar to the complex one (see points $(\mr{ii})$ and $(\mr{iii})$ of Theorem 4.16 of \cite{Schmudgen}). For every $n \in \N$, denote by $E_n$ the Borel subset of $\C^+_\imath$ formed by all $q$ such that $\max\{|f(q)|,|g(q)|\} \leq n$. Observe that $\{E_n\}_{n \in \N}$ is a bounding sequence for $f$, for $g$ and hence for $fg$. By Lemma \ref{lem:core2}, the subspace $\mc{D}:=\bigcup_{n \in \N}P(E_n)(\sH)$ of $\sH$ is a core for $\int_{\C^+_\imath}fg\diff\cP$. Since each measurable function $g\Chi_{E_n}$ is bounded, points $(\mr{a})$ and $(\mr{e})$ of Theorem \ref{TEO1unb} imply that
\begin{align*}
\int_{\C^+_\imath}fg\diff\cP \, P(E_n)&=\int_{\C^+_\imath}fg\Chi_{E_n}\diff\cP=\int_{\C^+_\imath}f\diff\cP \int_{\C^+_\imath}g\Chi_{E_n}\diff\cP\\
&=\int_{\C^+_\imath}f\diff\cP \int_{\C^+_\imath}g\diff\cP \, P(E_n)
\end{align*}
for every $n \in \N$. In other words, $\mc{D}$ is contained in $D\big(\int_{\C^+_\imath}f\diff\cP \int_{\C^+_\imath}g\diff\cP\big)$ and the operator $\int_{\C^+_\imath}f\diff\cP \int_{\C^+_\imath}g\diff\cP$ coincides with  $\int_{\C^+_\imath}fg\diff\cP$ on $\mc{D}$. Since $\int_{\C^+_\imath}f\diff\cP \int_{\C^+_\imath}g\diff\cP \subset \int_{\C^+_\imath}fg\diff\cP$ and $\mc{D}$ is a core for $\int_{\C^+_\imath}fg\diff\cP$, we obtain  \eqref{eq:closure-composition}.

The proof of \eqref{eq:closure-sum} is analogous.
\end{proof}


\section{Spectral theorem for unbounded normal operators and iqPVMs} \label{sec:spectral-unb}


\subsection{Preliminary lemmata}
%
%

Let $\sH$ be a quaternionic Hilbert space. We need some preliminary technical lemmata whose proofs are quite similar to those in complex Hilbert spaces. They mostly rely upon results of \cite{GhMoPe}, which extend known results of complex Hilbert space theory.

\begin{lemma}\label{lemA}
Let $A : D(A) \to \sH$ be a positive self-adjoint operator with dense domain and let $\lambda$ be a positive real number. Then $\Ran(\1\lambda+A)=\sH$ and the operator $\1\lambda+A$ is invertible. Furthermore, its inverse $(\1\lambda+A)^{-1}:\sH \to D(\1\lambda+A)=D(A)$ has the following properties:
\begin{itemize}
 \item[$(\mr{a})$] $(\1\lambda+A)^{-1} \in \gB(\sH)$ with $\|(\1\lambda+A)^{-1}\| \leq \lambda^{-1}$.
 \item[$(\mr{b})$] $(\1\lambda+A)^{-1} \geq 0$ and hence $(\1\lambda+A)^{-1}$ is self-adjoint as well.
 \end{itemize}
\end{lemma}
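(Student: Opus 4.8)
The plan is to reproduce the classical Hilbert-space argument, taking care that the quaternionic scalar products which enter are genuinely real. The starting point is the coercive estimate: for every $u \in D(A)$, since $A \geq 0$ forces $\langle u | Au \rangle \in \R^+$, one has
\[
\lambda \|u\|^2 \leq \lambda\|u\|^2 + \langle u | Au\rangle = \langle u \,|\, (\1\lambda + A)u\rangle \leq \|(\1\lambda+A)u\|\,\|u\|,
\]
whence $\|(\1\lambda+A)u\| \geq \lambda\|u\|$ for all $u \in D(A)$. This already gives injectivity of $\1\lambda + A$ and the norm bound $\lambda^{-1}$ for its (partially defined) inverse.

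Next I would show $\Ran(\1\lambda+A) = \sH$. Since $A$ is self-adjoint it is closed (cf.\ Section \ref{sec:preliminaries}), hence so is $\1\lambda + A$ because $\1\lambda \in \gB(\sH)$. A closed operator that is bounded below has closed range: if $(\1\lambda+A)u_n \to v$, the estimate above shows $\{u_n\}$ is Cauchy, so $u_n \to u$, and closedness yields $u \in D(A)$ and $(\1\lambda+A)u = v$. Density follows from $\Ran(\1\lambda+A)^\perp = \Ker\big((\1\lambda+A)^*\big) = \Ker(\1\lambda + A^*) = \Ker(\1\lambda + A) = \{0\}$, where $(\1\lambda+A)^* = \1\lambda + A^* = \1\lambda + A$ because $\lambda \in \R$, $\1\lambda$ is a bounded self-adjoint operator, and $A^* = A$. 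A closed dense subspace equals $\sH$, so $\1\lambda + A$ is a bijection of $D(A)$ onto $\sH$.

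Part $(\mr{a})$ is then immediate: $(\1\lambda+A)^{-1}\colon \sH \to D(A)$ is everywhere defined and, by the coercive estimate applied to $u = (\1\lambda+A)^{-1}v$, satisfies $\|(\1\lambda+A)^{-1}v\| \leq \lambda^{-1}\|v\|$, so $(\1\lambda+A)^{-1} \in \gB(\sH)$ with $\|(\1\lambda+A)^{-1}\| \leq \lambda^{-1}$. For part $(\mr{b})$, given $v \in \sH$ set $u := (\1\lambda+A)^{-1}v \in D(A)$; using $(\1\lambda+A)^* = \1\lambda + A$ we get $\langle v \,|\, (\1\lambda+A)^{-1}v\rangle = \langle (\1\lambda+A)u \,|\, u\rangle = \overline{\langle u \,|\, (\1\lambda+A)u\rangle}$, and $\langle u \,|\, (\1\lambda+A)u\rangle = \lambda\|u\|^2 + \langle u | Au\rangle \in \R^+$ coincides with its conjugate; hence $(\1\lambda+A)^{-1} \geq 0$. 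Since its domain is all of $\sH$, it is self-adjoint, as recalled in the preliminaries.

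There is no genuine obstacle here: the argument is entirely routine once the quaternionic adjoint identities ($\Ran(T)^\perp = \Ker(T^*)$ for densely defined $T$, and the sum rule $(B + A)^* = B + A^*$ when $B \in \gB(\sH)$) are invoked from \cite{GhMoPe}. The only point worth explicit mention is that positivity of $A$ and reality of $\lambda$ guarantee that $\langle u | Au \rangle$ and $\langle u \,|\, (\1\lambda+A)u\rangle$ lie in $\R^+$, which is exactly what makes both the coercive estimate and the positivity of the inverse go through verbatim as in the complex case.
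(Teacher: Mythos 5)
Your proof is correct and follows essentially the same route as the paper's: injectivity and the coercive bound $\|(\1\lambda+A)u\| \geq \lambda\|u\|$ from positivity of $A$, density of the range via $\Ran(\1\lambda+A)^\perp=\Ker(\1\lambda+A)=\{0\}$ for the self-adjoint operator $\1\lambda+A$, closedness of the range from the lower bound together with closedness of $A$, and then the norm estimate and positivity (hence self-adjointness, since everywhere defined) of the inverse by the same scalar-product computation. The only differences are cosmetic, namely the order in which density and closedness of the range are established.
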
 

\begin{proof}


Let $u \in \Ker(\1\lambda+A)$. Since $A$ is positive, we infer that $0=\b u|u\lambda+Au \k=\|u\|^2 \lambda+\b u|Au \k \geq \|u\|^2 \lambda$ and hence $u=0$. This proves that $\1\lambda+A$ is injective. The operator $\1\lambda+A$ is self-adjoint. In this way, thanks to Proposition 2.14 of \cite{GhMoPe}, we obtain that $\Ran(\1\lambda+A)$ is dense in $\sH$, because $\Ran(\1\lambda+A)^\perp=\Ker(\1\lambda+A)=\{0\}$.

Let $L:=(\1\lambda+A)^{-1}:\Ran(\1\lambda+A) \to D(A)$ be  the inverse of $\1\lambda+A$, let $y \in \Ran(\1\lambda+A)$ and let $x:=Ly$. We just know that $\b Ly | y\k=\b x | x\lambda+Ax\k \geq \|x\|^2\lambda=\|Ly\|^2\lambda$. In this way, by the Cauchy-Schwartz inequality, it follows that $\|Ly\| \|y\| \geq \lambda \|Ly\|^2$ and hence $\|y\| \geq \|Ly\|\lambda$. This fact proves that $\|L\| \leq \lambda^{-1}$. Furthermore, it holds $\b y|Ly \k=\b (\1\lambda+A)x|x \k=\b x|(\1\lambda+A)x \k \geq 0$.

In order to complete the proof, it is now sufficient to show that $\Ran(\1\lambda+A)=\sH$ or, equivalently, that $\Ran(\1\lambda+A)$ is closed in $\sH$. Indeed, if this would be true, then point $(\mr{a})$ would be evident and point $(\mr{b})$ would immediately follow from point $(\mr{b})$ of Proposition 2.17 of \cite{GhMoPe}. Consider a sequence $\{y_n\}_{n \in \N}$ in $\Ran(\1\lambda+A)$ converging to some $y \in \sH$. Let $x_n:=Ly_n \in D(A)$ for every $n \in \N$. Since $L$ is bounded, the sequence $\{x_n\}_{n \in \N}$ is a Cauchy sequence in $\sH$, which converges to some $x \in \sH$. Since $A$ is closed (because it is self-adjoint), the operator $\1\lambda+A$ is also closed and thus $y=(\1\lambda+A)x \in \Ran(\1\lambda+A)$, as desired.
\end{proof}

\begin{lemma}\label{lemA2}
Let $A:D(A) \to \sH$ be a positive self-adjoint operator with dense domain. The following facts hold.
\begin{itemize}
 \item[$(\mr{a})$] $\ssp(A) \subset [0,+\infty)$.
 \item[$(\mr{b})$] Suppose $A \in \gB(\sH)$. Let $\cP=(P,\LL)$ be an iqPVM over $\C^+_\imath$ in $\sH$ such that $A=\int_{[0,+\infty)}\mi{id}\diff\cP$ (see Theorem \ref{spectraltheorem} for the existence of such a $\cP$). Then
\beq \label{sqra}
\sqrt{A}=\int_{[0,+\infty)} \sqrt{r} \diff\cP(r),
\eeq
where $\int_{[0,+\infty)} \sqrt{r} \diff\cP(r)$ denotes the integral of the real-valued continuous function $[0,+\infty) \ni r \mapsto \sqrt{r}$ with respect to $\cP$ (see Remark \ref{remarksupp}(3)).
\end{itemize} 
\end{lemma}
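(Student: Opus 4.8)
\textbf{Plan of proof for Lemma \ref{lemA2}.}

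For point $(\mr{a})$, the plan is to show that every $q \in \bH$ with $\re(q) < 0$, as well as every $q \in \bH \setminus \R$, lies in the spherical resolvent set $\srho(A)$; this forces $\ssp(A) \subset [0,+\infty)$. Recall $\Delta_q(A) = A^2 - A(2\re(q)) + \1|q|^2$ on $D(A^2)$. Writing $q = \alpha + \imath\beta$ with $\alpha = \re(q)$ and $\beta = |\im(q)|$, one has $\Delta_q(A) = (A - \1\alpha)^2 + \1\beta^2$. Since $A$ is positive and self-adjoint, so is $(A-\1\alpha)^2$, and hence for every $u \in D(A^2)$,
\[
\b u \, | \, \Delta_q(A)u \k = \|(A-\1\alpha)u\|^2 + \beta^2\|u\|^2 \geq (\alpha^2 + \beta^2)\|u\|^2 = |q|^2\|u\|^2
\]
when $\alpha \le 0$, and more generally $\geq \beta^2\|u\|^2$ always. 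If $q \notin [0,+\infty)$ then either $\beta > 0$ or ($\beta = 0$ and $\alpha < 0$), and in both cases the right-hand side is bounded below by $c\|u\|^2$ for some $c > 0$. This coercivity gives injectivity of $\Delta_q(A)$ and, via the Cauchy--Schwarz inequality, boundedness of $\Delta_q(A)^{-1}$ on its range. Density of the range follows because $\Delta_q(A)$ is self-adjoint (being a real polynomial in the self-adjoint operator $A$, suitably interpreted via closures) with trivial kernel, so $\overline{\Ran(\Delta_q(A))} = \Ker(\Delta_q(A))^\perp = \sH$; this uses Proposition 2.14 of \cite{GhMoPe} as in the proof of Lemma \ref{lemA}. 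Hence $q \in \srho(A)$, proving $(\mr{a})$.

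For point $(\mr{b})$, the plan is as follows. By Theorem \ref{spectraltheorem} there is an iqPVM $\cP = (P,\LL)$ over $\C^+_\imath$ with $A = \int_{\C^+_\imath} \mi{id}\diff\cP$; by Theorem \ref{spectraltheorem}$(\mr{b})$ and part $(\mr{a})$ just proved, $\supp(P) = \ssp(A) \cap \C^+_\imath \subset [0,+\infty)$, so the integral is effectively over $[0,+\infty)$, and moreover $\mi{id}$ restricted to $\supp(P)$ is the real-valued function $r \mapsto r$. Since $A$ is self-adjoint ($A = A^*$), Theorem \ref{TEO1}$(\mr{a})(\mr{iii})$ forces $\int_{[0,+\infty)} \overline{r}\,\diff\cP = \int_{[0,+\infty)} r \,\diff\cP$, consistent with $A$ being represented by a real integrand. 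Now set $S := \int_{[0,+\infty)} \sqrt{r}\,\diff\cP(r)$, the integral of the bounded (since $\supp(P)$ is compact) real-valued continuous function $\sqrt{\cdot}$. By Theorem \ref{TEO1}$(\mr{a})(\mr{ii})$, $S\,S = \int_{[0,+\infty)} \sqrt{r}\sqrt{r}\,\diff\cP(r) = \int_{[0,+\infty)} r\,\diff\cP(r) = A$. By Theorem \ref{TEO1}$(\mr{b})$, equation \eqref{intemediatestimatef}, $\b u \, | \, S u \k = \int_{[0,+\infty)} \sqrt{r}\,\diff\mu^{\sss(P)}_u \geq 0$ since the integrand is non-negative and $\mu^{\sss(P)}_u$ is a positive measure; hence $S \geq 0$. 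Thus $S$ is a positive element of $\gB(\sH)$ whose square is $A$, and by the uniqueness of the positive square root (recalled in Section \ref{sec:preliminaries}, cf.\ \cite[Thm 2.18]{GhMoPe}) we conclude $S = \sqrt{A}$, which is \eqref{sqra}.

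I do not anticipate a serious obstacle here; both parts are quaternionic transcriptions of standard complex-Hilbert-space arguments, and all the required machinery (the decomposition $\Delta_q(A) = (A-\1\alpha)^2 + \1\beta^2$, the functional-calculus properties in Theorem \ref{TEO1}, and uniqueness of square roots) is already available. The only mild care needed is in part $(\mr{a})$ with the unbounded operator $A$: one must handle $\Delta_q(A)$ on the domain $D(A^2)$ and justify the density-of-range step for an unbounded self-adjoint operator, but this is exactly parallel to the closedness argument used at the end of the proof of Lemma \ref{lemA}. In part $(\mr{b})$ the subtlety is purely notational --- ensuring that ``$\int_{[0,+\infty)} \sqrt{r}\,\diff\cP(r)$'' is interpreted via Remark \ref{remarksupp}(3) and Definition \ref{defintslicefunctions} as the integral over $\supp(P) \subset [0,+\infty)$ of a genuinely bounded function.
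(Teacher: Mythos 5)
Your proof of part $(\mr{b})$ is essentially the paper's own argument: restrict to the compact support $\supp(P)=\ssp(A)\cap\C^+_\imath\subset[0,+\infty)$, set $S:=\int_{[0,+\infty)}\sqrt{r}\diff\cP(r)$, get $S^2=A$ from the multiplicativity in Theorem \ref{TEO1}$(\mr{a})(\mr{ii})$ and $S\geq 0$ from \eqref{intemediatestimatef}, and conclude by uniqueness of the positive square root. For part $(\mr{a})$ you take a genuinely different route. The paper first quotes \cite[Thm 4.8(b)]{GhMoPe} to get $\ssp(A)\subset\R$ and then excludes $(-\infty,0)$ by noting $\Delta_{-\lambda}(A)=(\1\lambda+A)^2$ and invoking Lemma \ref{lemA}, so no estimate on $\Delta_q(A)$ for non-real $q$ is ever made. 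You instead treat all $q\notin[0,+\infty)$ at once through the identity $\Delta_q(A)=(A-\1\alpha)^2+\1\beta^2$ and the coercivity bound $\b u|\Delta_q(A)u\k\geq c\|u\|^2$; this is more self-contained (no appeal to the reality of the spherical spectrum of a self-adjoint operator) at the price of having to handle the unbounded operator $\Delta_q(A)$ directly.

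The one step you should justify more carefully is the claim that $\Delta_q(A)$, with domain $D(A^2)$, is self-adjoint: for unbounded $A$ the phrase ``real polynomial in a self-adjoint operator'' is not automatic at this stage of the paper, since the unbounded functional calculus is exactly what is being built. The clean fix, entirely within the tools the paper already uses, is to write $\Delta_q(A)=S^*S+\1\beta^2$ with $S:=A-\1\alpha$ closed, densely defined and self-adjoint, and invoke von Neumann's theorem (\cite[Thm 2.15(d)]{GhMoPe}, already used in the proof of Lemma \ref{lem:core}) to get that $S^*S$ is self-adjoint on $D(S^*S)=D(A^2)$; adding the bounded self-adjoint operator $\1\beta^2$ preserves self-adjointness, and then $\overline{\Ran(\Delta_q(A))}=\Ker(\Delta_q(A))^\perp=\sH$ as you say. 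In fact, once you have $\Delta_q(A)=\1\beta^2+S^*S$ for $\beta>0$, you can skip the density argument altogether: Lemma \ref{lemA} applied to the positive self-adjoint operator $S^*S$ with $\lambda=\beta^2$ gives bijectivity of $\Delta_q(A)$ onto $\sH$ with bounded inverse, exactly parallel to the paper's treatment of $q=-\lambda<0$, leaving only the case $\beta=0$, $\alpha<0$ to your coercivity (or to the paper's) argument. With that adjustment your proof is correct.
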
 

\begin{proof}
$(\mr{a})$ Thanks to point $(\mr{b})$ of Theorem 4.8 in \cite{GhMoPe}, we know that $\ssp(A) \subset \R$. We must only prove that $(-\infty,0)$ does not intersect $\ssp(A)$. Choose $\lambda \in (0,+\infty)$. By Lemma \ref{lemA}$(\mr{a})$, we have that $\1\lambda+A$ is invertible and $(\1\lambda+A)^{-1} \in \gB(\sH)$. It follows that $\Delta_{-\lambda}(A)=(\1\lambda+A)^2$ is also invertible and $\Delta_{-\lambda}(A)^{-1}=(\1\lambda+A)^{-1}(\1\lambda+A)^{-1} \in \gB(\sH)$. Therefore $-\lambda \not \in \ssp(A)$, as desired.

$(\mr{b})$ Combining point $(\mr{b})$ of Theorem 4.3 of \cite{GhMoPe} with preceding point $(\mr{a})$ and with point $(\mr{b})$ of Theorem \ref{spectraltheorem}, we obtain at once that $\ssp(A)$ is a non-empty compact subset of $\R$, which is equal to $\supp(P)$. It follows that $\int_{[0,+\infty)} \sqrt{r} \diff\cP(r) \in \gB(\sH)$. Moreover, bearing in mind point $(\mr{a})(\mr{ii})$ of Theorem \ref{TEO1}, Theorem \ref{spectraltheorem} and \eqref{intemediatestimatef}, we infer that
\[
\left(\int_{[0,+\infty)} \sqrt{r} \diff\cP(r)\right)^2= \int_{\supp(P)} \mi{id} \diff\cP=A
\]
and
\[
\left\langle u \left| \, \int_{[0,+\infty)} \sqrt{r} \diff\cP(r) \right. \right \rangle = \int_{[0,+\infty)} \sqrt{r} \diff\mu^{\sss(P)}_u(r) \geq 0.
\]

Summing up, $\int_{[0,+\infty)} \sqrt{r} \diff\cP(r)$ is a positive operator in $\gB(\sH)$ whose square is $A$. By the uniqueness of the square root of a positive operator in $\gB(\sH)$ (see Theorem 2.18 of \cite{GhMoPe}), we have that $\int_{[0,+\infty)} \sqrt{r} \diff\cP(r)=\sqrt{A}$, as desired.
\end{proof}

\begin{lemma}\label{lemB}
Let $A:D(A) \to \sH$ be a closed linear operator and let $B \in \gB(\sH)$. Suppose that there exists a dense linear subspace $\sS$ of $\sH$ such that $\sS \subset D(AB)$ and the restriction of $AB$ to $\sS$ is bounded. Then $D(AB)=\sH$ and $AB \in \gB(\sH)$.
\end{lemma}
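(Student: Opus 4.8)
The plan is to show first that $AB$ is closed on its natural domain $D(AB)=B^{-1}(D(A))$, then to combine closedness with the hypothesis that $AB$ is bounded on a dense subspace $\sS$ to conclude that $\sS$ is actually all of $D(AB)$ and that $D(AB)$ is closed, hence everything. Concretely, recall that $D(AB)=\{u \in \sH \mid Bu \in D(A)\}$ and $(AB)u=A(Bu)$ there. I would begin by verifying that $AB:D(AB)\to\sH$ is a closed operator: if $u_n \in D(AB)$ with $u_n \to u$ and $ABu_n \to w$, then since $B$ is bounded we have $Bu_n \to Bu$, and since $A$ is closed and $A(Bu_n)=ABu_n \to w$, we get $Bu \in D(A)$ and $A(Bu)=w$; thus $u \in D(AB)$ and $(AB)u=w$, which is exactly closedness.

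Next, let $c:=\sup_{u\in\sS\setminus\{0\}}\|ABu\|/\|u\|<+\infty$ be the bound on $\sS$. I would show $\sS$ is a core for $AB$ and that the closure of $AB|_\sS$ is bounded with the same constant $c$: given any $u\in\sH$, pick $u_n\in\sS$ with $u_n\to u$ (possible since $\sS$ is dense in $\sH$); the sequence $\{ABu_n\}$ is Cauchy because $\|ABu_n-ABu_m\|\le c\|u_n-u_m\|\to 0$, so it converges to some $w\in\sH$. By closedness of $AB$ just proved, $u\in D(AB)$ and $(AB)u=w$, with $\|w\|=\lim\|ABu_n\|\le c\lim\|u_n\|=c\|u\|$. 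Since $u\in\sH$ was arbitrary, this gives $D(AB)=\sH$ and $\|ABu\|\le c\|u\|$ for all $u\in\sH$; that is, $AB\in\gB(\sH)$, as desired. (Alternatively, once $D(AB)=\sH$ is known, boundedness follows directly from the closed graph theorem, Proposition 2.11$(\mr{f})$ of \cite{GhMoPe}, but the argument above gives it for free.)

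The only point requiring a little care — and the main, though minor, obstacle — is making sure the approximating sequence can be chosen \emph{inside} $\sS$ rather than merely inside $D(AB)$; this is guaranteed precisely by the density of $\sS$ in $\sH$ (not just in $D(AB)$ in graph norm), which is part of the hypothesis. With that in hand, the closedness of $AB$ does all the remaining work: it upgrades pointwise limits of graph elements to genuine membership in the graph, and the uniform bound $c$ on $\sS$ propagates to the limit. No quaternion-specific subtlety arises here beyond the fact that $B$ bounded implies $Bu_n\to Bu$, which holds verbatim as in the complex case.
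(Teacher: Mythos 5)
Your proposal is correct and follows essentially the same route as the paper: approximate an arbitrary $x \in \sH$ by a sequence in $\sS$, use the bound on $AB|_\sS$ to make the image sequence Cauchy, use boundedness of $B$ together with closedness of $A$ to pass to the limit, and conclude $D(AB)=\sH$ with $AB$ bounded. The only cosmetic difference is that you extract the explicit bound $\|AB\|\le c$ directly, whereas the paper invokes the closed graph theorem (Proposition 2.11$(\mr{f})$ of \cite{GhMoPe}) at the last step — an alternative you yourself note.
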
 

\begin{proof}
Consider any fixed $x \in \sH$ and a sequence $\sS \supset \{x_n\}_n \to x$. Since $B \in \gB(\sH)$ and $AB|_\sS$ is bounded, the sequence $D(A) \supset \{Bx_n\}_n \to Bx$ and the sequence $\{ABx_n\}_n$ converges to some $y \in \sH$. Bearing in mind that $A$ is closed, we infer that $Bx \in D(A)$ and $ABx=y$. In particular, we have that $D(AB)=\sH$ and $AB$ is closed. Thanks to the closed graph theorem (see Proposition 2.11$(\mr{f})$ of \cite{GhMoPe}), it follows that $AB \in \gB(\sH)$.
\end{proof}

A well-known result for complex Hilbert spaces generalizes to quaternionic Hilbert spaces.

\begin{lemma} \label{lem:core}
Let $T:D(T) \to \sH$ be a closed operator with dense domain. Then $D(T^*T)$ is a core for $T$.
\end{lemma}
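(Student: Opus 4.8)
The plan is to mimic the standard complex-analysis argument showing that $D(T^*T)$ is a core for $T$, adapting it to the quaternionic setting. The key input is that for a closed densely-defined operator $T$, the operator $\1+T^*T$ is self-adjoint, positive, and surjective with bounded inverse; this is the quaternionic version of the von Neumann theorem. I would either invoke this from \cite{GhMoPe} or sketch it via the orthogonal decomposition of $\sH\times\sH$ with respect to the graph of $T$: since $T$ is closed, its graph $\Gamma(T)$ is a closed right $\bH$-linear subspace, and the unitary $W(u,v):=(-v,u)$ sends $\Gamma(T)^\perp$ onto $\Gamma(T^*)$ (after the usual identifications), so $\sH\times\sH=\Gamma(T)\oplus W(\Gamma(T^*))$. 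Projecting $(x,0)$ onto $\Gamma(T)$ gives, for every $x\in\sH$, a vector $u\in D(T)$ with $(\1+T^*T)u=x$, $Tu\in D(T^*)$; hence $\Ran(\1+T^*T)=\sH$, and injectivity plus the estimate $\|u\|^2+\|Tu\|^2=\langle x|u\rangle$ give $(\1+T^*T)^{-1}\in\gB(\sH)$.

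Once this is in hand, let $\sS:=D(T^*T)$, so $\sS=\Ran\big((\1+T^*T)^{-1}\big)$. First I would check $\sS$ is dense in $D(T)$ in the graph norm; equivalently, that $T|_\sS$ is densely defined in $\sH$ with respect to $\|\cdot\|$, which is immediate since $\sS$ already contains $\Ran\big((\1+T^*T)^{-1}\big)$ and $(\1+T^*T)^{-1}$ has dense range ($\1+T^*T$ being injective and self-adjoint, $\overline{\Ran}=\Ker^\perp=\sH$ by Proposition 2.14 of \cite{GhMoPe}). The real content is that $\sS$ is a core, i.e. $\overline{T|_\sS}=T$: I must show that if $(u,Tu)\in\Gamma(T)$ is orthogonal in $\sH\times\sH$ to $\Gamma(T|_\sS)$, then $u=0$. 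So suppose $\langle v|u\rangle+\langle Tv|Tu\rangle=0$ for all $v\in\sS$. Writing $v=(\1+T^*T)^{-1}w$ for arbitrary $w\in\sH$, and using $\langle Tv|Tu\rangle = \langle T^*Tv|u\rangle$ (valid since $v\in D(T^*T)$ and $u\in D(T)$, so $Tu$ pairs with $Tv$ and the adjoint relation applies), the left side becomes $\langle (\1+T^*T)v\,|\,u\rangle=\langle w|u\rangle$ — wait, more carefully $\langle v|u\rangle+\langle T^*Tv|u\rangle=\langle(\1+T^*T)v|u\rangle=\langle w|u\rangle$. Since this vanishes for all $w\in\sH$, we get $u=0$, as desired.

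The main obstacle I anticipate is purely bookkeeping about the quaternionic adjoint and the sesquilinearity conventions: in a right $\bH$-module the scalar product is linear in the second slot and conjugate-linear in the first, so when I manipulate $\langle Tv|Tu\rangle$ and move $T$ across via the defining relation of $T^*$, I must be sure the direction of conjugation is respected and that $w\in D(T^*T)\Rightarrow$ the pairing $\langle T^*Tv|u\rangle$ is legitimate (it is, because $u\in D(T)=D((T^*)^*)$ gives $\langle T^*Tv|u\rangle=\langle Tv|Tu\rangle$). A second, minor subtlety is that I should confirm $D(T^*T)$ is indeed a right $\bH$-linear subspace and that $\1+T^*T$ is $\bH$-linear, which follows since $T$ and $T^*$ are right $\bH$-linear and $\1$ is. With these conventions checked, the argument is a short direct computation; no limiting argument beyond the density of $\Ran\big((\1+T^*T)^{-1}\big)$ is needed, so I would keep the proof to a few lines, citing \cite{GhMoPe} for the self-adjointness of $\1+T^*T$ and for the general fact $\overline{\Ran(S)}=\Ker(S)^\perp$ for self-adjoint $S$.
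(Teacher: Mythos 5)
Your proof is correct and follows essentially the same route as the paper: both reduce the core property to showing that the only vector $u \in D(T)$ with $\langle v|u\rangle+\langle Tv|Tu\rangle=0$ for all $v \in D(T^*T)$ (i.e.\ orthogonality in the graph-norm Hilbert space, equivalently in the graph inside $\sH\times\sH$) is $u=0$, and both conclude via the identity $\langle v|u\rangle+\langle Tv|Tu\rangle=\langle(\1+T^*T)v|u\rangle$ together with the surjectivity of $\1+T^*T$. The only difference is how that surjectivity is sourced: the paper obtains it from the self-adjointness and positivity of $T^*T$ (\cite[Thm 2.15(d)]{GhMoPe}) combined with Lemma \ref{lemA}, whereas you optionally re-derive it directly via the von Neumann graph decomposition of $\sH\times\sH$, which is equally valid.
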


\begin{proof}
By point $(\mr{d})$ of Theorem 2.15 in \cite{GhMoPe}, we know that $D(T^*T)$ is dense in $\sH$ and the operator $T^*T$ is self-adjoint. Since $T^*T$ is also positive, Lemma \ref{lemA} ensures that $\Ran(\1+T^*T)=\sH$. As a general result, one immediately sees that the closedness of $T$ is equivalent to the fact that the quaternionic (right) vector space $D(T)$, equipped with the quaternionic scalar product $\langle x|y \rangle_T := \langle x| y\rangle + \langle Tx |Ty \rangle$ is complete; that is, it is a quaternionic Hilbert space, we denote by $\sH_T$. Consequently, the thesis is true if $D(T^*T)$ is dense in $\sH_T$. Let us show it. If $x \in D(T)$ is orthogonal to $D(T^*T)$ in $\sH_T$, then we have that $0= \langle x|y\rangle_T = \langle x|(\1+T^*T)y \rangle$ for $y \in D(T^*T)$. Since $\Ran(\1+T^*T)=\sH$, we conclude that $x=0$ and thus $D(T^*T)$ is dense in $\sH_T$, as desired.
\end{proof}

The next result is a slightly improved quaternionic version of Lemma 5.7 of \cite{Schmudgen} for complex operators.

\begin{lemma}\label{lemC}
Let $T:D(T) \to \sH$ be a closed operator with dense domain.  The following hold.
\begin{itemize}
 \item[$(\mr{a})$] The operator $C_T:=(\1+T^*T)^{-1}$ is a well-defined element of $\gB(\sH)$, it is positive and $\Ran(C_T)=D(T^*T)$.
 \item[$(\mr{b})$] The operator  $Z_T:=T\sqrt{C_T}$ has the following properties:
 \begin{itemize}
  \item[$(\mr{i})$] $D(Z_T)=\sH$ and $Z_T \in \gB(\sH)$ with $\|Z_T\| \leq 1$.
  \item[$(\mr{ii})$] $\Ran(\sqrt{C_T}\,)=D\big((\sqrt{\1- Z^*_TZ_T}\,)^{-1}\big)$ is a core for $T$ and it holds:
  \beq \label{inversT}
 C_T =\1-Z^*_TZ_T
 \quad \mbox{ and } \quad
 T= \overline{Z_T \big(\sqrt{\1-Z^*_TZ_T}\,\big)^{-1}}.
  \eeq
  \item[$(\mr{iii})$] $Z_T^* = Z_{T^*}$.
  \item[$(\mr{iv})$] $Z_T$ is normal if $T$ is normal.
 \end{itemize} 
\end{itemize} 
\end{lemma}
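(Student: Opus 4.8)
The plan is to reproduce, in the quaternionic setting, the construction of the bounded transform (Kato's \emph{$z$-transform}) of a closed operator, leaning on the bounded iq Spectral Theorem~\ref{spectraltheorem} applied to $C_T$ and on the integration calculus of Sections~\ref{sec:iqPVMs} and~\ref{sec:unb-int}. \emph{Part (a)} is immediate: since $T$ is closed with dense domain, $T^*T$ is positive, self-adjoint and densely defined (Theorem~2.15(d) of \cite{GhMoPe}), so Lemma~\ref{lemA} applied to $A:=T^*T$ with $\lambda=1$ shows that $\1+T^*T$ is invertible, $C_T=(\1+T^*T)^{-1}\in\gB(\sH)$ is positive with $\|C_T\|\le1$, and $\Ran(C_T)=D(\1+T^*T)=D(T^*T)$; moreover $C_T$ is injective, being a two-sided inverse.

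For \emph{Part (b)(i)} I would introduce the auxiliary operator playing the role of $(\1+T^*T)^{1/2}$. By Theorem~\ref{spectraltheorem} there is an iqPVM $\cP=(P,\LL)$ over $\C^+_\imath$ with $C_T=\int_{\C^+_\imath}\mi{id}\diff\cP$; since $C_T$ is self-adjoint, $\ge0$ and $\|C_T\|\le1$, its support $\Gamma$ lies in $[0,1]$, and $C_T$ injective forces $P(\{0\})=0$ (Theorem~\ref{TEO1}(vii)). Set $G:=\int_{\C^+_\imath}r^{-1/2}\diff\cP(r)$, the integrand extended by $0$ at the $P$-negligible point $r=0$; by Theorem~\ref{TEO1unb}(c)--(d) and \eqref{normunb2} $G$ is a closed, positive, self-adjoint operator with dense domain, and Lemma~\ref{lemA2}(b) gives $\sqrt{C_T}=\int_{\C^+_\imath}r^{1/2}\diff\cP(r)$. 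The product rule of Theorem~\ref{TEO1unb}(a) — exploiting $r^{-1/2}\le r^{-1}$ and $r^{-1/2}r^{1/2}=1$ on $(0,1]$ — yields $G\sqrt{C_T}=\1$, $\sqrt{C_T}\,Gy=y$ for $y\in D(G)$ (whence $\Ran(\sqrt{C_T})=D(G)$), and $G^2=\int_{\C^+_\imath}r^{-1}\diff\cP=C_T^{-1}=\1+T^*T$ (Corollary~\ref{corollario1}), so $D(G^2)=D(T^*T)$. For $a\in D(T^*T)$ one computes $\|Ga\|^2=\langle a|(\1+T^*T)a\rangle=\|a\|^2+\|Ta\|^2$; since $D(T^*T)$ is a core for $G$ (Lemma~\ref{lem:core2}) and for $T$ (Lemma~\ref{lem:core}), a routine approximation-and-closedness argument extends this identity to $D(G)=D(T)$. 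Hence $\Ran(\sqrt{C_T})=D(T)$, so $Z_T=T\sqrt{C_T}$ is everywhere defined and $\|x\|^2=\|\sqrt{C_T}x\|^2+\|Z_Tx\|^2$, giving $\|Z_T\|\le1$; being the composition of the closed $T$ with the bounded $\sqrt{C_T}$, $Z_T$ is closed, hence $Z_T\in\gB(\sH)$ by the closed graph theorem (Prop.~2.11(f) of \cite{GhMoPe}).

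\emph{Part (b)(ii)} then follows quickly: polarizing $\langle x|Z_T^*Z_Tx\rangle=\|Z_Tx\|^2=\langle x|(\1-C_T)x\rangle$ (quaternionic polarization, Prop.~2.2 of \cite{GhMoPe}) gives $Z_T^*Z_T=\1-C_T$, so $\1-Z_T^*Z_T=C_T\ge0$ and, by uniqueness of the bounded positive square root (Theorem~2.18 of \cite{GhMoPe}), $\sqrt{\1-Z_T^*Z_T}=\sqrt{C_T}$; its injectivity makes $D\big((\sqrt{\1-Z_T^*Z_T})^{-1}\big)=\Ran(\sqrt{C_T})=D(T)$, trivially a core for $T$, while $Z_T(\sqrt{C_T})^{-1}=T$ on $D(T)$ — an already closed operator — which is exactly \eqref{inversT}. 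For \emph{Part (b)(iv)}: $(T^*)^*=T$ makes $T^*$ closed and densely defined, so applying (ii) to $T^*$ and (iii) gives $Z_TZ_T^*=Z_{T^*}^*Z_{T^*}=\1-C_{T^*}$; if $T$ is normal then $T^*T=TT^*$, hence $C_T=C_{T^*}$ and $Z_T^*Z_T=\1-C_T=\1-C_{T^*}=Z_TZ_T^*$, i.e.\ $Z_T$ is normal.

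The heart of the proof is \emph{Part (b)(iii)}, and I expect it to be the main obstacle. First, the definition of the adjoint gives $Z_T^*g=\sqrt{C_T}\,T^*g$ for $g\in D(T^*)$ (using that $\sqrt{C_T}$ is self-adjoint and $\sqrt{C_T}x\in D(T)$ for all $x$). The crux is the resolvent intertwining $TC_T=C_{T^*}T$ on $D(T)$: for $x\in D(T)$ put $y:=C_Tx\in D(T^*T)$; then $T^*Ty=x-y\in D(T)$ (as $x,y\in D(T)$), so $Ty\in D(TT^*)$ and $Tx=(\1+TT^*)(Ty)$, whence $C_{T^*}Tx=Ty=TC_Tx$. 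Passing to the weak form and using density of $D(T^*)$ upgrades this to $C_TT^*=T^*C_{T^*}$ on $D(T^*)$; iterating (and using $C_{T^*}(D(T^*))\subseteq D(T^*)$) gives $\varphi(C_T)T^*g=T^*\varphi(C_{T^*})g$ for every polynomial $\varphi$ and $g\in D(T^*)$. Choosing polynomials $p_n\to r^{1/2}$ uniformly on $[0,1]$, letting $n\to\infty$ and invoking the closedness of $T^*$ yields $\sqrt{C_T}\,T^*=T^*\sqrt{C_{T^*}}$ on $D(T^*)$; since $\Ran(\sqrt{C_{T^*}})=D(T^*)$ by part (i) applied to $T^*$, we obtain $Z_T^*g=T^*\sqrt{C_{T^*}}g=Z_{T^*}g$ on the dense set $D(T^*)$, so $Z_T^*=Z_{T^*}$ by boundedness of both operators. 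The genuinely delicate points are the domain bookkeeping in $TC_T=C_{T^*}T$ and the passage from polynomials of $C_T$ to $\sqrt{C_T}$ in the intertwining identity; everything else is routine, if somewhat lengthy.
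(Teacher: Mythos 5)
Your proposal is correct, and its hardest part, (b)(iii), follows essentially the same path as the paper: the intertwining identity (you derive $TC_T=C_{T^*}T$ on $D(T)$ and dualize, the paper proves $C_TT^*=T^*C_{T^*}$ on $D(T^*)$ directly — the domain bookkeeping you flag is exactly the content of that computation), then iteration to polynomials, Stone--Weierstrass approximation of $\sqrt{r}$ on a compact set containing both spectra, closedness of $T^*$ to pass to the limit, and finally $Z_T^*\supset\sqrt{C_T}\,T^*$ plus density and boundedness; (a) and (b)(iv) also coincide with the paper's arguments. Where you genuinely diverge is (b)(i)--(ii): the paper never introduces an unbounded auxiliary operator — it proves the bound $\|Z_Ty\|\le\|y\|$ for $y\in\Ran(\sqrt{C_T}\,)$ by the direct estimate $\|TC_Tx\|^2=\langle C_Tx|T^*TC_Tx\rangle\le\langle C_Tx|x\rangle$, invokes Lemma \ref{lemB} (closed operator times bounded operator, bounded on a dense subspace) to get $Z_T\in\gB(\sH)$, and obtains $Z_T^*Z_T=\1-C_T$ from $Z_T^*Z_T\sqrt{C_T}=(\1-C_T)\sqrt{C_T}$ by density, concluding only that $\Ran(\sqrt{C_T}\,)\supset D(T^*T)$ is a core for $T$, whence the closure in \eqref{inversT}. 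You instead realize $G=(\1+T^*T)^{1/2}=\int r^{-1/2}\diff\cP$ via the measurable functional calculus of Sections \ref{sec:iqPVMs}--\ref{sec:unb-int} (all available before Lemma \ref{lemC}, so there is no circularity), prove the exact identity $\|Ga\|^2=\|a\|^2+\|Ta\|^2$ on $D(T^*T)$ and extend it by the two core-plus-closedness approximations you call routine (they are, and both directions are needed), obtaining the stronger statements $\Ran(\sqrt{C_T}\,)=D(T)$ and $T=Z_T\big(\sqrt{\1-Z_T^*Z_T}\,\big)^{-1}$ with no closure, and then $Z_T^*Z_T=\1-C_T$ by polarization — legitimate here because the operator $Z_T^*Z_T-\1+C_T$ is self-adjoint (in a quaternionic, as in a real, Hilbert space the vanishing of the quadratic form alone would not suffice for a general operator). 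The trade-off: the paper's route is lighter, using only the bounded square root, Lemma \ref{lemB} and Lemma \ref{lem:core}; yours costs the heavier integral machinery applied to $C_T$ but buys the sharper description $\Ran(\sqrt{C_T}\,)=D(T)$ of the domain, which the lemma's statement does not require.
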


\begin{proof}
$(\mr{a})$ As already noticed at the beginning of the proof of Lemma \ref{lem:core}, $T^*T$ is a positive self-adjoint operator with dense domain. By Lemma \ref{lemA}, the operator  $C_T:=(\1+T^*T)^{-1}:\sH \to D(T^*T)$ is well-defined, bijective, bounded and positive.

$(\mr{b})$ Since $\Ran(C_T)=D(T^*T) \subset D(T)$, we have that $D(TC_T)=\sH$ and hence $\Ran(\sqrt{C_T}\,) \subset D(Z_T)$. Let $y \in \Ran(\sqrt{C_T}\,)$ and let $x \in \sH$ such that $y=\sqrt{C_T} \, x$. Bearing in mind that $C_Tx \in D(T)$ and $TC_Tx \in D(T^*)$, it holds:
\begin{align*} 
\left\|Z_Ty \right\|^2&=\left\|TC_Tx\right\|^2=\langle C_Tx \, | \, T^*TC_Tx \rangle \leq \langle C_Tx \,|\, (\1+T^*T)C_Tx \rangle\\
&=\langle C_Tx|x \rangle = \left\|\sqrt{C_T}x\right\|^2=\|y\|^2.
\end{align*}
Observe that the operator $\sqrt{C_T}$ is injective, because $\Ker(\sqrt{C_T}\,) \subset \Ker(C_T)=\{0\}$. Since the operator $\sqrt{C_T} \in \gB(\sH)$ is self-adjoint, Proposition 2.14 of \cite{GhMoPe} ensures that  $\Ran(\sqrt{C_T}\,)^\perp=\Ker(\sqrt{C_T}\,)=\{0\}$. In other words, $\Ran(\sqrt{C_T}\,)$ is dense in $\sH$. Now we can apply Lemma \ref{lemB} for $A:=T$, $B:=\sqrt{C_T}$ and $\sS:=\Ran(\sqrt{C_T}\,)$, obtaining that $Z_T:= T\sqrt{C_T}$ is well-defined on the whole $\sH$, it belongs to $\gB(\sH)$ and $\|Z_T\| \leq 1$. This proves $(\mr{i})$.

Let us show $(\mr{ii})$. By $(\mr{i})$, we know that $D(T) \supset \Ran(\sqrt{C_T}\,)$. On the other hand, it is evident that $\Ran(\sqrt{C_T}\,) \supset \Ran(C_T)=D(T^*T)$. It follows that $D(T) \supset \Ran(\sqrt{C_T}\,) \supset D(T^*T)$. Since $D(T^*T)$ is a core for $T$ by Lemma \ref{lemC}, $\Ran(\sqrt{C_T})$ is a core for $T$ as well. Next observe that, from the relations $\Ran(C_T)=D(T^*T)$ and $Z^*_T \supset (\sqrt{C_T}\,)^*T^*=\sqrt{C_T}\,T^*$, we have:
\begin{align*}
Z_T^*Z_T \sqrt{C_T} &\supset \sqrt{C_T} \, T^*T \sqrt{C_T}\sqrt{C_T}\\
&=\sqrt{C_T}\,\big(\1+T^*T\big)C_T - \sqrt{C_T}\,C_T = (\1-C_T)\sqrt{C_T}.
\end{align*} 
Hence $Z_T^*Z_T\sqrt{C_T}=(\1-C_T)\sqrt{C_T}$. Since $\Ran(\sqrt{C_T}\,)$ is dense in $\sH$ and the opera\-tors $Z_T^*Z_T $ and $\1-C_T$ are continuous, we have also that $Z_T^*Z_T=\1-C_T$. It follows that $T|_{\Ran(\sqrt{C_T}\,)} = Z_T(\sqrt{C_T}\,)^{-1}=Z_T(\sqrt{\1-Z_T^*Z_T}\,)^{-1}$. On the other hand, $\Ran(\sqrt{C_T}\,)$ is a core for $T$ and hence $T=\overline{Z_T(\sqrt{\1-Z_T^*Z_T}\,)^{-1}}$.

Now we prove $(\mr{iii})$. Thanks to point $(\mr{b})$ of Theorem 2.15 of \cite{GhMoPe} and to the fact that $T$ is closed, we see that $D(T^*)$ is dense in $\sH$ and $(T^*)^*=\overline{T}=T$. Define $C_{T^*}:=(\1+(T^*)^*T^*)^{-1}=(\1+TT^*)^{-1}$. Since $T^*$ is closed, by applying preceding point $(\mr{a})$ to $C_{T^*}$, we obtain that $C_{T^*} \in \gB(\sH)$ and $\Ran(C_{T^*})=D(TT^*) \subset D(T^*)$. Take $x \in D(T^*)$ and set $y:= C_{T^*}x \in D(T^*)$. By definition of $C_{T^*}$, we have that $x=(\1+TT^*)y$ and hence $TT^*y=x-y \in D(T^*)$. In particular, it holds $T^*T(T^*y)=T^*(TT^*y)=T^*(x-y)=T^*x-T^*y$ or, equivalently, $T^*x=(\1+T^*T)T^*y$. This implies that  $C_TT^*x=T^*y=T^*C_{T^*}x$. We have just proved that
\[
C_TT^*x=T^*C_{T^*}x \;\; \text{ for every $x \in D(T^*)$}.
\]
By induction, the latter equality immediately implies that, for every $x \in D(T^*)$ and for every $n \in \N$, $(C_{T^*})^nx \in D(T^*)$ and $(C_T)^nT^*x=T^*(C_{T^*})^nx$. In this way, given any $x \in D(T^*)$ and any real polynomial $p(X)=\sum_{k=0}^dX^ka_k \in \R[X]$, we have that
\beq \label{eq:pct}
p(C_{T^*})x \in D(T^*)
\quad \mbox{and} \quad
p(C_T)T^*x = T^*p(C_{T^*})x,
\eeq
where $p_n(S)$ is the operator in $\gB(\sH)$ defined by  $p_n(S)y:=\sum_{k=0}^dS^kya_k$ if $S=C_T$ or $S=C_{T^*}$. Since $C_T$ and $C_{T^*}$ are positive (and hence self-adjoint) elements of $\gB(H)$, points $(\mr{b})$ of Theorems 4.3 and 4.8 of \cite{GhMoPe} and point $(\mr{a})$ of Lemma \ref{lemA2} ensure that $\ssp(C_T)$ and $\ssp(C_{T^*})$ are (nonempty) compact subsets $[0,+\infty)$. Denote by $K$ the compact subset $\ssp(C_T) \cup \ssp(C_{T^*})$ of $[0,+\infty)$. By the Stone-Weierstrass theorem, we can choose a sequence $\{p_n\}_{n \in \N}$ of polynomials in $\R[X]$ in such a way that the corresponding sequence $\{K \ni r \mapsto p_n(r)\}_{n \in \N}$ of functions converges uniformly to $K \ni r \mapsto \sqrt{r}$. Fix $\imath \in \bS$ and denote by $\cP$ and $\cP'$ two iqPVMs over $\bC^+_\imath$ in $\sH$ associated with $C_T$ and $C_{T^*}$ by means of Spectral Theorem \ref{spectraltheorem}, respectively. By point $(\mr{a})$ of Theorem \ref{TEO1} and point $(\mr{b})$ of Lemma \ref{lemA2}, we have that
\[
\left\{p_n(C_T)=\int_Kp_n(r) \diff\cP(r) \right\}_{n \in \N}
\, \lra \,
\int_K\sqrt{r}\diff\cP(r)=\sqrt{C_T} \quad \text{in $\sH$}  
\]
and
\[
\left\{p_n(C_{T^*})=\int_Kp_n(r) \diff\cP'(r) \right\}_{n \in \N}
\, \lra \,
\int_K\sqrt{r}\diff\cP'(r)=\sqrt{C_{T^*}} \quad \text{in $\sH$}.
\]

Since $T^*$ is closed, taking the limit as $n \to +\infty$ in equality \eqref{eq:pct} with $p=p_n$, we obtain that $\sqrt{C_{T^*}}\,x \in D(T^*)$ and $\sqrt{C_T}\,T^*x = T^* \sqrt{C_{T^*}}\,x$ for every $x \in D(T^*)$. Bearing in mind that $Z_{T}^*=(T\sqrt{C_T}\,)^* \supset \sqrt{C_T}\,T^*$, we infer that $Z_T^*x=\sqrt{C_T}\,T^*x=T^*\sqrt{C_{T^*}}\,x=Z_{T^*}x$ for every $x \in D(T^*)$. On the other hand, $D(T^*)$ is dense in $\sH$ and the operators $Z_T$ and $Z_{T^*}$ belong to $\gB(\sH)$. It follows that $Z_T^*= Z_{T^*}$.
 
It remains to prove $(\mr{iv})$. Suppose $T$ is normal. Bearing in mind the first equality of \eqref{inversT}, preceding point $(\mr{iii})$ and the fact that $(T^*)^*=T$, we obtain at once that
\begin{align*}
\1-Z_T^*Z_T &=C_T=(\1+T^*T)^{-1}=(\1+TT^*)^{-1}\\
&=\big(\1+(T^*)^*T^*\big)^{-1}=\1-(Z_{T^*})^*Z_{T^*}=\1-Z_TZ_T^*.
\end{align*} 
It follows that $Z_T$ is normal, as desired.
\end{proof}


\subsection{The spectral theorem for unbounded normal operators and some consequences}

We are in a position to state and prove the spectral theorem for unbounded normal operators (see also Proposition \ref{prop:unb-sp-teo} below).

\begin{theorem}\label{spectraltheoremU}
Let $\sH$ be a quaternionic Hilbert space and let $\imath \in \bS$. Then, given a closed normal operator $T:D(T) \to \sH$ with dense domain, there exists an iqPVM $\mc{Q}=(Q,\LL)$ over $\C^+_\imath$ in $\sH$ such that
\beq \label{decspetU}
T=\int_{\C^+_\imath} \mi{id}\diff\mc{Q},
\eeq
where $\mi{id}:\C^+_\imath \hookrightarrow \bH$ indicates the inclusion map.

The following additional facts hold.
\begin{itemize}
\item[$(\mr{a})$] $T$ determines uniquely $Q$. More precisely, if $\mc{Q}'=(Q',\LL')$ is any iqPVM over $\C^+_\imath$ in $\sH$ such that $T=\int_{\bC^+_\imath} \mi{id} \diff\mc{Q}'$, then $Q'=Q$. Furthermore, if $L'_q$ denotes $\LL'(q)$ for every $q \in \bH$, then we have
 \begin{itemize}
  \item[$(\mr{i})$] $L'_\imath T=TL'_\imath$ and $L'_\imath T^*=T^*L'_\imath$,
  \item[$(\mr{ii})$] $L'_\jmath T=T^*L'_\jmath$ if $\jmath \in \cS$ with $\imath\jmath=-\jmath\,\imath$,
  \item[$(\mr{iii})$] $-L'_\imath(T-T^*) \geq 0$.
 \end{itemize}
 \item[$(\mr{b})$] $\supp(Q)=\ssp(T) \cap \C^+_\imath$, and $\supp(Q)$ (or equivalently $\ssp(T)$) is compact if and only if $T \in \gB(\sH)$.
 \item[$(\mr{c})$] $P(\R)(\sH)=\Ker(\, \overline{T-T^*}\,)$ and hence $P(\bC^+_\imath \setminus \R)(\sH)=\Ker(\, \overline{T-T^*}\,)^\perp$.
 \item[$(\mr{d})$] Concerning the spherical spectrum $\ssp(T)$ of $T$, we have:
 \begin{itemize}
  \item[$(\mr{i})$] $q \in \sigma_{\mi{pS}}(T)$ if and only if $Q(\bS_q \cap \bC^+_\imath) \neq 0$. In particular, every isolated point of $\ssp(T) \cap \bC^+_\imath$ belongs to $\sigma_{\mi{pS}}(T)$.
  \item[$(\mr{ii})$] $\sigma_{\mi{rS}}(T)=\emptyset$.
  \item[$(\mr{iii})$] $q \in \sigma_{\mi{cS}}(T)$ if and only if $Q(\bS_q \cap \bC^+_\imath)=0$ and $Q(A) \neq 0$ for every open subset $A$ of $\bC^+_\imath$ containing $\bS_q \cap \bC^+_\imath$. Furthermore, if $q \in \sigma_{\mi{cS}}(T)$, then, for every positive real number $\epsilon$, there exists a vector $u_\epsilon \in \sH$ such that $\|u_\epsilon\|=1$ and $\|Tu_\epsilon - u_\epsilon q\|<\epsilon$.
 \end{itemize}
\end{itemize}
\end{theorem}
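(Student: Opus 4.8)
The plan is to reduce everything to the bounded Spectral Theorem~\ref{spectraltheorem} by means of the bounded transform $Z_T:=T\sqrt{C_T}$ with $C_T:=(\1+T^*T)^{-1}$, studied in Lemma~\ref{lemC}. Since $Z_T\in\gB(\sH)$ is normal by Lemma~\ref{lemC}(b), I would first apply Theorem~\ref{spectraltheorem} to $Z_T$, producing a bounded iqPVM $\cP_0=(P_0,\LL)$ over $\C^+_\imath$ with $Z_T=\int_{\C^+_\imath}\mi{id}\diff\cP_0$. By \eqref{inversT} one has $C_T=\1-Z_T^*Z_T$, and the product and sum rules of Theorem~\ref{TEO1} give $\1-Z_T^*Z_T=\int_{\C^+_\imath}(1-|z|^2)\diff\cP_0(z)$; as $C_T$ is injective, Theorem~\ref{TEO1}(a)(vii) forces $P_0(\{|z|=1\})=0$, so the map $\Phi(z):=z(1-|z|^2)^{-1/2}$ and the (possibly unbounded, but $P_0$-a.e.\ finite) function $g(z):=(1-|z|^2)^{-1/2}$ are well defined $P_0$-a.e.\ on $\C^+_\imath$; note that $\Phi$ carries $\{z\in\C^+_\imath:|z|<1\}$ bijectively onto $\C^+_\imath$, with inverse $\psi(w):=w(1+|w|^2)^{-1/2}$. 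Uniqueness of the positive square root identifies $\sqrt{C_T}=\int_{\C^+_\imath}\sqrt{1-|z|^2}\diff\cP_0$, and then Corollary~\ref{corollario1} gives $(\sqrt{C_T})^{-1}=\int_{\C^+_\imath}g\diff\cP_0$. Combining \eqref{inversT} with the closure identity of Proposition~\ref{TEO1unb-improved} and the closedness statement of Theorem~\ref{TEO1unb}(c),
\[
T=\overline{Z_T\big(\sqrt{\1-Z_T^*Z_T}\,\big)^{-1}}=\overline{\int_{\C^+_\imath}\mi{id}\diff\cP_0\,\int_{\C^+_\imath}g\diff\cP_0}=\int_{\C^+_\imath}\Phi\diff\cP_0 .
\]
Setting $Q:=P_0\circ\Phi^{-1}$ and $\mc Q:=(Q,\LL)$, Theorem~\ref{TEO1unb}(f) shows that $\mc Q$ is an iqPVM over $\C^+_\imath$ in $\sH$ with $\int_{\C^+_\imath}\mi{id}\diff\mc Q=\int_{\C^+_\imath}\mi{id}\circ\Phi\diff\cP_0=T$, which is \eqref{decspetU}.

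For the uniqueness of $Q$ I would run this construction backwards. If $\mc Q'=(Q',\LL')$ is any iqPVM over $\C^+_\imath$ in $\sH$ with $T=\int_{\C^+_\imath}\mi{id}\diff\mc Q'$, then $T^*T=\int|z|^2\diff\mc Q'$ by Theorem~\ref{TEO1unb}(d), hence $C_T=\int(1+|z|^2)^{-1}\diff\mc Q'$ by Corollary~\ref{corollario1} and $\sqrt{C_T}=\int(1+|z|^2)^{-1/2}\diff\mc Q'$; since that integrand is bounded, Theorem~\ref{TEO1unb}(a),(e) yield $Z_T=T\sqrt{C_T}=\int\psi\diff\mc Q'$ with $\psi$ as above. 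Pushing forward by the bounded map $\psi$ (Theorem~\ref{TEO1}(c)) produces a bounded iqPVM representing $Z_T$, so by the uniqueness clause of Theorem~\ref{spectraltheorem}(a) its qPVM part equals $P_0$; pushing that forward again by $\Phi$, using $\Phi\circ\psi=\mathrm{id}$ on $\C^+_\imath$ and the $P_0$-negligibility of $\{|z|=1\}$, gives $Q'=Q$. The assertions (a)(i)--(iii) are then obtained exactly as in Theorem~\ref{spectraltheorem}(a): from $\imath\,\mi{id}=\mi{id}\,\imath$ and $\jmath\,\mi{id}=\overline{\mi{id}}\,\jmath$ (pointwise on $\C^+_\imath$, for $\jmath\in\bS$ with $\imath\jmath=-\jmath\,\imath$), together with Theorem~\ref{TEO1unb}(c) and Proposition~\ref{defintunb}(d), one reads off $L'_\imath T=TL'_\imath$, $L'_\imath T^*=T^*L'_\imath$, $L'_\jmath T=T^*L'_\jmath$, and $-L'_\imath(T-T^*)\subset\int_{\C^+_\imath}2\beta\diff\mc Q'\geq0$, the positivity following from \eqref{normunb2}.

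For (b)--(d) the key elementary computation is that, for $q\in\bH$ and $q^+:=\mr{Re}(q)+\imath|\mr{Im}(q)|$ (so that $\bS_q\cap\C^+_\imath=\{q^+\}$), the scalar function $\Delta_q(z)=z^2-2\mr{Re}(q)z+|q|^2$ on $\C^+_\imath$ factors as $\Delta_q(z)=(z-q^+)(z-\overline{q^+})$ and hence vanishes exactly at $q^+$. From $T=\int\mi{id}\diff\mc Q$, the domain comparisons of Theorem~\ref{TEO1unb}(a),(b),(e) and finiteness of the measures $\mu_u^{\sss(Q)}$ give $\Delta_q(T)=\int_{\C^+_\imath}\Delta_q(z)\diff\mc Q(z)$ on $D(T^2)$; then Corollary~\ref{corollario1} yields $\Ker\Delta_q(T)=Q(\{q^+\})(\sH)=\Ker\Delta_q(T)^*$ (so the range is dense whenever the kernel is trivial) and $\Delta_q(T)^{-1}=\int\Delta_q(z)^{-1}\diff\mc Q$, this last being bounded iff $Q$ annihilates some neighbourhood of $q^+$ (Corollary~\ref{cor:f}). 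Thus $q\in\srho(T)$ iff $q^+\notin\supp(Q)$, which gives: (b) $\supp(Q)=\ssp(T)\cap\C^+_\imath$, this set being compact iff $\int\mi{id}\diff\mc Q=T\in\gB(\sH)$ by Corollary~\ref{cor:f}; (d)(ii) $\sigma_{\mi{rS}}(T)=\emptyset$; (d)(i) $q\in\sigma_{\mi{pS}}(T)\Leftrightarrow Q(\bS_q\cap\C^+_\imath)\neq0$, the isolated-point claim following from Remark~\ref{rempropP}(3); and (d)(iii) the characterization of $\sigma_{\mi{cS}}(T)$, with the approximate-eigenvector statement obtained by taking $u_\epsilon\in Q(\{z\in\C^+_\imath:|z-q^+|<\delta\})(\sH)$, $\|u_\epsilon\|=1$, $\delta$ small, so that $\|\Delta_q(T)u_\epsilon\|$ is small by \eqref{normunb}, and then copying verbatim the end of the proof of Theorem~\ref{spectraltheorem}(d)(iii). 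Finally (c) follows from $\overline{T-T^*}=\int_{\C^+_\imath}(\mi{id}-\overline{\mi{id}})\diff\mc Q=\int_{\C^+_\imath}2\imath\beta\diff\mc Q$ (Theorem~\ref{TEO1unb}(c) and Proposition~\ref{TEO1unb-improved}) and Corollary~\ref{corollario1}, since $(2\imath\beta)^{-1}(0)=\C^+_\imath\cap\R$.

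The part I expect to be most delicate is the domain bookkeeping for products and sums of unbounded operatorial integrals — in particular, checking that the closure operations in \eqref{inversT} and in the displayed chain above are genuine equalities rather than mere inclusions — and transporting the uniqueness of the qPVM through the two mutually inverse changes of variables $\Phi$ and $\psi$, where one must carefully account for the $P_0$-null set $\{|z|=1\}$ and for the fact that $\Phi$ is only defined $P_0$-a.e.
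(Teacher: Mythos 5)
Your proposal is correct and follows essentially the same route as the paper: reduce to the bounded case via the transform $Z_T=T\sqrt{C_T}$ of Lemma~\ref{lemC}, apply Theorem~\ref{spectraltheorem} to $Z_T$, transport the resulting iqPVM through the change of variables $\Phi$ (with inverse $\psi$), prove uniqueness by reversing the transform and invoking the bounded uniqueness, and derive (a)--(d) from Theorem~\ref{TEO1unb}, Corollaries~\ref{cor:f} and~\ref{corollario1} and Proposition~\ref{TEO1unb-improved}. Your minor shortcuts (killing the boundary set $\{|z|=1\}$ via Theorem~\ref{TEO1}(a)(vii), getting (d)(i) directly from $\Ker\Delta_q(T)=Q(\{q^+\})(\sH)$, and obtaining approximate eigenvectors from spectral projections onto small balls) are all legitimate replacements for the corresponding steps in the paper.
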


\begin{proof}
We divide the proof into five steps.

\textit{Step I.} Let us prove the existence of an iqPVM $\mc{Q}=(Q,\LL)$ satisfying \eqref{decspetU}. 

Let $C_T$ and $Z_T$ be the operators in $\gB(\sH)$ defined as in the statement of Lemma \ref{lemC}. By point $(\mr{b})(\mr{iv})$ of the same lemma, we know that $Z_T$ is normal. In this way, we can apply Theorem \ref{spectraltheorem} to $Z_T$, obtaining a bounded iqPVM $\cP=(P,\LL)$ over $\C^+_\imath$ in $\sH$ such that $\supp(P)=\ssp(Z_T) \cap \bC_\imath^+$ and
\beq \label{eq:Z_T}
Z_T=\int_{\ssp(Z_T) \cap \bC_\imath^+} \mi{id}\diff\cP.
\eeq
By Theorem 4.3$(\mr{a})$ of \cite{GhMoPe} and above Lemma \ref{lemC}$(\mr{b})(\mr{i})$, we have that $\ssp(Z_T) \cap \bC_\imath^+ \subset B$, where $B:=\{z \in \bC_\imath \,|\, |z| \leq 1\}$.

Let us show that $P(\ssp(Z_T) \cap \bC_\imath^+ \cap \partial B)=0$. We will use this equality below. Define the measurable function $\psi:\C^+_\imath \to \C^+_\imath$ and the qPVM $R:\mscr{B}(\C^+_\imath) \to \gB(\sH)$ by setting $\psi(z):=|z|^2$ and $R(E):=P(\psi^{-1}(E))$. Points $(\mr{a})$ and $(\mr{c})$ of Theorem~\ref{TEO1} implies that the iqPVM $\mc{R}:=(R,\LL)$ satisfies the Spectral Theorem \ref{spectraltheorem} for $Z_T^*Z_T$; that is, $Z_T^*Z_T =\int_{\C^+_\imath} \mi{id}\diff\mc{R}$. Since $Z_T^*Z_T$ is self-adjoint, Theorem 4.8$(\mr{b})$ of \cite{GhMoPe} implies that $\ssp(T) \subset \R$. On the other hand, by combining the fact that $Z_T^*Z_T$ is positive and $\|Z^*_T Z_T\| \leq \|Z_T\|^2 \leq 1$ with Lemma \ref{lemA2}$(\mr{a})$, we obtain at once that $\supp(R)=\ssp(Z_T^*Z_T) \subset [0,1]$. Suppose that $P(\ssp(Z_T) \cap \bC_\imath^+ \cap \partial B) \neq 0$. We would have that $R(\{1\})=P(\partial B)=P(\supp(P) \cap \partial B)=P(\ssp(Z_T) \cap \bC_\imath^+ \cap \partial B) \neq 0$. By above Theorem \ref{spectraltheorem}$(\mr{d})(\mr{i})$ and Proposition 4.5 of \cite{GhMoPe}, it would imply that $1$ is an eigenvalue of $Z_T^*Z_T$ or, equivalently, that $\Ker(\1-Z_T^*Z_T) \neq \{0\}$. This is impossible, because the first equality of \eqref{inversT} asserts that $\1-Z_T^*Z_T$ is equal to $C_T$, which is bijective. Therefore $P(\ssp(Z_T) \cap \bC_\imath^+ \cap \partial B)=0$.

To go on, looking at the second equality of (\ref{inversT}), one immediately realizes that a promising idea for constructing the desired iqPVM $\mc{Q}$ is to compare $T$ with the closed normal operator $\int_{\C_\imath^+} f\diff\cP$, where $f:\C^+_\imath \to \bH$ is the measurable function in $M(\C^+_\imath,\bH)$ given by
\beq \label{f}
\text{$f(z):=z\big(\sqrt{1-\overline{z}z}\,\big)^{-1}$ if $|z|<1$ and $f(z):=0$ otherwise.}
\eeq

Let us follow such an idea. By \eqref{eq:Z_T} and Theorem \ref{TEO1}$(\mr{a})$, we have that
\[
\1-Z_T^*Z_T=\int_{\C_\imath^+} (1-\overline{z}z)\diff\cP(z).
\]
Define the measurable function $\psi':\C^+_\imath \to \C^+_\imath$, the qPVM $R':\mscr{B}(\C^+_\imath) \to \gB(\sH)$ and the iqPVM $\mc{R}'$ over $\C^+_\imath$ in $\sH$ by setting $\psi'(z):=1-\overline{z}z$, $R'(E):=P((\psi')^{-1}(E))$ and $\mc{R}':=(R',\LL)$, respectively. Observe that $\supp(R') \subset (\psi')^{-1}([0,1])=B$. By Theorem \ref{TEO1}$(\mr{c})$, we have that  $\1-Z_T^*Z_T=\int_B\mi{id}\diff\mc{R}'$. Lemma \ref{lemC} ensures that the operator $\1-Z_T^*Z_T$ is positive. By combining points $(\mr{a})$ and $(\mr{b})$ of Theorem \ref{spectraltheorem} with point $(\mr{a})$ of Lemma \ref{lemA2}, we infer that $\supp(R') \subset [0,1]=B \cap [0,+\infty)$. In this way, if $g:\C^+_\imath \to \bH$ is the measurable function defined by $g(z):=\sqrt{1-\overline{z}z}$ if $|z|<1$ and $g(z):=0$ otherwise, then Lemma \ref{lemA2}$(\mr{b})$ and Theorem \ref{TEO1}$(\mr{c})$ imply that
\[
\sqrt{\1-Z^*_TZ_T}=\int_{[0,1]}\sqrt{r}\diff\mc{R}'(r)=\int_B g\diff\cP=\int_{\C^+_\imath} g\diff\cP.
\]
Since $\supp(P) \cap g^{-1}(0)=\ssp(A) \cap \C^+_\imath \cap \partial B$, we have that $P(g^{-1}(0))=0$. This allows to apply Corollary \ref{corollario1}, obtaining:
\[
\big(\sqrt{\1-Z^*_TZ_T}\,\big)^{-1}=   \int_{\C_\imath^+}\frac{1}{g}\diff\cP.
\]
On the other hand, thanks to Lemma \ref{lemC}$(\mr{a})(\mr{ii})$, we know that 
\[
T|_{\Ran(\sqrt{C_T}\,)} = Z_T(\sqrt{C_T}\,)^{-1}=Z_T(\sqrt{\1-Z_T^*Z_T}\,)^{-1}
\]
and hence, exploiting Theorem \ref{TEO1unb}$(\mr{a})$, we obtain:
\begin{align*}
T|_{\Ran(\sqrt{C_T}\,)}&=Z_T(\sqrt{\1-Z_T^*Z_T}\,)^{-1}= \int_{\C_\imath^+} \mi{id}\diff\cP \int_{\C_\imath^+} \frac{1}{g} \diff\cP\\
&=\int_{\C_\imath^+} \mi{id} \cdot \frac{1}{g} \diff\cP=\int_{\C_\imath^+} f\diff\cP,
\end{align*}
where $f$ is the measurable function defined in \eqref{f}. Here we used the equality $D_{\frac{1}{g}}=D_{\mi{id} \cdot \frac{1}{g}}$ (and hence $D_{\frac{1}{g}} \supset D_{\mi{id} \cdot \frac{1}{g}}$), which is immediate to verify.

Now, by Lemma \ref{lemC}$(\mr{b})(\mr{ii})$, we know that $\Ran(\sqrt{C_T}\,)$ is a core for $T$ and, by the last part of point $(\mr{c})$ of Theorem \ref{TEO1unb}, we have that the operator $\int_{\C_\imath^+} f\diff\cP$ is closed. It follows that
\[
T=\overline{T|_{\Ran(\sqrt{C_T}\,)}} =\overline{\int_{\C^+_\imath} f\diff\cP}=\int_{\C^+_\imath}  f\diff\cP,
\]

Define the measurable function $F:\C^+_\imath \to \C^+_\imath$ and the qPVM $Q$ over $\C^+_\imath$ in $\sH$ by setting $F(z):=f(z)$ for every $z \in \C^+_\imath$ and $Q(E):=P(F^{-1}(E))$ for every $E \in \mscr{B}(\C^+_\imath)$. Denote by $\mc{Q}$ the iqPVM $(Q,\LL)$ over $\C^+_\imath$ in $\sH$. Since $f=\mi{id}\circ F$, using again Theorem \ref{TEO1}$(\mr{c})$, we obtain:
\[
T=\int_{\C^+_\imath}  f\diff\cP=\int_{\C^+_\imath} (\mi{id} \circ F)\diff\cP=\int_{\C^+_\imath}\mi{id}\diff\mc{Q},
\]
which proves \eqref{decspetU}.

\textit{Step II.} Let us show point $(\mr{a})$.

$(\mr{a})$ Suppose that there exists another iqPVM $\mc{Q}'=(Q',\LL')$ over $\C^+_\imath$ in $\sH$ such that $T=\int_{\C^+_\imath} \mi{id}\diff\mc{Q}'$. Let $\phi:\C^+_\imath \to \C^+_\imath$ be the measurable function defined by setting $\phi(z):=z\sqrt{(1+\overline{z}z)^{-1}}$. 
Notice that $\phi$ is a homeomorphism onto its image $\phi(\C^+_\imath)=\{z \in \C^+_\imath \, | \, |z|<1\}$. Define two bounded iqPVMs $\mc{U}=(U,\LL)$ and $\mc{U}'=(U',\LL')$ over $\C^+_\imath$ in $\sH$, and two normal operators $S$ and $S'$ in $\gB(\sH)$ as follows:
\[
U(E):=Q(\phi^{-1}(E)), \quad U'(E):=Q'(\phi^{-1}(E))
\]
and
\[
S:=\int_{\C^+_\imath}\mi{id}\diff\mc{U}, \quad S':=\int_{\C^+_\imath}\mi{id}\diff\mc{U}'.
\]
By Theorem \ref{TEO1}$(\mr{c})$, we have that
\beq \label{eq:S}
S=\int_{\C^+_\imath}\phi(z)\diff\mc{Q}(z),
\quad
S'=\int_{\C^+_\imath}\phi(z)\diff\mc{Q}'(z).
\eeq
Combining these equalities with Theorem \ref{TEO1unb}, Corollary \ref{corollario1} and Lemmata \ref{lemA}, \ref{lemA2} and \ref{lemC}, we easily obtain:
\[
S=Z_T=S'.
\]
Thanks to Theorem \ref{spectraltheorem}$(\mr{a})$, we infer at once that $U=U'$ and hence $Q=Q'$ as well. Indeed, for every $F \in \mscr{B}(\C^+_\imath)$, the set $\phi(F)$ is a Borel subset of $\C^+_\imath$ and hence it holds:
\[
Q(F)=U(\phi(F))=U'(\phi(F))=Q'(F).
\]
This proves the uniqueness of the qPVM $Q$.

Let us show $(\mr{i})$, $(\mr{ii})$ and $(\mr{iii})$. By Theorem \ref{TEO1unb}$(\mr{c})$ and \eqref{decspetU}, we know that $D(T)=D_{\mi{id}}=D_{\overline{\mi{id}}}=D(T^*)$ and $T^*=\int_{\C^+_\imath}\overline{\mi{id}}\diff\mc{Q}$. Combining these facts with \eqref{ccq}, we obtain at once $(\mr{i})$ and $(\mr{ii})$, because $\imath \cdot \mi{id}=\mi{id} \cdot \imath$, $\imath \cdot \overline{\mi{id}}=\overline{\mi{id}} \cdot \imath$ and $\jmath \cdot \mi{id}=\overline{\mi{id}} \cdot \jmath$ if $\jmath \in \cS$ with $\imath\jmath=-\jmath\,\imath$. Moreover, thanks to \eqref{normunb}, we have that
\beq \label{agg}
\text{$\|T^*u\|=\|Tu\|\;$ if $u \in D(T)=D(T^*)$.}
\eeq
In particular, we have:
\beq \label{agg-bis}
\Ker(T)=\Ker(T^*).
\eeq
We will use \eqref{agg-bis} in Step IV below.

Now, by Proposition \ref{defintunb}$(\mr{d})$ and Theorem \ref{TEO1unb}$(\mr{b})$, we have that
\[
-L_\imath(T-T^*) \subset \int_{\C^+_\imath} -\imath (z-\overline{z})\diff\mc{Q}(z)=\int_{\C^+_\imath} 2\beta\diff\mc{Q},
\]
where the integrand $2\beta$ indicates the function $\C_\imath^+ \ni z=\alpha+\imath\beta \mapsto 2\beta \in \bH$. This proves $(\mr{iii})$. Indeed, thanks to \eqref{normunb2}, given any $u \in D(T)=D(T^*)$, it holds:
\[
\langle u| -L_\imath (T-T^*)u \rangle = \int_{\C_\imath^+} 2\,\beta \diff\mu^{\sss(Q)}_u \geq 0.
\]

\textit{Step III.} Let us prove $(\mr{b})$.

We begin by recalling that, by definition, $q \in \srho(T)=\bH \setminus \ssp(T)$ if and only if the operator $\Delta_q(T):D(T^2) \to \sH$, defined by $\Delta_q(T):=T^2-T(2\mr{Re}(q))+\1|q|^2$, has the following three properties $\Ker(\Delta_q(T))=\{0\}$, $\Ran(\Delta_q(T))$ is dense in $\sH$ and the operator  $\Delta_q(T)^{-1}:\Ran(\Delta_q(T)) \to D(T^2)$ is bounded (see Subsection \ref{subsec:ssp}).

Let us prove that
\beq \label{eq:srho}
\srho(T)=\big\{q \in \bH \, \big| \, \Delta_q(T):D(T^2) \to \sH \text{ is bijective and }\Delta_q(T)^{-1} \in \gB(\sH)\big\}.
\eeq
Evidently, if $\Delta_q(T)$ is bijective and $\Delta_q(T)^{-1} \in \gB(\sH)$, then $q \in \srho(T)$.

Consider now a point $q$ in $\srho(T)$. In order to obtain \eqref{eq:srho}, it suffices to show that $\Ran(\Delta_q(T))$ is closed in $\sH$. First, we show that
\beq \label{eq:int-Delta-q}
\Delta_q(T)=\int_{\C^+_\imath}\big(z^2-z(2\mr{Re}(q))+|q|^2\big)\diff\mc{Q}(z)
\eeq
Since the positive Borel measure $\mu_u^{\sss(Q)}:\mscr{B}(\C^+_\imath) \to \R^+$ is finite (see \eqref{measuremuPx} for its definition), we know that $L^4(\C^+_\imath,\bH;\mu_u^{\sss(Q)}) \subset L^2(\C^+_\imath,\bH;\mu_u^{\sss(Q)})$ for every $u \in \sH$. It follows immediately that $D_{\mi{id}} \supset D_{\mi{id}^2}$. Thanks to \eqref{decspetU} and Theorem \ref{TEO1unb}$(\mr{a})$, we infer that
\[
T^2=\int_{\C^+_\imath}\mi{id}^2\diff\mc{Q}.
\]
By using \eqref{decspetU} and \eqref{ccq}, we have also that $T(2\mr{Re}(q))=\int_{\C^+_\imath}\mi{id}(2\mr{Re}(q))\diff\mc{Q}$ and $\1|q|^2=\int_{\C^+_\imath}c_{\,|q|^2}\diff\mc{Q}$, where $c_{\,|q|^2}:\C^+_\imath \to \bH$ is the function constantly equal to~$|q|^2$. Observe that, if $z \in \C^+_\imath$ and $|z| \geq 2(2\mr{Re}(q))$, then $|z| \leq |z-2\mr{Re}(q)|+|2\mr{Re}(q)| \leq 2|z-2\mr{Re}(q)|$ and hence $|z^2| \leq 2|z^2-z(2\mr{Re}(z))|$. It follows that $D_{\mi{id}^2-\mi{id}(2\mr{Re}(z))}=D_{\mi{id}^2}=D_{\mi{id}^2} \cap D_{-\mi{id}(2\mr{Re}(z))}$. Thanks to this fact and to the equality $D(\1|q|^2)=\sH$, we can apply points $(\mr{b})$ and $(\mr{e})$ of Theorem \ref{TEO1unb} obtaining \eqref{eq:int-Delta-q}:
\[ \Delta_q(T)=\big(T^2-T(2\mr{Re}(q))\big)+\1|q|^2=\int_{\C^+_\imath}\big(\mi{id}^2-\mi{id}(2\mr{Re}(q))+c_{\,|q|^2}\big)\diff\mc{Q}.
\]

By point $(\mr{c})$ of the same theorem, we have also that:
\beq \label{eq:closedness-Delta-q}
\text{$\Delta_q(T):D(T^2) \to \sH$ is a closed operator}.
\eeq

We are in a position to prove that $\Ran(\Delta_q(T))$ is closed in $\sH$. We adapt to the present situation the argument used in Remark 4.2(1) of \cite{GhMoPe}. Let $\Ran(\Delta_q(T)) \supset \{y_n\}_{n \in \N} \to y$ in $\sH$ and, for every $n \in \N$, let $x_n:=\Delta_q(T)^{-1}y_n \in D(\Delta_q(T))$. Since we are assuming that $q$ belongs to $\srho(T)$, $\Delta_q(T)^{-1}$ is bounded and hence $\{x_n\}_{n \in \N}$ is a Cauchy sequence in $\sH$; indeed, $\|x_n-x_m\| \leq \|\Delta_q(T)^{-1}\| \, \|y_n-y_m\| \to 0$ as $n,m \to +\infty$. We infer that $\{x_n\}_{n \in \N} \to x$ for some $x \in \sH$. Since the operator $\Delta_q(T)$ is closed, it follows that $x \in D(T^2)$ and $y \in \Ran(\Delta_q(T))$. In particular, $\Ran(\Delta_q(T))$ is closed in $\sH$, and equality \eqref{eq:srho} is valid. Such an equality permits to prove $(\mr{b})$ as follows.

Let $q \in \C^+_\imath \setminus \supp(Q)$. Define the function $h_q$ in $M_b(\C^+_\imath,\bH)$ and the operator $H_q$ in $\gB(\sH)$ by setting $h_q(z):=\big(z^2-z(2\mr{Re}(q))+|q|^2\big)^{-1}$ if $z \in \supp(Q)$, $h_q(z):=0$ if $z \in \C^+_\imath \setminus \supp(Q)$, and $H_q:=\int_{\C^+_\imath}h_q\diff\mc{Q}$. By combining points $(\mr{a})$ and $(\mr{e})$ of Theorem \ref{TEO1unb} with \eqref{threeintunb} and \eqref{eq:int-Delta-q}, it follows at once that $H_q\Delta_q(T) \subset \1$ and $\Delta_q(T)H_q=\1$. In other words, $\Delta_q(T)$ is bijective and $\Delta_q(T)^{-1} \in \gB(\sH)$, and hence \eqref{eq:srho} implies that $q \in \srho(T)$. 

Suppose now that $q \in \supp(Q)$. We must show that $q \not\in \srho(T)$. For every $n \in \N$, define
\[
B_n:=\big\{z \in \C^+_\imath \, \big| \, |z-q| \leq (n+1)^{-1}\big\}
\]
and
\[
m_n:=\sup_{z \in B_n}\big|z^2-z(2\mr{Re}(q))+|q|^2\big|^2>0.
\] 
Evidently, the sequence $\{m_n\}_{n \in \N}$ converges to $0$. Since $q \in \supp(Q)$, for every $n \in \N$, $Q(B_n) \neq 0$ and hence we can choose $u_n \in Q(B_n)(\sH) \setminus \{0\}$. Recall that $Q(B_n)=\int_{\C^+_\imath}\Chi_{B_n}\diff\mc{Q}$ and observe that $Q(B_n)u_n=u_n$. In this way, using \eqref{eq:int-Delta-q} and points $(\mr{a})$ and $(\mr{e})$ of Theorem \ref{TEO1unb}, we obtain that $D(\Delta_q(T)Q(B_n))=\sH$, $u_n \in D(T^2)$ and
\begin{align*}
\Delta_q(T)u_n&=\int_{\C^+_\imath}\big(z^2-z(2\mr{Re}(q))+|q|^2\big)\diff\mc{Q}(z)\int_{\C^+_\imath}\Chi_{B_n}\diff\mc{Q}\,u_n\\
&=\int_{\C^+_\imath}\big(z^2-z(2\mr{Re}(q))+|q|^2\big)\Chi_{B_n}(z)\diff\mc{Q}(z)\,u_n\\
&=\int_{B_n}\big(z^2-z(2\mr{Re}(q))+|q|^2\big)(z)\diff\mc{Q}(z)\,u_n.
\end{align*}
Thanks to \eqref{normunb}, we obtain also that
\begin{align*}
\|\Delta_q(T)u_n\|^2&=\int_{\C^+_\imath}\big|z^2-z(2\mr{Re}(q))+|q|^2\big|^2\diff\mu_{u_n}^{\sss(Q)}(z)\\
&\leq m_n \int_{\C^+_\imath}\diff\mu_{u_n}^{\sss(Q)}=m_n\|u_n\|^2.
\end{align*}
Thus, if $\Delta_q(T)$ would be bijective, then each vector $v_n:=\Delta_q(T)^{-1}(u_n)$ would be different from $0$, it would hold that
\[
\|\Delta_q(T)^{-1}(v_n)\|^2 \geq \frac{1}{m_n}\|v_n\|^2
\]
and hence $\Delta_q(T)^{-1} \not\in \gB(\sH)$. This shows that $q \not\in \srho(T)$. 

We have just proved that $\supp(Q)=\ssp(T) \cap \C^+_\imath$. Let us complete the proof of~$(\mr{b})$. If $T \in \gB(\sH)$, the compactness of $\supp(T)$ is a direct consequence of above Theorem \ref{spectraltheorem}$(\mr{b})$ and of Theorem 4.3$(\mr{b})$ of \cite{GhMoPe}. Conversely, if 
$\supp(Q)$ is compact, then $\mi{id}$ is bounded on $\supp(Q)$ and hence, by \eqref{decspetU}, \eqref{threeintunb} and Proposition \ref{defintunb}$(\mr{f})$, $T=\int_{\C^+_\imath}\mi{id}\diff\mc{Q}=\int_{\supp(Q)}\mi{id}\diff\mc{Q}$ belongs to $\gB(\sH)$.

\textit{Step IV.} Let us show point $(\mr{c})$. By  Proposition \ref{TEO1unb-improved}, we know that $\overline{T-T^*}=\int_{\C^+_\imath}2\beta\imath\diff\cP$. In this way, Corollary \ref{corollario1} ensures that $\Ker(\,\overline{T-T^*}\,)=P(\R)(\sH)$, which is equivalent to say that $\Ker(\,\overline{T-T^*}\,)^\perp=P(\bC^+_\imath \setminus \R)(\sH)$.

\textit{Step V}. Let us complete the proof by proving $(\mr{d})$.

We begin with $(\mr{i})$. Let $q \in \C^+_\imath$ such that $Q(\{q\}) \neq 0$. Write $q$ as follows $q=\alpha+\imath\beta$, where $\alpha,\beta \in \R$ and $\beta \geq 0$. Choose $w \in Q(\{q\})(\sH) \setminus \{0\}$. Using \eqref{eq:int-Delta-q}, points $(\mr{a})$ and $(\mr{e})$ of Theorem \ref{TEO1unb}, and \eqref{ccq}, we obtain that $D(TQ(\{q\}))=\sH$, $w \in D(T)$ and
\begin{align*}
Tw&=TQ(\{q\})w=\int_{\C^+_\imath}\mi{id}\diff\mc{Q}\int_{\C^+_\imath}\Chi_{\{q\}}\diff\mc{Q} \, w=\int_{\C^+_\imath}\mi{id} \cdot \Chi_{\{q\}}\diff\mc{Q} \, w\\
&=\int_{\C^+_\imath}q\Chi_{\{q\}}\diff\mc{Q} \, w=L_q\int_{\C^+_\imath}\Chi_{\{q\}}\diff\mc{Q} \, w=L_qQ(\{q\})w\\
&=L_qw=w\alpha+L_\imath w \beta.
\end{align*}
Consider the $\C_\imath$-complex subspaces $\sH_\pm^{L_\imath\imath}=\{u \in \sH \, | \, L_\imath u=\pm u\imath\}$ of $\sH$ and the decomposition $\sH=\sH_+^{L_\imath\imath} \oplus \sH_-^{L_\imath\imath}$ (see Subsection \ref{subsec:complex-subspaces}). Define $w_\pm:=\frac{1}{2}(w \mp L_\imath w \imath) \in \sH^{L_\imath\imath}_\pm$ in such a way that $w=w_++w_-$. Since $TL_\imath=L_\imath T$, we have that $L_\imath (D(T)) \subset D(T)$, $w_\pm \in D(T)$ and $Tw_\pm \in \sH^{L_\imath\imath}_\pm$. 
Thus it holds:
\begin{align*}
Tw_++Tw_-&=Tw=w\alpha+L_\imath w\beta =w_+\alpha+w_-\alpha+L_\imath w_+\beta+L_\imath w_-\beta\\
&=w_+\alpha+w_-\alpha+w_+\imath\beta-w_-\imath\beta=w_+q+w_-\overline{q}
\end{align*}
or, equivalently, $Tw_+=w_+q$ and $Tw_-=w_-\overline{q}$. Since $w \neq 0$, at least one of $w_\pm$ is different from $0$ and thus $q$ is an eigenvalue of $T$. Thanks to Proposition 4.5 of \cite{GhMoPe}, this is equivalent to say that $q \in \sigma_{pS}(T)$. We have just proven that, if $Q(\bS_q \cap \bC^+_\imath) \neq 0$, then $q \in \sigma_{pS}(T)$.

Let us prove the converse implication. Suppose that 
$q \in \C^+_\imath \cap \sigma_{pS}(T)$. If $q=0$, then $\Ker(T) \neq \{0\}$. Moreover, $Q(\{0\})=Q(\mi{id}^{-1}(0))$ must be different from~$0$; otherwise, by Corollary \ref{corollario1}, $T$ would be bijective and, in particular, $\Ker(T)=\{0\}$. Suppose that $q \neq 0$. Using Proposition 4.5 of \cite{GhMoPe} again, we know that there exists $y \in D(T) \setminus \{0\}$ such that $Ty=yq$. Point $(\mr{a})$ of Theorem \ref{TEO1unb} and \eqref{decspetU} imply that
\[
Q(\{q\})yq=Q(\{q\})Ty=\int_{\C_\imath^+} \Chi_{\{q\}} \mi{id}\diff\mc{Q}\,y=L_qy
\]
and thus
\[
Q(\{q\})y=L_qyq^{-1}=y\alpha q^{-1}+L_\imath y \beta q^{-1}.
\]
Decompose $y$ as follows $y=y_++y_-$ with $y_\pm \in \sH^{L_\imath\imath}_\pm$. Since $L_\imath$ commutes with $Q(\{q\})$, we can repeat the preceding argument, obtaining that $Q(\{q\})y_+=y_+$ and $Q(\{q\})y_-=y_-\overline{q}q^{-1}$. At least one of $y_\pm$ is different from $0$ and hence $Q(\{q\}) \neq 0$.

Let us show $(\mr{ii})$. Recall that, by definition, a quaternion $q$ belongs to $\sigma_{rS}(T)$ if $\Ker(\Delta_q(T))=\{0\}$ and $\Ran(\Delta_q(T))$ is not dense in $\sH$. Such a quaternion cannot exist. Indeed, by \eqref{eq:closedness-Delta-q} and Theorem \ref{TEO1unb}$(\mr{d})$, we have that the operator $\Delta_q(T)$ is closed and normal. In this way, we can apply \eqref{agg-bis} and Proposition 2.14 of \cite{GhMoPe} to $\Delta_q(T)$, obtaining that
\[
\{0\}=\Ker(\Delta_q(T))=\Ker(\Delta_q(T)^*)=\Ran(\Delta_q(T))^{\perp}.
\]
Thus, if $\Ker(\Delta_q(T))=\{0\}$, then $\Ran(\Delta_q(T))$ is dense in $\sH$.

It remains to show $(\mr{iii})$. Since $\ssp(T)$ is equal to the disjoint union of $\sigma_{pS}(T)$ and $\sigma_{cS}(T)$, above points $(\mr{b})$ and $(\mr{d})(\mr{i})$ imply immediately the first part of $(\mr{iii})$. In order to prove the second part of $(\mr{iii})$, one can repeat exactly the proof of the second part of point $(\mr{d})(\mr{iii})$ of Theorem \ref{spectraltheorem}.
\end{proof}

The proof of Theorem \ref{spectraltheoremU} singles out some results which are interesting in their own right. Let us collect those results in the next proposition.

\begin{proposition} \label{propLu}
Let $\sH$ be a quaternionic Hilbert space and let $T:D(T) \to \sH$ be a closed normal operator with dense domain. The following facts hold.
\begin{itemize}
 \item[$(\mr{a})$] There is an unitary operator $U:\sH \to \sH$ such that $T^*=U^*TU$.
 \item[$(\mr{b})$] $D(T)=D(T^*)$, $\|T^*u\|=\|Tu\|$ if $u \in D(T)$ and $\Ker(T)=\Ker(T^*)=\Ran(T)^\perp$. Moreover, we have that $D(T^2)=D(T^*T)=D(TT^*)$.
 \item[$(\mr{c})$] $\Delta_q(T)=\int_{\C^+_\imath}\big(z^2-z(2\mr{Re}(q))+|q|^2\big)\diff\mc{Q}(z)$ for every $q \in \bH$.
 \item[$(\mr{d})$] $\srho(T)=\big\{q \in \bH \, \big| \, \Delta_q(T):D(T^2) \to \sH \text{ is bijective, }\Delta_q(T)^{-1} \in \gB(\sH)\big\}$.
\end{itemize}
\end{proposition}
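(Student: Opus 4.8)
The plan is to harvest most of the statement from the proof of Theorem~\ref{spectraltheoremU} and to supply the two genuinely new ingredients: the unitary conjugation in $(\mr{a})$ and the remaining adjoint/domain identities in $(\mr{b})$. Throughout I work with the iqPVM $\mc{Q}=(Q,\LL)$ over $\C^+_\imath$ in $\sH$ furnished by that theorem, so that $T=\int_{\C^+_\imath}\mi{id}\diff\mc{Q}$ and, by Theorem~\ref{TEO1unb}$(\mr{c})$, $D(T)=D_{\mi{id}}=D_{\overline{\mi{id}}}=D(T^*)$ with $T^*=\int_{\C^+_\imath}\overline{\mi{id}}\diff\mc{Q}$. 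Then $(\mr{c})$ and $(\mr{d})$ are precisely equalities \eqref{eq:int-Delta-q} and \eqref{eq:srho} obtained in Step~III of the proof of Theorem~\ref{spectraltheoremU}; note that the derivation of \eqref{eq:int-Delta-q} there uses only Theorem~\ref{TEO1unb}$(\mr{b})$, $(\mr{e})$ and the finiteness of the measures $\mu^{\sss(Q)}_u$ (which yields $D_{\mi{id}}\supset D_{\mi{id}^2}$ and $D_{\mi{id}^2-\mi{id}(2\mr{Re}(q))}=D_{\mi{id}^2}$), so it holds for every $q\in\bH$, while \eqref{eq:srho} was established there for an arbitrary $q\in\srho(T)$. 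Likewise, $D(T)=D(T^*)$, the isometry $\|T^*u\|=\|Tu\|$ for $u\in D(T)$, and $\Ker(T)=\Ker(T^*)$ are \eqref{agg} and \eqref{agg-bis}: the first comes from \eqref{normunb} together with $|z|=|\overline z|$, and the second is then immediate.

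To prove $(\mr{a})$, pick $\jmath\in\bS$ with $\imath\jmath=-\jmath\,\imath$ and put $U:=L_\jmath$. Since $L_\jmath^*=L_{\overline\jmath}=L_{-\jmath}=-L_\jmath$ and $L_\jmath L_{-\jmath}=L_{-\jmath^2}=L_1=\1$, the operator $U$ is unitary; moreover $L_\jmath(D(T))=D(T)$ by Proposition~\ref{defintunb}$(\mr{d})$, so $U^*TU$ has domain $D(T)=D(T^*)$. Applying \eqref{ccq} twice,
\[
U^*TU=-L_\jmath\,T\,L_\jmath=-L_\jmath\left(\int_{\C^+_\imath}\mi{id}\diff\mc{Q}\right)L_\jmath=\int_{\C^+_\imath}\psi\diff\mc{Q},
\]
where $\psi(z):=-\jmath\,z\,\jmath$. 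A direct computation gives $-\jmath\,z\,\jmath=\overline z$ for every $z\in\C^+_\imath$, so $\psi=\overline{\mi{id}}$ and $U^*TU=\int_{\C^+_\imath}\overline{\mi{id}}\diff\mc{Q}=T^*$.

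Finally, for the rest of $(\mr{b})$: the identity $\Ker(T^*)=\Ran(T)^\perp$ is the elementary adjoint relation valid for any densely defined operator (if $v\perp\Ran(T)$ then $v\in D(T^*)$ with $T^*v=0$, and conversely), and combined with $\Ker(T)=\Ker(T^*)$ it gives $\Ker(T)=\Ker(T^*)=\Ran(T)^\perp$. For the domain equalities, \eqref{decspetU} and Theorem~\ref{TEO1unb}$(\mr{a})$ give $T^2=\int_{\C^+_\imath}\mi{id}^2\diff\mc{Q}$ with $D(T^2)=D_{\mi{id}^2}$, while by the same theorem $D(T^*T)=D_{\mi{id}}\cap D_{|\mi{id}|^2}$ and $D(TT^*)=D_{\overline{\mi{id}}}\cap D_{|\mi{id}|^2}$; since $|\mi{id}(z)|^2=|\mi{id}^2(z)|=|z|^2$ one has $D_{|\mi{id}|^2}=D_{\mi{id}^2}$, and $D_{\mi{id}}=D_{\overline{\mi{id}}}\supset D_{\mi{id}^2}$ by finiteness of $\mu^{\sss(Q)}_u$ (i.e.\ $L^4\subset L^2$), so all three domains coincide with $D_{\mi{id}^2}$. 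I do not expect a serious obstacle here: the proposition only repackages the proof of Theorem~\ref{spectraltheoremU}, and the sole computation not already displayed there is the one in $(\mr{a})$, whose only delicate point is checking that the domains of $U^*TU$, $T$ and $T^*$ agree.
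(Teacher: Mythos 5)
Your proposal is correct and follows essentially the same route as the paper: points $(\mr{c})$, $(\mr{d})$ and the first part of $(\mr{b})$ are harvested from Steps II--III of the proof of Theorem \ref{spectraltheoremU} (equations \eqref{eq:int-Delta-q}, \eqref{eq:srho}, \eqref{agg}, \eqref{agg-bis}), point $(\mr{a})$ is obtained with $U=L_\jmath$ (the paper simply cites Theorem \ref{spectraltheoremU}$(\mr{a})(\mr{ii})$, which encodes the same identity $-L_\jmath T L_\jmath=T^*$ you verify via the functional calculus), and the domain equalities in $(\mr{b})$ come from Theorem \ref{TEO1unb}$(\mr{a})$, $(\mr{d})$ together with $L^4\subset L^2$ for the finite measures $\mu^{\sss(Q)}_u$. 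Your explicit checks (that \eqref{eq:int-Delta-q} holds for all $q\in\bH$, that the domains of $U^*TU$, $T$, $T^*$ coincide, and the elementary relation $\Ker(T^*)=\Ran(T)^\perp$, which the paper delegates to Proposition 2.14 of \cite{GhMoPe}) are all sound.
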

\begin{proof}
Point $(\mr{a})$ follows from Theorem \ref{spectraltheoremU}$(\mr{a})(\mr{ii})$, the first part of $(\mr{b})$ from \eqref{agg} and Proposition 2.14 of \cite{GhMoPe}, $(\mr{c})$ from \eqref{eq:int-Delta-q} and $(\mr{d})$ from \eqref{eq:srho}: see Steps II and III of the preceding proof. The second part of $(\mr{b})$ is an immediate consequence of \eqref{decspetU} and of points $(\mr{a})$ and $(\mr{d})$ of Theorem \ref{TEO1unb}.
\end{proof}

\begin{remark}
Preceding point $(\mr{a})$ is false in the complex setting. Preceding point $(\mr{d})$ was proven in Remark 4.2 of \cite{GhMoPe} in the special case of normal operators in $\gB(\sH)$.~\bs
\end{remark}

Theorem \ref{spectraltheoremU} allows to prove the existence, and the uniqueness, of the square root of a positive self-adjoint operator and of the absolute value of a normal operator in the unbounded case.

\begin{lemma} \label{lem:J}
The following facts hold.
\begin{itemize}
 \item[$(\mr{a})$] Given a positive self-adjoint operator $A:D(A) \to \sH$ with dense domain, there exists a unique positive self-adjoint operator $\sqrt{A}:D(\sqrt{A}\,) \to \sH$ with dense domain, we call \emph{square root of $A$}, such that
\[
\big(\sqrt{A}\,\big)^2=A.
\]
Moreover, if $\mc{K}=(K,\mc{N})$ is an iqPVM over some $\C^+_\imath$ in $\sH$ such that $\supp(K)=\ssp(A) \subset [0,+\infty)$ and $A=\int_{[0,+\infty)}r\diff\mc{K}(r)$ (which exists by Theorem \ref{spectraltheoremU} and Lemma \ref{lemA2}(a); see also Remark \ref{remarksupp2}(1)), then
\beq \label{eq:sqrt-A}
\sqrt{A}=\int_{[0,+\infty)} \sqrt{r} \diff\mc{K}(r).
\eeq
 \item[$(\mr{b})$] Let $T:D(T) \to \sH$ be a closed normal operator with dense domain. Then the operator $T^*T:D(T^*T) \to \sH$ is a positive self-adjoint operator with dense domain. We denote by $|T|:D(|T|) \to \sH$ the square root of $T^*T$.
 
If $\mc{Q}=(Q,\LL)$ is an iqPVM over $\C^+_\imath$ in $\sH$ such that $T=\int_{\C^+_\imath}\mi{id}\diff\mc{Q}$, then we have also that
\[
T^*T=TT^*=\int_{\C^+_\imath}|\mi{id}|^2\diff\mc{Q}=\int_{[0,+\infty)}r^2\diff\mc{Q}(r)
\]
and
\[
|T|=\int_{\C^+_\imath}|\mi{id}|\diff\mc{Q}=\int_{[0,+\infty)}r\diff\mc{Q}(r).
\]
In particular, if $T$ is a positive self-adjoint operator with dense domain, then $|T|=T$. 
\end{itemize} 
\end{lemma}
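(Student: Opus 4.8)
The plan is to derive everything from the unbounded iq Spectral Theorem (Theorem \ref{spectraltheoremU}) together with the functional calculus machinery of Theorems \ref{TEO1} and \ref{TEO1unb} and Corollary \ref{corollario1}, so that no genuinely new analysis is needed. The only two nontrivial points are \emph{uniqueness} of the square root in part $(\mr{a})$ and the fact that $T^*T$ is a densely defined positive self-adjoint operator in part $(\mr{b})$; the integral formulas themselves are immediate.

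\textbf{Part $(\mr{a})$, existence.} First I would fix any $\imath \in \bS$ and invoke Lemma \ref{lemA2}$(\mr{a})$ to get $\ssp(A) \subset [0,+\infty)$, then apply Theorem \ref{spectraltheoremU} to the (closed, densely defined, normal) operator $A$ to obtain an iqPVM $\mc{K}=(K,\mc{N})$ over $\C^+_\imath$ with $\supp(K)=\ssp(A)\cap\C^+_\imath=\ssp(A)\subset[0,+\infty)$ and $A=\int_{[0,+\infty)}r\,\diff\mc{K}(r)$; here I use Theorem \ref{spectraltheoremU}$(\mr{b})$ to identify the support, and the fact that $A=A^*$ forces the representing measure to live on $\R$ (this follows since $\overline{A-A^*}=0=\int 2\beta\imath\,\diff\mc K$, so $K(\C^+_\imath\setminus\R)=0$ by Corollary \ref{corollario1}). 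Now set $S:=\int_{[0,+\infty)}\sqrt{r}\,\diff\mc{K}(r)$. By Theorem \ref{TEO1unb}$(\mr{c})$ (applied to the real-valued function $\sqrt{\cdot}\,$, which equals its own conjugate) $S$ is closed and self-adjoint, and by Proposition \ref{defintunb}$(\mr{e})$ equality \eqref{normunb2} gives $\langle u|Su\rangle=\int\sqrt{r}\,\diff\mu^{\sss(K)}_u\ge 0$ for $u\in D_S$, so $S$ is positive; its domain is dense by Proposition \ref{defintunb}$(\mr{a})$. Applying Theorem \ref{TEO1unb}$(\mr{a})$ with $f=g=\sqrt{\cdot}\,$ and noting $D_{\sqrt{\cdot}}\supset D_{(\sqrt{\cdot})^2}=D_{\mr{id}}$ (because the relevant measures are finite, so $L^4\subset L^2$), we get $S^2=\int r\,\diff\mc K(r)=A$ with $D(S^2)=D(A)$. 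This proves existence of a square root and formula \eqref{eq:sqrt-A} simultaneously.

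\textbf{Part $(\mr{a})$, uniqueness.} This is the step I expect to be the main obstacle, since the usual complex-analytic argument (monotone class / polynomial approximation on the spectrum) has to be routed through the quaternionic spectral calculus. I would argue as follows: suppose $R$ is another positive self-adjoint square root of $A$. Apply Theorem \ref{spectraltheoremU} to $R$ to get an iqPVM $\mc{M}=(M,\mc N')$ over $\C^+_\imath$ with $\supp(M)\subset[0,+\infty)$ (again $R=R^*$ forces the measure onto $\R_{\ge0}$ using $\ssp(R)\subset[0,+\infty)$) and $R=\int r\,\diff\mc M(r)$. Then by Theorem \ref{TEO1unb}$(\mr{a})$, $R^2=\int r^2\,\diff\mc M(r)$, so the pushforward $\mc M'$ of $\mc M$ under $r\mapsto r^2$ is an iqPVM representing $A$; by the uniqueness clause in Theorem \ref{spectraltheoremU}$(\mr{a})$ applied to $A$, the qPVM part of $\mc M'$ coincides with $K$. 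Since $r\mapsto r^2$ is a homeomorphism of $[0,+\infty)$ onto itself, this forces $M(E)=K(\sqrt{\cdot}\,^{-1}... )$, i.e. $M$ is the pushforward of $K$ under $r\mapsto\sqrt r$; substituting back gives $R=\int\sqrt r\,\diff\mc K(r)=S$ by Theorem \ref{TEO1}$(\mr{c})$/Theorem \ref{TEO1unb}$(\mr{f})$. The delicate point here is that Theorem \ref{spectraltheoremU}$(\mr{a})$ only asserts uniqueness of the \emph{qPVM} (not the left multiplication), but that is precisely all we need, because the integrand $\sqrt r$ is real-valued and Remark \ref{remarksupp2}$(2)$ shows $\int\sqrt r\,\diff\cP$ does not depend on the left multiplication.

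\textbf{Part $(\mr{b})$.} Given a closed densely defined normal $T$, Theorem \ref{spectraltheoremU} supplies an iqPVM $\mc Q=(Q,\LL)$ over $\C^+_\imath$ with $T=\int_{\C^+_\imath}\mr{id}\,\diff\mc Q$, hence $T^*=\int\overline{\mr{id}}\,\diff\mc Q$ and $D(T)=D(T^*)$ by Theorem \ref{TEO1unb}$(\mr{c})$. By Theorem \ref{TEO1unb}$(\mr{d})$, $T^*T=\int|\mr{id}|^2\,\diff\mc Q=TT^*$; this is a closed self-adjoint operator (its own adjoint, being of the form $\int g\,\diff\mc Q$ with $g=|\mr{id}|^2$ real-valued and using Theorem \ref{TEO1unb}$(\mr{c})$), positive by \eqref{normunb2}, with domain $D_{|\mr{id}|^2}=D(T^2)$ (again $L^4\subset L^2$ for finite measures, giving $D(T^*T)=D(TT^*)=D(T^2)$, the last part of $(\mr b)$ as stated). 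Since $|\mr{id}|^2$ takes values in $[0,+\infty)$, the pushforward of $Q$ under $z\mapsto|z|^2$ is a qPVM supported in $[0,+\infty)$ representing $T^*T$, so by part $(\mr{a})$ its square root is $|T|:=\sqrt{T^*T}=\int\sqrt{|\mr{id}|^2}\,\diff\mc Q=\int|\mr{id}|\,\diff\mc Q=\int_{[0,+\infty)}r\,\diff\mc Q(r)$, using Theorem \ref{TEO1unb}$(\mr{f})$ to rewrite the pushforward integral. Finally, if $T$ is positive self-adjoint then $T^*T=T^2$ and, by part $(\mr a)$'s uniqueness applied to $A=T^2$ with the obvious square root $T$, we get $|T|=\sqrt{T^2}=T$.
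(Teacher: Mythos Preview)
Your proposal is correct and follows essentially the same route as the paper's proof: existence via the integral $\int\sqrt{r}\,\diff\mc K$ using Theorem \ref{TEO1unb}, uniqueness by applying the spectral theorem to a competing square root $R$, pushing its qPVM forward under $r\mapsto r^2$, invoking the uniqueness of the qPVM in Theorem \ref{spectraltheoremU}$(\mr{a})$, and then using Remark \ref{remarksupp2}$(2)$ to handle the real-valued integrand. The paper makes the pushforward step slightly more explicit by writing down a homeomorphism $\psi$ of all of $\C^+_\imath$ (needed because Theorem \ref{TEO1unb}$(\mr{f})$ is stated for maps $\C^+_\imath\to\C^+_\jmath$), and for part $(\mr{b})$ it cites Theorem 2.15$(\mr{d})$ of \cite{GhMoPe} for the self-adjointness of $T^*T$ rather than reading it off the spectral representation as you do, but these are cosmetic differences.
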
 

\begin{proof}
$(\mr{a})$ If we define $\sqrt{A}:=\int_{[0,+\infty)} \sqrt{r} \diff\mc{K}(r)$, then points $(\mr{a})$ and $(\mr{c})$ of Theorem \ref{TEO1unb} and equality \eqref{normunb2} imply that $\sqrt{A}$ is positive, self-adjoint and $(\sqrt{A}\,)^2=A$.

Let $B:D(B) \to \sH$ be another positive self-adjoint operator with dense domain, whose square is $A$. Since $B$ is closed and normal, thanks to Theorem \ref{spectraltheoremU}, Lemma \ref{lemA2}(a) and Remark \ref{remarksupp2}$(1)$, there exists an iqPVM $\cP'=(P',\mc{N}')$ over $\C^+_\imath$ in $\sH$ such that $\supp(P')=\ssp(B) \subset [0,+\infty)$ and $B=\int_{[0,+\infty)}\mi{id}\diff\cP'$. Denote by $\mr{sgn}:\R \to \{-1,0,1\}$ the sign function, sending $t<0$ into $-1$, $t=0$ into $0$ and $t>0$ into $1$. Define the homeomorphism $\psi:\C^+_\imath \to \C^+_\imath$ and the iqPVM $\mc{K}'=(K',\mc{N}')$ over $\C^+_\imath$ in $\sH$ by setting
\[
\psi(\alpha+\imath\beta):=\mr{sgn}(\alpha)\sqrt{|\alpha|}+\imath\beta,
\]
and
\[
K'(F):=P'(\psi(F)) \;\; \text{for every $F \in \mscr{B}(\C^+_\imath)$.}
\]
Observe that $\supp(K')=\psi^{-1}(\supp(P')) \subset [0,+\infty)$ and, for every $r \in [0,+\infty)$, $(\mi{id} \circ \psi)(r)=\sqrt{r}$ and $(\mi{id}^2 \circ \psi)(r)=\mi{id}(r)$. By combining these facts with points $(\mr{a})$ and $(\mr{f})$ of Theorem \ref{TEO1unb}, we obtain:
\beq \label{eq:C}
B=\int_{\C^+_\imath}\mi{id} \circ \psi \diff\mc{K}'=\int_{[0,+\infty)}\sqrt{r}\diff\mc{K}'(r)
\eeq
and
\begin{align*}
A&=B^2=\int_{\C^+_\imath}\mi{id}^2\diff\cP'=\int_{\C^+_\imath}\mi{id}^2 \circ \psi \diff\mc{K}'\nonumber\\
&=\int_{[0,+\infty)}\mi{id}\diff\mc{K}'=\int_{\C^+_\imath}\mi{id}\diff\mc{K}'.
\end{align*}
Since $\int_{\C^+_\imath}\mi{id}\diff\mc{K}=A=\int_{\C^+_\imath}\mi{id}\diff\mc{K}'$, Theorem \ref{spectraltheoremU}$(\mr{a})$ ensures that $K'=K$. Since the function $[0,+\infty) \ni r \mapsto \sqrt{r} \in \bH$ is real-valued, we have that $\int_{[0,+\infty)}\sqrt{r}\diff\mc{K}(r)=\int_{[0,+\infty)}\sqrt{r}\diff\mc{K}'(r)$ (see Remark \ref{remarksupp2}$(2)$). Thanks to \eqref{eq:sqrt-A} and \eqref{eq:C}, we infer that $B=\sqrt{A}$, as desired.

$(\mr{b})$ Theorem 2.15$(\mr{d})$ of \cite{GhMoPe} ensures that $T^*T$ is a self-adjoint operator with dense domain. Evidently, $T^*T$ is also positive and hence point $(\mr{b})$ follows immediately from Theorem \ref{TEO1unb}$(\mr{d})$ and from above point $(\mr{a})$.
\end{proof} 

Thanks to the latter result, we can improve point $(\mr{a})$ of Spectral Theorem \ref{spectraltheoremU}.

\begin{proposition} \label{prop:unb-sp-teo}
Let $\sH$ be a quaternionic Hilbert space, let $\imath \in \bS$ and let $T:D(T) \to \gB(\sH)$ be a closed normal operator with dense domain. If $\mc{Q}=(Q,\LL)$ and $\mc{Q}'=(Q,\LL')$ are iqPVMs over $\C^+_\imath$ in $\sH$ satisfying (\ref{decspetU}); that is, 
\[
\int_{\C^+_\imath}\mi{id}\diff\mc{Q}=T=\int_{\C^+_\imath}\mi{id}\diff\mc{Q}',
\]
then $L_q(u)=L_q'(u)$ for every $u \in \Ker(\, \overline{T-T^*} \,)^\perp$ and for every $q \in \C_\imath$, where $L_q:=\LL(q)$ and $L'_q:=\LL'(q)$.
\end{proposition}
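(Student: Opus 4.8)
The plan is to reduce the statement to the already-established uniqueness clause of the bounded spectral theorem, Theorem~\ref{spectraltheorem}(a), via the very transformation $T \mapsto Z_T$ used in the proof of Theorem~\ref{spectraltheoremU}. Recall from that proof that $C_T := (\1+T^*T)^{-1} \in \gB(\sH)$, that $Z_T := T\sqrt{C_T}$ is a bounded normal operator (Lemma~\ref{lemC}(b)), and that the homeomorphism $\phi$ of $\C^+_\imath$ onto $\{z \in \C^+_\imath \, | \, |z|<1\}$ given by $\phi(z):=z\sqrt{(1+\overline{z}z)^{-1}}$ transforms $\mc{Q}$ into the \emph{bounded} iqPVM $\mc{U}=(U,\LL)$ over $\C^+_\imath$ in $\sH$ with $U(E):=Q(\phi^{-1}(E))$, for which $Z_T=\int_{\C^+_\imath}\mi{id}\diff\mc{U}$ (this is precisely Step~II of the proof of Theorem~\ref{spectraltheoremU}). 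Since by hypothesis $\mc{Q}$ and $\mc{Q}'$ share the \emph{same} qPVM $Q$, the same recipe applied to $\mc{Q}'$ produces $\mc{U}':=(U,\LL')$, again with $Z_T=\int_{\C^+_\imath}\mi{id}\diff\mc{U}'$.

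The first main step is then to apply the uniqueness part of Theorem~\ref{spectraltheorem}(a) to the bounded normal operator $Z_T$ and to the two bounded iqPVMs $\mc{U}=(U,\LL)$ and $\mc{U}'=(U,\LL')$: since both represent $Z_T$ through the bounded spectral formula, we obtain $L_q(u)=L'_q(u)$ for every $u\in\Ker(Z_T-Z_T^*)^\perp$ and every $q\in\C_\imath$. The second main step is to identify $\Ker(Z_T-Z_T^*)^\perp$ with $\Ker(\,\overline{T-T^*}\,)^\perp$. By Theorem~\ref{spectraltheorem}(c) applied to $Z_T$ and $\mc{U}$ we have $\Ker(Z_T-Z_T^*)=U(\R)(\sH)$; since $\phi(\alpha+\imath\beta)=(\alpha+\imath\beta)/\sqrt{1+\alpha^2+\beta^2}$ fixes the real line set-wise and carries $\C^+_\imath\setminus\R$ into $\C^+_\imath\setminus\R$, one has $\phi^{-1}(\R)=\R$, hence $U(\R)=Q(\phi^{-1}(\R))=Q(\R)$; and Theorem~\ref{spectraltheoremU}(c) gives $Q(\R)(\sH)=\Ker(\,\overline{T-T^*}\,)$. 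Combining, $\Ker(Z_T-Z_T^*)=\Ker(\,\overline{T-T^*}\,)$, and therefore $L_q(u)=L'_q(u)$ for all $u\in\Ker(\,\overline{T-T^*}\,)^\perp$ and all $q\in\C_\imath$, as claimed.

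I do not expect a serious obstacle here: the entire content is the bookkeeping of transporting left scalar multiplications along $\phi$ together with the fact --- already recorded in Step~II of the proof of Theorem~\ref{spectraltheoremU} --- that $\mc{U}$ and $\mc{U}'$ are genuine bounded iqPVMs representing $Z_T$. The only point requiring an explicit (but elementary) verification is the identity $\phi^{-1}(\R)=\R$, and this is immediate from the displayed formula for $\phi$. Everything else is a direct appeal to Theorems~\ref{spectraltheorem} and \ref{spectraltheoremU} and to the results of \cite{GhMoPe} already used in this section.
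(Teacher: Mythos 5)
Your argument is correct, but it is not the route the paper takes for this proposition. You reduce everything to the bounded uniqueness clause: you pass to $Z_T=T\sqrt{C_T}$, push $\mc{Q}$ and $\mc{Q}'$ forward along $\phi(z)=z\sqrt{(1+\overline{z}z)^{-1}}$ to the bounded iqPVMs $\mc{U}=(U,\LL)$ and $\mc{U}'=(U,\LL')$ representing $Z_T$ (all of which is indeed already on record in Step~II of the proof of Theorem~\ref{spectraltheoremU}), invoke Theorem~\ref{spectraltheorem}(a) to get $L_q=L'_q$ on $\Ker(Z_T-Z_T^*)^\perp$ for $q\in\C_\imath$, and then match kernels via Theorem~\ref{spectraltheorem}(c), the elementary identity $\phi^{-1}(\R)=\R$, and Theorem~\ref{spectraltheoremU}(c), so that $\Ker(Z_T-Z_T^*)=U(\R)(\sH)=Q(\R)(\sH)=\Ker(\,\overline{T-T^*}\,)$. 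The paper instead argues directly in the unbounded functional calculus: it sets $B:=\big|\,\overline{(T-T^*)\frac{1}{2}}\,\big|$, uses Proposition~\ref{TEO1unb-improved} and Lemma~\ref{lem:J}(b) to write $\int_{\C^+_\imath}\beta\diff\mc{Q}=B=\int_{\C^+_\imath}\beta\diff\mc{Q}'$ and $L_\imath B=\overline{(T-T^*)}\,\tfrac{1}{2}=L'_\imath B$, concludes $L_\imath=L'_\imath$ on $\Ran(B)$ and hence on $\overline{\Ran(B)}$, and identifies $\overline{\Ran(B)}=\Ker(\,\overline{T-T^*}\,)^\perp$ via Proposition~\ref{propLu}(b) and Corollary~\ref{corollario1} (agreement for all $q\in\C_\imath$ then follows since $L_q=\1\alpha+L_\imath\beta$ for $q=\alpha+\imath\beta$). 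What each approach buys: the paper's proof stays entirely within the unbounded calculus and makes transparent exactly where the $\C_\imath$-part of $\LL$ is pinned down (on the closure of the range of the ``imaginary part'' of $T$); yours recycles the bounded uniqueness statement wholesale and is shorter given that the $Z_T$ bookkeeping is already established, but it hinges on the extra (easy) verifications that $\mc{U}$, $\mc{U}'$ are genuinely bounded iqPVMs representing $Z_T$ and that $\phi$ respects the real axis, which is precisely what makes the kernel identification go through. I see no gap in your proposal.
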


\begin{proof}

Define $B:=\big|(\,\overline{T-T^*}\,)\frac{1}{2}\big|$. Thanks to Proposition \ref{TEO1unb-improved} and Lemma \ref{lem:J}$(\mr{b})$, we have that $\int_{\C^+_\imath}\beta\diff\mc{Q}=B=\int_{\C^+_\imath}\beta\diff\mc{Q}'$ and also
\[
L_\imath B=\int_{\C^+_\imath} \imath\beta\diff\mc{Q} =(\,\overline{T-T^*}\,)\frac{1}{2}=\int_{\C^+_\imath}\imath\beta\diff\mc{Q}'=L'_\imath B.
\]
It follows that $L_\imath=L'_\imath$ on $\Ran(B)$ and hence, by continuity, on $\overline{\Ran(B)}$ too. On the other hand, by Proposition \ref{propLu}$(\mr{b})$ and Corollary \ref{corollario1}, we infer that $\overline{\Ran(B)}=\overline{\Ran\big(\big|\,\overline{T-T^*}\,\big|\big)}=\Ker\big(\big|\,\overline{T-T^*}\,\big|\big)^\perp$ and $\Ker\big(\big|\,\overline{T-T^*}\,\big|\big)=Q(\{2\beta=0\})(\sH)=Q(\{2\beta\imath=0\})(\sH)=\Ker\big(\,\overline{T-T^*}\,\big)$. The proof is complete.
\end{proof}


\subsection{iqPVMs associated with unbounded normal operators}

Our next theorem gives a complete characterization of the iqPVMs associated with an unbounded closed quaternionic normal operator via Spectral Theorem \ref{spectraltheoremU}. It is the version of Theorem \ref{propL} for the unbounded case. 

\begin{theorem}\label{thmLu}
Let $\sH$ be a quaternionic Hilbert space, let $\imath \in \bS$, let $T:D(T) \to \sH$ be a closed normal operator with dense domain and let $Q:\mscr{B}(\bC^+_\imath) \to \gB(\sH)$ be the qPVM over $\bC^+_\imath$ in $\sH$ associated with $T$ by means of Spectral Theorem \ref{spectraltheoremU}(a). Given a left scalar multiplication $\bH \ni q \stackrel{\LL}{\longmapsto}\!L_q$ of $\sH$, we have that the pair $\mc{Q}=(Q,\LL)$ is an iqPVM over $\bC^+_\imath$ in $\sH$ verifying (\ref{decspetU}) if and only if the following three conditions hold:
\begin{itemize}
 \item[$(\mr{i})$] $L_\imath T = T L_\imath$,
 \item[$(\mr{ii})$] $L_\jmath T = T^* L_\jmath$ for some $\jmath \in \bS$ with $\imath\jmath=-\jmath \, \imath$,
 \item[$(\mr{iii})$] $-L_\imath (T-T^*)\geq 0$.
\end{itemize}
\end{theorem}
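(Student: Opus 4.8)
The plan is to reduce the statement to its already-established bounded counterpart, Theorem \ref{propL}, applied to the bounded normal transform $Z_T:=T\sqrt{C_T}$ of $T$ (with $C_T:=(\1+T^*T)^{-1}\in\gB(\sH)$), exactly as in the proof of Spectral Theorem \ref{spectraltheoremU}. The forward implication is immediate: if $\mc{Q}=(Q,\LL)$ is an iqPVM over $\C^+_\imath$ in $\sH$ satisfying \eqref{decspetU}, then Theorem \ref{spectraltheoremU}(a) applied with $\mc{Q}'=\mc{Q}$ gives at once conditions $(\mr{i})$, $(\mr{ii})$ and $(\mr{iii})$. All the work is in the converse, so assume $\LL$ verifies $(\mr{i})$, $(\mr{ii})$ and $(\mr{iii})$.

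First I would transfer these conditions to $Z_T$. Using $(L_q)^*=L_{\overline q}$ and taking adjoints, $(\mr{i})$ yields $L_\imath T=TL_\imath$ and $L_\imath T^*=T^*L_\imath$, while $(\mr{ii})$ yields $L_\jmath T=T^*L_\jmath$ and $L_\jmath T^*=TL_\jmath$; hence $L_\imath$ and $L_\jmath$ commute with $T^*T$. Since $T$ is normal, $T^*T=TT^*$, so $C_T=C_{T^*}$ and, by Lemma \ref{lemC}(b)(iii), $Z_T^*=Z_{T^*}=T^*\sqrt{C_T}$, whence $Z_T-Z_T^*=(T-T^*)\sqrt{C_T}$. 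By uniqueness of the positive square root (Lemma \ref{lem:J}), $L_\imath,L_\jmath$ commute with $\sqrt{C_T}$, and therefore
\[
L_\imath Z_T=TL_\imath\sqrt{C_T}=T\sqrt{C_T}L_\imath=Z_TL_\imath,\qquad L_\jmath Z_T=T^*L_\jmath\sqrt{C_T}=T^*\sqrt{C_T}L_\jmath=Z_T^*L_\jmath,
\]
the analogues of $(\mr{i})$ and $(\mr{ii})$ for $Z_T$ (which is normal by Lemma \ref{lemC}(b)(iv)). For the analogue of $(\mr{iii})$ I would write $-L_\imath(Z_T-Z_T^*)=M\sqrt{C_T}$, where $M:=-L_\imath\overline{(T-T^*)}$; from $(\mr{iii})$ and $L_\imath(T-T^*)=(T-T^*)L_\imath$, the usual square-root argument (as in the proof of Theorem \ref{propL}) identifies $M$ with the positive self-adjoint operator $\big|\,\overline{T-T^*}\,\big|$. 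Fixing, via the existence part of Theorem \ref{spectraltheoremU}, an iqPVM $\mc{Q}^0=(Q,\LL^0)$ with $T=\int_{\C^+_\imath}\mi{id}\diff\mc{Q}^0$, both $M$ and $\sqrt{C_T}$ are real-function integrals against $\mc{Q}^0$ (Proposition \ref{TEO1unb-improved}, Lemma \ref{lem:J}(b), Theorem \ref{TEO1unb}), hence strongly commute; a truncation of $M$ by its spectral projections then shows that the product $M\sqrt{C_T}$ of two commuting positive operators — which is bounded and self-adjoint, being $-L_\imath(Z_T-Z_T^*)$ — is again positive. Thus $\LL$ satisfies all three hypotheses of Theorem \ref{propL} relative to $Z_T$.

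Applying Theorem \ref{propL} to $Z_T$ then gives that $\cP:=(P,\LL)$ is an iqPVM with $Z_T=\int_{\C^+_\imath}\mi{id}\diff\cP$, where $P$ is the unique qPVM associated with $Z_T$ by Theorem \ref{spectraltheorem}. Finally I would transfer this back along the homeomorphism $\phi(z):=z(1+\overline z z)^{-1/2}$ of $\C^+_\imath$ onto the open unit disc of $\C_\imath$ and its inverse $f$ (the map of \eqref{f}), precisely as in Steps I and II of the proof of Theorem \ref{spectraltheoremU}: uniqueness of the bounded spectral resolution forces $P(E)=Q(\phi^{-1}(E))$, so $L_q$ commutes with every $Q(F)=P(\phi(F))$ and $(Q,\LL)$ is an iqPVM; and pushing $Z_T=\int\mi{id}\diff\cP$ forward by $f$ via Theorem \ref{TEO1unb}(f) and Corollary \ref{corollario1} recovers $T=\int_{\C^+_\imath}\mi{id}\diff(Q,\LL)$, i.e.\ \eqref{decspetU}, with $Q$ the same qPVM as the one furnished by Theorem \ref{spectraltheoremU}(a). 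I expect the main obstacle to be the positivity claim $-L_\imath(Z_T-Z_T^*)\ge 0$: a product of a possibly unbounded positive operator with a bounded positive one is handled only through the strong-commutation input supplied by $\mc{Q}^0$ together with a careful spectral truncation, and one must keep track throughout of the domain identities $D(T)=D(T^*)$ and $\Ran(\sqrt{C_T})\subset D(T)$ from Proposition \ref{propLu}(b) and Lemma \ref{lemC}.
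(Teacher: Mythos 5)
Your proposal is correct and follows the paper's own strategy almost verbatim: the forward implication from Theorem \ref{spectraltheoremU}(a), the transfer of (i)--(iii) to the bounded transform $Z_T=T\sqrt{C_T}$ (including the commutation of $L_\imath,L_\jmath$ with $C_T$ and hence with $\sqrt{C_T}$, and the use of $Z_T^*=Z_{T^*}$), the application of Theorem \ref{propL} to $Z_T$, and the return to $T$ via the map $F$ of \eqref{f} together with Steps I--II of the proof of Theorem \ref{spectraltheoremU} and the uniqueness of the bounded resolution. The one place where you genuinely diverge is the verification of $-L_\imath(Z_T-Z_T^*)\ge 0$: the paper sandwiches with the fourth root, writing $-L_\imath(Z_T-Z_T^*)u=\sqrt[4]{C_T}\,\bigl(-L_\imath(T-T^*)\bigr)\sqrt[4]{C_T}\,u$ on the dense set $D_f$, so that hypothesis (iii) applies directly to the vectors $\sqrt[4]{C_T}\,u$ and positivity follows by continuity, with no unbounded square-root identification needed. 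You instead identify $M:=-L_\imath\overline{(T-T^*)}$ with $\bigl|\,\overline{T-T^*}\,\bigr|$; this works, but it is the heavier route: one must check that $L_\imath$ commutes with the closure $\overline{T-T^*}$, that $\overline{T-T^*}$ is anti self-adjoint (e.g.\ from Proposition \ref{TEO1unb-improved} and Theorem \ref{TEO1unb}(c) applied to $\mc{Q}^0$), that $M$ is self-adjoint and positive on all of $D(\overline{T-T^*})$ (positivity on $D(T)$ from (iii) plus a closure argument), and then invoke the unbounded uniqueness of the square root, Lemma \ref{lem:J}(a). Once that identification is in place, your truncation/commuting-positive-operators step is actually unnecessary: $M\sqrt{C_T}=-L_\imath(Z_T-Z_T^*)$ is everywhere defined and coincides, by Theorem \ref{TEO1unb}(a),(e), with $\int_{\C^+_\imath}2\beta\,(1+\alpha^2+\beta^2)^{-1/2}\diff\mc{Q}^0$, the integral of a bounded nonnegative real function, whose positivity is immediate from \eqref{intemediatestimatef}. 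So: no gap, same architecture, with your variant of the positivity step being correct but more laborious than the paper's $\sqrt[4]{C_T}$ trick.
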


\begin{proof} 
Point $(\mr{a})$ of Theorem \ref{spectraltheoremU} ensures that, if the pair $\mc{Q}=(Q,\LL)$ is an iqPVM verifying \eqref{decspetU}, then properties $(\mr{i})$, $(\mr{ii})$ and $(\mr{iii})$ hold.

Suppose $\LL$ satisfies $(\mr{i})$, $(\mr{ii})$ and $(\mr{iii})$. Thanks to point $(\mr{i})$, Remark 2.16(v) of~\cite{GhMoPe} and equality $(L_\imath)^*= -L_\imath$, it follows immediately that $-L_\imath T^* \subset (TL_\imath)^*=(L_\imath T)^*=-T^*L_\imath$, so that $L_\imath T^* \subset T^*L_\imath$. In order to prove that $T^*L_\imath=L_\imath T^*$, it remains to verify that $T^*L_\imath \subset L_\imath T^*$; that is, $L_\imath(D(T^*))=D(T^*L_\imath) \subset D(L_\imath T^*)=D(T^*)$. Point $(\mr{i})$ implies that $L_\imath(D(T))=D(TL_\imath)=D(L_\imath T)=D(T)$. On the other hand, by Proposition \ref{propLu}$(\mr{b})$, we have that $D(T^*)=D(T)$ and hence $L_\imath(D(T^*))=D(T^*)$. This proves that $T^*L_\imath=L_\imath T^*$. From $(\mr{ii})$, it follows that $T=-L_\jmath L_\jmath T=-L_\jmath T^*L_\jmath$ and $L_\jmath(D(T))=D(T^*L_\jmath)=D(L_\jmath T)=D(T)$. In particular, $D(TL_\jmath)=L_\jmath(D(T))=D(T)=D(L_\jmath T^*)$ and hence $TL_\jmath=-L_\jmath T^*L_\jmath L_\jmath=L_\jmath T^*$. Summing up, properties $(\mr{i})$ and $(\mr{ii})$ are equivalent to the following:
\begin{itemize}
 \item[$(\mr{i}')$] $L_\imath T = T L_\imath$ and $L_\imath T^* = T^* L_\imath$, 
 \item[$(\mr{ii}')$] $L_\jmath T = T^* L_\jmath$ and $L_\jmath T^* = T L_\jmath$ for some $\jmath \in \bS$ with $\imath\jmath=-\jmath \, \imath$.
\end{itemize}

Let $C_T \in \gB(\sH)$ be the positive operator and let $Z_T \in \gB(\sH)$ be the normal operator defined as in Lemma \ref{lemC}; that is, $C_T:=(\1+T^*T)^{-1}$ and $Z_T:=T\sqrt{C_T}$. Thanks to $(\mr{i}')$, we have that $L_\imath(\1+T^*T)=(\1+T^*T)L_\imath$ and hence $C_TL_\imath=C_TL_\imath(\1+T^*T)C_T=C_T(\1+T^*T)L_\imath C_T=L_\imath C_T$. Similarly, combining $(\mr{ii}')$ with the equality $T^*T=TT^*$, we obtain that $L_\jmath(\1+T^*T)=(\1+T^*T)L_\jmath$ and hence $C_TL_\jmath=L_\jmath C_T$. By Theorem 2.18 of \cite{GhMoPe}, it follows that $\sqrt{C_T}\,L_\imath=L_\imath\sqrt{C_T}$ and $\sqrt{C_T}\,L_\jmath=L_\jmath\sqrt{C_T}$ as well. In this way, we infer that $L_\imath Z_T=Z_TL_\imath$ and $L_\jmath Z_T=Z_{T^*}L_\jmath$. Thanks to point $(\mr{b})(\mr{iii})$ of Lemma \ref{lemC}, we know that $Z_T^*=Z_{T^*}$, thus $L_\jmath Z_T=Z_T^*L_\jmath$. We have just proved that
\begin{itemize}
 \item[$(\mr{i}'')$] $L_\imath Z_T=Z_TL_\imath$,
 \item[$(\mr{ii}'')$] $L_\jmath Z_T=Z_T^*L_\jmath$.
\end{itemize}

Let us show that
\begin{itemize}
 \item[$(\mr{iii}'')$] $-L_\imath(Z_T-Z_T^*) \geq 0$.
\end{itemize}
Denote by $\sqrt[4]{C_T}$ the square root of $\sqrt{C_T}$. Using again Theorem 2.18 of \cite{GhMoPe}, we see that $\sqrt[4]{C_T}\,L_\imath=L_\imath\sqrt[4]{C_T}$. Let $\mc{Q}'=(Q,\LL')$ be an iqPVM over $\C^+_\imath$ in $\sH$ such that $T=\int_{\C^+_\imath} z\diff\mc{Q}'(z)$. Define the measurable function $f:\C^+_\imath \to \bH$ by setting $f(z):=z(1+\overline{z}z)^{-\frac{1}{4}}$. Since $\sqrt{C_T}=\int_{\C^+_\imath}(1+\overline{z}z)^{-\frac{1}{2}}\diff\mc{Q}'(z)$ (see Step II of the proof of Theorem \ref{spectraltheoremU}), we have that  $\sqrt[4]{C_T}=\int_{\C^+_\imath}(1+\overline{z}z)^{-\frac{1}{4}}\diff\mc{Q}'(z)$. Moreover, thanks to points $(\mr{a})$, $(\mr{c})$ and $(\mr{e})$ of Theorem \ref{TEO1unb}, it holds:
\begin{itemize}
 \item $T\sqrt[4]{C_T}=\int_{\C^+_\imath} f \diff\mc{Q}'$ and $T^*\sqrt[4]{C_T}=\int_{\C^+_\imath} \overline{f} \diff\mc{Q}'$,
 \item $D\big(\sqrt[4]{C_T}(T\sqrt[4]{C_T}\,)\big)=D_f=D\big(\sqrt[4]{C_T}(T^*\sqrt[4]{C_T}\,)\big)$
\end{itemize}
and
\begin{itemize}
 \item $\sqrt[4]{C_T}(T\sqrt[4]{C_T}\,) \subset \int_{\C^+_\imath} (1+\overline{z}z)^{-\frac{1}{4}}f(z) \diff\mc{Q}'(z)=Z_T$,
 \item $\sqrt[4]{C_T}(T^*\sqrt[4]{C_T}\,) \subset \int_{\C^+_\imath} (1+\overline{z}z)^{-\frac{1}{4}}\overline{f}(z) \diff\mc{Q}'(z)=Z_T^*$.
\end{itemize}
In particular, for every $u \in D_f$, we have:
\begin{align*}
-L_\imath(Z_T-Z_T^*)u &=-L_\imath\sqrt[4]{C_T}(T\sqrt[4]{C_T}\,)u+L_\imath\sqrt[4]{C_T}(T^*\sqrt[4]{C_T}\,)u\\
&=-L_\imath\sqrt[4]{C_T}(T-T^*)\sqrt[4]{C_T}\,u=\sqrt[4]{C_T}(-L_\imath(T-T^*))\sqrt[4]{C_T}\,u
\end{align*}
and hence, by $(\mr{iii})$,
\begin{align*}
\big\langle u\big|-L_\imath(Z_T-Z_T^*)u\big\rangle &= \big\langle u\big|\sqrt[4]{C_T}(-L_\imath(T-T^*))\sqrt[4]{C_T}\,u\big\rangle\\ &=\big\langle \sqrt[4]{C_T}\,u\big|-L_\imath(T-T^*)(\sqrt[4]{C_T}\,u)\big\rangle \geq 0.
\end{align*}
By Proposition \ref{defintunb}$(\mr{a})$, $D_f$ is dense in $\sH$. In this way, $(\mr{iii}'')$ follows by continuity.
 
Let $P$ be the qPVM over $\C^+_\imath$ in $\sH$ associated with $Z_T$ by means of Spectral Theo\-rem \ref{spectraltheorem}. Thanks to points $(\mr{i}'')$, $(\mr{ii}'')$ and $(\mr{iii}'')$, Theorem \ref{propL} applies to $Z_T$. It ensures that the pair $\cP=(P,\LL)$ is an iqPVM over $\C^+_\imath$ in $\sH$ such that $Z_T=\int_{\C^+_\imath}\mi{id}\diff\cP$. Let $F:\C^+_\imath \to \C^+_\imath$ be the measurable map given by $F(z):=z(\sqrt{1-|z|^2}\,)^{-1}$ if $|z|<1$ and $F(z):=0$ if $|z| \geq 1$, and let $Q^0:\mscr{B}(\C^+_\imath) \to \gB(\sH)$ be the qPVM over $\C^+_\imath$ in $\sH$ defined by setting $Q^0(E):=P(F^{-1}(E))$. The proof of Theorem \ref{spectraltheoremU} presented above establishes that the pair $\mc{Q}^0:=(Q^0,\LL)$ is an iqPVM over $\C^+_\imath$ in $\sH$ such that $T=\int_{\C^+_\imath}\mi{id}\diff\mc{Q}^0$ (see Step I) and $Q^0=Q$ (see Step II again). It follows that $\mc{Q}=\mc{Q}^0$ satisfies \eqref{decspetU}, as desired.
\end{proof}


\section{Applications} \label{sec:applications}


\subsection{Spherical and left spectrum}

Let $T:D(T) \to \sH$ be an operator and let $\bH \ni q \stackrel{\LL}{\longmapsto} L_q$ be a left scalar multiplication of $\sH$. Let us introduce the notion of left spectrum of $T$ w.r.t.\ $\LL$ and some related concepts.

\begin{definition} \label{def:left-spectrum}
The \textit{left resolvent set of $T$ w.r.t.\ $\LL$} is  the set $\rho_\LL(T)$ of all quaternions $q$ such that 
\begin{itemize}
 \item $\Ker(T-L_q)=\{0\}$, 
 \item $\Ran(T-L_q)$ is dense in $\sH$,
 \item $(T-L_q)^{-1}:\Ran(T-L_q) \to D(T)$ is bounded.
\end{itemize}
The  \textit{left spectrum of $T$ w.r.t.\ $\LL$} is the set $\sigma_\LL(T):=\bH \setminus \rho_\LL(T)$, which is the disjoint union of the sets $\sigma_{p\LL}(T)$, $\sigma_{r\LL}(T)$ and $\sigma_{c\LL}(T)$ defined as follows:
\begin{itemize}
 \item $\sigma_{p\LL}(T)$ is the set of all $q \in \bH$ such that $\Ker(T-L_q) \neq \{0\}$. We call $\sigma_{p\LL}(T)$ \textit{left point spectrum of $T$ w.r.t.\ $\LL$}. 
 \item $\sigma_{r\LL}(T)$ is the set of all $q \in \bH$ such that $\Ker(T-L_q)=\{0\}$ and $\Ran(T-L_q)$ is not dense in $\sH$. We call $\sigma_{r\LL}(T)$ \textit{left residual spectrum of $T$ w.r.t.~$\LL$.}  
 \item $\sigma_{c\LL}(T)$ is the set of all $q \in \bH$ such that $\Ker(T-L_q)=\{0\}$, $\Ran(T-L_q)$ is dense in $\sH$ 
 and $(T-L_q)^{-1}:\Ran(T-L_q) \to D(T)$ is not bounded. We call $\sigma_{c\LL}(T)$ \textit{left continuous spectrum of $T$ w.r.t.\ $\LL$}.
\end{itemize} 
The elements of $\sigma_{p\LL}(T)$ are the \textit{left eigenvalues of $T$ w.r.t.\ $\LL$}. 
If $q \in \sigma_{p\LL}(T)$, the subspace $\Ker(T-L_q)$ of $\sH$ is the \textit{left eigenspace of $T$ w.r.t.\ $\LL$ associated with~$q$} and the non-zero vectors in $\Ker(T-L_q)$ are the \textit{left eigenvectors of $T$ w.r.t.\ $\LL$ associated with~$q$}. \bs
\end{definition}

We remind the reader that, by Proposition 4.5 of \cite{GhMoPe}, the spherical point spectrum $\sigma_{\mi{pS}}(T)=\{q \in \bH \,|\, \Ker(\Delta_q(T)) \neq \{0\} \}$ of $T$ coincides with the right point spectrum of $T$; that is, $\sigma_{\mi{pS}}(T)$ is equal to the set of all (right) eigenvalues of $T$:
\[
\sigma_{pS}(T)=\big\{q \in \bH \, \big| \, Tu=uq \text{ for some $u \in D(T) \setminus \{0\}$}\big\}.
\]

As an application of Spectral Theorem \ref{spectraltheoremU}, we establish the exact relation existing between the spherical spectrum of an unbounded normal operator $T$ and the left spectrum of $T$ w.r.t.\ any left scalar multiplication associated with $T$ itself via Theorem \ref{thmLu}. The corresponding result reads as follows.

\begin{theorem} \label{thm:LL-S}
Let $\sH$ be a quaternionic Hilbert space, let $T:D(T) \to \sH$ be a closed normal operator with dense domain, let $\imath \in \bS$ and let $\mc{Q}=(Q,\LL)$ be an iqPVM over $\C^+_\imath$ in $\sH$ such that $T=\int_{\C^+_\imath}\mi{id}\diff\mc{Q}$. Define $L_q:=\LL(q)$ for every $q \in \bH$. Then it holds:
\beq \label{eq:rho-LL}
\rho_\LL(T)=\{q \in \bH \, \big| \, \text{$T-L_q:D(T) \to \sH$ is bijective, $(T-L_q)^{-1}$ is bounded}\,\},
\eeq
\beq \label{eq:bH-resolvent}
(T-L_q)^{-1}=\int_{\C^+_\imath}(z-q)^{-1}\diff\mc{Q}(z) \;\; \text{ if $q \in \rho_\LL(T)$}
\eeq
and
\begin{align} 
\sigma_\LL(T)&=\ssp(T) \cap \C^+_\imath, \label{eq:sigma-LL}\\
\sigma_{p\LL}(T)&=\sigma_{pS}(T) \cap \C^+_\imath, \label{eq:sigma-pLL}\\
\sigma_{r\LL}(T)&=\sigma_{rS}(T)=\emptyset, \label{eq:sigma-rLL}\\
\sigma_{c\LL}(T)&=\sigma_{cS}(T) \cap \C^+_\imath. \label{eq:sigma-cLL}
\end{align}
In particular, if $\psi:\C_\imath \to \C$ is the natural complex linear isomorphism $\alpha+\imath\beta \mapsto \alpha+i\beta$, then $\ssp(T)$ coincides with the circularization of $\psi(\sigma_\LL(T))$; namely, $\ssp(T)=\OO_{\psi(\sigma_\LL(T))}$. Moreover, $\sigma_{pS}(T)=\OO_{\psi(\sigma_{p\LL}(T))}$ and $\sigma_{cS}(T)=\OO_{\psi(\sigma_{c\LL}(T))}$.

The following additional facts hold regarding the left point spectrum and the left continuous spectrum. Define $\sH_q:=Q(\{q\})(\sH)$ for every $q \in \C^+_	\imath$. Then we have:
\begin{itemize}
 \item[$(\mr{a})$] $\sH_q=\Ker(T-L_q)=\Ker(\Delta_q(T))$ if $q \in \C^+_\imath$ and $\Ker(T-L_q)=\{0\}$ if $q \in \bH \setminus \C^+_\imath$. 
 \item[$(\mr{b})$] Consider the decomposition $\sH= \sH^{L_\imath\imath}_+ \oplus \sH^{L_\imath\imath}_-$ and write each vector $u$ in $\sH$ as follows $u=u_++u_-$ with $u_\pm \in \sH^{L_\imath\imath}_\pm$. If $q \in \C^+_\imath$, then
\[
\sH_q=\{u \in D(T) \, | \, Tu_+=u_+q, \; Tu_-=u_-\overline{q}\}.
\]
 \item[$(\mr{c})$] For every $q \in \sigma_{p\LL}(T)$, the (right) eigenspace of $T$ associated with $q$ is contained in the left eigenspace of $T$ w.r.t.\ $\LL$ associated with $q$; that is,
\beq \label{eq:incl} 
\{u \in D(T) \, | \, Tu=uq\} \subset \Ker(T-L_q).
\eeq
In particular, if $u$ is a (right) eigenvalue of $T$ associated with $q \in \C^+_\imath$, then $uq=L_qu$. The inclusion \eqref{eq:incl} is an equality if and only if $q \in \R$.
  \item[$(\mr{d})$] For every $q \in \sigma_{c\LL}(T)$ and for every positive real number $\epsilon$, there exists a vector $u_\epsilon \in \sH$ such that $\|u_\epsilon\|=1$ and $\|Tu_\epsilon-L_qu_\epsilon\|<\epsilon$. 
\end{itemize} 
\end{theorem}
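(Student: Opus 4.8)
The plan is to deduce everything from the functional-calculus representation $T=\int_{\C^+_\imath}\mi{id}\diff\mc{Q}$ of Spectral Theorem~\ref{spectraltheoremU}, the calculus rules for unbounded operatorial integrals in Theorem~\ref{TEO1unb}, Corollary~\ref{corollario1} and Proposition~\ref{propLu}. The starting observation is that $L_q=\int_{\C^+_\imath}c_q\diff\mc{Q}$, where $c_q$ is the function on $\C^+_\imath$ constantly equal to $q$ (Theorem~\ref{TEO1}(vi) transported to $\supp(Q)$); since $c_q$ is $P$-essentially bounded, Theorem~\ref{TEO1unb}(b),(e) gives $T-L_q=\int_{\C^+_\imath}g_q\diff\mc{Q}$ with $g_q(z):=z-q$ and $D(T-L_q)=D(T)$, while Theorem~\ref{TEO1unb}(c),(d) shows that $T-L_q$ is closed and normal. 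Then Corollary~\ref{corollario1} yields $\Ker(T-L_q)=Q(g_q^{-1}(0))(\sH)$; since $g_q^{-1}(0)=\{q\}$ when $q\in\C^+_\imath$ and $\emptyset$ otherwise, this equals $\sH_q$, resp.\ $\{0\}$. Applying the same reasoning to $\Delta_q(T)=\int_{\C^+_\imath}\big(z^2-z(2\re q)+|q|^2\big)\diff\mc{Q}(z)$ (Proposition~\ref{propLu}(c)) gives $\Ker(\Delta_q(T))=\sH_q$ for $q\in\C^+_\imath$, because the $\C_\imath$-valued polynomial $z^2-z(2\re q)+|q|^2$ vanishes on $\C^+_\imath$ exactly at $q$. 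This settles point~(a).

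Next I would establish the resolvent-set identity~\eqref{eq:rho-LL}: if $q\in\rho_\LL(T)$ then $(T-L_q)^{-1}$ is bounded on the dense subspace $\Ran(T-L_q)$, and since $T-L_q$ is closed this forces $\Ran(T-L_q)$ to be closed, hence all of $\sH$ — this is precisely the closed-range argument already used for $\Delta_q(T)$ in Step~III of the proof of Theorem~\ref{spectraltheoremU}, so it is the only point requiring real care, but it presents no genuinely new difficulty. By Corollary~\ref{corollario1}, $T-L_q$ is bijective with $(T-L_q)^{-1}=\int_{\C^+_\imath}\frac1{g_q}\diff\mc{Q}$ exactly when $Q(g_q^{-1}(0))=0$, and by Corollary~\ref{cor:f} this inverse is bounded iff $\frac1{g_q}$ is $Q$-essentially bounded, i.e.\ iff $Q$ vanishes on some ball around $q$, i.e.\ iff $q\notin\supp(Q)$. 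Combined with $\supp(Q)=\ssp(T)\cap\C^+_\imath\subset\C^+_\imath$ (Theorem~\ref{spectraltheoremU}(b)) this gives \eqref{eq:bH-resolvent} and \eqref{eq:sigma-LL}. Formula~\eqref{eq:sigma-pLL} follows from $\Ker(T-L_q)=\sH_q=Q(\{q\})(\sH)$ and Theorem~\ref{spectraltheoremU}(d)(i), using $\bS_q\cap\C^+_\imath=\{q\}$ for $q\in\C^+_\imath$; \eqref{eq:sigma-rLL} holds because $T-L_q$ is closed normal, so Proposition~\ref{propLu}(b) gives $\Ker(T-L_q)=\Ran(T-L_q)^\perp$, whence a trivial kernel forces a dense range; and \eqref{eq:sigma-cLL} is the bookkeeping $\sigma_{c\LL}=\sigma_\LL\setminus(\sigma_{p\LL}\cup\sigma_{r\LL})$ matched against $\sigma_{cS}\cap\C^+_\imath=(\ssp(T)\setminus\sigma_{pS})\cap\C^+_\imath$. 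The circularization statements are then immediate since $\ssp(T)$, $\sigma_{pS}(T)$ and $\sigma_{cS}(T)$ are circular and any circular set is the circularization of the $\psi$-image of its slice with $\C^+_\imath$.

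Finally, the supplementary facts. Point~(b) follows from~(a) and the description of $\sH^{L_\imath\imath}_\pm$: writing $u=u_++u_-$, using that $L_\imath$ commutes with $T$ (Theorem~\ref{spectraltheoremU}(a)(i)) so $Tu_\pm\in\sH^{L_\imath\imath}_\pm$, and noting $L_qu_+=u_+q$, $L_qu_-=u_-\overline{q}$ for $q\in\C_\imath$, the equation $Tu=L_qu$ splits into $Tu_+=u_+q$ and $Tu_-=u_-\overline{q}$. For~(c): if $Tu=uq$ then $u\in D(T^2)$ (domains are right submodules), so $\Delta_q(T)u=u\big(q^2-2\re(q)q+|q|^2\big)=0$, hence $u\in\Ker(\Delta_q(T))=\sH_q$ and $u=Q(\{q\})u$; consequently $L_qu=L_qQ(\{q\})u=\int_{\C^+_\imath}q\,\Chi_{\{q\}}\diff\mc{Q}\,u=\int_{\C^+_\imath}\mi{id}\,\Chi_{\{q\}}\diff\mc{Q}\,u=TQ(\{q\})u=Tu=uq$, which gives $u\in\Ker(T-L_q)$ and $uq=L_qu$. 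The inclusion is an equality for $q\in\R$ (there $uq=L_qu$ for every $u$), and strict for $q\notin\R$, since for any $0\neq v\in\sH_q$ one has $v\jmath\in\sH_q$ ($\Delta_q(T)$ being right linear) whereas $T(v\jmath)=(Tv)\jmath=(vq)\jmath=v\jmath\,\overline{q}\neq v\jmath\,q$. Point~(d): for $q\in\sigma_{c\LL}(T)\subset\supp(Q)$ and $\epsilon>0$, a unit vector $u_\epsilon\in Q\big(B(q,\epsilon/2)\cap\C^+_\imath\big)(\sH)$ (nonzero since $q\in\supp(Q)$) lies in $D(T)$ and satisfies $\|Tu_\epsilon-L_qu_\epsilon\|^2=\int_{\C^+_\imath}|z-q|^2\diff\mu^{\sss(Q)}_{u_\epsilon}\le(\epsilon/2)^2<\epsilon^2$.
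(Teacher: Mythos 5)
Your proposal is correct in substance and follows the same overall strategy as the paper: represent $T-L_q=\int_{\C^+_\imath}(z-q)\diff\mc{Q}(z)$, read off kernels from Corollary \ref{corollario1}, use the closed-range argument for \eqref{eq:rho-LL}, and split along $\sH^{L_\imath\imath}_\pm$ for points (b) and (c). Where you deviate, you deviate harmlessly: you get \eqref{eq:sigma-LL} and \eqref{eq:sigma-pLL} by identifying $\sigma_\LL(T)$ with $\supp(Q)$ (essential boundedness of $(z-q)^{-1}$ off $\supp(Q)$) and then quoting Theorem \ref{spectraltheoremU}(b) and (d)(i), whereas the paper re-derives these relations by comparing $\|z-q\|^{\sss(Q)}_\infty$ with the essential sup of the quadratic $z^2-z(2\re q)+|q|^2$; and you prove (d) directly with a unit vector in $Q\big(B(q,\epsilon/2)\cap\C^+_\imath\big)(\sH)$, which is, if anything, more self-contained than the paper's appeal to Theorem \ref{spectraltheoremU}(d)(iii). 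For completeness, the part $\sigma_{rS}(T)=\emptyset$ of \eqref{eq:sigma-rLL} should be quoted from Theorem \ref{spectraltheoremU}(d)(ii); your normality argument only covers $\sigma_{r\LL}(T)$.

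One step is wrong as written: in the strictness part of (c) you take an arbitrary $0\neq v\in\sH_q$ and compute $T(v\jmath)=(Tv)\jmath=(vq)\jmath$, but a general element of $\sH_q=\Ker(T-L_q)$ satisfies $Tv=L_qv$, not $Tv=vq$ (by your own point (b) it splits as $Tv_+=v_+q$, $Tv_-=v_-\overline q$). The fix is immediate and uses only what you already have: since $q\in\sigma_{p\LL}(T)=\sigma_{pS}(T)\cap\C^+_\imath$, pick a nonzero right eigenvector $v$, i.e.\ $Tv=vq$; then $v\in\Ker(\Delta_q(T))=\sH_q$, right $\bH$-linearity of $\sH_q$ gives $v\jmath\in\Ker(T-L_q)$, and $T(v\jmath)=v\jmath\,\overline q\neq v\jmath\,q$ when $q\notin\R$, so $v\jmath$ lies in $\Ker(T-L_q)$ but not in the right eigenspace, making \eqref{eq:incl} strict. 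With that correction (and the trivial equality case $q\in\R$) your argument for (c), and hence the whole proof, is complete.
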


\begin{proof}
We begin by proving \eqref{eq:rho-LL}. Evidently, it suffices to show that, if $q \in \rho_\LL(T)$, then $\Ran(T-L_q)$ is closed in $\sH$. To do this, we adapt to the present situation the argument used in Step III of the proof of Theorem \ref{spectraltheoremU}. By point $(\mr{a})(\mr{vi})$ of Theorem \ref{TEO1} and by points $(\mr{b})$ and $(\mr{c})$ of Theorem \ref{TEO1unb}, the operator $T-L_q$ is closed and 
\beq \label{eq:int-LL}
T-L_q=\int_{\C^+_\imath}(z-q)\diff\mc{Q}(z) \; \; \text{ for every $q \in \bH$}.
\eeq
Suppose $q \in \rho_\LL(T)$. Let $\{y_n\}_{n \in \N}$ be a sequence in $\Ran(T-L_q)$ converging to some point $y$ of $\sH$ and let $x_n:=(T-L_q)^{-1}(y_n)$ for every $n \in \N$. Since $(T-L_q)^{-1}$ is bounded, the sequence $\{x_n\}_{n \in \N} \subset D(T)$ is a Cauchy sequence and hence it converges to some $x \in \sH$. The closedness of $T-L_q$ implies that $x \in D(T)$ and $y=(T-L_q)x \in \Ran(T-L_q)$, thus $\Ran(T-L_q)$ is closed in $\sH$ and \eqref{eq:rho-LL} is proved.

For every $q \in \bH$, define the measurable functions $f_q:\C^+_\imath \to \bH$ and $g_q:\C^+_\imath \to \bH$
by setting $f_q(z):=z-q$ and $g_q(z):=z^2-z(q+\overline{q})+|q|^2$. Observe that $(f_q)^{-1}(0)=\{q\} \cap \C^+_\imath$,  $(g_q)^{-1}(0)=\{q,\overline{q}\} \cap \C^+_\imath$ for every $q \in \bH$ and $g_q=f_qf_{\overline{q}}$ if $q \in \C_\imath$.

Combining \eqref{eq:rho-LL}, \eqref{eq:int-LL} and Corollaries \ref{cor:f} and \ref{corollario1}, we infer at once that   
\beq \label{eq:LL-LL}
\rho_\LL(T)=\{q \in \bH\, | \, Q(\{q\} \cap \C^+_\imath)=0, \|f_q\|^{\sss(Q)}_\infty<+\infty\}.
\eeq
Similarly, points $(\mr{c})$ and $(\mr{d})$ of Proposition \ref{propLu}, and Corollaries \ref{cor:f} and \ref{corollario1} imply that   
\beq \label{eq:sp-sp}
\srho(T)=\{q \in \bH\, | \, Q(\{q,\overline{q}\} \cap \C^+_\imath)=0, \|g_q\|^{\sss(Q)}_\infty<+\infty\}.
\eeq

Let us prove \eqref{eq:sigma-LL}. Thanks to \eqref{eq:LL-LL}, it is evident that $\bH \setminus \C^+_\imath \subset \rho_\LL(T)$. In other words, we have that $\sigma_\LL(T) \subset \C^+_\imath$. If $q \in \rho_\LL(T) \cap \C^+_\imath$, then $Q(\{q\})=0$ and $M:=\|f_q\|^{\sss(Q)}_\infty<+\infty$. Observe that $\{q,\overline{q}\} \cap \C^+_\imath=\{q\}$. Since $g_q=f_qf_{\overline{q}}=f_q(f_q+2\mr{Im}(q))$, we infer that $\|g_q\|^{\sss(Q)}_\infty \leq M(M+2|\mr{Im}(q)|)<+\infty$ and hence $q \in \srho(T)$ by \eqref{eq:sp-sp}.

Suppose now that $q \in \srho(T) \cap \C^+_\imath$. Using again \eqref{eq:sp-sp}, we know that $Q(\{q\})=Q(\{q,\overline{q}\} \cap \C^+_\imath)=0$ and $N:=\|g_q\|^{\sss(Q)}_\infty<+\infty$. If $q \not\in \R$, then $f_q=g_q(f_{\overline{q}})^{-1}$ and $\inf_{z \in \C^+_\imath}|f_{\overline{q}}(z)|=|\mr{Im}(q)|>0$. It follows that $\|f_q\|^{\sss(Q)}_\infty \leq N|\mr{Im}(q)|^{-1}<+\infty$. If $q \in \R$, then $g_q=f_q^2$ and hence $\|f_q\|^{\sss(Q)}_\infty=\sqrt{N}<+\infty$. In both cases, \eqref{eq:LL-LL} implies that $q \in \rho_\LL(T)$. We have just proved \eqref{eq:sigma-LL}.

Let us show \eqref{eq:sigma-rLL}. Suppose $\sigma_{r\LL}(T) \neq \emptyset$ and consider $q \in \sigma_{r\LL}(T) \subset \C^+_\imath$. Since $T$ commutes with $L_q$ and $(L_q)^*=L_{\overline{q}}$ (see Theorem \ref{spectraltheoremU}$(\mr{a})(\mr{i})$), we have that the operator $T-L_q$ is normal. Then, thanks to  Proposition \ref{propLu}, we infer that $\sH=\Ker(T-L_q)^\perp=\overline{\Ran(T-L_q)}$, which is a contradiction. This shows that $\sigma_{r\LL}(T)$ is empty. Recall that Theorem 4.8$(\mr{a})(\mr{ii})$ of \cite{GhMoPe} ensures that $\sigma_{rS}(T)$ is empty as well. Equality \eqref{eq:sigma-rLL} is proved. Additionally, if $q \in \C^+_\imath$, combining \eqref{eq:int-LL} with Corollary \ref{corollario1}, we obtain:
$\Ker(T-L_q)=Q((f_q)^{-1}(0))(\sH)=Q(\{q\})(\sH)=\sH_q$ and $\Ker(\Delta_q(T))=Q((g_q)^{-1}(0))(\sH)=Q(\{q\})(\sH)=\sH_q$. On the other hand, if $q \in \bH \setminus \C^+_\imath$, then $\Ker(T-L_q)=Q((f_q)^{-1}(0))(\sH)=Q(\emptyset)(\sH)=\{0\}$. This proves point $(\mr{a})$.

Let us now focus on point $(\mr{b})$ and on \eqref{eq:sigma-pLL}. Fix $q \in \C_\imath$. Let $\alpha,\beta \in \R$ such that $q=\alpha+\imath\beta$. Since $L_\imath T=TL_\imath$ (see Theorem \ref{spectraltheoremU}$(\mr{a})(\mr{i})$), $q\imath=\imath q$ and $\overline{q}\imath=\imath\overline{q}$, if $u \in D(T)$, then $u_\pm \in D(T)$, $Tu_+-u_+q \in \sH^{L_\imath\imath}_+$ and $Tu_--u_-\overline{q} \in \sH^{L_\imath\imath}_-$. Proceeding as in Step V of the proof of Theorem \ref{spectraltheoremU},  we obtain:
\begin{align*}
(T-L_q)u&=Tu_++Tu_--u_+\alpha-L_\imath u_+\beta-u_-\alpha-L_\imath u_-\beta\\
&=Tu_++Tu_--u_+\alpha-u_+\imath\beta-u_-\alpha+u_-\imath\beta\\
&=(Tu_+-u_+q)+(Tu_--u_-\overline{q}).
\end{align*}
It follows that
\beq \label{eq:Tq}
\Ker(T-L_q)=\{u \in D(T) \, | \, Tu_+=u_+q, \; Tu_-=u_-\overline{q}\}.
\eeq
In particular, when $\beta \geq 0$, we infer point $(\mr{b})$.

Choose $\jmath \in \bS$ in such a way that $\imath\jmath=-\jmath\,\imath$ and suppose $q \in \sigma_{p\LL}(T)$. Let $u \in D(T) \setminus \{0\}$ be such that $Tu=L_qu$. By \eqref{eq:Tq}, we have that $Tu_+=u_+q$, $Tu_-=u_-\overline{q}$ and also $Tu_-\jmath=u_-\overline{q}\jmath=u_-\jmath q$. If $u_+ \neq 0$, then $q \in \sigma_{pS}(T)$. Similarly, if $u_- \neq 0$, then $u_-\jmath \neq 0$ and hence $q \in \sigma_{pS}(T)$ again. This proves that  $\sigma_{p\LL}(T) \subset \sigma_{pS}(T) \cap \C^+_\imath$.

Let us prove the converse inclusion. Suppose $q \in \sigma_{pS}(T) \cap \C^+_\imath$. Let $v \in D(T) \setminus \{0\}$ be such that $Tv=vq$. Since $Tv_+-v_+q \in \sH^{L_\imath\imath}_+$ and $Tv_--v_-q \in \sH^{L_\imath\imath}_-$, we have that $Tv_+=v_+q$ and $Tv_-=v_-q$. If $v_+ \neq 0$ and $u:=v_+ \in \sH$, then $Tu_+=Tv_+=v_+q=u_+q$ and $Tu_-=T0=0=u_-\overline{q}$. In this way, \eqref{eq:Tq} implies that $q \in \sigma_{p\LL}(T)$. Suppose that $v_+=0$ and $v_- \neq 0$. Define $u:=v_-\jmath \in \sH^{L_\imath\imath}_+$. Since $u_+=v_-\jmath$ and $u_-=0$, it holds $Tu_+=Tv_-\jmath=v_-q\jmath=v_-\jmath\overline{q}=u_+\overline{q}$ and $Tu_-=0=u_-q$. Using again \eqref{eq:Tq}, we have that $\overline{q} \in \sigma_{p\LL}(T)$. On the other hand, in this case, it holds that $\overline{q}=q$, being $\sigma_{p\LL}(T) \subset \sigma_\LL(T) \subset \C^+_\imath$. Equality \eqref{eq:sigma-pLL} is proved.

Equality \eqref{eq:sigma-cLL} is now an immediate consequence of \eqref{eq:sigma-LL}, \eqref{eq:sigma-pLL} and \eqref{eq:sigma-rLL}, because $\sigma_{c\LL}(T)=\sigma_\LL(T) \setminus \sigma_{p\LL}(T)$ and $\sigma_{cS}(T)=\ssp(T) \setminus \sigma_{pS}(T)$.

Let us prove $(\mr{c})$. Let $q \in \sigma_{p\LL}(T)=\sigma_{pS}(T) \cap \C^+_\imath$ and let $v \in \sH \setminus \{0\}$ such that $Tv=vq$. Proceeding as in the proof of Proposition 4.5 of \cite{GhMoPe}, we obtain that $\Delta_q(T)v=T(Tv-vq)-(Tv-vq)\overline{q}=0$ and hence $v \in \Ker(\Delta_q(T))=\Ker(T-L_q)$. In particular, it holds $L_qv=Tv=vq$. Furthermore, the set $\{u \in D(T) \, | \, Tu=uq\}$ is a (right $\bH$-linear) subspace of $\sH$ only if $q$ belongs to the center of $\bH$; that is, if $q \in \R$. On the other hand, if $q \in \R$, then $L_qu=uq$ for every $u \in \sH$ and hence the equality holds in \eqref{eq:incl}. 

Finally, point $(\mr{d})$ follows immediately from preceding point $(\mr{c})$ and from Theorem \ref{spectraltheoremU}$(\mr{d})(\mr{iii})$.
\end{proof}

\begin{example}
Consider the finite dimensional quaternionic Hilbert space $\sH=\bH \oplus \bH$, the unitary operator on $\sH$ defined by the matrix
\[
T=
\begin{bmatrix}
0 & i \\
j & 0
\end{bmatrix}
\]
and define $\imath:=i$ as in Example \ref{MEx}(2). Recall that $T$ has spherical spectrum 
\[
\ssp(T)=\sigma_{pS}(T)=\left\{q=q_0+q_1i+q_2j+q_3k \in \bH \, \left| \, q_0^2=q_1^2+q_2^2+q_3^2=\frac12\right.\right\}.
\]
In particular, $\ssp(T) \cap \bC_i^+=\{\lambda_1:=\frac1{\sqrt2}+\frac1{\sqrt2}i,\lambda_2:=-\frac1{\sqrt2}+\frac1{\sqrt2}i\}$.
Moreover, we can associate to $T$ an iqPVM $(P,\LL)$ over $\C^+_i$ in $\sH$, with left scalar multiplication $\bH \ni q=q_0+q_1i+q_2j+q_3k \mapsto \LL(q)=L_q$ defined by
\[
L_q=
\begin{bmatrix}
q_0-q_2j & \frac{-q_3+q_1i+q_1j-q_3k}{\sqrt2} \\
\frac{q_3+q_1i+q_1j-q_3k}{\sqrt2} & q_0+q_2i 
\end{bmatrix},
\]
and with the qPVM $P$ given by 
\[
P(E):=\sum_{\mu=1}^2 \epsilon_\mu P_\mu \;\; \text{ for every $E \in \mscr{B}(\bC^+_\imath)$}, 
\]
where $\epsilon_\mu=1$ if $\lambda_\mu\in E$ and $\epsilon_\mu=0$ otherwise, and 
\[
P_1=
\frac12
\begin{bmatrix}
1 & \frac{i-j}{\sqrt2} \\
\frac{j-i}{\sqrt2} & 1
\end{bmatrix},
\quad
P_2=
\frac12
\begin{bmatrix}
1 & \frac{j-i}{\sqrt2} \\
\frac{i-j}{\sqrt2} & 1
\end{bmatrix}.
\]
The spectral theorem asserts that $T=L_{\lambda_1}P_1+L_{\lambda_2}P_2$.

A quaternionic matrix is invertible if and only if the determinant of one of its complex representations does not vanish (see Section 5.9 of \cite{Rodman}). The left eigenvalues of $T$ w.r.t.\ $\LL$ can then be found by computing the determinant of a representation of $T-L_q$ in $M_{4,4}(\C)$.  One finds that 
\begin{align*}
{\det}_{\C}(T-L_q)=&
\left(\left(q_0^2 - \frac12\right)+\left(q_1-\frac1{\sqrt2}\right)^2\right)^2 + \left(\sqrt2 q_1-1\right)^2 \\
&+\left(1+2q_0^2+\left(\sqrt2 q_1-1\right)^2+q_2^2+q_3^2\right) \left(q_2^2+q_3^2\right).
\end{align*}
This implies that
\[
\sigma_{p\LL}(T)=\left\{\lambda_1,\lambda_2\right\}=\sigma_{pS}(T) \cap \C^+_i,
\]
as predicted by Theorem \ref{thm:LL-S}. If $\mu \in \{1,2\}$, then $\sH_{\lambda_\mu}=P(\{\lambda_\mu\})(\sH)$ coincides with the subspace of $\sH$ generated by the columns of $P_\mu$ or, equivalently, by one of the two columns of $P_\mu$. Since $TP_1=L_{\lambda_1}P_1$ and $TP_2=L_{\lambda_2}P_2$, it follows easily that
\[
\sH_q=\Ker(T-L_q)\text{\quad for every }q \in \C^+_i. 
\]

As an illustration of points (a) and (c) of the statement of Theorem~\ref{thm:LL-S}, consider the eigenvalue $\lambda_1=\frac1{\sqrt2}+\frac1{\sqrt2}i\in\sigma_{p\LL}(T)$. An easy computation shows that the right eigenspace $\{u \in \sH \, | \, Tu=u\lambda_1\}$ has real dimension two. It is generated over $\R$ by the vectors
\[u_1=\left(\frac{i+k}2,\frac{1+i+j-k}{2\sqrt2}\right),
\quad u'_1=\left(\frac{1-j}2,\frac{1-i+j+k}{2\sqrt2}\right).\]
On the other hand, the kernel of the matrix
\[\Delta_{\lambda_1}(T)=T^2-\sqrt2 T+\1=
\begin{bmatrix}
1+k & -\sqrt2 i \\
-\sqrt2 j & 1-k
\end{bmatrix}
\]
has real dimension four, with independent generators $\{u_1,u_1',u_1j,u_1'j\}$. Since $u_1,u_1'$ belong to $\sH_{\lambda_1}=\Ker(T-L_{\lambda_1})$, it holds that $\Ker(\Delta_{\lambda_1}(T))\subset\sH_{\lambda_1}$. But it holds also the reverse inclusion: if $Tu=L_{\lambda_1}u$, then $T^2u=TL_{\lambda_1}u=L_{\lambda_1}Tu=\lambda_1^2u$ and then $\Delta_{\lambda_1}(T)u=(\lambda_1^2-\sqrt2\lambda_1+1)u=0$. Therefore $\Ker(\Delta_{\lambda_1}(T))=\sH_{\lambda_1}$.
 \;\text{ \bs}
\end{example}


\subsection{Slice-type decomposition of unbounded normal operators}

The aim of this subsection is to prove an unbounded version of Theorem \ref{thm:first-5.9}. Indeed, unbounded normal operators admit a slice-type decomposition as described in our next, and concluding, result.

\begin{theorem} \label{thm:J-unb}
Let $\sH$ be a quaternionic Hilbert space and let $T:D(T) \to \sH$ be a closed normal operator with dense domain. Then there exist three operators $A:D(A) \to \sH$, $B:D(B) \to \sH$ and $J \in \gB(\sH)$ with the following properties:
\begin{itemize}
 \item[$(\mr{i})$] $T=A+JB$; in particular, we have that $D(T)=D(A) \cap D(B)$.
 \item[$(\mr{ii})$] $A$ is self-adjoint and $B$ is positive and self-adjoint.
 \item[$(\mr{iii})$] $J$ is anti self-adjoint and unitary.
 \item[$(\mr{iv})$] $BJ=JB$; in particular, we have that $J(D(B))=D(B)$.
\end{itemize}
and also
\begin{itemize}
 \item[$(\mr{v})$] $\overline{AB}=\overline{BA}$ and $AJ=JA$; in particular, $J(D(A))=D(A)$.
\end{itemize}
The operators $A$ and $B$ are uniquely determined by $T$ by means of the equalities
\[
\textstyle
\text{$A=\overline{(T+T^*)\frac{1}{2}}\;\;$ and $\;\;B=\left|\,\overline{(T-T^*)\frac{1}{2}}\,\right|$,}
\]
and the operator $J$ is uniquely determined by $T$ on $\mr{Ker}(\,\overline{T-T^*}\,)^\perp$. More precisely, if $A':D(A') \to \sH$, $B':D(B') \to \sH$ and $J' \in \gB(\sH)$ are operators satisfying conditions (i)-(iv) with $A'$, $B'$ and $J'$ in place of $A$, $B$ and $J$ respectively, then $A'=A$, $B'=B$ and $J'u=Ju$ for every $u \in \mr{Ker}(\,\overline{T-T^*}\,)^\perp$.
\end{theorem}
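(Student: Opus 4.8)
The plan is to extract $A$, $B$ and $J$ directly from the iq Spectral Theorem and then identify them with the operators named in the statement. First I would fix $\imath \in \bS$ and invoke Theorem~\ref{spectraltheoremU} to produce an iqPVM $\mc{Q} = (Q, \LL)$ over $\C^+_\imath$ in $\sH$ with $T = \int_{\C^+_\imath}\mi{id}\diff\mc{Q}$; writing $\alpha, \beta \colon \C^+_\imath \to \R$ for the real and imaginary coordinate functions (so $\mi{id} = \alpha + \imath\beta$ and $\beta \ge 0$), I would set
\[
A := \int_{\C^+_\imath}\alpha\diff\mc{Q}, \qquad B := \int_{\C^+_\imath}\beta\diff\mc{Q}, \qquad J := \LL(\imath).
\]
Then (ii) and (iii) are read off at once: $A$ and $B$ are self-adjoint by Theorem~\ref{TEO1unb}(c) because $\alpha,\beta$ are real-valued, $B \ge 0$ by \eqref{normunb2} because $\beta \ge 0$, and $J$ is anti self-adjoint and unitary since $\LL(\imath)^* = \LL(-\imath) = -\LL(\imath)$ and $\LL(\imath)\LL(-\imath) = \LL(1) = \1$. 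For (i), Proposition~\ref{defintunb}(d) lets me rewrite $JB = \int_{\C^+_\imath}\imath\beta\diff\mc{Q}$, so Theorem~\ref{TEO1unb}(b) gives $A + JB \subset \int(\alpha + \imath\beta)\diff\mc{Q} = T$, and the domain identity $D_\alpha \cap D_\beta = D_{\mi{id}}$ — which follows from $|z|^2 = \alpha(z)^2 + \beta(z)^2$ and additivity of the measures $\mu^{\sss(Q)}_u$ — upgrades this to $T = A + JB$ with $D(T) = D(A) \cap D(B)$. The commutation relations (iv) and (v) follow from Proposition~\ref{defintunb}(d) and Proposition~\ref{TEO1unb-improved}, since the real functions $\alpha,\beta$ commute pointwise with each other and with $\imath$.

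Next I would verify the explicit formulas. Since $T^* = \int\overline{\mi{id}}\diff\mc{Q}$ by Theorem~\ref{TEO1unb}(c), Proposition~\ref{TEO1unb-improved} yields $\overline{T + T^*} = \int 2\alpha\diff\mc{Q} = 2A$ and $\overline{T - T^*} = \int 2\imath\beta\diff\mc{Q} = 2JB$, hence $A = \overline{(T + T^*)\frac{1}{2}}$ and $\overline{(T - T^*)\frac{1}{2}} = JB$. Because $(JB)^* = -BJ = -JB$ by (iv), the operator $JB$ is anti self-adjoint and a short computation using (iv) gives $(JB)^*(JB) = -JBJB = -J^2 B^2 = B^2$ on $D(B^2)$; by uniqueness of the square root of a positive self-adjoint operator (Lemma~\ref{lem:J}(a)) this forces $|JB| = B$, i.e. $B = \big|\,\overline{(T - T^*)\frac{1}{2}}\,\big|$.

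For uniqueness, let $A', B', J'$ satisfy (i)--(iv). The decisive step is to show $T^* = A' - J'B'$. For $u \in D(A') \cap D(B') = D(T)$ and every $v \in D(T)$, a direct computation using that $A'$ is self-adjoint, $(J')^* = -J'$, $J'(D(B')) = D(B')$ and $B'J' = J'B'$ gives $\langle (A' - J'B')u | v\rangle = \langle u | (A' + J'B')v\rangle = \langle u | Tv\rangle$, so $A' - J'B' \subset T^*$; since $D(T^*) = D(T) = D(A') \cap D(B')$ by Proposition~\ref{propLu}(b), the inclusion is an equality. Therefore $T + T^* = 2A'$ and $T - T^* = 2J'B'$ on $D(T)$; taking $\frac{1}{2}(\cdot)$ and then closures, $A = \overline{(T+T^*)\frac{1}{2}} \subset A'$ and $JB = \overline{(T-T^*)\frac{1}{2}} \subset J'B'$, and since a self-adjoint (resp. anti self-adjoint) operator has no proper self-adjoint (resp. anti self-adjoint) extension — the usual argument $S \subset S' \Rightarrow S'^* \subset S^* = \pm S$ — we conclude $A' = A$ and $J'B' = JB$. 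Running the square-root computation of the previous paragraph for $J'B'$ gives $B' = |J'B'| = |JB| = B$, and then $J'Bw = J'B'w = JB'w = JBw$ for all $w \in D(B)$, so $J' = J$ on $\Ran(B)$ and hence on $\overline{\Ran(B)} = \Ker(B)^\perp = \Ker(\,\overline{T - T^*}\,)^\perp$ (using Proposition~\ref{propLu}(b) and Corollary~\ref{corollario1}, exactly as in the proof of Proposition~\ref{prop:unb-sp-teo}), which is the claimed uniqueness of $J$.

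The main obstacle is the uniqueness part: deriving $T^* = A' - J'B'$ from the abstract hypotheses (i)--(iv) requires both the pairing identity above and the non-trivial domain fact $D(T^*) = D(T)$ from Proposition~\ref{propLu}(b), and then recovering $B'$ from the product $J'B'$ hinges on recognizing $J'B'$ as anti self-adjoint together with uniqueness of square roots. The existence part, by contrast, is essentially bookkeeping with the operatorial integral developed in Sections~\ref{sec:iqPVMs} and~\ref{sec:unb-int}.
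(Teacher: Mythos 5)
Your proposal is correct and follows essentially the same route as the paper's proof: defining $A=\int_{\C^+_\imath}\alpha\diff\mc{Q}$, $B=\int_{\C^+_\imath}\beta\diff\mc{Q}$, $J=\LL(\imath)$ from the iq Spectral Theorem, identifying $A$ and $B$ via Proposition~\ref{TEO1unb-improved} and Lemma~\ref{lem:J}, and proving uniqueness through $T^*=A'-J'B'$, maximality of (anti) self-adjoint operators, the square-root identity $(J'B')^*(J'B')=(B')^2$, and the $\Ran(B)$ argument of Proposition~\ref{prop:unb-sp-teo}. The only cosmetic difference is that you obtain $A'-J'B'\subset T^*$ by a direct pairing computation plus $D(T^*)=D(T)$, where the paper uses the adjoint rules $(J'B')^*=-B'J'$ and $(A'+J'B')^*\supset A'-J'B'$; both are sound and rest on the same ingredients.
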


\begin{proof}
Let $\mc{Q}=(Q,\LL)$ be an iqPVM over some $\C^+_\imath$ in $\sH$ such that $T=\int_{\C^+_\imath} \mi{id}\diff\mc{Q}$, whose existence was proven in Theorem \ref{spectraltheoremU}. Define $J:=\LL(\imath)$, $A:=\int_{\C^+_\imath}\alpha\diff\mc{Q}$ and $B:=\int_{\C^+_\imath}\beta\diff\mc{Q}$, where $\alpha$ and $\beta$ denote the functions $\C^+_\imath \ni \alpha+\imath\beta \mapsto \alpha$ and $\C^+_\imath \ni \alpha+\imath\beta \mapsto \beta$, respectively. Properties $(\mr{i})$-$(\mr{v})$ follows immediately from Theorem \ref{TEO1unb} and equalities \eqref{ccq}, \eqref{normunb2} and \eqref{eq:closure-composition}. By \eqref{eq:closure-sum} and Lemma \ref{lem:J}$(\mr{b})$, it follows also that
\[
{\textstyle\overline{(T+T^*)\frac{1}{2}}}=\int_{\C^+_\imath}\alpha\diff\mc{Q}=A
\]
and
\[
{\textstyle\left|\,\overline{(T-T^*)\frac{1}{2}}\,\right|}=\int_{\C^+_\imath}|\imath\beta|\diff\mc{Q}=\int_{\C^+_\imath}\beta\diff\mc{Q}=B.
\]

Suppose now that there exist operators $A':D(A') \to \sH$, $B':D(B') \to \sH$ and $J' \in \gB(\sH)$ satisfying properties $(\mr{i})$-$(\mr{iv})$ with $A'$, $B'$ and $J'$ in place of $A$, $B$ and $J$ respectively. By Remark 2.16$(\mr{v})$ of \cite{GhMoPe}, we know that $(J'B')^*=(B')^*(J')^*=-B'J'$. It follows that $(J'B')^*=-J'B'$, $T^*=A'-J'B'$ and hence $(T+T^*)\frac{1}{2} \subset A'$ and $(T-T^*)\frac{1}{2} \subset J'B'$. Since the operators $A'$ and $B'$ are self-adjoint, we have that they are also closed, $J'B'$ is closed as well and hence $A={\textstyle\overline{(T+T^*)\frac{1}{2}}} \subset A'$ and $\int_{\C^+_\imath}\imath\beta\diff\mc{Q}={\textstyle\overline{(T-T^*)\frac{1}{2}}} \subset J'B'$. Furthermore, bearing in mind Remark 2.16$(\mr{i})$ of \cite{GhMoPe} and above Theorem \ref{TEO1unb}$(\mr{c})$, we infer that $A=A'$ and $\int_{\C^+_\imath}\imath\beta\diff\mc{Q}=J'B'$. Indeed, it holds $A \subset A'=(A')^* \subset A^*=A$ and
\[
\int_{\C^+_\imath}\imath\beta\diff\mc{Q} \subset J'B'=-(J'B')^* \subset -\left(\int_{\C^+_\imath}\imath\beta\diff\mc{Q}\right)^*=-\int_{\C^+_\imath}\overline{\imath\beta}\diff\mc{Q}=\int_{\C^+_\imath}\imath\beta\diff\mc{Q}.
\]
Since $(J'B')^*(J'B')=-B'J'J'B'=(B')^2$ and $B'$ is assumed to be positive, Lemma \ref{lem:J} implies that
\[
B'=|J'B'|=\left|\int_{\C^+_\imath}\imath\beta\diff\mc{Q}\right|=\int_{\C^+_\imath}|\imath\beta|\diff\mc{Q}=\int_{\C^+_\imath}\beta\diff\mc{Q}=B.
\]
We can now apply the same argument used in the proof of Proposition \ref{prop:unb-sp-teo}. Observe that, being $B'=B$ and $J'B=J'B'=\int_{\C^+_\imath}\imath\beta\diff\mc{Q}=L_\imath\int_{\C^+_\imath}\beta\diff\mc{Q}=JB$, it holds that $J'=J$ on $\Ran(B)$. Applying Proposition \ref{propLu}$(\mr{b})$ to $B$, we obtain that $\Ker(B)=\Ran(B)^\perp$ and hence $\overline{\Ran(B)}=\Ker(B)^\perp$. Thanks to the continuity of $J'$ and $J$, it follows that $J'=J$ on $\overline{\Ran(B)}=\Ker(B)^\perp$. Finally, by Corollary \ref{corollario1}, we have that $\Ker(B)=\Ker\big(\big|\,\overline{T-T^*}\,\big|\big)=Q(\{2\beta=0\})(\sH)=Q(\{2\beta\imath=0\})(\sH)=\Ker\big(\,\overline{T-T^*}\,\big)$. The proof is complete.
\end{proof}


\end{document}